\newcommand{\Ham}{\mathsf}
\newtcolorbox{hllong}{breakable,myhlight}
\newcommand{\N}{{\mathbb N}}
\newcommand{\R}{{\mathbb R}}
\newcommand{\T}{{\mathbb T}}
\newcommand{\Z}{{\mathbb Z}}
\newcommand{\cB}{{\mathcal B}}
\newcommand{\cE}{{\mathcal E}}
\newcommand{\cF}{{\mathcal F}}
\newcommand{\cG}{{\mathcal G}}
\newcommand{\cH}{{\mathcal H}}
\newcommand{\cO}{{\mathcal O}}
\newcommand{\cR}{{\mathcal R}}
\newcommand{\cT}{{\mathcal T}}
\newcommand{\cU}{{\mathcal U}}
\newcommand{\cZ}{{\mathcal Z}}
\renewcommand{\d}{\partial}
\newcommand{\ii}{\mathrm{i}}
\newcommand{\la}{\left\langle}
\newcommand{\ra}{\right\rangle}
\def\sleq{\lesssim}
\def\di{{\rm d}}
\newtheorem{theorem}{Theorem}[section]
\newtheorem{lemma}[theorem]{Lemma}
\newtheorem{corollary}[theorem]{Corollary}
\newtheorem{proposition}[theorem]{Proposition}
\newtheorem{definition}[theorem]{Definition}
\newtheorem{remark}[theorem]{Remark}
\title[]{Metastability phenomena in two-dimensional rectangular lattices with nearest-neighbour interaction}
\author{M. Gallone$^{(\dagger)}$}
\address[$\dagger$]{Dipartimento di Matematica, Universit\`a degli Studi di Milano, Via Cesare Saldini 50, 20133, Milan, Italy}
\email[$\dagger$]{matteo.gallone@unimi.it}
\author{S. Pasquali$^{(\ast)}$}
\address[$\ast$]{Matematikcentrum, Lunds Universitet, S\"olvegatan 18, 223 62 Lund, Sweden.}
\email[$\ast$]{stefano.pasquali@math.lu.se}
\begin{document}

\begin{abstract}
We study analytically the dynamics of two-dimensional rectangular lattices with periodic boundary conditions. We consider anisotropic initial data supported on one low-frequency Fourier mode. We show that, in the continuous approximation, the resonant normal form of the system is given by integrable PDEs. We exploit the normal form in order to prove the existence of metastability phenomena for the lattices. More precisely, we show that the energy spectrum of the normal modes attains a distribution in which the energy is shared among a packet of low-frequencies modes; such distribution remains unchanged up to the time-scale of validity of the continuous approximation. \\
\emph{Keywords}: Continuous approximation, Metastability, Energy Localization \\
\emph{MSC2010}: 37K10, 37K60, 70H08, 70K45
\end{abstract}
\maketitle
\tableofcontents

\section{Introduction} \label{intro}

In this paper we present an analytical study of the dynamics of two-dimensional rectangular lattices with nearest-neighbour interaction and periodic boundary conditions, for initial data with only one low-frequency Fourier mode initially excited. We give some rigorous results concerning the relaxation to a metastable state, in which energy sharing takes place among low-frequency modes only. \\

The study of metastability phenomena for lattices started with the numerical result by Fermi, Pasta and Ulam (FPU) \cite{fermi1995studies}, who investigated the dynamics of a one-dimensional chain of particles with nearest neighbour interaction. In the original simulations all the energy was initially given to a single low-frequency Fourier mode with the aim of measuring the time of relaxation of the system to the `thermal equilibrium' by looking at the evolution of the Fourier spectrum. Classical statistical mechanics prescribes that the energy spectrum corresponding to the thermal equilibrium is a plateau (the so-called theorem of equipartition of energy). Despite the authors believed that the approach to such an equilibrium would have occurred in a short time-scale, the outcoming Fourier spectrum was far from being flat and they observed two features of the dynamics that were in contrast with their expectations: the lack of thermalization displayed by the energy spectrum and the recurrent behaviour of the dynamics.

Both from a physical and a mathematical point of view, the studies on FPU-like systems have a long and active history: a concise survey of this vast literature is discussed in the monograph \cite{gallavotti2007fermi}. For a more recent account on analytic results on the `FPU paradox' we refer to \cite{bambusi2015some}. \\

In particular, we mention the papers \cite{bambusi2006metastability} and \cite{bambusi2008resonance}, in which the authors used the techniques of canonical perturbation theory for PDEs in order to show that the FPU $\alpha$ model (respectively, $\beta$ model) can be rigorously described by a system of two uncoupled KdV (resp. mKdV) equations, which are obtained as a resonant normal form of the continuous approximation of the FPU model; moreover, this result allowed to deduce a rigorous result about the energy sharing among the Fourier modes, up to the time-scales of validity of the approximation. If we denote by $N$ the number of degrees of freedom for the lattice and by $\mu \sim \frac{1}{N} \ll 1$ the wave-number of the initially excited mode, if we assume that the specific energy $\epsilon \sim \mu^4$ (resp. $\epsilon \sim \mu^2$ for the FPU $\beta$ model), then the dynamics of the KdV (resp. mKdV) equations approximates the solutions of the FPU model up to a time of order $\cO(\mu^{-3})$. However, the relation between the specific energy and the number of degrees of freedom implies that the result does not hold in the thermodynamic limit regime, namely for large $N$ and for fixed specific energy $\epsilon$ (such a regime is the one which is relevant for statistical mechanics). \\

Unlike the extensive research concerning one-dimensional systems, it seems to the authors that the behaviour of the dynamics of two-dimensional lattices is far less clear; it is expected that the interplay between the geometry of the lattice and the specific energy regime could lead to different results.

Benettin and collaborators \cite{benettin1980stochastic} \cite{benettin2005time} \cite{benettin2008study} studied numerically a two-dimensional FPU lattice with triangular cells and different boundary conditions in order to estimate the equipartition time-scale. They found out that in the thermodynamic limit regime the equipartition is reached faster than in the one-dimensional case. The authors decided not to consider model with square cells in order to have a spectrum of linear frequencies which is different with respect to the one of the one-dimensional model; they also added 
(see \cite{benettin2008study}, Section B.(iii) )\\

\emph{There is a good chance, however, that models with square lattice, and perhaps a different potential so as to avoid instability, behave differently from models with triangular lattice, and are instead more similar to one-dimensional models. This would correspond to an even stronger lack of universality in the two-dimensional FPU problem.} \\

Up to the authors' knowledge, the only analytical results  on the dynamics of two-dimensional lattices in this framework concern the existence of breathers \cite{wattis1994solitary} \cite{butt2006discrete} \cite{butt2007discrete} \cite{yi2009discrete} \cite{bambusi2010existence}.

In this paper we study two-dimensional rectangular lattices with $(2N_1+1) \times (2 N_2+1)$ sites, square cell, nearest-neighbour interaction and periodic boundary conditions, and we show the existence of metastability phenomena as in \cite{bambusi2006metastability}. More precisely, if we denote by $\mu \ll 1$ the wave-number of the Fourier mode initially excited and by $\sigma$ the ratio between the sides of the lattice, we obtain for a 2D Electrical Transmission lattice (ETL) either a system of two uncoupled KP-II equations for $\mu \ll 1$ and $\sigma=2$, or a system of two uncoupled KdV equations for $\mu \ll 1$ and $2 < \sigma < {7}$ as a resonant normal form for the continuous approximation of the lattice, while for the 2D Klein-Gordon lattice with quartic defocusing nonlinearity we obtain a one-dimensional cubic defocusing NLS equation for $\mu \ll 1$ and $1 < \sigma < 7$. Since all the above PDEs are integrable, we can exploit integrability to deduce a mathematically rigorous result on the formation of the metastable packet. 

Up to the authors' knowledge, this is the first analytical result about metastable phenomena in two-dimensional Hamiltonian lattices with periodic boundary conditions; in particular, this is the first rigorous result for two-dimensional lattices in which the dynamics of the lattice in a two-dimensional regime is described by a system of two-dimensional integrable PDEs. \\

Some comments are in order:
\begin{enumerate}
\item[i.] 
the time-scale of validity of our result is of order $\cO(\mu^{-3})$ for the 2D ETL lattice, and of order $\cO(\mu^{-2})$ for the 2D Klein-Gordon lattice;
\item[ii.] the ansatz about the small amplitude solutions gives a relation between the specific energy of the system $\epsilon$ and the wave-number $\mu \sim \frac{1}{N_1}$ of the Fourier mode initially excited. More precisely, we obtain $\epsilon \sim \mu^4$ for the 2D ETL lattice as in \cite{bambusi2006metastability}, and $\epsilon \sim \mu^2$ for the 2D Klein-Gordon lattice. This implies that the result does not hold in the thermodynamic limit regime;
\item[iii.] our result can be easily generalized to higher-dimensional lattices, such as the physical case of three-dimensional rectangular lattices with cubic cells;
\item[iv.] depending on the geometry of the lattice which is encoded in the parameter $\sigma$, the effective dynamics is described by one-dimensional PDEs for highly anisotropic lattices and by two-dimensional PDEs for low values of $\sigma$. 
{ The normal form equation of the ETL lattices are not integrable for $1 \leq \sigma < 2$ and they are integrable if $\sigma > 2$. The edge case $\sigma=2$ is very sensitive to the potential: if the cubic term of the potential is present, then the normal form equation it is the integrable two-dimensional KP-II equation, otherwise it is a non-integrable modification of the mKdV equation. On the other hand, the normal form equation for KG lattices is integrable for $\sigma > 1$, thus also for less anisotropic lattices; }
\item[v.] the upper bounds for $\sigma$ in the KdV regime and in the NLS regime come from a technical assumption in the approximation results (see Proposition \ref{KdVxietaProp}, Proposition \ref{KPxietaProp} and Proposition \ref{1DNLSpsiProp}). The approximation of solutions for the lattice with solutions of integrable PDEs in one-dimensional lattices was obtained through a detailed analysis in order to bound the error, and this is also the case for two-dimensional lattices, (see Proposition \ref{KdVxietaProp}, Proposition \ref{1DNLSpsiProp}, Appendix \ref{ApprEstSec12} and Appendix \ref{ApprEstSec22}), where one has to do very careful estimates in order to bound the different contributions to the error.
\end{enumerate}

To prove our results we follow the strategy of \cite{bambusi2006metastability}. The first step consists in the approximation of the dynamics of the lattice with the dynamics of a continuous system. {This step gives also a natural perturbative order and, since $\sigma > 1$, the leading term is given by a PDE in only one space variable. The effect of the second dimension is of order $\mu^\sigma$, and thus it appears at the second perturbative order for low values of $\sigma$ and at higher perturbative orders for higher values. In this sense one expects that for higher values of $\sigma$ the normal form equations are one-dimensional, but it is not trivial that the effect of the second dimension do not destroy integrability. For this reason,} as a second step we perform a normal form canonical transformation and we obtain that the effective dynamics is given by a system of integrable PDEs (KdV, KP-II, NLS depending on the lattice and the relation between $N_1$ and $N_2$). Next, we exploit the dynamics of these integrable PDEs in order to construct approximate solutions of the original discrete lattices, and we estimate the error with repect to a true solution with the corresponding initial datum. Finally, we use the known results about the dynamics of the above mentioned integrable PDEs in order to estimate the specific energies for the approximate solutions of the original lattices. \\

The novelties of this work are: on the one side, a mathematically rigorous proof of the approximation of the dynamics of the 2D ETL lattice by the dynamics of certain integrable PDEs (among these integrable PDEs, there is one which is \emph{genuinely} two-dimensional, the KP-II equation) and of the dynamics of the 2D KG lattice by the dynamics of the one-dimensional nonlinear Schr\"odinger equation; on the other side, there are two technical differences with respect to previous works, namely the normal form theorem (which is a variant of the technique used in \cite{bambusi2002nonlinear} \cite{bambusi2005galerkin} \cite{pasquali2018dynamics}) and the estimates for bounding the error between the approximate solution and the true solution of the lattice (which need a more careful study than the ones appearing in \cite{schneider2000counter} \cite{bambusi2006metastability} for the one-dimensional case). \\

The paper is organized as follows: in Section \ref{results} we introduce the mathematical setting of the models and we state our main results, Theorem \ref{KPrThm}, Theorem \ref{KdVrThm} and Theorem \ref{1DNLSrThm}. In Section \ref{galavsec} we state an abstract Averaging Theorem, which we prove in Section \ref{BNFprsubsec}. In Section \ref{2Dsec} we apply the averaging Theorem to the two-dimensional lattices, deriving the integrable approximating PDEs in the different regimes. In Section \ref{BNFdyn} we review some results about the dynamics of the normal form equation. In Section \ref{ApprSec} we use the normal form equations in order to construct approximate solutions (see Proposition \ref{KdVxietaProp}, Proposition \ref{KPxietaProp} and Proposition \ref{1DNLSpsiProp}), and we estimate the difference with respect to the true solutions with corresponding initial data in Proposition \ref{ApprPropKdV}, Proposition \ref{ApprPropKP} and Proposition \ref{ApprProp1DNLS}. In Appendix \ref{BNFest} we prove the technical Lemma \ref{NFest}; in Appendix \ref{ApprEstSec11} we prove Propositions \ref{KPxietaProp} and \ref{KdVxietaProp}; in Appendix \ref{ApprEstSec12} we prove Propositions \ref{ApprPropKdV} and \ref{ApprPropKP}; in Appendix \ref{ApprEstSec21} we prove Proposition \ref{1DNLSpsiProp}; in Appendix \ref{ApprEstSec22} we prove Proposition \ref{ApprProp1DNLS}. \\

{\color{black}
	\emph{Relevant notations.} For the sake of clarity we provide a short explanation of some of the symbols we use in the paper. With $\mathbb{Z}_{N_1,N_2}^2$ we denote $\frac{\mathbb{Z}}{2N_1+1} \times \frac{\mathbb{Z}}{2N_2+1}=\{(j_1,j_2) \in \mathbb{Z}^2 \,: \, |j_1| \leq N_1, \, |j_2| \leq N_2\}$. We will denote by $k=(k_1,k_2) \in \mathbb{Z}^2$ the index of the wave-vector; we denote by $\kappa=(\kappa_1,\kappa_2)$ the specific wave vector defined in \eqref{kappa}. In Section \ref{ApprSec} and in the appendices we will denote by $K=(K_1,K_2)$ the index of the wave-vector with $| K_1| \leq N_1$ and $ |K_2| \leq N_2$. With $N_r$ ($r=1,2$) we denote the half-length of the rectangular lattice and with $N:=(2N_1+1)(2N_2+1)$ we denote the total number of sites.
	
	Space variables are denoted by $x_r$ (with $r=1,2$) with $x_r \in [-N_r,N_r]$ and the rescaled space variables are denoted by $y_r$ with $y_r \in [-1,1]$. 
	
	The Hilbert space $\ell_{\rho,s}^2$ is defined in Definition \ref{defsigk} and $B_{\rho,s}(R)$ denotes the open ball of radius $R$ centred in $0$, $B_{\rho,s}(R) \subset \ell_{\rho,s}^2$. We also use $\cH^{\rho,s}:=\ell^2_{\rho,s} \times \ell^2_{\rho,s}$ and $\cB_{\rho,s}(R):= B_{\rho,s}(R) \times B_{\rho,s}(R) \subset \cH^{\rho,s}$.
	
	We also denote with $|(a,b)|:=\sqrt{a^2+b^2}$ the Euclidean norm of the vector $(a,b)$. We denote with $[f]:=\int_0^T f \circ \Phi_{\Ham h}^t(y) \, \frac{\di t}{T}$ the time-average of $f$ with respect to $\Phi_{\Ham h}^t$.
	
	In Section \ref{2Dsec} we introduce interpolating functions and we denote with $Q(t,x)$ the interpolating function for the lattice variable $Q_j(t)$ and with $P(t,x)$ the interpolating function for the lattice variable $P_j(t)$. We use this little abuse of notation because the former is actually an extension of the latter in the sense that $P_j(t)=P(t,j)$ for every $t$ and $j$.
	
	The component $k$ of the Fourier transform of the continuous approximation at time $t$ will be denoted by $\hat{Q}(t,k)$ to distinguish it from the $k$-th component of the Fourier transform of the reticular variable $\hat{Q}_k(t)$. Since for the rescaled functions such as $q$ and $p$ this ambiguity do not hold, we prefer the short notation $\hat{q}_k(\tau)$ and $\hat{p}_k(\tau)$ for their Fourier transform.
	
{	In Section \ref{ApprSec} we make use of the following abuse of notation: $Q(t)=\xi(\tau)$. This is to avoid the long-writing $Q(t)=\xi(\tau(t))$.}
}

%
%
%

\section{Main Results} \label{results}

We consider a periodic two-dimensional rectangular lattice, called ETL lattice, which in the non-periodic setting has been studied in \cite{butt2006discrete}, and which can be regarded as a simpler version of a 2D rectangular FPU model. We denote
\begin{align} \label{Z2}
\Z^2_{N_1,N_2} &:= \{ (j_1,j_2): j_1,j_2 \in \Z, |j_1| \leq N_1, |j_2| \leq N_2 \};
\end{align}
we also write $e_1:=(1,0)$, $e_2:=(0,1)$ and {we denote by $N:=(2N_1+1)(2N_2+2)$ the total number of sites of the lattice.}

The Hamiltonian describing the ETL lattice is given by
\begin{align}
H_{\color{black}\mathrm{ETL}}(Q,P) &= \sum_{j \in \Z^2_{N_1,N_2}} - \frac{1}{2} P_j \, (\Delta_1P)_j + ({\color{black}V}(Q))_j, \label{HamQ} \\
(\Delta_1 P)_j &:= (P_{j+e_1}-2P_j+P_{j-e_1}) + (P_{j+e_2}-2P_j+P_{j-e_2}), \label{Delta1} \\
({\color{black}V}(Q))_j &= \frac{Q_j^2}{2} + \alpha \frac{Q_j^3}{3} + \beta \frac{Q_j^4}{4}. \label{FQ}
\end{align}
We refer to \eqref{HamQ} as $\alpha+\beta$ model (respectively, $\beta$ model) if $\alpha \neq 0$ (respectively $\alpha =0$). With the above Hamiltonian formulation the equations of motion associated to \eqref{HamQ} are given by
\begin{equation*} \label{eqQ4}
\begin{cases}
\dot{Q_j} = -(\Delta_1P)_j \\
\dot{P_j} = -({\color{black}V}'(Q))_j
\end{cases}
;
\end{equation*}
\begin{align} \label{2DETLeq}
\ddot{Q_j} &= (\Delta_1 {\color{black}V}'(Q))_j.
\end{align}

We also introduce the Fourier coefficients of $Q$ via the following standard relation,
{
\begin{equation} \label{fourierQ}
Q_j(t) \;:=\; \frac{1}{ \sqrt{N} } \sum_{k \in \Z^2_{N_1,N_2}} \hat{Q}_k(t) \, e^{2 \pi \ii \frac{j \cdot k}{ N }  } \, , \; \qquad j \in \Z^2_{N_1,N_2},
\end{equation}}
and similarly for $P_j$. We denote by
\begin{align}
E_k \;&:=\; \frac{\omega_k^2|\hat{P}_k|^2+ |\hat{Q}_k|^2}{2}, \label{EnNormMode} \\
\omega_k^2 \;&:=\;  4 \sin^2\left(\frac{k_1 \, \pi}{2N_1+1} \right) + 4 \sin^2\left(\frac{k_2 \, \pi}{2N_2+1} \right), \label{FreqNormMode}
\end{align}
the energy and the square of the frequency of the mode at site $k=(k_1,k_2) \in \Z^2_{N_1,N_2}$ (see Figure \ref{fig:freq_ETL}). For states described by real functions, one has $E_{(k_1,k_2)} = E_{(-k_1,k_2)}$ and $E_{(k_1,k_2)} = E_{(k_1,-k_2)}$ for all $k=(k_1,k_2)$, so we will consider only indexes in 
\begin{align*}
\Z^2_{N_1,N_2,+}&:= \{(k_1,k_2) \in \Z^2_{N_1,N_2} : k_1,k_2 \geq 0\}.
\end{align*}

\begin{figure} 
\begin{center}
\includegraphics[width=0.5\textwidth]{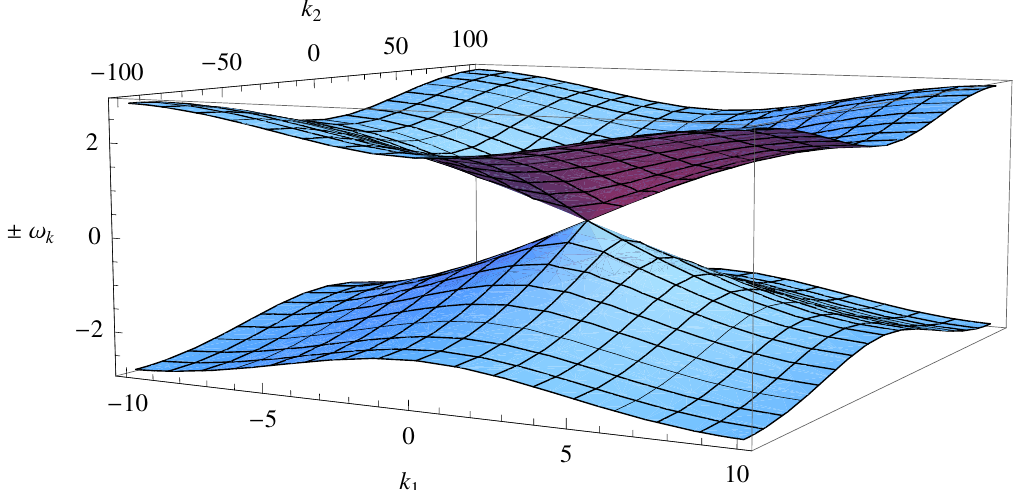}
\end{center}
\caption{The dispersion relation for the ETL lattice \eqref{HamQ}, namely $\pm \; \omega_k$, $k=(k_1,k_2)$, vs the integer coordinates, for $N_1=10$ and $N_2=100$.}
\label{fig:freq_ETL}
\end{figure}

As is customary in lattices with a large number of degrees of freedom, especially in relation with statistical mechanics, {we introduce the \emph{specific wave vector} $\kappa$ as
\begin{equation}\label{kappa}
\kappa \;:=\; \kappa(k) = \left( \frac{k_1}{ N_1+\frac{1}{2} }, \frac{k_2}{ N_2+\frac{1}{2} } \right) \, , 
\end{equation} 
{and the \emph{specific energy of the specific normal mode $\kappa=\kappa(k)$} as}
\begin{equation}\label{enkappa}
\cE_\kappa \;:=\; \frac{E_k}{\left(N_1+\frac{1}{2}\right) \left(N_2+\frac{1}{2}\right)}\, .
\end{equation}}


We want to study the behaviour of small amplitude solutions of \eqref{2DETLeq}, with initial data in which only one low-frequency Fourier mode is excited. \\

We assume $N_1 \leq N_2$, and we introduce the quantities
\begin{align} 
\mu &:= \frac{2}{2N_1+1}, \label{mu} \\
\sigma &:= \, \log_{N_1+\frac{1}{2}} \left( N_2+\frac{1}{2} \right), \label{sigma}
\end{align}
which play the role of parameters in our construction: we will use them in the asymptotic expansion of the dispersion relation of the continuous approximation of the lattice (see \eqref{exDelta1sigma}-\eqref{ex2Delta1sigma}, and \eqref{Ham2KGc1DNLS}-\eqref{1DKGsc2}) in order to derive the integrable approximating PDEs in the regimes we are considering.

We study the $\alpha+\beta$ model of \eqref{2DETLeq} in the following regime:
\begin{itemize}
\item[(KP)] the \emph{weakly transverse regime}, where the effective dynamics is described by a system of two \emph{uncoupled} Kadomtsev-Petviashvili (KP) equation. This corresponds to taking $\mu \ll 1$ and $\sigma=2$.
\end{itemize}

From now on, we denote by $\kappa_0 := \left( \frac{1}{N_1+\frac{1}{2}}, \frac{1}{ (N_1+\frac{1}{2})^\sigma } \right) = (\mu,\mu^\sigma)$. Our main result is the following:

\begin{theorem} \label{KPrThm}
Consider \eqref{2DETLeq} with $\alpha \neq 0$, $\sigma = 2$.

Fix {$0< \gamma  < \frac{1}{2}$} and two positive constants $C_0$ and $T_0$, then there exist positive constants $\mu_0$, $C_1$ and $C_2$ (depending only on $\gamma$, $C_0$ and on $T_0$) such that the following holds. Consider an initial datum with
\begin{align} \label{KPData}
\cE_{\kappa_0}(0) \, = C_0\mu^{4}, &\qquad \cE_{\kappa}(0)=0 \qquad \forall \kappa=(\kappa_1,\kappa_2) \neq \kappa_0,
\end{align}
and assume that $\mu < \mu_0$. Then there exists $\rho>0$ such that along the corresponding solution one has
\begin{align} \label{EnModesKP}
\cE_\kappa(t) &\leq C_1 \, \mu^{4} e^{-\rho |(\kappa_1/\mu, \kappa_2/\mu^\sigma )| } + C_2 \, \mu^{4+\gamma }, \qquad {{\forall \, }} |t| \, \leq \, \frac{T_0}{\mu^{3} }
\end{align}
for all $\kappa$.
\end{theorem}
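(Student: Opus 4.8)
The plan is to read this statement as the final assembly step: the substantial analysis is packaged in the approximation results of Section \ref{ApprSec}, so here I only need to feed the specific single‑mode datum through them and then read off the energy spectrum. First I would translate the lattice datum \eqref{KPData} into a datum for the normal form. Since only $\kappa_0=(\mu,\mu^\sigma)$ is excited, in the rescaled space variables $y_r=x_r/N_r$ the interpolating profile $Q(0,\cdot)$ is, up to the amplitude fixed by $\cE_{\kappa_0}(0)=C_0\mu^4$, a single Fourier mode, hence analytic with arbitrarily large radius. Passing to d'Alembert (characteristic) variables splits it into a right‑ and a left‑moving profile, each of which I take as datum for one of the two uncoupled KP‑II equations produced by the averaging theorem of Section \ref{galavsec} in the regime $\sigma=2$ (applied in Section \ref{2Dsec}). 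I would also fix the time rescaling $\tau\sim\mu^{3}t$, so that the window $|t|\le T_0/\mu^{3}$ is exactly the fixed interval $|\tau|\le T_0$ on which the KP‑II flow must be controlled.

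Next I would invoke Proposition \ref{KPxietaProp} to build, from these two KP‑II solutions, an approximate solution $(Q^{\mathrm{appr}},P^{\mathrm{appr}})(t)$ of the lattice, and Proposition \ref{ApprPropKP} to control the difference with respect to the true solution with datum \eqref{KPData} in the space $\cH^{\rho,s}$, uniformly for $|t|\le T_0/\mu^{3}$, so that its contribution to the specific energy is $\cO(\mu^{4+\gamma})$; the restrictions $\sigma=2$ and $0<\gamma<\tfrac12$ enter precisely to make this uniform long‑time bound work (comment v.\ above). The amplitude ansatz $Q\sim\mu^{2}$ makes $Q^{\mathrm{appr}}$ carry total energy $\cO(\mu^{3-\sigma})$, which after dividing by $(N_1+\tfrac12)(N_2+\tfrac12)\sim\mu^{-(1+\sigma)}$ yields specific energies of size $\mu^{4}$, matching the two terms in \eqref{EnModesKP}.

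The energy bookkeeping is then direct. Writing each Fourier coefficient as $\hat Q_k=\hat Q^{\mathrm{appr}}_k+\widehat{(Q-Q^{\mathrm{appr}})}_k$ (and likewise for $P$) and using the triangle inequality in \eqref{EnNormMode}, I obtain
\begin{equation*}
E_k(t)\ \sleq\ |\hat Q^{\mathrm{appr}}_k(t)|^2+\omega_k^2\,|\hat P^{\mathrm{appr}}_k(t)|^2+\big\|(Q,P)(t)-(Q^{\mathrm{appr}},P^{\mathrm{appr}})(t)\big\|_{\cH^{\rho,s}}^2 .
\end{equation*}
By Proposition \ref{ApprPropKP} the last term contributes the uniform floor $C_2\mu^{4+\gamma}$ to $\cE_\kappa=E_k/((N_1+\tfrac12)(N_2+\tfrac12))$. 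For the approximate part I would use the regularity theory recalled in Section \ref{BNFdyn}: since the KP‑II data are analytic and KP‑II is globally well posed and propagates analyticity with radius of convergence bounded below on $[0,T_0]$ uniformly in $\mu$ (the rescaled limiting equation does not see $\mu$), its Fourier coefficients decay like $e^{-\rho|(\kappa_1/\mu,\kappa_2/\mu^\sigma)|}$ in the anisotropically rescaled wave vector, and the $\mu^{2}$ amplitude prefactor contributes the $\mu^{4}$ in specific energy. Summing the two contributions gives the packet term $C_1\mu^{4}e^{-\rho|(\kappa_1/\mu,\kappa_2/\mu^\sigma)|}$ and hence \eqref{EnModesKP}.

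The step I expect to be the main obstacle is securing the two uniform‑in‑$\mu$ statements on which the packet shape rests: a lower bound on the radius of analyticity of the genuinely two‑dimensional KP‑II solution valid on the whole fixed interval $[0,T_0]$, so that $\rho$ is independent of $\mu$, and the uniform control of the approximation error at the critical amplitude threshold over the long time $T_0/\mu^{3}$. The anisotropic rescaling $(\kappa_1/\mu,\kappa_2/\mu^\sigma)$ must be tracked consistently through the continuous approximation so that the exponential localization is expressed in exactly these variables, and preventing the transverse KP‑II dispersion from spoiling either the persistence of analyticity or the error bound is the delicate point that the careful estimates behind Proposition \ref{KPxietaProp} and Proposition \ref{ApprPropKP} are designed to handle.
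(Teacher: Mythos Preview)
Your plan is correct and follows exactly the paper's route: translate the single-mode datum \eqref{KPData} into analytic data $(\xi_0,\eta_0)$ for the two uncoupled KP-II equations, use Bourgain's global well-posedness (Theorem \ref{BouThm}) to propagate analyticity on the fixed interval $|\tau|\le T_0$ (yielding the $\mu$-independent $\rho$), and then read off \eqref{EnModesKP} from Proposition \ref{ApprPropKP}. One small correction of attribution: Proposition \ref{KPxietaProp} does not build the approximate solution---that is done explicitly by the formulas \eqref{Qappr2}--\eqref{Pappr2}---but rather relates $\cE_\kappa$ to $\mu^4(|\hat\xi_K|^2+|\hat\eta_K|^2)/2$; the energy bookkeeping you sketch with the triangle inequality is already packaged inside Proposition \ref{ApprPropKP} as \eqref{LowModesApprKP}--\eqref{HighModesApprKP}, so once you invoke that proposition no further splitting is needed.
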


\begin{remark} \label{ActAngKPrem}
Theorem \ref{KPrThm} is the first rigorous result for two-dimensional lattices in which the dynamics of the lattice in a genuinely two-dimensional regime is described by a system of two-dimensional integrable PDEs. Moreover, in Theorem \ref{KPrThm} we do not mention the existence of a sequence of almost-periodic functions approximating the specific energies of the modes, and this is a difference with respect to Theorem 5.3 in \cite{bambusi2006metastability}. This is related to the construction of action-angle/Birkhoff coordinates for the KP equation, which is an open problem in the theory of integrable PDEs.
\end{remark}

\begin{remark} \label{InDatarem}
For the sake of simplicity, we have proved Theorem \ref{KPrThm} for initial data in which only one low-frequency Fourier mode is excited. One can also prove that a variant of Theorem \ref{KPrThm} holds also in the case the higher harmonics of a low-frequency Fourier mode are excited, provided that the energy decreases exponentially with respect to $|(\kappa_1/\mu, \kappa_2/\mu^\sigma )|$, and also for initial data in which the symmetrical modes of a given low-frequency Fourier mode are excited. To summarize, we are only able to prove stability of the solutions we constructed for initial data with vanishing specific energy for a time-scale $\cO(\mu^{-3})$. 
\end{remark}

{
\begin{remark}
	Here and in the following theorems we decided to write the bound on the specific energy with respect to the intensive variables $\kappa=(\kappa_1,\kappa_2)$. This writing has the advantage to be consistent with the previous literature (e.g. compare \eqref{KPData} above with (3.8) in \cite{bambusi2006metastability}). Anyway, let us emphasize that, using the definition of $\kappa$, \eqref{KPData} can be written equivalently as
\begin{equation}
\cE_\kappa(t) \leq C_1 \mu^4 e^{-\rho|(k_1,k_2)|}+C_2 \mu^{4+\gamma} \, , \qquad \forall t \leq \frac{T_0}{\mu^3} \, .
\end{equation} 
This implies that, as the number of sites increase, the Fourier spectrum is exponentially localised (and remains so, as $\mu \to 0$) around $\kappa=0$, apart from an error of order $\mu^{4+\gamma}$.
\end{remark}}

We also point out that there are also other regimes in which the dynamics of a two-dimensional lattice can be approximated by integrable PDEs. For example, we can consider $\alpha+\beta$ model of \eqref{2DETLeq} in the following regime:

\begin{itemize}
\item[(KdV)] the \emph{very weakly transverse regime}, where the effective dynamics is described by a system of two \emph{uncoupled} Korteweg-de Vries (KdV) equations. This corresponds to taking $\mu \ll 1$ and $2 < \sigma < {7}$.
\end{itemize}

The corresponding result one can prove in such a regime is the following.

\begin{theorem} \label{KdVrThm}
Consider \eqref{2DETLeq} with $\alpha \neq 0$, {$2< \sigma < 7$}. {Define $\gamma_0(\sigma) := \frac{3}{2} \left(\sigma-\frac{5}{3}\right)$ for $2<\sigma<3$, and $\gamma_0(\sigma):= \frac{1}{2} (7-\sigma)$ for $3 \leq \sigma < 7$.}

Fix {$0 < \gamma  < \gamma_0(\sigma)$} and two positive constants $C_0$ and $T_0$, then there exist positive constants $\mu_0$, $C_1$ and $C_2$ (depending only on $\gamma$, $C_0$, ${\sigma}$ and on $T_0$) such that the following holds. Consider an initial datum with
\begin{align} \label{KdVData}
\cE_{\kappa_0}(0) \, = C_0\mu^{4}, &\; \qquad \cE_{\kappa}(0)=0, \; \qquad \forall \kappa=(\kappa_1,\kappa_2) \neq \kappa_0,
\end{align}
and assume that $\mu < \mu_0$. Then there exists $\rho>0$ such that along the corresponding solution one has
\begin{align} \label{EnModesKdV}
\cE_\kappa(t) &\leq C_1 \, \mu^{4} e^{-\rho |(\kappa_1/\mu, \kappa_2/\mu^\sigma )| } + C_2 \, \mu^{4+\gamma}, \; \qquad {{\forall \, }} |t| \, \leq \, \frac{T_0}{\mu^{3} }
\end{align}
for all $\kappa$. Moreover, for any $n_2$ with $0 \leq n_2 \leq N_2$ there exists a sequence of almost-periodic functions $(F_n)_{n=(n_1,n_2) \in \Z^2_{N_1,N_2,+}}$ such that, if we define
\begin{align} \label{AlmostPerKdV}
\cF_{\kappa_0} \, = \, \mu^{4} \, F_{n}, &\; \qquad \cF_\kappa \, = \, 0 \; \qquad \forall \kappa \neq n\kappa_0
\end{align}
{then one has, for the \emph{specific energy distribution} $\cE_\kappa(t)$,}
\begin{align} \label{ApprEnModesKdV}
\left| \cE_\kappa(t) - \cF_\kappa(t) \right| &\leq C_2 \, \mu^{4+\gamma}, \; \qquad {{\forall \, }} |t| \, \leq \, \frac{T_0}{\mu^{ 3 }}.
\end{align}
\end{theorem}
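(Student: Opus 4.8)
The plan is to follow the four-step scheme of \cite{bambusi2006metastability}, adapting each step to the two-dimensional very weakly transverse regime $2 < \sigma < 7$. First I would pass to the continuous approximation: introduce interpolating functions $Q(t,x)$ and $P(t,x)$ for the lattice variables and rescale space and time according to the small-amplitude ansatz $\cE_{\kappa_0}(0) \sim \mu^4$. Expanding the discrete Laplacian \eqref{Delta1} in powers of $\mu$ (via \eqref{exDelta1sigma}--\eqref{ex2Delta1sigma}), the transverse second difference enters only at relative order $\mu^\sigma$; since $\sigma > 2$ this contribution is genuinely higher-order and is averaged out at the leading normal form. Applying the Averaging Theorem of Section \ref{galavsec} to the resulting continuous Hamiltonian then yields, as resonant normal form, a system of two \emph{uncoupled} KdV equations governing the right- and left-moving components $\xi$ and $\eta$.

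Second, I would use the KdV flow to construct approximate solutions of the lattice: Proposition \ref{KdVxietaProp} produces such an approximate solution from KdV data, and Proposition \ref{ApprPropKdV} controls, by a Gronwall-type argument, the difference between it and the true lattice solution with datum \eqref{KdVData} on the time-scale $|t| \leq T_0\mu^{-3}$. The single-mode datum \eqref{KdVData} corresponds to an analytic KdV profile whose Fourier coefficients decay exponentially; since KdV propagates analytic regularity with a strip width uniform on the relevant time-scale, the Fourier spectrum of the approximate solution stays exponentially localized around $\kappa = 0$. Transferring this back to the lattice and absorbing the approximation error gives \eqref{EnModesKdV}, with the factor $e^{-\rho|(\kappa_1/\mu,\kappa_2/\mu^\sigma)|}$ coming from the localization and the term $C_2\mu^{4+\gamma}$ from the error.

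Third, for the almost-periodicity statement \eqref{AlmostPerKdV}--\eqref{ApprEnModesKdV} I would exploit the integrability of KdV reviewed in Section \ref{BNFdyn}. In the action-angle (Birkhoff) coordinates for KdV on the torus the actions are conserved and the angles rotate linearly, so the solution, and hence each normal-mode energy, is an almost-periodic function of time. Defining $F_n$ to be the rescaled mode energies of the KdV flow and setting $\cF_\kappa$ as in \eqref{AlmostPerKdV} (supported on the harmonics $n\kappa_0$), the same approximation estimate used above yields $|\cE_\kappa(t) - \cF_\kappa(t)| \leq C_2\mu^{4+\gamma}$. This is exactly where the KdV regime improves on the KP regime of Theorem \ref{KPrThm}: as noted in Remark \ref{ActAngKPrem}, action-angle coordinates for KP are unavailable, so no analogue of the $F_n$ can be produced there.

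The hard part will be the two-dimensional error estimate underlying Proposition \ref{ApprPropKdV}. Unlike the one-dimensional analyses of \cite{schneider2000counter} and \cite{bambusi2006metastability}, one must track the extra error contributions generated by the transverse direction, which carry powers of $\mu^\sigma$, and balance them against the intrinsic KdV approximation error. This balancing is precisely what forces the restriction $\sigma < 7$ and produces the admissible threshold $\gamma_0(\sigma)$, continuous at $\sigma = 3$ with value $2$ and degenerating to $0$ as $\sigma \to 7$; carrying these estimates through the normal form transformation and the inverse change of variables, while keeping every contribution to the error sharp, is the technical core of the proof.
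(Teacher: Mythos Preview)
Your proposal is correct and follows essentially the same approach as the paper: derive the uncoupled KdV system as the resonant normal form via the Averaging Theorem, invoke Propositions \ref{KdVxietaProp} and \ref{ApprPropKdV} for the approximation and error control, and appeal to the Birkhoff coordinates of Theorem \ref{KapPosThm2} for almost-periodicity. The only point the paper makes slightly more explicit is that the persistence of analyticity of the KdV solution (hence the existence of a uniform $\rho$) is obtained via the spectral-gap characterization of Theorem \ref{KapPosThm1}, rather than by a generic ``KdV propagates analyticity'' argument.
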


\begin{remark} \label{Scalingrem}

We point out that in the statement of Theorem \ref {KdVrThm} the assumption $\sigma > 2$ comes from an asymptotic expansion of the dispersion relation of the continuous approximation of the lattice (see \eqref{exDelta1sigma}-\eqref{ex2Delta1sigma}), while the assumption {$\sigma < 7$} comes from a technical assumption under which we can approximate the dynamics of the lattice with the dynamics of the system of uncoupled KdV equations (see the statement of Theorem \ref{ApprPropKdV}).
\end{remark}

We can also consider two-dimensional KG lattices, which combine the nearest-neighbour potential with an on-site one: the scalar model is described by
\begin{align}
H_{\color{black}\mathrm{KG}}(Q,P) &= \sum_{j \in \Z^2_{N_1,N_2}} \frac{P_j^2}{2} + \frac{1}{2} \sum_{j \in \Z^2_{N_1,N_2}} Q_j \; (-\Delta_1 Q)_j + \sum_{j \in \Z^2_{N_1,N_2}} U(Q_j),, \label{Ham2KGs} \\
U(x) &= m^2 \frac{x^2}{2} + \beta \frac{x^{2p+2}}{2p+2}, \qquad m>0, \qquad \beta>0, \; p \geq 1, \label{potKG2}
\end{align}
(see \cite{rosenau2003hamiltonian} for a physical interpration of the model).
The associated equations of motion are
\begin{align}
\ddot{Q_j} &= (\Delta_1 Q)_j - m^2 Q_j - \beta Q_j^{2p+1}, \qquad j \in \Z^2_{N_1,N_2}. \label{2DKGseq} 
\end{align}
If we take $p=1$, we obtain a generalization of the one-dimensional $\phi^4$ model. \\

{We} now introduce the Fourier coefficients of $Q$ as in \eqref{fourierQ}, 
and similarly for $P$, and denote by
\begin{align}
E_k &:= \frac{|\hat{P}_k|^2+\omega_k^2 |\hat{Q}_k|^2}{2}, \label{EnNormModeKG} \\
\omega_k^2 &:=  m^2+4 \sin^2\left(\frac{k_1 \, \pi}{2N_1+1} \right) + 4 \sin^2\left(\frac{k_2 \, \pi}{2N_2+1} \right), \label{FreqNormModeKG}
\end{align}
the energy and the square of the frequency of the mode at site $k=(k_1,k_2) \in \Z^2_{N_1,N_2}$ (see Figure \ref{fig:freq_KG}).  \\
In the rest of the paper we will assume that $m=1$. \\

\begin{figure} 
\begin{center}
\includegraphics[width=0.5\textwidth]{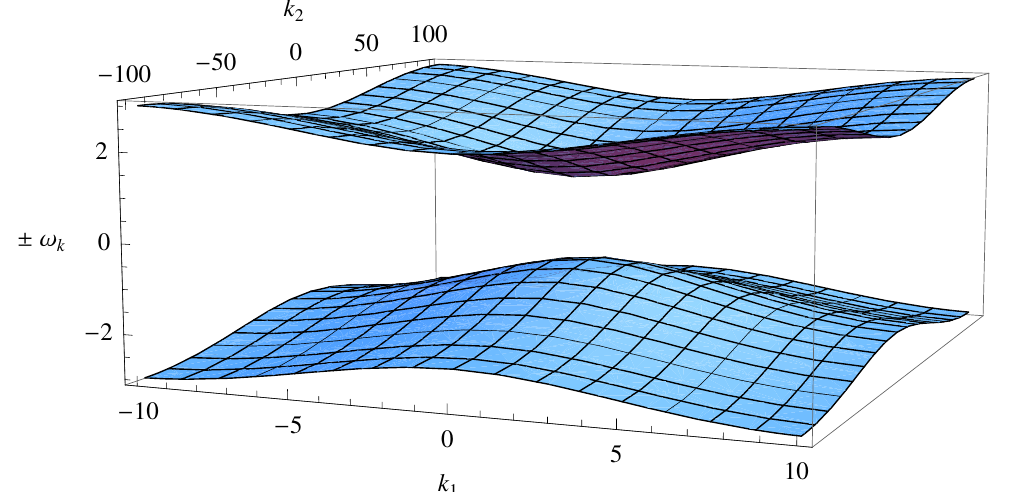}
\end{center}
\caption{The dispersion relation for the KG lattice \eqref{Ham2KGs}, namely $\pm \; \omega_k$, $k=(k_1,k_2)$, vs the integer coordinates, for $N_1=10$ and $N_2=100$.}
\label{fig:freq_KG}
\end{figure}

We consider the two-dimensional KG lattice \eqref{Ham2KGs} in the following regime:
\begin{itemize}
\item[(1D NLS)] the \emph{very weakly transverse regime}, where the effective dynamics is described by a cubic one-dimensional nonlinear Schr\"odinger (NLS) equation. This corresponds to taking $\mu \ll 1$ and $1 < \sigma < 7$.
\end{itemize}

\begin{theorem} \label{1DNLSrThm}
Consider \eqref{Ham2KGs} with $\beta > 0$,  $1 < \sigma < 7$. {Define $\gamma_1(\sigma) := \frac{3}{2} \left(\sigma-\frac{1}{3}\right)$ for $1<\sigma<2$, and $\gamma_1(\sigma):= \frac{1}{2} (7-\sigma)$ for $2 \leq \sigma < 7$.}

Fix $0 < \gamma  \leq {\gamma_1(\sigma)}$ and two positive constants $C_0$ and $T_0$, then there exist positive constants $\mu_0$, $C_1$ and $C_2$ (depending only on $\gamma$, $C_0$, {$\sigma$} and on $T_0$) such that the following holds. Consider an initial datum with
\begin{align} \label{1DNLSData}
\cE_{\kappa_0}(0) \, = C_0\mu^{2}, &\qquad \cE_{\kappa}(0)=0, \, \, \forall \kappa=(\kappa_1,\kappa_2) \neq \kappa_0,
\end{align}
and assume that $\mu < \mu_0$. Then there exists $\rho>0$ such that along the corresponding solution one has
\begin{align} \label{EnModes1DNLS}
\cE_\kappa(t) &\leq C_1 \, \mu^{2} e^{-\rho |(\kappa_1/\mu, \kappa_2/\mu^\sigma )| } + C_2 \, \mu^{2+\gamma}, \qquad |t| \, \leq \, \frac{T_0}{\mu^{2} }
\end{align}
for all $\kappa$. Moreover, for any $n_2$ with $0 \leq n_2 \leq N_2$ there exists a sequence of almost-periodic functions $(F_n)_{n=(n_1,n_2) \in \Z^2_{N_1,N_2,+}}$ such that, if we denote
\begin{align} \label{AlmostPer1DNLS}
\cF_{\kappa_0} \, = \, \mu^{2} \, F_{n}, &\qquad \cF_\kappa \, = \, 0 \qquad \forall \kappa \neq n\kappa_0
\end{align}
then {we have for the \emph{specific energy distribution}}
\begin{align} \label{ApprEnModes1DNLS}
\left| \cE_\kappa(t) - \cF_\kappa(t) \right| &\leq C_2 \, \mu^{2+\gamma}, \qquad |t| \, \leq \, \frac{T_0}{\mu^{ 2 }}.
\end{align}
\end{theorem}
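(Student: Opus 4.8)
The plan is to follow the same four-step strategy used for the KdV regime (Theorem \ref{KdVrThm}), adapting each step to the NLS scaling $\epsilon \sim \mu^2$ and to the shorter time-scale $\cO(\mu^{-2})$. First I would fix the scaling ansatz dictated by \eqref{1DNLSData}: representing the lattice variables through interpolating functions $Q(t,x),P(t,x)$, rescaling space by $y_r=x_r/N_r$ and amplitude by $\mu$, the initial condition $\cE_{\kappa_0}(0)=C_0\mu^2$ singles out the natural small parameter. Expanding the discrete operator \eqref{Delta1} in $\mu$ and using that $\sigma>1$ forces the transverse coupling to enter only at order $\mu^\sigma$ (hence strictly beyond the leading one-dimensional dynamics), the continuous approximation of \eqref{2DKGseq} reduces at leading order to a one-dimensional problem carrying an $\cO(\mu^\sigma)$ perturbation. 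The mass gap $m=1$ in \eqref{FreqNormModeKG} is what makes the relevant modulation equation an envelope (NLS-type) equation rather than a long-wave (KdV-type) one.

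Second, I would invoke the abstract Averaging Theorem of Section \ref{galavsec}, applied to the continuous Hamiltonian in the analytic spaces $\ell^2_{\rho,s}$. Because $\beta>0$ the on-site potential is defocusing and the resonance structure at the carrier wave $\kappa_0$ selects the cubic resonant interactions, so the resonant normal form is the one-dimensional cubic defocusing NLS equation, exactly as derived in the NLS regime in Section \ref{2Dsec}. The delicate point at this stage is to check that the $\cO(\mu^\sigma)$ transverse terms are absorbed by the averaging transformation without destroying the integrable structure of the leading equation; this is precisely where the lower bound $\sigma>1$ is used.

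Third, I would use Proposition \ref{1DNLSpsiProp} to build, out of a solution $\psi$ of the NLS normal form whose initial datum matches \eqref{1DNLSData}, an approximate solution of the discrete KG lattice, and then Proposition \ref{ApprProp1DNLS} to bound the discrepancy between this approximate solution and the true lattice solution with the same initial datum, uniformly for $|t|\leq T_0/\mu^2$. This is the step I expect to be the main obstacle: one must control simultaneously the continuous-approximation error, the normal-form remainder, and the transverse contributions, which requires a substantially more careful analysis than in the one-dimensional case. The upper bound $\sigma<7$ is exactly the technical threshold beyond which these estimates fail to close, reflected in the admissible loss $\gamma_1(\sigma)$ degenerating as $\sigma\to 7$.

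Finally, I would exploit the integrability of the cubic NLS. Since the defocusing NLS admits global action--angle/Birkhoff coordinates, the actions of the normal-form solution are constants of motion, so the energy distribution of its Fourier modes is an almost-periodic function of time; moreover, for the analytic, spectrally localized datum \eqref{1DNLSData} the NLS spectrum stays exponentially concentrated around the integer multiples $n\kappa_0$ of the carrier, with decay $e^{-\rho|(\kappa_1/\mu,\kappa_2/\mu^\sigma)|}$. Defining $F_n$ from these NLS actions yields \eqref{AlmostPer1DNLS}, and transferring the exponential-localization and almost-periodicity properties back to the lattice specific energies $\cE_\kappa(t)$ through the approximation estimate of the third step gives both \eqref{EnModes1DNLS} and \eqref{ApprEnModes1DNLS}. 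Note that, unlike the KP case (Remark \ref{ActAngKPrem}), here the almost-periodic statement \eqref{ApprEnModes1DNLS} is available precisely because action--angle coordinates for NLS do exist.
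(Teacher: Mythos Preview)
Your proposal is correct and follows essentially the same approach as the paper: pass to the continuous approximation, apply the Galerkin averaging Theorem~\ref{gavthm} to obtain the one-dimensional defocusing NLS as normal form, then invoke Propositions~\ref{1DNLSpsiProp} and~\ref{ApprProp1DNLS} together with the analyticity/spectral-gap control (Theorem~\ref{GrebKapThm1}) for \eqref{EnModes1DNLS}, and finally use the Birkhoff coordinates of Theorem~\ref{GrebKapThm2} to obtain the almost-periodicity statement \eqref{ApprEnModes1DNLS}. One small imprecision: the functions $F_n$ are not literally the NLS actions but the Fourier-mode energies $E_K=\tfrac{1}{2}|\hat\psi_K|^2$, which become almost-periodic when expressed in Birkhoff coordinates.
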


\begin{remark} \label{InDataKGrem}
In Theorem \ref{1DNLSrThm} we are able to prove stability of the solutions we constructed for initial data with vanishing specific energy for a time-scale $\cO(\mu^{-2})$.
\end{remark}

\begin{remark} \label{ScalingKGrem}

As for Theorem \ref{KdVrThm}, in the statement of Theorem \ref{1DNLSrThm} the assumption $\sigma > 1$ comes from an asymptotic expansion of the dispersion relation of the continuous approximation of the lattice (see \eqref{Ham2KGc1DNLS}-\eqref{1DKGsc2}), while the assumption $\sigma < 7$ comes from a technical assumption under which we can approximate the dynamics of the lattice with the dynamics of the system of uncoupled NLS equations (see the statement of Theorem \ref{ApprProp1DNLS}).
\end{remark}

\subsection{Further remarks}

\begin{remark} \label{Anisorem}
The specific choice of the direction of longitudinal propagation in the regimes that we have considered is not relevant.
\end{remark}

\begin{remark} \label{FPUNLKGrem}
We point out that the time of validity of Theorem \ref{1DNLSrThm} for the KG lattice is of order $\cO(\mu^{-2})$, which is different from the time of validity of Theorem \ref{KdVrThm} and Theorem \ref{KPrThm} for the FPU lattice. In the one-dimensional case it has been observed that, for a fixed value of specific energy $\epsilon$ and for long-wavelength modes initially excited, the $\phi^4$ model reached equipartition faster than the FPU $\beta$ model (see \cite{lichtenberg2007dynamics}, sec. 2.1.8). 
\end{remark}

\begin{remark} \label{HigherDimRem}
Theorem \ref{KPrThm}, Theorem \ref{KdVrThm} and Theorem \ref{1DNLSrThm} can be generalized to higher dimensional lattices. Indeed, let $d \geq 2$, define
\begin{align} \label{Zd}
\Z^d_{N_1,\ldots,N_d} &:= \{ (j_1,\ldots,j_d): j_1,\ldots,j_d \in \Z, |j_1| \leq N_1,\ldots, |j_d| \leq N_d \},
\end{align}
and consider the $d$-dimensional ETL
\begin{align} 
H_{\color{black}\mathrm{ETL}}^{\color{black}(d)}(Q,P) &= \sum_{j \in \Z^d_{N_1,\ldots,N_d}}\left( - \frac{1}{2} P_j \, (\Delta_1P)_j + ({\color{black}V}(Q))_j\right), \; \; \; ({\color{black}V}(Q))_j = \frac{Q_j^2}{2} + \alpha \frac{Q_j^3}{3} + \beta \frac{Q_j^4}{4}, \; \; j \in \Z^d_{N_1,\ldots,N_d}, \label{dDHam} 
\end{align}
and the $d$-dimensional NLKG lattice
\begin{align}
H_{\color{black}\mathrm{KG}}^{\color{black}(d)}(Q,P) &= \sum_{j \in \Z^d_{N_1,\ldots,N_d}} \frac{P_j^2}{2} + \frac{1}{2} \sum_{\substack{j,k \in \Z^d_{N_1,\ldots,N_d} \\ |j-k|=1}} \frac{(Q_j-Q_k)^2}{2} + \sum_{j \in \Z^d_{N_1,\ldots,N_d}} U(Q_j), \; \; \; U(x) = \frac{x^2}{2} + \beta \frac{x^{4}}{4}, \;\; \beta>0. \label{HamdKGs} 
\end{align}

We assume $N_1 \leq N_2,\ldots,N_d$, and we introduce the quantities
\begin{align}
\mu := \frac{2}{2N_1 +1}, \; &\; \; \sigma_i := \log_{N_1 + \frac{1}{2} } \left( N_{i+1} + \frac{1}{2} \right), \qquad i=1,\ldots,d-1. \label{dmudsigma}
\end{align}
Then we can describe the following regimes:
\begin{itemize}
\item[(KdV-d)] the model \eqref{dDHam} for $d \leq 4$, in the very weakly transverse regime with $\mu \ll 1$ and $2 < \sigma_1,\ldots,\sigma_{d-1} < {7}$;

\item[(KP-d)] the  model \eqref{dDHam} for $d \leq 4$, in the weakly transverse regime with $\mu \ll 1$ and $\sigma_1=2$, $2 < \sigma_2,\ldots,\sigma_{d-1} < {7}$;

\item[(1DNLS-d)] the model \eqref{HamdKGs} for $d \leq 6$, in the very weakly transverse regime, with $\mu \ll 1$, $1 < \sigma_1,\ldots,\sigma_{d-1} < 7$.
\end{itemize}

\end{remark}

\begin{remark}
There are other interesting regimes for \eqref{2DETLeq} and \eqref{2DKGseq} especially for their relation with the modified KdV equation and two-dimensional Non-Linear Schr\"odinger equation respectively. These will be discussed in Remark \ref{mKdVrem} and Remark \ref{2DNLSrem} respectively.
\end{remark}

\section{Galerkin Averaging} \label{galavsec}

{In the next Section we will show that for large $N$ the dynamics of both ETL and KG lattices can be approximated by an infinite dimensional Hamiltonian, which can be written as the sum of an integrable part and a non-integrable perturbation. For this kind of systems it is often possible to analyse the dynamics taking into account only the leading terms of the integrable part and the `average effect' of the perturbation, which describe the relevant qualitative behaviour of the system for a sufficiently long time-scale.}

{In this Section we prove an abstract averaging theorem whose assumptions are satisfied by the systems we have introduced in Section \ref{results}. This is the crucial technical result that allows to rigorously approximate the infinite dimensional Hamiltonian system with the leading terms of the integrable part and the average of the perturbation, up to the time-scales we are interested in. Since the average has to be computed along the solutions of the unperturbed system (see \eqref{average} below), the vector field of the averaged perturbation commutes with the vector field of the unperturbed system, thus resulting in a system in normal form. }

{ The idea of its proof (following \cite{bambusi2005galerkin}, \cite{bambusi2006metastability} and \cite{pasquali2018dynamics}) is to make a Galerkin cutoff, namely to approximate the original infinite dimensional system by a finite dimensional one, to put in normal form the cutoffed system, and then to choose the dimension of the cutoffed system in such a way that the error due to the Galerkin cutoff and the error due to the truncation in the normalization procedure are of the same order of magnitude. The system one gets is composed by a part which is in normal form, and by a remainder which is a smaller singular perturbation. }

If we neglect the remainder, we obtain a system whose solutions are approximate solutions of the original system. In Sec. \ref{ApprSec} we will show how to control the error with respect to a true solution of the original system.

{This Section is divided in two parts. In the first part we introduce the analytic setting we are working with. This includes the definition functional setting of the problem and the average Theorem \ref{gavthm}. In the second part we give a concise proof of the average Theorem, deferring the proof of the technical Lemma \ref{NFest} to Appendix \ref{BNFest}.}

\subsection{An Averaging Theorem} \label{BNFsubsec}

{To define the function spaces we are working with, we introduce} a topology in the phase space. This is conveniently done in terms of Fourier coefficients.

\begin{definition} \label{defsigk}
Fix two constants $\rho \geq 0$ and $s \geq 0$. We will denote by $\ell^2_{\rho,s}$
the Hilbert space of complex sequences $v = (v_n)_{n \in \Z^2 \setminus\{0\}}$ with obvious vector space structure and with scalar product
\begin{align}
\langle v, w \rangle_{\rho,s} \; := \; \sum_{n \in \Z^2 \setminus\{0\}} \overline {v_n} w_n e^{2 \rho |n|} |n|^{2 s} \, .
\end{align} 
and such that
\begin{align} \label{normsigk}
\|v\|_{\rho,s}^2 &:= \langle v,v \rangle_{\rho,s} = \sum_{n \in \Z^2 \setminus\{0\}} |v_n|^2 e^{2\rho |n|} |n|^{2s}
\end{align}
is finite. We will denote by $\ell^2$ the space $\ell^2_{0,0}$.
\end{definition}

We will identify a 2-periodic function {\color{black}$f$} with the sequence of its Fourier coefficients {$(\hat{f}_n)_{n \in \mathbb{Z}^2 \setminus\{0\}}$},
\begin{align*}
{\color{black}f(y)} &= \frac{1}{2} \sum_{n \in \Z^2} {\color{black}\hat{f}_n} e^{\ii \pi \, n \cdot y},
\end{align*}
and, with a small abuse of notation, we will say that ${f} \in \ell^2_{\rho,s}$ if the sequence of its Fourier coefficients  belong to $\ell^2_{\rho,s}$.

Now fix $\rho \geq 0$ and $s \geq 1$, and consider the scale of Hilbert spaces 
$\cH^{\rho,s}:= \ell^2_{\rho,s} \times \ell^2_{\rho,s} \ni \zeta=(\xi,\eta)$, 
endowed with one of the following symplectic forms:
\begin{align}\label{eq:SymplecticForms}
\Omega_1 := \begin{pmatrix}
0 & \ii \\ -\ii & 0
\end{pmatrix}, \qquad \Omega_2 := \begin{pmatrix}
-\partial_{y_1}^{-1} & 0 \\
0 & \partial_{y_1}^{-1}
\end{pmatrix} \, .
\end{align}
Observe that $\Omega_{{\Gamma}}: \cH^{\rho,s} \to \cH^{\rho,s+{{\Gamma}}-1}$ (${{\Gamma}}=1,2$) is a well-defined operator. Moreover, $\Omega_2$ is well-defined on the space of functions with zero-average with respect to the $y_1$-variable, i.e. on those functions $\zeta(y_1,y_2)$ such that for every $y_2$ we have $\int_{-1}^1 \zeta(y_1,y_2) \, \di y_1 =0$. 

If we fix $ {{\Gamma}} \in \{1,2\}$, $s$ and $U_{s} \subset \ell^2_{\rho,s}$ open, 
we define the gradient of $\Ham{K} \in C^\infty(U_{s},\R)$ with respect 
to $\xi \in \ell^2_{\rho,s}$ as the unique function s.t.
\begin{align*}
\la \nabla_\xi \Ham K , h \ra &= \di_{\xi} \Ham K  (h), \qquad \forall h \in \ell^2_{\rho,s}.
\end{align*}
Similarly, for an open set $\cU_{s} \subset \cH^{\rho,s}$  the Hamiltonian vector field of the Hamiltonian function $\Ham H \in C^\infty(\cU_{s},\R)$ is given by 
\[ X_{\Ham H}(\zeta)=\Omega^{-1}_{{{\Gamma}}} \nabla_{\zeta} \Ham H(\zeta). \]
The open ball of radius $R$ and center $0$ in $\ell^2_{\rho,s}$ will be denoted by 
$B_{\rho,s}(R)$; we write $\cB_{\rho,s}(R):=B_{\rho,s}(R) \times B_{\rho,s}(R) \subset \cH^{\rho,s}$. \\

Now, we introduce the Fourier projection operators $\hat{\pi}_j: \ell^2_{\rho,s} \to \ell^2_{\rho,s}$
\begin{align} \label{hatpi}
\hat{\pi}_j((v_n)_{ n\in\Z^2 \setminus\{0\} }) &:= 
\begin{cases}
v_n & \text{if} \qquad j-1 \leq |n| < j \\
0 & \text{otherwise}
\end{cases} \qquad , \qquad j \geq 1,
\end{align}
the operators $\pi_j: \cH^{\rho,s} \to \cH^{\rho,s}$
\begin{align} \label{smallpi}
\pi_j((\zeta_n)_{ n\in\Z^2 \setminus\{0\} }) &:= 
\begin{cases}
\zeta_n & \text{if} \qquad j-1 \leq |n| < j \\
0 & \text{otherwise}
\end{cases} \qquad , \qquad j \geq 1,
\end{align}
and the operators $\Pi_M: \cH^{\rho,s} \to \cH^{\rho,s}$
\begin{align} \label{bigpi}
\Pi_M((\zeta_n)_{ n\in\Z^2 \setminus\{0\} })  &:= 
\begin{cases}
\zeta_n & \text{if} \qquad |n| \leq M \\
0 & \text{otherwise}
\end{cases} \qquad , \qquad M \geq 0.
\end{align}
{Last, we define the operator $\overline{\pi_j}:=\mathrm{id}-\pi_j$ that will be used in Appendix \ref{ApprEstSec12} and \ref{ApprEstSec22}.}

\begin{lemma}
The projection operators defined in \eqref{smallpi} and \eqref{bigpi} satisfy 
the following properties for any $\zeta \in \mathcal{H}^{\rho,s}$:
\begin{enumerate}
\item[$i.$] for any $j \geq 0$
\begin{align*}
\zeta = \sum_{j \geq 0} \pi_j\zeta;
\end{align*}
\item[$ii.$] for any $j \geq 0$ 
\begin{align*}
\|\Pi_M\zeta\|_{\cH^{\rho,s}}&\leq  \, \|\zeta\|_{\cH^{\rho,s}} ;
\end{align*}
\item[$iii.$] the following equality holds 
\begin{align} \label{compnorms}
\| \zeta \|_{\cH^{\rho,s}} = \left\| \left[ \sum_{j \in \N} j^{2s} |\pi_j\zeta|^2 \right]^{1/2} \right\|_{\cH^{\rho,0}} 
\end{align}
where $| \zeta|$, for $\zeta \in \cH^{\rho,s}$ is the element $|\zeta| \in \cH^{\rho,s}$ whose $n$-th element is
\begin{align*}
|\zeta|_n &:= (|\xi_n|,|\eta_n|) 
\end{align*}
and $(\zeta^\alpha)_n:=(\xi_n^\alpha,\eta_n^\alpha)$.
\end{enumerate}
\end{lemma}

Now we consider a Hamiltonian system of the form
\begin{align} \label{Hamdecomp}
\Ham H &= \Ham h_0 + \delta \Ham F, 
\end{align}
where we assume that
\begin{itemize}
\item[(PER)]  $\Ham{h}_0$ generates a linear periodic flow $\Phi^\tau_{\Ham{h}_0}$ with period $T$, 
\begin{align*}
\Phi^{\tau+T}_{\Ham{h}_0} &= \Phi^\tau_{\Ham{h}_0} \qquad \forall \tau,
\end{align*}
which is analytic as a map from $\cH^{\rho,s}$ into itself for any $s \geq 1$. Furthermore, the flow is an isometry for any $s \geq 1 $. 

\item[(INV)] for any $s \geq 1$, 
$\Phi^\tau_{\Ham{h}_0}$ leaves invariant the space $\Pi_j\cH^{\rho,s}$ for any $j\geq0$. 
Furthermore, for any $j \geq 0$ 
\[ \pi_j \circ \Phi^\tau_{\Ham{h}_0} = \Phi^\tau_{\Ham{h}_0} \circ \pi_j. \]
\end{itemize}

Next, we assume that {there exists $\nu \in ]0,1]$ such that} the vector field of $\Ham F$ admits an asymptotic expansion in $\delta$ of the form
\begin{align}
\Ham F &\sim \sum_{j \geq 1} \delta^{{\nu}(j-1)} \Ham F_j, \label{expF} \\
X_{\Ham F} &\sim \sum_{j \geq 1} \delta^{{\nu}(j-1)} X_{\Ham F_j}, \label{expXF}
\end{align}
and that the following property is satisfied
\begin{itemize}

\item[(HVF)] There exists $R^\ast>0$ such that for any $j \geq 1$ 
\begin{itemize}
\item[$\cdot$] $X_{\Ham F_j}$ is analytic from $\cB_{\rho,s+2j+{\Gamma}}(R^\ast)$ to $\cH^{\rho,s}$.
\end{itemize}
Moreover, for any $r \geq 1$ we have that 
\begin{itemize}
\item[$\cdot$] $X_{\Ham F - \sum_{j=1}^r \delta^{{\nu}(j-1)}\Ham F_j}$ is analytic from $\cB_{\rho,s+2(r+1)+{\Gamma}}(R^\ast)$ to $\cH^{\rho,s}$.
\end{itemize}

\end{itemize}

The main result of this Section is the following theorem.

\begin{theorem} \label{gavthm}
Fix $R>0$, $s_1\gg 1$. 
Consider \eqref{Hamdecomp}, and assume (PER), (INV) and (HVF).
Then $\exists$ $s_0>0$ with the following properties: 
for any $s \geq s_1$ there exists $\delta_{s} \ll 1$ such that 
for any $\delta<\delta_{s}$ there exists 
$\cT_\delta:\cB_{\rho,s}(R/2) \to \cB_{\rho,s}(R)$ analytic canonical transformation 
such that
\begin{align} \label{TransfHam}
\Ham H_1 := \Ham H \circ \cT_\delta &= \Ham{h}_0 + \delta \cZ_1 + \delta^{{1+\nu}} \; \mathcal{R}^{(1)},
\end{align}
where $\cZ_1$ is in normal form, namely
\begin{align} \label{NFthm}
\{\cZ_1,\Ham{h}_0\} &= 0,
\end{align} 
and there exists a positive constant $C'_{s}$ {(that depends on $s$)} such that
\begin{align*} 
\sup_{\cB_{\rho,s+s_0}(R/2)} \|X_{\cZ_{1}}\|_{\cH^{\rho,s}} &\leq C'_{s},
\end{align*} 
\begin{align} \label{Remthm}
\sup_{\cB_{\rho,s+s_0}(R/2)} \|X_{\mathcal{R}^{(1)}}\|_{\cH^{\rho,s}} &\leq C'_{s},
\end{align} 
\begin{align} \label{CTthm}
\sup_{\cB_{\rho,s}(R/2)} \|\cT_\delta-\mathrm{id}\|_{\cH^{\rho,s}} &\leq C'_{s} \, \delta.
\end{align}
In particular,
\begin{align} \label{average}
\cZ_1(\zeta) = \la \Ham F_1 \ra(\zeta),
\end{align}
where $\la \Ham F_1 \ra(\zeta) := \int_0^{T} \Ham F_1\circ\Phi^\tau_{\Ham{h}_0}(\zeta) \frac{\di\tau}{T}$.
\end{theorem}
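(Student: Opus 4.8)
The plan is to follow the Galerkin-averaging scheme announced before the statement: truncate to finitely many Fourier modes, perform a single Lie-transform normal-form step on the truncated system, and then fix the truncation order as a function of $\delta$ so that the Galerkin error and the remainder of the normalization are both of size $\delta^{1+\nu}$.

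First I would identify the normal form and the generating function. Since $\Ham{h}_0$ generates a flow of period $T$ by (PER), the time-average in \eqref{average} is invariant under the flow: for every $t$,
\begin{align*}
\langle \Ham F_1\rangle\circ\Phi^t_{\Ham{h}_0}
=\frac1T\int_0^T \Ham F_1\circ\Phi^{s+t}_{\Ham{h}_0}\,\di s
=\langle \Ham F_1\rangle,
\end{align*}
hence $\{\Ham{h}_0,\langle \Ham F_1\rangle\}=0$, which is exactly \eqref{NFthm} with $\cZ_1=\langle \Ham F_1\rangle$. To kill the oscillating part $\Ham F_1-\langle \Ham F_1\rangle$ I would solve the homological equation $\{\Ham{h}_0,\chi\}=\langle \Ham F_1\rangle-\Ham F_1$ by the explicit periodic-averaging formula
\begin{align*}
\chi:=\frac1T\int_0^T t\,\big(\Ham F_1-\langle \Ham F_1\rangle\big)\circ\Phi^t_{\Ham{h}_0}\,\di t,
\end{align*}
which is well defined because its $\Phi_{\Ham{h}_0}$-average over a period vanishes. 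The canonical transformation is then the time-one flow $\cT_\delta:=\Phi^1_{\delta\chi}$; expanding $\Ham H\circ\cT_\delta$ in its Lie series and using $\{\Ham{h}_0,\chi\}+\Ham F_1=\langle \Ham F_1\rangle$ gives \eqref{TransfHam}, with $\cZ_1=\langle \Ham F_1\rangle$ and a remainder made of the higher Lie brackets together with the tail $\Ham F-\Ham F_1=O(\delta^{\nu})$ of the expansion \eqref{expF}; since $\nu\le1$ every such contribution is $O(\delta^{1+\nu})$.

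The obstruction to running this directly on $\cH^{\rho,s}$ is the derivative loss in (HVF): $X_{\Ham F_j}$ only maps $\cB_{\rho,s+2j+\Gamma}(R^\ast)$ into $\cH^{\rho,s}$, so the Poisson brackets produced by the Lie series are not bounded on any fixed space. For this reason I would first replace $\Ham H$ by its Galerkin truncation $\Ham H^{(M)}:=\Ham{h}_0+\delta\,\Ham F\circ\Pi_M$ and carry out the previous construction on the finite-dimensional invariant subspace $\Pi_M\cH^{\rho,s}$. By (INV) the flow $\Phi^t_{\Ham{h}_0}$ and the projections commute, so the averaging formula and the generating function stay inside $\Pi_M\cH^{\rho,s}$, where every vector field is bounded; the quantitative control of $X_\chi$, of $\cT_\delta-\mathrm{id}$, and of the truncated remainder on $\Pi_M$-balls is exactly the content of the technical Lemma \ref{NFest}.

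The last and most delicate step is the balancing. The Galerkin error $\Ham H-\Ham H^{(M)}$ is controlled using the extra regularity $s_0$: since $|n|^{2s}\le M^{-2s_0}|n|^{2(s+s_0)}$ for $|n|>M$, one has $\|(\mathrm{id}-\Pi_M)\zeta\|_{\rho,s}\le M^{-s_0}\|\zeta\|_{\rho,s+s_0}$, so on $\cB_{\rho,s+s_0}(R/2)$ this error is polynomially small in $M$. On the other hand, on $\Pi_M\cH^{\rho,s}$ the normalization remainder carries positive powers of $M$ coming from the $2j+\Gamma$ derivatives lost in (HVF). Choosing $M=M(\delta)$ a suitable negative power of $\delta$ (and $\delta_s$ small enough) makes both mechanisms contribute at order $\delta^{1+\nu}$, which yields \eqref{Remthm}; the bound \eqref{CTthm} on $\cT_\delta-\mathrm{id}$ then follows from $\sup\|\delta X_\chi\|\lesssim\delta$ and guarantees $\cT_\delta:\cB_{\rho,s}(R/2)\to\cB_{\rho,s}(R)$ for $\delta<\delta_s$. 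I expect the main obstacle to be precisely this bookkeeping: tracking how the index shift $2j+\Gamma$ propagates through the Lie brackets against the polynomial Galerkin gain $M^{-s_0}$, and fixing $M(\delta)$ so that the two error sources meet at the same order — which is the estimate isolated in Lemma \ref{NFest}.
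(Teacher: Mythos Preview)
Your proposal is correct and follows the same Galerkin-averaging scheme as the paper: split $\Ham H$ into a truncated part $\Ham H_{1,M}=\Ham h_0+\delta\,\Ham F_1\circ\Pi_M$, a Galerkin tail $\cR_{1,M}$, and the higher-order expansion tail $\cR_1$ (Lemma~\ref{truncest}), normalize $\Ham H_{1,M}$ by a single Lie transform using exactly the averaging/homological formulas you wrote down (this is Lemma~\ref{NFest}, relying on Lemma~\ref{homeqlemma}), and then fix $s_0=\sigma+2+\Gamma$ and balance the parameters. The only cosmetic difference is that the paper truncates $\Ham F_1$ rather than the full $\Ham F$, absorbing $\Ham F-\Ham F_1=O(\delta^{\nu})$ separately into $\cR_1$ before the Galerkin step.
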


\subsection{Proof of the Averaging Theorem} \label{BNFprsubsec}

The proof of Theorem \ref{gavthm} is actually an application of the techniques used in \cite{pasquali2018dynamics} and \cite{bambusi2006metastability}).

First notice that by assumption (INV) the Hamiltonian vector field of $\Ham{h}_0$ 
generates a continuous flow $\Phi^\tau_{{\Ham h_0}}$ which leaves $\Pi_M\cH^{\rho,s}$ invariant. \\

Now we set $\Ham H = \Ham H_{1,M} + \cR_{1,M} + \cR_1$, where \\
\begin{align} \label{truncsys}
\Ham H_{1,M} &:= \Ham h_{0} + \delta \, \Ham F_{1,M}, \\ 
\Ham F_{1,M} &:= \Ham F_1 \circ \Pi_M,
\end{align}
and
\begin{align} \label{remsys}
\cR_{1,M} &:= \Ham{h}_0 + \delta \Ham F_1 - \Ham H_{1,M}, \\
\cR_1 &:= \delta \left( \Ham F - \Ham F_1 \right).
\end{align}

The system described by the Hamiltonian \eqref{truncsys} is the one that 
we will put in normal form.  \\
In the following we will use the notation $a \sleq b$ to mean: 
there exists a positive constant $K$ independent of $M$ and $R$ 
(but eventually on $s$), such that $a \leq Kb$. 
We exploit the following intermediate results:

\begin{lemma} \label{truncest}
For any $s \geq s_1$ there exists $R>0$ such that $\forall$ $\sigma >0$, $M>0$ 
\begin{align} \label{truncremt}
\sup_{\zeta \in  \cB_{\rho, s+ {\Gamma} +\sigma+2 }(R) } \|X_{\cR_{1,M}}(\zeta)\|_{\cH^{\rho,s}} &\sleq \; \frac{\delta}{(M+1)^{ \sigma }}, 
\end{align}
\begin{align} \label{expremest} 
\sup_{\zeta \in  \cB_{\rho, s+ {\Gamma} +4 }(R) } \|X_{\cR_1}(\zeta)\|_{\cH^{\rho,s}} &\sleq \delta^{ {1+\nu} }.
\end{align}
\end{lemma}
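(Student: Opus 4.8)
The plan is to treat the two bounds separately, since \eqref{expremest} is essentially a reformulation of the hypotheses while \eqref{truncremt} is the genuine Galerkin-cutoff estimate. For \eqref{expremest} I would simply recall that $\cR_1 = \delta(\Ham F - \Ham F_1)$, so that $X_{\cR_1} = \delta\, X_{\Ham F - \Ham F_1}$. The field $X_{\Ham F - \Ham F_1}$ is the remainder of the expansion \eqref{expXF} after the first term, whose leading contribution is $\delta^{\nu}X_{\Ham F_2}$; by \eqref{expXF} together with the case $r=1$ of hypothesis (HVF) it is analytic from $\cB_{\rho,s+\Gamma+4}(R^\ast)$ into $\cH^{\rho,s}$ and of size $\cO(\delta^{\nu})$ there. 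Choosing $R\le R^\ast$ and multiplying by $\delta$ yields $\|X_{\cR_1}\|_{\cH^{\rho,s}}\sleq \delta^{1+\nu}$ on $\cB_{\rho,s+\Gamma+4}(R)$, which is exactly \eqref{expremest}.

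For \eqref{truncremt} the starting point is the explicit form of the vector field of the cutoffed term. Since $\Pi_M$ is a Fourier multiplier, it is self-adjoint for the pairing defining the gradient and commutes with $\Omega_\Gamma^{-1}$; hence $X_{\Ham F_{1,M}}(\zeta) = \Pi_M\, X_{\Ham F_1}(\Pi_M\zeta)$, and recalling $\cR_{1,M} = \delta(\Ham F_1 - \Ham F_{1,M})$,
\begin{align*}
X_{\cR_{1,M}}(\zeta) = \delta\Big[(\mathrm{id}-\Pi_M)X_{\Ham F_1}(\zeta) + \Pi_M\big(X_{\Ham F_1}(\zeta)-X_{\Ham F_1}(\Pi_M\zeta)\big)\Big].
\end{align*}
Both summands are controlled by the same mechanism: trading $\sigma$ units of regularity against a factor $(M+1)^{-\sigma}$ through the smoothing estimate $\|(\mathrm{id}-\Pi_M)\zeta\|_{\cH^{\rho,s}} \le (M+1)^{-\sigma}\|\zeta\|_{\cH^{\rho,s+\sigma}}$, which is immediate from the block decomposition \eqref{compnorms} since $(\mathrm{id}-\Pi_M)$ retains only modes with $|n|\ge M+1$. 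For the first summand I apply this at regularity level $s+\sigma$, using that by (HVF) $X_{\Ham F_1}$ maps $\cB_{\rho,(s+\sigma)+\Gamma+2}(R^\ast)$ boundedly into $\cH^{\rho,s+\sigma}$; for $\zeta\in\cB_{\rho,s+\Gamma+\sigma+2}(R)$ this gives a bound $\sleq (M+1)^{-\sigma}$.

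For the second summand I would write the difference as $\int_0^1 dX_{\Ham F_1}\big(\Pi_M\zeta + t(\mathrm{id}-\Pi_M)\zeta\big)\big[(\mathrm{id}-\Pi_M)\zeta\big]\,\di t$ and estimate the differential by a Cauchy estimate, using the analyticity of $X_{\Ham F_1}:\cB_{\rho,s+\Gamma+2}(R^\ast)\to\cH^{\rho,s}$; this produces $\|X_{\Ham F_1}(\zeta)-X_{\Ham F_1}(\Pi_M\zeta)\|_{\cH^{\rho,s}} \sleq \|(\mathrm{id}-\Pi_M)\zeta\|_{\cH^{\rho,s+\Gamma+2}}$, and one more application of the smoothing estimate with gain $\sigma$ supplies the factor $(M+1)^{-\sigma}\|\zeta\|_{\cH^{\rho,s+\Gamma+\sigma+2}}$. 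Since $\|\Pi_M\|\le 1$ on $\cH^{\rho,s}$ and the intermediate points $\Pi_M\zeta + t(\mathrm{id}-\Pi_M)\zeta$ remain in $\cB_{\rho,s+\Gamma+2}(R^\ast)$ once $R\le R^\ast/2$, both contributions are $\sleq \delta(M+1)^{-\sigma}$ on $\cB_{\rho,s+\Gamma+\sigma+2}(R)$, which is \eqref{truncremt}; the indices are chosen precisely so that this single ball serves both summands.

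The main obstacle I anticipate is the second summand of \eqref{truncremt}: one cannot use mere Lipschitz continuity of $X_{\Ham F_1}$ in a fixed norm, as that loses the regularity gain needed to convert the smallness of $(\mathrm{id}-\Pi_M)\zeta$ into the decay $(M+1)^{-\sigma}$. The point is to differentiate and use $dX_{\Ham F_1}$, via a Cauchy estimate, as a bounded operator $\cH^{\rho,s+\Gamma+2}\to\cH^{\rho,s}$ (the same smoothing as $X_{\Ham F_1}$ itself), thereby placing the full loss of $\sigma$ derivatives on the factor $(\mathrm{id}-\Pi_M)\zeta$. Keeping careful track of the Sobolev indices so that exactly the ball $\cB_{\rho,s+\Gamma+\sigma+2}(R)$ appears for both summands is the delicate bookkeeping of the argument.
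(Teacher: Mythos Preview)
Your proof is correct and uses the same two ingredients as the paper: the smoothing estimate $\|\mathrm{id}-\Pi_M\|_{\cH^{\rho,s+\sigma}\to\cH^{\rho,s}}=(M+1)^{-\sigma}$ and the analyticity of $X_{\Ham F_1}$ from (HVF), with \eqref{expremest} handled identically as an immediate consequence of (HVF). The only difference is presentational: the paper compresses \eqref{truncremt} into a single line $\|X_{\cR_{1,M}}\|\sleq\|\di X_{\delta\Ham F_1}\|\cdot\|\mathrm{id}-\Pi_M\|$, whereas you make the decomposition $X_{\cR_{1,M}}=\delta[(\mathrm{id}-\Pi_M)X_{\Ham F_1}(\zeta)+\Pi_M(X_{\Ham F_1}(\zeta)-X_{\Ham F_1}(\Pi_M\zeta))]$ explicit and treat the two summands separately; your version is arguably more transparent, since the paper's one-liner literally accounts only for the second summand, but the content is the same.
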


\begin{proof}
We recall that $\cR_{1,M} = \Ham{h}_0 + \delta \Ham F_1 - \Ham H_{1,M} { =\delta (F_1-F_{1,M}) }$. \\
We first notice that  $\|\mathrm{id}-\Pi_M\|_{ \cH^{\rho,s+\sigma} \to \cH^{\rho,s} } = (M+1)^{-\sigma}$: indeed, using \eqref{compnorms} we obtain
\begin{align*}
\left\| \sum_{j \geq M+1} \pi_jf \right\|_{\cH^{\rho,s}} &= \left\| \left[ \sum_{j \geq M+1} |j^{s} \pi_jf|^2 \right]^{1/2} \right\|_{\cH^{\rho,0}} \leq (M+1)^{-\sigma} \left\| \left[ \sum_{j \geq M+1} |j^{s+\sigma} \pi_jf|^2 \right]^{1/2} \right\|_{\cH^{\rho,0}} \\
&\leq (M+1)^{-\sigma} \|f\|_{\cH^{\rho,s+\sigma}},
\end{align*}
whereas the inequality $\Vert \mathrm{id} - \Pi_M \Vert_{\cH^{\rho,s+\sigma} \to \cH^{\rho,\sigma}} \leq (M+1)^{-\sigma}$ is obtained with a function which has non zero components only for $|j|=M+1$, i.e. $f=\pi_{M+1} f$. 

Inequality \eqref{truncremt} follows from 
\begin{align*}
\sup_{\zeta \in \cB_{\rho,s+{\Gamma}+2+\sigma}(R)} \; \| X_{\cR_{1,M}}(\zeta)\|_{\cH^{\rho,s}} & \sleq \; \|\mathrm{d} X_{ \delta \Ham F_1 }\|_{ L^\infty(\cB_{\rho,s+2+{\Gamma}}(R),\cH^{\rho,s}) } \|\mathrm{id}-\Pi_M\|_{ L^\infty(\cB_{\rho,s+2+{\Gamma}+\sigma}(R),\cB_{\rho,s+2+{\Gamma}}(R) ) } \\
&\sleq \delta \, (M+1)^{-\sigma},
\end{align*}
while estimate \eqref{expremest} is an immediate consequence of (HVF).
\end{proof}

\begin{lemma} \label{pertestlemma}
For any $s \geq s_1$ 
\begin{align*}
\sup_{\zeta \in  \cB_{\rho,s}(R^*) } \|X_{\Ham F_{1,M}}(\zeta)\|_{\cH^{\rho,s}} &\leq K^{(\Ham F)}_{1,s} M^{2+{\Gamma}} , 
\end{align*}
where 
\begin{align*}
K^{(\Ham F)}_{1,s} &:= \sup_{\zeta \in \cB_{\rho,s}(R^*) } \|X_{\Ham F_1}(\zeta)\|_{\cH^{\rho,s-2-{\Gamma}}} \; < \; + \infty.
\end{align*}
\end{lemma}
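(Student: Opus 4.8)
The plan is to reduce the whole estimate to the structural identity
$X_{\Ham F_{1,M}}(\zeta) = \Pi_M \, X_{\Ham F_1}(\Pi_M \zeta)$, and then exploit that $\Pi_M$ retains only the Fourier modes with $|n| \leq M$, so it acts as a smoothing operator with a controlled gain of $2+\Gamma$ derivatives that precisely compensates the loss in (HVF). First I would compute the gradient of the composition $\Ham F_{1,M} = \Ham F_1 \circ \Pi_M$. Since $\Pi_M$ is a Fourier multiplier, it is self-adjoint for the pairing defining the gradient, so for every $h$ one has $\di \Ham F_{1,M}(\zeta)[h] = \di \Ham F_1(\Pi_M \zeta)[\Pi_M h] = \la \Pi_M \nabla \Ham F_1(\Pi_M \zeta), h \ra$, whence $\nabla \Ham F_{1,M}(\zeta) = \Pi_M \nabla \Ham F_1(\Pi_M \zeta)$. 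Both symplectic operators $\Omega_1$ and $\Omega_2$, and hence $\Omega_\Gamma^{-1}$, are Fourier multipliers ($\Omega_1$ being constant in the modes and $\Omega_2$ a multiplier in $y_1$), so they commute with $\Pi_M$; applying $\Omega_\Gamma^{-1}$ and commuting it past $\Pi_M$ gives $X_{\Ham F_{1,M}}(\zeta) = \Pi_M X_{\Ham F_1}(\Pi_M \zeta)$.

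Next I would use the elementary smoothing estimate for the truncation. Since $1 \leq |n| \leq M$ on the range of $\Pi_M$, for any $w$ one has
\begin{align*}
\|\Pi_M w\|_{\cH^{\rho,s}}^2 = \sum_{|n| \leq M} |w_n|^2 e^{2\rho|n|} |n|^{2s} \leq M^{2(2+\Gamma)} \sum_{|n| \leq M} |w_n|^2 e^{2\rho|n|} |n|^{2(s-2-\Gamma)} \leq M^{2(2+\Gamma)} \|w\|_{\cH^{\rho,s-2-\Gamma}}^2,
\end{align*}
that is, $\|\Pi_M w\|_{\cH^{\rho,s}} \leq M^{2+\Gamma} \|w\|_{\cH^{\rho,s-2-\Gamma}}$. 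Applying this with $w = X_{\Ham F_1}(\Pi_M \zeta)$ yields $\|X_{\Ham F_{1,M}}(\zeta)\|_{\cH^{\rho,s}} \leq M^{2+\Gamma} \|X_{\Ham F_1}(\Pi_M \zeta)\|_{\cH^{\rho,s-2-\Gamma}}$.

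Finally I would close the estimate using (HVF). By property $ii.$ of the projections, $\|\Pi_M \zeta\|_{\cH^{\rho,s}} \leq \|\zeta\|_{\cH^{\rho,s}} < R^*$, so $\Pi_M \zeta$ remains in $\cB_{\rho,s}(R^*)$; moreover (HVF) with $j=1$, after the shift $s \mapsto s-2-\Gamma$ (legitimate since $s_1 \gg 1$ keeps us in the Hilbert scale), guarantees that $X_{\Ham F_1}$ maps $\cB_{\rho,s}(R^*)$ into $\cH^{\rho,s-2-\Gamma}$ with finite supremum $K^{(\Ham F)}_{1,s}$. Hence $\|X_{\Ham F_1}(\Pi_M \zeta)\|_{\cH^{\rho,s-2-\Gamma}} \leq K^{(\Ham F)}_{1,s}$, and taking the supremum over $\zeta \in \cB_{\rho,s}(R^*)$ gives the claimed bound.

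There is no deep difficulty here: the entire content is the commutation identity $X_{\Ham F_{1,M}}(\zeta) = \Pi_M X_{\Ham F_1}(\Pi_M \zeta)$, and the only point requiring genuine care is justifying that $\Pi_M$ commutes both with the gradient (self-adjointness of the Fourier projection for the chosen pairing) and with $\Omega_\Gamma^{-1}$ (both being Fourier multipliers, which is exactly what makes the $\Omega_2$ case involving $\partial_{y_1}^{-1}$ go through). Once this is in place, the loss of $2+\Gamma$ derivatives permitted in (HVF) is exactly matched by the $M^{2+\Gamma}$ gain from truncating to $|n| \leq M$, which is the whole mechanism behind balancing the Galerkin cutoff against the normalization error in the proof of Theorem \ref{gavthm}.
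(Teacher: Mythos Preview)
Your proof is correct and follows essentially the same approach as the paper: both use that $X_{\Ham F_{1,M}}$ is supported on modes $|n|\le M$ and exploit the resulting $M^{2+\Gamma}$ smoothing gain to offset the derivative loss in (HVF). Your version is in fact more explicit, since you spell out the structural identity $X_{\Ham F_{1,M}}(\zeta)=\Pi_M X_{\Ham F_1}(\Pi_M\zeta)$ via self-adjointness of $\Pi_M$ and its commutation with $\Omega_\Gamma^{-1}$, whereas the paper uses this implicitly (both when truncating the sum to $h\le M$ and when passing from $X_{\Ham F_{1,M}}$ to $K^{(\Ham F)}_{1,s}$) and phrases the same smoothing estimate through the shell-norm identity \eqref{compnorms}.
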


\begin{proof}
Using \eqref{compnorms} we have
\begin{align}
\sup_{\zeta \in \cB_{\rho,s}(R)} &\left\| \sum_{h \leq M} \pi_h X_{\Ham F_{1,M}} (\zeta) \right\|_{\cH^{\rho,s}} = 
\sup_{\zeta \in \cB_{\rho,s}(R)} \left\| \left[ \sum_{h \leq M} |h^{s} \pi_h X_{\Ham F_{1,M}}(\zeta)|^2 \right]^{1/2} \right\|_{\cH^{\rho,0}} \\
&\leq M^{2+{\Gamma}} \sup_{\zeta \in \cB_{\rho,s}(R)} \left\| \left[ \sum_{h \leq M} |h^{s-2-{\Gamma}} \pi_h X_{\Ham F_{1,M}}(\zeta)|^2 \right]^{1/2} \right\|_{\cH^{\rho,0}} \\
&\leq M^{2+{\Gamma}} \sup_{\zeta \in \cB_{\rho,s}(R)} \|X_{\Ham F_{1,M}}(\zeta)\|_{\cH^{\rho,s-2-{\Gamma}}} = K^{(\Ham F)}_{1,s} \, M^{2+{\Gamma}},
\end{align}
where the last quantity is finite for $R \leq R^\ast$ by property (HVF).
\end{proof}

To normalize \eqref{truncsys} we need {to prove a reformulation}
of Theorem 4.4 in \cite{bambusi1999nekhoroshev}. Here we report a statement of the result adapted to our context which is proved in Appendix  \ref{BNFest}.

\begin{lemma} \label{NFest}
Let $s \geq s_1+2+{{\Gamma}}$, $R>0$, and consider the 
system \eqref{truncsys}. Assume that $\delta < \frac{1}{30^{ {1/\nu} } }$, and that 
\begin{align} \label{smallcond}
12 \, T \, K_{1,s}^{(\Ham F)} M^{2+{{\Gamma}}} \delta <  R
\end{align}
where 
\begin{align*}
K^{(\Ham F)}_{1,s}&:=\sup_{\zeta \in \cB_{\rho,s}(R)} \|X_{\Ham F_1}(\zeta)\|_{\cH^{\rho,s-2-{{\Gamma}}}}.
\end{align*}
Then there exists an analytic canonical transformation 
$\cT^{(0)}_{\delta,M}: \cB_{\rho,s}(R/2) \to \cB_{\rho,s}(R)$ such that
\begin{align} \label{CTlemma}
\sup_{\zeta \in \cB_{\rho,s}(R/2)} \|\cT^{(0)}_{\delta,M}(\zeta)-\zeta\|_{\cH^{\rho,s}} &\leq 2T \, K^{(F)}_{1,s} M^{2+{{\Gamma}}} \delta,
\end{align}
and that puts \eqref{truncsys} in normal form up to a small remainder, 
\begin{align} \label{stepr}
\Ham H_{1,M} \circ \cT^{(0)}_{\delta,M} &= \Ham h_{0} + \delta \Ham Z^{(1)}_M + \delta^{ {1+\nu} } \cR^{(1)}_M, 
\end{align}
with $Z^{(1)}_M$ in normal form, namely $\{h_{0,M},Z^{(1)}_M\}=0$, and
\begin{align}
\sup_{\zeta \in \cB_{\rho,s}(R/2)} \|X_{ \Ham Z^{(1)}_M }(\zeta)\|_{\cH^{\rho,s}} &\leq 
K_{1,s}^{(\Ham F)} M^{2+{{\Gamma}}}
\label{itervf1}
\end{align} 
\begin{align}
&\sup_{\zeta \in \cB_{\rho,s}(R/2)} \|X_{ \cR^{(1)}_M }(\zeta)\|_{\cH^{\rho,s}} \leq 15 K_{1,s}^{(\Ham F)} M^{2+{{\Gamma}}}  \label{vecfrem}
\end{align}
\end{lemma}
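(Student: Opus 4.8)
The plan is to perform a single Lie-transform normalization step on the cut-off system \eqref{truncsys}, following the reformulation of Theorem 4.4 in \cite{bambusi1999nekhoroshev}. The decisive simplification is that, after the Galerkin projection $\Pi_M$, the perturbation $\Ham F_{1,M}$ has a vector field that is \emph{bounded} on $\cH^{\rho,s}$ (Lemma \ref{pertestlemma}), so the whole construction can be run with elementary integral bounds rather than with the derivative-losing estimates of the full field. First I would solve the homological equation $\{\chi,\Ham h_0\}=\Ham F_{1,M}-\la\Ham F_{1,M}\ra$. Hypothesis (PER) is exactly what makes this possible: since $\Phi^\tau_{\Ham h_0}$ is $T$-periodic and $\{\cdot,\Ham h_0\}$ acts as $\di/\di\tau$ along its flow, an explicit solution is
\[
\chi:=\frac1T\int_0^T\Big(\tau-\frac T2\Big)\,\Ham F_{1,M}\circ\Phi^\tau_{\Ham h_0}\,\di\tau ,
\]
as one checks by integrating by parts and using $\Phi^T_{\Ham h_0}=\mathrm{id}$. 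By construction the averaged term $\Ham Z^{(1)}_M:=\la\Ham F_{1,M}\ra$ is invariant under the periodic flow, hence $\{\Ham Z^{(1)}_M,\Ham h_0\}=0$, which is the normal-form property required in \eqref{stepr}.

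Next I would estimate the generating function. Because (PER) makes $\Phi^\tau_{\Ham h_0}$ a linear isometry of $\cH^{\rho,s}$ and $|\tau-T/2|\le T/2$, Lemma \ref{pertestlemma} yields at once $\sup_{\cB_{\rho,s}(R)}\|X_\chi\|_{\cH^{\rho,s}}\le\frac T2K_{1,s}^{(\Ham F)}M^{2+\Gamma}$, and the same argument gives the bound \eqref{itervf1} for $X_{\Ham Z^{(1)}_M}$. I would then set $\cT^{(0)}_{\delta,M}:=\Phi^1_{\delta\chi}$, the time-one flow of $\delta\chi$. A routine bootstrap shows that, under the smallness condition \eqref{smallcond}, this flow is well defined and maps $\cB_{\rho,s}(R/2)$ into $\cB_{\rho,s}(R)$: the orbit cannot leave $\cB_{\rho,s}(R)$ while its displacement stays below $R/2$, and integrating $\dot\zeta=\delta X_\chi(\zeta)$ produces the displacement estimate \eqref{CTlemma}.

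Finally I would expand $\Ham H_{1,M}\circ\Phi^1_{\delta\chi}$ in its Lie series. Writing $L_\chi:=\{\cdot,\chi\}$, the choice of $\chi$ cancels the $O(\delta)$ non-normal part and leaves $\Ham h_0+\delta\la\Ham F_{1,M}\ra$ plus a tail made of iterated brackets $L_\chi^m(\cdot)$, which starts at order $\delta^2$. This identifies $\Ham Z^{(1)}_M=\la\Ham F_{1,M}\ra$ and the tail as the remainder. Since $\delta<1$ and $\nu\le1$ force $\delta^{1-\nu}\le1$, I factor $\delta^2=\delta^{1+\nu}\,\delta^{1-\nu}$ and absorb $\delta^{1-\nu}$ in order to present the remainder in the form $\delta^{1+\nu}\cR^{(1)}_M$ of \eqref{stepr}, the chosen power $\delta^{1+\nu}$ matching the order of the neglected field in (HVF).

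The main obstacle — and the only genuinely technical point — is the remainder bound \eqref{vecfrem}. Estimating the vector field of an $m$-fold bracket $L_\chi^m(\cdot)$ in the $\cH^{\rho,s}$ norm requires Cauchy estimates on the analytic ball, each bracket being paid for by trading a sliver of the radius for a factor proportional to $\|X_\chi\|/R\sim TK_{1,s}^{(\Ham F)}M^{2+\Gamma}/R$. The resulting geometric-type series has ratio kept strictly below one precisely by \eqref{smallcond} together with $\delta<30^{-1/\nu}$, and carrying the numerical constants through the summation is what produces the explicit factor $15$ in \eqref{vecfrem}. I defer this bookkeeping to Appendix \ref{BNFest}.
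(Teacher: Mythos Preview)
Your proposal is correct and follows the same Lie-transform strategy as the paper: solve the homological equation by averaging along the periodic flow of $\Ham h_0$, take the time-one map of $\delta\chi$ as the transformation, and control the remainder by Cauchy estimates on shrinking balls. Two organisational differences are worth noting. First, the paper does not attack Lemma~\ref{NFest} directly but instead proves a general iterative step (Lemma~\ref{itlemma}) for Hamiltonians $\Ham h_0+\delta\Ham Z^{(m)}+\delta^{\nu m+1}\Ham F^{(m)}$ and then specialises to $m=0$; your single-step argument is the $m=0$ case extracted. Second, rather than summing the full Lie series $\sum_\ell L_\chi^\ell(\cdot)$ as you outline, the paper writes the remainder as three explicit pieces---$\Ham h_0\circ\cT-\Ham h_0-\delta\{\chi,\Ham h_0\}$, $\delta(\Ham Z^{(1)}_M\circ\cT-\Ham Z^{(1)}_M)$, and $\delta(\Ham F_{1,M}\circ\cT-\Ham F_{1,M})$---and bounds each via the integral representations of Lemmas~\ref{lem:VectorFieldCanonicalTransformation} and~\ref{Poissonlemma}; the factor $15$ then arises as $9+6$ from the latter two pieces (the $\Ham Z^{(0)}$ piece vanishes at $m=0$), not from a geometric sum. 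This integral route is chosen because $\Omega_2^{-1}$ is unbounded on $\cH^{\rho,s}$, so $X_{\Ham h_0}$ itself cannot be estimated directly---a point your Lie-series sketch handles implicitly (via $\{\chi,\Ham h_0\}=\Ham F_{1,M}-\Ham Z^{(1)}_M$ being bounded after the cutoff) but does not make explicit. Your alternative $\chi=\frac1T\int_0^T(\tau-T/2)\,\Ham F_{1,M}\circ\Phi^\tau_{\Ham h_0}\,\di\tau$ differs from the paper's Lemma~\ref{homeqlemma} formula $\frac1T\int_0^T t\,[\Ham Z-\Ham G]\circ\Phi^t_{\Ham h_0}\,\di t$ only by an element of the kernel of $\{\cdot,\Ham h_0\}$ and gives a slightly sharper constant.
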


Now we conclude with the proof of  Theorem \ref{gavthm}. 

\begin{proof}
If we define $\delta_s := \min\{\frac{1}{ 30^{{1/\nu}} },\frac{R}{12 \, T \, K_{1,s}^{(\Ham F)} M^{2+{{\Gamma}}}}\}$ and we choose
\begin{align*}
s_0 &= \sigma+2+{{\Gamma}}, \\
\sigma &\geq 2,
\end{align*}
then the transformation $\cT_\delta:=\cT^{(0)}_{\delta,M}$ defined by 
Lemma \ref{NFest} satisfies \eqref{TransfHam} because of \eqref{stepr}. 

Next, Eq. \eqref{NFthm} follows from Lemma \ref{NFest}, Eq. \eqref{Remthm} follows from \eqref{itervf1} and \eqref{vecfrem}, while \eqref{CTthm} is precisely \eqref{CTlemma}.
Finally, \eqref{average} can be deduced by applying Lemma \ref{homeqlemma} to $\Ham G=\Ham F_1$.
\end{proof}

\section{Applications to two-dimensional lattices} \label{2Dsec}

\subsection{The KP regime for the ETL lattice} \label{KPsubsec}
We want to study the behaviour of small amplitude solutions of \eqref{2DETLeq} with initial data in which only one low-frequency Fourier mode is excited. \\

As a first step, we introduce an interpolating function $Q=Q(t,x)$ such that 
\begin{itemize}
\item[(A1)] $Q(t,j)=Q_j(t)$, for all $j \in \Z^2_{N_1,N_2}$;
\item[(A2)] $Q$ is periodic with period $2N_1+1$ in the $x_1$-variable, and periodic with period $2N_2+1$ in the $x_2$-variable;
\item[(A3)] $Q$ has zero average, $\int_{ \left[-\left(N_1+\frac{1}{2}\right),N_1+\frac{1}{2}\right] \times \left[-\left(N_2+\frac{1}{2}\right),N_2+\frac{1}{2}\right] } Q(t,x) \, \di x =0$ $\forall t$;
\item[(A4)] $Q$ fulfills 
\begin{align}
\ddot{Q} &= \Delta_1(Q+\alpha Q^2+\beta Q^3), \label{FPUeqc} \\
\Delta_1 &:= 4 \sinh^2\left(\frac{\d_{x_1}}{2}\right) + 4 \sinh^2\left(\frac{\d_{x_2}}{2}\right). \label{Delta1c}
\end{align}
{ One can check that the operator \eqref{Delta1c} is equivalent to \eqref{Delta1} by using functional calculus.}
\end{itemize}

It is easy to verify that \eqref{FPUeqc} is Hamiltonian with Hamiltonian function
\begin{align} \label{HamFPUc}
\Ham H_{\mathrm{ETL}} (Q,P) &= \int_{ \left[-\frac{1}{\mu},\frac{1}{\mu}\right] \times \left[-\frac{1}{\mu^2},\frac{1}{\mu^2}\right]  } \left(\frac{-P \, \Delta_1 P + Q^2}{2} + \alpha \frac{Q^3}{3} + \beta \frac{Q^4}{4}\right) \, \di x,
\end{align}
where $P$ is a periodic function which has zero average and is canonically conjugated to $Q$.

{The existence of such an interpolating function is obvious, indeed such function can be taken to be a Fourier polynomial with $N$ terms. However, by construction, items (A1)-(A4) do not provide a \emph{unique} interpolating function. Among all possible choices of interpolating functions it is convenient to choose the Fourier polynomial supported on $k \in [-N_1,N_1]\times[-N_2,N_2]$ (i.e. the analytic function supported in the lowest possible number of Fourier modes). Also let us emphasize that (A4) ensures that if (A1) is valid for the intial time $t=0$, then is valid for all $t \in \mathbb{R}$.}

We consider \eqref{FPUeqc}, with $\alpha \neq 0$, 
and we look for small amplitude solutions of the form 
\begin{align} \label{KPr1}
Q(t,x) &= \mu^2 q(\mu t, \mu x_1, \mu^2 x_2),
\end{align}
with $\mu$ as in \eqref{mu}. { We introduce the rescaled time $\tau = \mu t$ and the rescaled space variables $y_1=\mu x_1$, $y_2=\mu^2 x_2$.}

Plugging \eqref{KPr1} into \eqref{FPUeqc}, leads to
\begin{align}
q_{\tau \tau} &= \frac{\Delta_{\mu,y_1}}{\mu^2} \left( q + \mu^2 \alpha q^2 {+ \mu^4 \beta q^3} \right), \label{FPUeqKPr1} \\
\Delta_{\mu,y_1} &:= 4 \sinh^2\left(\frac{\mu \d_{y_1}}{2}\right) + 4 \sinh^2\left(\mu^{2} \frac{\d_{y_2}}{2}\right), \label{Delta1mu}
\end{align}
which is a Hamiltonian PDE  corresponding to the Hamiltonian functional,
\begin{align}
\Ham K_1(q,p) &= \int_{I} \left(\frac{-p \, \Delta_{\mu,y_1} p}{2\mu^2} + \frac{q^2}{2} + \alpha \mu^2 \frac{q^3}{3} + \beta \mu^4 \frac{q^4}{4} \right) \di y,  \label{HamFPUcKP} 
\end{align}
where 
\begin{align} \label{I}
I &:= [-1,1]^2,
\end{align}
and $p$ is the variable canonically conjugated to $q$. 

Now, observe that the the operator $\Delta_{\mu,y_1}$ admits the following asymptotic expansion up to terms of order $\cO(\mu^4)$,
\begin{align}
\frac{\Delta_{\mu,y_1}}{\mu^2} &\sim \d_{y_1}^2 + \mu^2 \d_{y_2}^2 + \frac{\mu^2}{12} \d_{y_1}^4 + \cO(\mu^4), \label{ex2Delta1}
\end{align}
therefore the Hamiltonian \eqref{HamFPUcKP} admits the following asymptotic expansion
\begin{align}
\Ham K_1(q,p) &\;\sim\; \hat{\Ham h}_0(q,p) + \mu^2 \hat{\Ham F}_1(q,p) + \mu^4 \hat{\cR}(q,p), \label{asexp3} \\
\hat{\Ham h}_0(q,p) &= \int_{I} \left[\frac{p \, (-\d_{y_1}^2p) + q^2}{2} \right] \,\di y, \label{h0KP} \\
\hat{\Ham F}_1(q,p) &= \int_{I} \left(- \frac{p \, \d_{y_1}^4p}{24} - \frac{p \, \d_{y_2}^2p}{2} + \alpha \frac{q^3}{3}\right) \, \di y. \label{F1KP}
\end{align}

Following the approach of \cite{bambusi2006metastability}, we can introduce the following non-canonical change of coordinates
\begin{align}
\xi  := \frac{1}{\sqrt{2}} (q+\d_{y_1} p), &\; \; \eta := \frac{1}{\sqrt{2}} (q-\d_{y_1} p). \label{xieta} 
\end{align}
which transforms the Poisson tensor into
\begin{align} \label{Poisson}
J &= \d_{y_1} 
\begin{pmatrix}
-1 & 0 \\
0 & 1
\end{pmatrix}
,
\end{align}
and Hamilton equations associated to a Hamiltonian $\Ham K$ are
\begin{align*}
\d_\tau \xi &= -\d_{y_1} \frac{\delta \Ham K}{\delta \xi} \, , \\
\d_\tau \eta &= \d_{y_1} \frac{\delta \Ham K}{\delta \eta}.
\end{align*}

\begin{remark} \label{CasimirRem}
By the explicit expression of the Poisson tensor \eqref{Poisson} we can compute straightforwardly Casimir invariants associated to $J$, which are 
\begin{align} \label{CasimirKdV}
C(\xi,\eta) &= A(y_2) + B(y_2) \int_{-1}^1 \xi(\tau,y_1,y_2) \, \di y_1 + C(y_2)  \int_{-1}^1 \eta(\tau,y_1,y_2) \, \di y_1,
\end{align}
where $A$, $B$ and $C$ are arbitrary functions of $y_2$.

Since Casimir invariants are constants of motion, we can restrict our analysis on the subspace defined by
\begin{align} \label{ZeroAvy1KdV}
\int_{-1}^1 \left[\xi(\tau, y_1, y_2)-\eta(\tau,y_1,y_2)\right] \, \di y_1 &=0 \, , \qquad \forall \tau \in \R, \; |y_2| \leq 1\, .
\end{align}
However, by recalling \eqref{xieta} one sees that \eqref{ZeroAvy1KdV} {is equivalent to}
\begin{align} \label{ZeroAvy1KdV2}
\int_{-1}^1 \d_{y_1}p(\tau, y_1, y_2)\, \di y_1 &=0 \, ,\qquad \forall \tau \in \R, \; |y_2| \leq 1 \, ,
\end{align}
which is {trivially} true due to periodic boundary conditions.
\end{remark}

In the new coordinates the Hamiltonian takes the form
\begin{align}
\Ham{K}_1(\xi,\eta) &\;\sim\; \Ham{h}_0(\xi,\eta) + \mu^2 \Ham F_1(\xi,\eta) + \mu^4 \cR(\xi,\eta), \label{asexp3xieta} \\
\Ham{h}_0(\xi,\eta) &= \int_{I} \frac{\xi^2+\eta^2}{2} \di y, \label{h0KPxieta} \\
\Ham F_1(\xi,\eta) &= \int_{I} \left( -\frac{[\d_{y_1}(\xi-\eta)]^2}{48} + \frac{[\d_{y_2}\d_{y_1}^{-1}(\xi-\eta)]^2}{4} + \alpha \frac{(\xi+\eta)^3}{3 \cdot 2^{3/2}}\right) \, \di y, \label{F1KPxieta}
\end{align}
where \eqref{F1KPxieta} is well defined because of \eqref{ZeroAvy1}. 

Now we apply the averaging Theorem \ref{gavthm} to the Hamiltonian \eqref{asexp3xieta}, with $\delta=\mu^2$, {$\nu=1$}. Observe that the equations of motion of $\Ham{h}_0$ have the following simple form:
\begin{equation} \label{h0flow}
\begin{cases}
\xi_{\tau} & = -\d_{y_1}\xi \\
\eta_\tau & = \d_{y_1}\eta
\end{cases}
; \qquad
\begin{cases}
\xi(\tau,y) & = \xi_0(y_1-\tau,y_2) \\
\eta(\tau,y) & = \eta_0(y_1+\tau,y_2)
\end{cases}
.
\end{equation}

\begin{proposition} \label{F1avKPprop}
The average of $\Ham F_1$ in \eqref{asexp3xieta} with respect to the flow of $\Ham{h}_0$ in \eqref{asexp3xieta} is given by
\begin{align} \label{F1avKP}
\la \Ham F_1 \ra(\xi,\eta) &=  \int_{I} \left(- \frac{(\d_{y_1}\xi)^2+(\d_{y_1}\eta)^2}{48} + \frac{ (\d_{y_2}\d_{y_1}^{-1}\xi)^2 + (\d_{y_2}\d_{y_1}^{-1}\eta)^2 }{4}\right) \, \di y + \frac{\alpha}{3 \cdot 2^{3/2}} ([\xi^3]+[\eta^3]) 
\end{align}
where we denote by $[f^j]$ the average $\int_{I} f^j(y) \frac{\di y}{4}$.
\end{proposition}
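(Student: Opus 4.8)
The plan is to insert the explicit flow \eqref{h0flow} into each term of $\Ham F_1$ from \eqref{F1KPxieta} and to exploit the fact that, along $\Phi^\tau_{\Ham h_0}$, the field $\xi$ is rigidly translated to the right in $y_1$ while $\eta$ is translated to the left. Concretely, writing $\Phi^\tau_{\Ham h_0}(\xi,\eta)=(\xi^\tau,\eta^\tau)$ with $\xi^\tau(y)=\xi(y_1-\tau,y_2)$ and $\eta^\tau(y)=\eta(y_1+\tau,y_2)$, and recalling that both fields are $2$-periodic in $y_1$ so that the flow has period $T=2$, I would expand the two quadratic terms via $[\d_{y_1}(\xi-\eta)]^2 = (\d_{y_1}\xi)^2 - 2\,\d_{y_1}\xi\,\d_{y_1}\eta + (\d_{y_1}\eta)^2$ (and analogously for $\d_{y_2}\d_{y_1}^{-1}$) and the cubic term via $(\xi+\eta)^3 = \xi^3 + 3\xi^2\eta + 3\xi\eta^2 + \eta^3$. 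The whole computation then reduces to averaging each monomial separately.

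For the self-interaction monomials (those involving only $\xi$ or only $\eta$, such as $\int_I(\d_{y_1}\xi^\tau)^2\,\di y$ or $\int_I(\xi^\tau)^3\,\di y$) I would observe that, the domain being a full spatial period in $y_1$, the $y_1$-integral is invariant under the rigid translation $y_1\mapsto y_1-\tau$; hence each such integral is independent of $\tau$ and equals its own time-average. This immediately produces the diagonal terms $-\frac{(\d_{y_1}\xi)^2+(\d_{y_1}\eta)^2}{48}$, $\frac{(\d_{y_2}\d_{y_1}^{-1}\xi)^2+(\d_{y_2}\d_{y_1}^{-1}\eta)^2}{4}$ and the cubic contribution $\propto([\xi^3]+[\eta^3])$ appearing in \eqref{F1avKP}.

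The heart of the matter is to show that every mixed monomial (the cross terms $\d_{y_1}\xi^\tau\,\d_{y_1}\eta^\tau$, $\d_{y_2}\d_{y_1}^{-1}\xi^\tau\,\d_{y_2}\d_{y_1}^{-1}\eta^\tau$, and $(\xi^\tau)^2\eta^\tau$, $\xi^\tau(\eta^\tau)^2$) averages to zero. Here I would expand the fields in their $y_1$-Fourier series with coefficients depending on $y_2$, treating $y_2$ as a parameter. Because $\xi^\tau$ carries the phase $e^{-\ii\pi k\tau}$ on its $k$-th mode while $\eta^\tau$ carries $e^{+\ii\pi l\tau}$ on its $l$-th mode, integrating the product in $y_1$ enforces a resonance relation on the mode indices (e.g. $k+l=0$ for the quadratic cross terms, $k+k'+l=0$ for the cubic ones), which leaves a residual factor $e^{\mp 2\ii\pi k\tau}$; integrating this over one period $T=2$ yields $0$ unless the resonance forces the surviving index to be the zero mode. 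For the quadratic cross terms the derivative $\d_{y_1}$ annihilates the zero mode, so these vanish unconditionally; for the cubic mixed terms one instead invokes that $\xi$ and $\eta$ have vanishing $y_1$-average, i.e. $\hat\xi_0\equiv\hat\eta_0\equiv0$ (this is exactly the zero-average condition \eqref{ZeroAvy1} and the restriction to the Casimir leaf \eqref{ZeroAvy1KdV}, which is also what makes $\d_{y_1}^{-1}$ well defined). Assembling the surviving diagonal contributions then gives \eqref{F1avKP}.

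The step I expect to be the main obstacle is the rigorous vanishing of the mixed cubic terms: unlike the quadratic cross terms, these are not killed by any derivative, so their cancellation depends entirely on the zero $y_1$-average constraint and on the counter-propagation of $\xi$ and $\eta$. One must be careful that the Fourier manipulation (interchange of summation, integration in $y_1$, and integration in $\tau$) is justified on $\ell^2_{\rho,s}$ with $s\ge1$, where the series converge absolutely, and that the operator $\d_{y_1}^{-1}$ is only applied on the zero-$y_1$-average subspace.
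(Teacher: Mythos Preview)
Your proposal is correct and follows essentially the same strategy as the paper: expand $\Ham F_1$ into self-interaction and mixed monomials, observe that the self-terms are invariant under the rigid $y_1$-translations generated by $\Ham h_0$, and show that the counter-propagating mixed terms average to zero. The only difference is cosmetic: where you pass to a $y_1$-Fourier expansion and read off the resonance conditions, the paper uses the elementary change-of-variables identity $\int_{-1}^1\!\int_{-1}^1 u(y_1-\tau)\,v(y_1+\tau)\,\di y_1\,\di\tau = \int u\,\di y_1\cdot\int v\,\di y_1$ to factor each mixed term directly, after which the zero $y_1$-average of the relevant factor (a derivative, or $\xi$, $\eta$ themselves on the Casimir leaf) makes it vanish. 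Both routes rely on exactly the same zero-mean hypothesis for the cubic cross terms; your Fourier argument just makes the mechanism more explicit.
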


\begin{proof}
{ For the computation of $\langle \Ham F_1 \rangle(\xi,\eta)$ one can exchange the order of the integrations in $y$ and $s$. To compute averages we use the following elementary facts:
\begin{enumerate}
\item[i.] let $u,v \in L^2([-1,1])$, then
\begin{align*}
\int_{-1}^1 \di y \int_{-1}^1 \di s  \, u(y \pm s)\, v(y \mp s) &=  \int_{-1}^1 u(y)\, \di y \int_{-1}^1 v(y) \, \di y;
\end{align*}
\item[ii.] let $u \in L^1([-1,1])$, then
\begin{align*}
\frac{1}{2} \int_{-1}^1 \di s \int_{-1}^1 \di y \; u(y \pm s) &= \int_{-1}^1 u(x) \, \di x \, .
\end{align*}
\end{enumerate}
Since we assume our functions to be periodic, then also $\int_I \partial_{y_r} u(y) \, \di y = 0$, $r=1,2$, and hence only terms with only $\xi$ or $\eta$ are not cancelled by the average procedure. }
\end{proof}

\begin{corollary} \label{KPcor}
The equations of motion associated to $\Ham{h}_0(\xi,\eta)+\mu^2\la \Ham F_1 \ra(\xi,\eta)$ are given by 
\begin{equation} \label{KPsys}
\begin{cases}
\xi_{\tau} & = -\d_{y_1}\xi - \frac{\mu^2}{24} \d_{y_1}^3\xi - \frac{\mu^2}{2} \d_{y_1}^{-1}\d_{y_2}^2\xi - \frac{\alpha \mu^2}{2\sqrt{2}} \, \d_{y_1}(\xi^2) \\
\eta_{\tau} & = \d_{y_1}\eta + \frac{\mu^2}{2} \d_{y_1}^{-1}\d_{y_2}^2\eta + \frac{\mu^2}{24} \d_{y_1}^3\eta + \frac{\alpha \mu^2}{2\sqrt{2}} \, \d_{y_1}(\eta^2)
\end{cases}
.
\end{equation}
\end{corollary}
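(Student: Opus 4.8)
The plan is to derive \eqref{KPsys} by a direct variational computation. Having fixed the Poisson tensor $J$ in \eqref{Poisson}, the Hamilton equations attached to any functional $\Ham K$ read $\d_\tau \xi = -\d_{y_1}\frac{\delta \Ham K}{\delta \xi}$ and $\d_\tau \eta = \d_{y_1}\frac{\delta \Ham K}{\delta \eta}$, so it suffices to compute the $L^2$-gradients of $\Ham K := \Ham h_0 + \mu^2 \la \Ham F_1 \ra$, with $\Ham h_0$ and $\la \Ham F_1 \ra$ given by \eqref{h0KPxieta} and \eqref{F1avKP}, and then to apply $\mp \d_{y_1}$. Because, by Proposition \ref{F1avKPprop}, $\la \Ham F_1 \ra$ contains only pure powers of $\xi$ and pure powers of $\eta$ with no mixed monomials, the $\xi$-gradient depends on $\xi$ alone and the $\eta$-gradient on $\eta$ alone; this is precisely what makes \eqref{KPsys} split into two uncoupled equations, so I would carry out the computation for the $\xi$-line and then repeat it verbatim (with $+\d_{y_1}$) for $\eta$.

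First I would dispatch the quadratic part. The gradient of $\Ham h_0$ is immediate, $\frac{\delta \Ham h_0}{\delta \xi} = \xi$, giving the transport term $-\d_{y_1}\xi$. For $-\frac{1}{48}\int_I (\d_{y_1}\xi)^2 \, \di y$ a single integration by parts, licit by the periodicity in assumption (A2), yields the gradient $\frac{1}{24}\d_{y_1}^2 \xi$, and $-\d_{y_1}$ then produces the dispersive term $-\frac{\mu^2}{24}\d_{y_1}^3 \xi$. The delicate summand is the nonlocal term $\frac{1}{4}\int_I (\d_{y_2}\d_{y_1}^{-1}\xi)^2 \, \di y$: here one uses that, on the zero-average subspace singled out in Remark \ref{CasimirRem} where $\d_{y_1}^{-1}$ is well defined (as already needed to make \eqref{F1KPxieta} meaningful), both $\d_{y_1}^{-1}$ and $\d_{y_2}$ are anti-self-adjoint for the $L^2$ pairing under periodic boundary conditions, so that $(\d_{y_2}\d_{y_1}^{-1})^{\ast} = \d_{y_1}^{-1}\d_{y_2}$ and the gradient equals $\frac{1}{2}\d_{y_1}^{-2}\d_{y_2}^2 \xi$; applying $-\d_{y_1}$ gives the transverse term $-\frac{\mu^2}{2}\d_{y_1}^{-1}\d_{y_2}^2 \xi$. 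Finally, differentiating the cubic term $\frac{\alpha}{3\cdot 2^{3/2}}[\xi^3]$ gives $\frac{\alpha}{2\sqrt{2}}\xi^2$, whence $-\d_{y_1}$ yields the nonlinearity $-\frac{\alpha\mu^2}{2\sqrt{2}}\d_{y_1}(\xi^2)$. Collecting the four contributions reproduces the first line of \eqref{KPsys}, and the mirror computation gives the second.

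I expect the only genuinely delicate point to be the bookkeeping for the nonlocal operator $\d_{y_1}^{-1}$: one must verify that every integration by parts and the adjoint identity $(\d_{y_2}\d_{y_1}^{-1})^{\ast} = \d_{y_1}^{-1}\d_{y_2}$ are carried out on the invariant zero-average subspace on which $\d_{y_1}^{-1}$ acts, so that no boundary terms survive. A secondary, purely arithmetic, care is needed to track the constants $\tfrac{1}{24}$, $\tfrac{1}{2}$ and $\tfrac{\alpha}{2\sqrt{2}}$ through the variations, keeping in mind the normalization hidden in the bracket $[\,\cdot\,]=\int_I(\cdot)\,\tfrac{\di y}{4}$ of Proposition \ref{F1avKPprop}, so that the coefficients match those displayed in \eqref{KPsys}.
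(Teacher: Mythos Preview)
Your proposal is correct and follows exactly the route the paper implicitly takes: the paper states the corollary without proof, treating it as an immediate variational computation from \eqref{h0KPxieta} and \eqref{F1avKP} via the Poisson structure \eqref{Poisson}. Your explicit breakdown of the four contributions (transport, dispersion, nonlocal transverse term, cubic nonlinearity) together with the adjoint identity for $\d_{y_2}\d_{y_1}^{-1}$ on the zero-average subspace is precisely what is needed, and your caveat about the $\tfrac{1}{4}$ normalization in the bracket $[\,\cdot\,]$ is well placed, since the coefficient $\tfrac{\alpha}{2\sqrt{2}}$ in \eqref{KPsys} matches only if one effectively reads $[\xi^3]$ as $\int_I \xi^3\,\di y$.
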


More explicitly, we observe that \eqref{KPsys} is a system of two uncoupled 
KP equations on a two-dimensional torus in translating frames.

\subsection{The KdV regime for the ETL lattice} \label{KdVsubsec}

For this regime we consider \eqref{FPUeqc}, with $\alpha \neq 0$, 
and we look for small amplitude solutions of the form 
\begin{align} \label{KdVr1}
Q(t,x) &= \mu^2 q(\mu t, \mu x_1, \mu^\sigma x_2),
\end{align}
where $q:\mathbb{R} \times \mathbb{T}^2 \to \mathbb{R}$ is a periodic function  and $\mu$, $\sigma > 2$ are defined in \eqref{mu}-\eqref{sigma}. We introduce the rescaled variables $\tau = \mu t$, $y_1=\mu x_1$, $y_2=\mu^\sigma x_2$, and we denote $I$ is as in \eqref{I}.
Plugging \eqref{KdVr1} into \eqref{FPUeqc}, we get 
\begin{align}
q_{\tau \tau} &= \frac{\Delta_{\mu,y_1,\sigma}}{\mu^2} \left( q + \mu^2 \alpha q^2 + {\mu^4 \beta q^3} \right), \; \; \Delta_{\mu,y_1,\sigma} := 4 \sinh^2\left(\frac{\mu \d_{y_1}}{2}\right) + 4 \sinh^2\left(\mu^{\sigma} \frac{\d_{y_2}}{2}\right), \label{FPUeqKdVr1} 
\end{align}
which is a Hamiltonian PDE corresponding to the Hamiltonian functional
\begin{align}
\Ham K_2(q,p) &= \int_{I} \left( \frac{-p \, \Delta_{\mu,y_1,\sigma} p}{2\mu^2} + \frac{q^2}{2} + \alpha \mu^2 \frac{q^3}{3} { + \beta \mu^4 \frac{q^4}{4} } \right) \, \di y,  \label{HamFPUcKdV} 
\end{align}
and $p$ is the variable canonically conjugated to $q$. 

Now, observe that the the operator $\Delta_{\mu,y_1,\sigma}$ admits the following asymptotic expansion, 
\begin{align}
\frac{\Delta_{\mu,y_1,\sigma}}{\mu^2} &\sim \d_{y_1}^2 {+ \frac{\mu^2}{12} \d_{y_1}^4 + \mu^{2(\sigma-1)}\d_{y_2}^2 + \frac{ \mu^{2(2\sigma-1)} }{12} \, \d_{y_2}^4 } + \sum_{ {m \geq 2} } c_m \left(\mu^{2m} \d_{y_1}^{2(m+1)} + \mu^{2[(m+1)\sigma-1]} \d_{y_2}^{2(m+1)}\right), \label{exDelta1sigma} \\
c_m &:= { \frac{2}{(2m+2)!}  }, \nonumber
\end{align}
and by recalling that $\sigma > 2$ we have 
\begin{align}
\frac{\Delta_{\mu,y_1,\sigma}}{\mu^2} &\sim \d_{y_1}^2 + \frac{\mu^2}{12} \d_{y_1}^4 + \cO({ \mu^{\min(2\sigma-2,4)} }) \label{ex2Delta1sigma} \, .
\end{align}
Therefore the Hamiltonian \eqref{HamFPUcKdV} admits the following asymptotic expansion
\begin{align}
\Ham K_2(q,p) &\;\sim\; \hat{\Ham h}_0(q,p) + \mu^2 \hat{\Ham F}_2(q,p) + { \mu^{\min(2\sigma-2,4)} } \hat{\cR}(q,p), \label{asexp1} \\
\hat{\Ham h}_0(q,p) = \int_{I} \frac{-p \, (\d_{y_1}^2p) + q^2}{2} \di y, \; \; &\; \; \; \hat{\Ham F}_2(q,p) = \int_{I} \left(- \frac{p \, \d_{y_1}^4p}{24} + \alpha \frac{q^3}{3}\right) \, \di y. \label{F1KdV}
\end{align}
Note that the nonlinearity of degree $4$ does not affect the Hamiltonian 
up to order $\cO(\mu^4)$. 

By exploiting again the non-canonical change of coordinates $(q,p) \mapsto (\xi,\eta)$ introduced in \eqref{xieta} and the Poisson tensor \eqref{Poisson}, and 
\begin{align} \label{ZeroAvy1}
\int_{-1}^1 \left[ \xi(\tau, y_1, y_2)-\eta(\tau,y_1,y_2)\right] \, \di y_1 &=0 \, , \qquad \forall \tau \in \R, \; |y_2| \leq 1,
\end{align}
we obtain
\begin{align}
\Ham K_2(\xi,\eta) &\;\sim\; \Ham{h}_0(\xi,\eta) + \mu^2 \Ham F_2(\xi,\eta) + { \mu^{\min(2\sigma-2,4)} } \cR(\xi,\eta), \label{asexp1xieta} \\
\Ham{h}_0(\xi,\eta) = \int_{I} \frac{\xi^2+\eta^2}{2} \di y, \; \; &\; \; \; \Ham F_2(\xi,\eta) = \int_{I} \left(-\frac{[\d_{y_1}(\xi-\eta)]^2}{48} + \alpha \frac{(\xi+\eta)^3}{3 \cdot 2^{3/2}}\right) \, \di y. \label{F1xieta}
\end{align}
Now we apply the averaging Theorem \ref{gavthm} to the Hamiltonian \eqref{asexp1xieta}, with $\delta=\mu^2$, {$\nu = \min( \sigma-2,1 )$}.

\begin{proposition} \label{F1avKdVprop}
The average of $\Ham F_2$ in \eqref{asexp1xieta} with respect to the flow of $\Ham{h}_0$ in \eqref{F1xieta} is given by
\begin{align} \label{F1avKdV}
\la \Ham F_2 \ra(\xi,\eta) &= - \int_{I} \frac{(\d_{y_1}\xi)^2+(\d_{y_1}\eta)^2}{48} \di y + \frac{\alpha}{3 \cdot 2^{3/2}} ([\xi^3]+[\eta^3]),
\end{align}
where we denote by $[f^j]$ the average $\int_{I} f^j(y) \frac{\di y}{4}$.
\end{proposition}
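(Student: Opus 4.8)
The plan is to compute the time-average straight from its definition \eqref{average}, $\la \Ham F_2\ra(\xi,\eta)=\int_0^{T}\Ham F_2\circ\Phi^{s}_{\Ham h_0}(\xi,\eta)\,\frac{\di s}{T}$, using the explicit flow \eqref{h0flow}. Since $\Ham h_0$ in \eqref{F1xieta} generates a pure translation in $y_1$ at unit speed, in opposite directions for the two components, $\Phi^{s}_{\Ham h_0}$ sends $(\xi,\eta)$ to $\bigl(\xi(y_1-s,y_2),\,\eta(y_1+s,y_2)\bigr)$, leaving the transverse variable $y_2$ inert; the period is $T=2$, matching the length of $[-1,1]$. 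First I would substitute this into the density of $\Ham F_2$, exchange the $s$- and $y$-integrations (Fubini is legitimate by analyticity in $\ell^2_{\rho,s}$), and expand both $[\d_{y_1}(\xi^{s}-\eta^{s})]^2$ and $(\xi^{s}+\eta^{s})^3$ into terms containing only $\xi^{s}$, only $\eta^{s}$, or both.

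Next I would evaluate each term for fixed $y_2$ by means of the two elementary averaging identities recorded in the proof of Proposition \ref{F1avKPprop}. A \emph{pure} term becomes $s$-independent once the $y_1$-integral is performed, so identity (ii) returns the untranslated integral: the two diagonal squares give $\int_I(\d_{y_1}\xi)^2\,\di y$ and $\int_I(\d_{y_1}\eta)^2\,\di y$, while the two diagonal cubes give $[\xi^3]$ and $[\eta^3]$. A \emph{mixed} term is collapsed by identity (i) into a product of two longitudinal integrals. The quadratic cross term thereby acquires the factor $\bigl(\int_{-1}^1\d_{y_1}\xi\,\di y_1\bigr)\bigl(\int_{-1}^1\d_{y_1}\eta\,\di y_1\bigr)$, which vanishes by $y_1$-periodicity; the cubic cross terms acquire factors $\int_{-1}^1\eta\,\di y_1$ and $\int_{-1}^1\xi\,\di y_1$. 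These longitudinal means are among the Casimir invariants of the Poisson tensor $J$ (Remark \ref{CasimirRem}), hence are conserved and vanish on the invariant subspace determined by our initial data (cf. \eqref{ZeroAvy1}); thus all mixed contributions drop out and only the pure terms survive, yielding \eqref{F1avKdV}.

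I expect the vanishing of the cubic cross terms to be the only real obstacle, since, unlike the quadratic one, it does not follow from periodicity alone but genuinely relies on the fields carrying no longitudinal-mean component; this is where I would be most careful. A clean alternative that avoids repeating this argument is to observe, by comparing \eqref{F1xieta} with \eqref{F1KPxieta}, that $\Ham F_2=\Ham F_1-\int_I\frac{[\d_{y_2}\d_{y_1}^{-1}(\xi-\eta)]^2}{4}\,\di y$, where $\Ham F_1$ is the KP perturbation. By linearity of the average and Proposition \ref{F1avKPprop} one is then left only with the average of the transverse quadratic term; its diagonal parts reproduce $\int_I\frac{(\d_{y_2}\d_{y_1}^{-1}\xi)^2+(\d_{y_2}\d_{y_1}^{-1}\eta)^2}{4}\,\di y$ (its cross term vanishing again by the vanishing longitudinal mean of $\d_{y_1}^{-1}\xi$), and these are precisely the terms subtracted from \eqref{F1avKP}, leaving \eqref{F1avKdV}.
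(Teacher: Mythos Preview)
Your direct computation is exactly the paper's (implicit) approach: no separate proof of this proposition is given, the argument being the one spelled out for Proposition~\ref{F1avKPprop} (exchange of integrations and the two averaging identities), which you reproduce faithfully. You are in fact more explicit than the paper about the cubic cross terms.

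One refinement is warranted at precisely the point you flagged. The reference \eqref{ZeroAvy1} yields only $\int_{-1}^1(\xi-\eta)\,\di y_1=0$, whereas killing the averaged contributions of $3\xi^2\eta$ and $3\xi\eta^2$ requires $\int_{-1}^1\xi\,\di y_1$ and $\int_{-1}^1\eta\,\di y_1$ to vanish \emph{separately}. For that you also need $\int_{-1}^1(\xi+\eta)\,\di y_1=\sqrt{2}\int_{-1}^1 q\,\di y_1=0$, which is not a consequence of periodicity or of \eqref{ZeroAvy1} but comes from the zero-longitudinal-mean of $q$ inherited from the initial data (this is itself a Casimir of $J$, hence preserved). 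The paper's proof of Proposition~\ref{F1avKPprop} glosses over this same point, attributing everything to periodicity of derivatives; so you have correctly located a genuine subtlety rather than introduced one. Your alternative route via $\Ham F_2=\Ham F_1-\int_I\tfrac{[\d_{y_2}\d_{y_1}^{-1}(\xi-\eta)]^2}{4}\,\di y$ is valid and tidy, but note that it rests on the very same hypothesis, since $\d_{y_1}^{-1}$ is only defined on functions with vanishing $y_1$-mean.
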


\begin{corollary} \label{KdVcor}
The equations of motion associated to $\Ham{h}_0(\xi,\eta)+\mu^2\la \Ham F_1 \ra(\xi,\eta)$ are given by 
\begin{equation} \label{KdVsys}
\begin{cases}
\xi_{\tau} & = -\d_{y_1}\xi - \frac{\mu^2}{24} \d_{y_1}^3\xi - \frac{\mu^2 \alpha}{2\sqrt{2}} \, \d_{y_1}(\xi^2) \\
\eta_{\tau} & = \d_{y_1}\eta + \frac{\mu^2}{24} \d_{y_1}^3\eta + \frac{\mu^2 \alpha}{2\sqrt{2}} \, \d_{y_1}(\eta^2)
\end{cases}
.
\end{equation}
\end{corollary}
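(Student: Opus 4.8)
The plan is to compute the Hamiltonian vector field of the averaged Hamiltonian $\Ham{h}_0 + \mu^2 \la \Ham F_2 \ra$ produced in Proposition \ref{F1avKdVprop} directly from the non-canonical Poisson structure \eqref{Poisson}. Recall that for that structure Hamilton's equations read $\d_\tau \xi = -\d_{y_1} \frac{\delta \Ham K}{\delta \xi}$ and $\d_\tau \eta = \d_{y_1} \frac{\delta \Ham K}{\delta \eta}$, where the variational derivatives are the $L^2(I)$ gradients; since for real $2$-periodic functions one has $\int_I f\,g\,\di y = \la f,g\ra_{0,0}$, these coincide with the gradients used to define $X_{\Ham H}$ in Section \ref{galavsec}. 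Thus the whole statement reduces to evaluating three variational derivatives of the explicit functional \eqref{F1avKdV} and applying $\mp\d_{y_1}$.

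First I would record the gradients term by term. The quadratic part $\Ham{h}_0 = \int_I \frac{\xi^2+\eta^2}{2}\,\di y$ gives $\frac{\delta \Ham h_0}{\delta \xi} = \xi$, hence the transport terms $-\d_{y_1}\xi$ and $\d_{y_1}\eta$. For the dispersive part $-\frac{1}{48}\int_I (\d_{y_1}\xi)^2\,\di y$ I integrate by parts once; the boundary contributions vanish by periodicity in $y_1$ (property (A2) carried over to the rescaled variables), leaving the gradient $\frac{1}{24}\d_{y_1}^2\xi$ and therefore the term $-\frac{\mu^2}{24}\d_{y_1}^3\xi$. For the cubic part $\frac{\alpha}{3\cdot 2^{3/2}}([\xi^3]+[\eta^3])$ the gradient is a multiple of $\xi^2$, which after applying $-\d_{y_1}$ yields the nonlinearity $-\frac{\mu^2\alpha}{2\sqrt2}\d_{y_1}(\xi^2)$ of \eqref{KdVsys}. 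The $\eta$-equation is obtained identically, the opposite overall sign coming from the second diagonal block of \eqref{Poisson}.

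A point worth isolating is that, by Proposition \ref{F1avKdVprop}, $\la \Ham F_2 \ra$ contains no monomials mixing $\xi$ and $\eta$; this is exactly what decouples the two equations and makes \eqref{KdVsys} a pair of uncoupled KdV equations in the two translating frames of \eqref{h0flow}. I would also remark that these manipulations take place on the invariant subspace \eqref{ZeroAvy1} singled out by the Casimir reduction of Remark \ref{CasimirRem}; for the KdV regime no $\d_{y_1}^{-1}$ occurs, so the reduction plays no active role here, but it is the same restriction that legitimises $\d_{y_1}^{-1}$ in the parallel KP computation of Corollary \ref{KPcor}.

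I do not expect a genuine obstacle: the statement is a direct unfolding of Hamilton's equations for the tensor \eqref{Poisson}. The only real care is bookkeeping, namely tracking the numerical constants (in particular the normalisation implicit in the bracket $[\,\cdot\,]$, so that the cubic coefficient comes out as $\frac{\alpha}{2\sqrt2}$ rather than an incorrect multiple of it) and the sign conventions in the two blocks $\mp\d_{y_1}$.
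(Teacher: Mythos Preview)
Your proposal is correct and is precisely the computation the paper leaves implicit: the corollary is stated in the paper without proof, as an immediate consequence of Proposition~\ref{F1avKdVprop} and the form of Hamilton's equations for the tensor \eqref{Poisson}. Your warning about the normalisation hidden in $[\,\cdot\,]$ is apt---to land on the coefficient $\frac{\alpha}{2\sqrt2}$ one must read the cubic term of \eqref{F1avKdV} as $\frac{\alpha}{3\cdot 2^{3/2}}\int_I(\xi^3+\eta^3)\,\di y$, i.e.\ as the average of the cubic part of \eqref{F1xieta}, rather than applying the stated convention $[f^j]=\frac14\int_I f^j\,\di y$ literally.
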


The latter is a system of two uncoupled KdV equations in translating frames with respect to the $y_1$-direction, for each fixed value of the coordinate $y_2$.

\begin{remark} \label{mKdVrem}
One can also study the $\beta$ model (namely, \eqref{FPUeqc} with $\alpha=0$, $\beta \neq 0$) in the following regime,
\begin{itemize}
\item[(mKdV)] the $\beta$ model in the very weakly transverse regime, $Q(t,x) = \mu \, q(\mu t, \mu x_1,\mu^\sigma x_2)$, where $\mu \ll 1$, {$ \sigma > 2$}.
\end{itemize}
Let us introduce again the rescaled variables $\tau = \mu t$, $y_1=\mu x_1$, $y_2=\mu^\sigma x_2$, and the domain $I$ as in \eqref{I}; plugging the ansatz for $Q$ into \eqref{FPUeqc}, we get 
\begin{align}
q_{\tau \tau} &= \frac{\Delta_{\mu,y_1,\sigma}}{\mu^2} \left( q + \mu^2 \beta q^3 \right), \label{FPUeqmKdVr1}
\end{align}
where $\Delta_{\mu,y_1,\sigma}$ is the operator introduced in \eqref{FPUeqKdVr1}. Eq. \eqref{FPUeqmKdVr1} is a Hamiltonian PDE with the following corresponding Hamiltonian,
\begin{align}
\Ham K_3(q,p) &= \int_{I} \left(\frac{-p \, \Delta_{\mu,y_1,\sigma} p}{2\mu^2} + \frac{q^2}{2} + \beta \mu^2 \frac{q^4}{4}\right) \, \di y,  \label{HamFPUcmKdV}
\end{align}
where $p$ is the variable canonically conjugated to $q$. Recalling that \eqref{ZeroAvy1KdV} holds true, we exploit again the non-canonical change of coordinates \eqref{xieta} and the Poisson tensor \eqref{Poisson}, obtaining that
\begin{align}
\Ham K_3(\xi,\eta) &\;\sim\; \Ham{h}_0(\xi,\eta) + \mu^2 \Ham F_3(\xi,\eta) + { \mu^{\min(2\sigma-2,4)} } \cR(\xi,\eta), \label{asexp2xieta}
\end{align}
where $\Ham{h}_0$ is the same as in \eqref{F1xieta}, while
\begin{align}
\Ham F_3(\xi,\eta) &= \int_{I}\left( -\frac{[\d_{y_1}(\xi-\eta)]^2}{48} + \beta \frac{(\xi+\eta)^4}{2^4}\right) \, \di y. \label{F1mxieta}
\end{align}
Applying Theorem \ref{gavthm} to the Hamiltonian \eqref{asexp2xieta} with $\delta=\mu^2$ and {$\nu = \min( \sigma-2,1 )$}, we get that the equations of motion associated to $\Ham{h}_0(\xi,\eta)+\mu^2\la \Ham F_3 \ra(\xi,\eta)$ are given by 
\begin{equation} \label{mKdVsys}
\begin{cases}
\xi_{\tau} & = - \left( 1+ \frac{3}{4} [\eta^2] \right) \d_{y_1}\xi - \frac{\mu^2}{24} \d_{y_1}^3\xi - \frac{\mu^2 \beta}{4} \, \d_{y_1}(\xi^3) \\
\eta_{\tau} & = \left( 1+ \frac{3}{4} [\xi^2] \right) \d_{y_1}\eta + \frac{\mu^2}{24} \d_{y_1}^3\eta + \frac{\mu^2 \beta}{4} \, \d_{y_1}(\eta^3)
\end{cases}
.
\end{equation}
which is a system of two uncoupled mKdV equations in translating frames 
with respect to the $y_1$-direction. The integrability properties of the mKdV equation and the existence of Birkhoff coordinates for this model have been proved in \cite{kappeler2008mkdv}.
\end{remark}

\subsection{The one-dimensional NLS regime for the KG Lattice} \label{1DNLSsubsec}
We want to study small amplitude solutions of \eqref{2DKGseq}, with initial data in which only one low-frequency Fourier mode is excited.

Analogously to the procedure of the previous Sections, the first step is to introduce an interpolating function $Q=Q(t,x)$ such that 
\begin{itemize}
\item[(B1)] $Q(t,j)=Q_j(t)$, for all $j \in \Z^2_{N_1,N_2}$;
\item[(B2)] $Q$ is periodic with period $2N_1+1$ in the $x_1$-variable, and periodic with period $2N_2+1$ in the $x_2$-variable;
\item[(B3)] $Q$ fulfills 
\begin{align}
\ddot{Q} &= \Delta_1 Q - Q - \beta Q^{2p+1}, \label{KGseqc}
\end{align}
where $\Delta_1$ is the operator defined in \eqref{Delta1c} 
(recall that we also assumed $m=1$ in \eqref{potKG2}).
\end{itemize}

It is easy to verify that \eqref{KGseqc} is Hamiltonian with Hamiltonian function
\begin{align} \label{HamKGsc}
\Ham H_{\color{black}\mathrm{KG}}(Q,P) &= \int_{[-\frac{1}{\mu},\frac{1}{\mu}] \times [-\frac{1}{\mu^\sigma},\frac{1}{\mu^\sigma}] } \left( \frac{P^2}{2} +  \frac{Q^2}{2} - \frac{Q \; \Delta_1 Q}{2} + \beta \frac{Q^{2p+2}}{2p+2}\right) \, \di x,
\end{align}
where $P$ is a periodic function and is canonically conjugated to $Q$.

Starting from the Hamiltonian \eqref{Ham2KGs}, where $p=1$, 
 we look for small amplitude solutions of the form
\begin{align} \label{1DNLSr}
Q(t,x) &= \mu \, q(\mu^2 t, \mu x_1,\mu^\sigma x_2) \, .
\end{align} 
where $q : \mathbb{R} \times \mathbb{T}^2 \to \mathbb{R}$ is a periodic function and $\sigma,\mu$ are defined respectively in \eqref{sigma}-\eqref{mu}.

We introduce the rescaled variables {\color{black}$\tau=\mu^2 t$}, $y_1=\mu x_1$ and $y_2=\mu^\sigma x_2$, and we define $I$ as in \eqref{I}. 
The Hamiltonian \eqref{Ham2KGs} in the rescaled variables is given by 
\begin{align}
\Ham K_4(q,p) &= \int_{I} \left(\frac{p^2}{2} + \frac{q^2}{2} - \frac{q \; \Delta_{\mu,y_1,\sigma} q}{2} + \beta \mu^2 \frac{q^4}{4}\right) \, \di y,  \label{Ham2KGc1DNLS} 
\end{align}
with the operator $\Delta_{\mu,y_1,\sigma}$ as in \eqref{FPUeqKdVr1}, 
and $p$ is the variable canonically conjugated to $q$. 
The corresponding equation of motion is given by
\begin{align}
q_{\tau \tau} &= - q + \Delta_{\mu,y_1,\sigma} q - \beta \mu^2 q^3. \label{1DKGsc2} 
\end{align}
Recalling \eqref{ex2Delta1sigma}, we have that the Hamiltonian \eqref{Ham2KGc1DNLS} admits the following asymptotic expansion
\begin{align}
\Ham K_4(q,p) &\;\sim\;\hat{\Ham h}_4(q,p) + \mu^2 \hat{\Ham F}_4(q,p) + { \mu^{ \min(2\sigma,4) } } \hat{\cR}(q,p), \label{asexp4} \\
\hat{\Ham h}_4(q,p) = \int_{I} \frac{p^2 + q^2}{2}\, \di y, \; \; &\; \; \; \hat{\Ham F}_4(q,p) = \int_{I} \left(- \frac{q \, \d_{y_1}^2 q}{2} + \beta \frac{q^4}{4}\right) \, \di y, \label{F11DNLS}
\end{align}
and the equation of motion associated to ${ \hat{\Ham{h}}_4 + \mu^2 \hat{\Ham F}_4 }$ is given by the following cubic one-dimensional nonlinear Klein-Gordon (NLKG) equation,
\begin{align} \label{1DNLKGeq}
q_{\tau \tau} &= -(q - \mu^2 \d_{y_1}^2 q) - \mu^2 \beta q^3.
\end{align}

We now exploit the change of coordinates $(q,p) \mapsto (\psi,\bar\psi)$ given by
\begin{align} \label{psi}
\psi &= \frac{1}{\sqrt{2}} (q- \ii p),
\end{align}
therefore the inverse change of coordinates is given by
\begin{align}
q = \frac{1}{\sqrt{2}} (\psi+\bar\psi), \; \; &\; \; \; p = \frac{\ii}{\sqrt{2}} (\psi-\bar\psi), \label{qp} 
\end{align}
while the symplectic form is given by $-\ii \, \di \psi \wedge \di\bar\psi$. 
With this change of variables the Hamiltonian takes the form
\begin{align}
\Ham K_4(\psi,\bar\psi) &\;\sim\; \Ham{h}_4(\psi,\bar\psi) + \mu^2 \Ham F_4(\psi,\bar\psi) + { \mu^{ \min(2\sigma,4) } } \cR(\psi,\bar\psi), \label{asexp4psi} \\
\Ham{h}_4(\psi,\bar\psi) = \int_{I} \psi \, \bar\psi \, \di y, \; \; &\; \; \; \Ham F_4(\psi,\bar\psi) = \int_{I} \left(-\frac{(\psi+\bar\psi) \, [-\d_{y_1}^2(\psi+\bar\psi)] }{4} + \beta \frac{(\psi+\bar\psi)^4}{16}\right) \, \di y. \label{F11DNLSpsi}
\end{align}
Now we apply the averaging Theorem \ref{gavthm} to the Hamiltonian \eqref{asexp4psi}, with $\delta=\mu^2$, {$\nu=\min(\sigma-1,1)$}. Observe that $\Ham{h}_4$ generates a periodic flow,
\begin{align}
-\ii \d_\tau \psi = \psi; \; &\; \; \psi({\tau} ,y) = e^{\ii \tau}\psi_0(y). \label{h0flowNLS}
\end{align}

\begin{proposition} \label{F1av1DNLSprop}
The average of $\Ham F_4$ in \eqref{asexp4psi} with respect to the flow of $\Ham{h}_4$ \eqref{F11DNLS} is given by
\begin{align} \label{F1av1DNLS}
\la \Ham F_4 \ra(\psi,\bar\psi) &=  \int_{I}  \frac{\bar\psi \; (-\d_{y_1}^2 \psi)}{2} \di y + \frac{3 \beta}{8} \int_I |\psi|^4 \di y.
\end{align}
\end{proposition}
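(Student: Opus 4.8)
The plan is to compute the time average directly, exploiting that the flow of $\Ham h_4$ acts on $(\psi,\bar\psi)$ as a simple phase rotation. By \eqref{h0flowNLS} the flow is $\Phi^\tau_{\Ham h_4}(\psi,\bar\psi)=(e^{\ii\tau}\psi,e^{-\ii\tau}\bar\psi)$, which is periodic with period $T=2\pi$; hence the average reduces to $\la\Ham F_4\ra=\frac{1}{2\pi}\int_0^{2\pi}\Ham F_4(e^{\ii\tau}\psi,e^{-\ii\tau}\bar\psi)\,\di\tau$. Substituting into \eqref{F11DNLSpsi} amounts to replacing every occurrence of $\psi+\bar\psi$ by $e^{\ii\tau}\psi+e^{-\ii\tau}\bar\psi$. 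The whole computation then rests on the elementary orthogonality relation $\frac{1}{2\pi}\int_0^{2\pi}e^{\ii n\tau}\,\di\tau=\delta_{n,0}$, so that after expanding each term only the \emph{resonant} monomials---those carrying an equal number of $\psi$ and $\bar\psi$ factors---survive the average.

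For the quadratic part I would expand $(e^{\ii\tau}\psi+e^{-\ii\tau}\bar\psi)\,[-\d_{y_1}^2(e^{\ii\tau}\psi+e^{-\ii\tau}\bar\psi)]$ into four terms, carrying phases $e^{2\ii\tau}$, $e^{-2\ii\tau}$ and two $\tau$-independent cross terms. The phases $e^{\pm 2\ii\tau}$ integrate to zero, leaving only $\psi\,[-\d_{y_1}^2\bar\psi]+\bar\psi\,[-\d_{y_1}^2\psi]$. Integrating over $I$ and using that $-\d_{y_1}^2$ is symmetric under the periodic boundary conditions (equivalently, integrating by parts twice with no boundary contribution), the two cross terms coincide, and after accounting for the prefactor in \eqref{F11DNLSpsi} one obtains the first summand $\int_I \frac{\bar\psi\,(-\d_{y_1}^2\psi)}{2}\,\di y$ of \eqref{F1av1DNLS}.

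For the quartic part I would expand $(e^{\ii\tau}\psi+e^{-\ii\tau}\bar\psi)^4$ by the binomial theorem; the term containing $k$ factors of $\psi$ carries the phase $e^{\ii(2k-4)\tau}$, so only the balanced $k=2$ term $\binom{4}{2}\psi^2\bar\psi^2=6|\psi|^4$ is resonant. Multiplying by the prefactor $\beta/16$ yields $\frac{3\beta}{8}\int_I|\psi|^4\,\di y$. Adding the two contributions gives exactly \eqref{F1av1DNLS}. There is no genuine analytic obstacle here: the only points requiring a little care are the bookkeeping of the resonant terms and the symmetry/integration-by-parts step in the quadratic part. The argument is the rotational analogue of the transport-flow computations used for \eqref{F1avKP} and \eqref{F1avKdV}, with the orthogonality of $e^{\ii n\tau}$ replacing the convolution identities (i) and (ii) invoked in the proof of Proposition \ref{F1avKPprop}.
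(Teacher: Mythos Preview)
Your proposal is correct and is precisely the approach the paper has in mind: the paper does not spell out a separate proof for this proposition, treating it as the direct rotational analogue of Proposition~\ref{F1avKPprop}, and your computation via the orthogonality $\frac{1}{2\pi}\int_0^{2\pi}e^{\ii n\tau}\,\di\tau=\delta_{n,0}$ is exactly that analogue. The bookkeeping of the resonant monomials and the integration-by-parts step in the quadratic term are handled correctly.
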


\begin{corollary} \label{1DNLScor}
The equations of motion associated to $\Ham{h}_4(\psi,\bar\psi)+\mu^2\la \Ham F_4 \ra(\psi,\bar\psi)$ are given by a cubic one-dimensional nonlinear Schr\"odinger equation for each fixed value of $y_2$,
\begin{align} \label{1DimNLSeq}
-\ii \psi_{\tau} &= \psi - \mu^2 \; \d_{y_1}^2\psi + \mu^2 \frac{3 \beta}{4} \, |\psi|^2\psi.
\end{align}
\end{corollary}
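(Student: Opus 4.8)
The claim is that the Hamiltonian vector field of $\Ham{h}_4+\mu^2\la\Ham F_4\ra$, taken with respect to the symplectic form $-\ii\,\di\psi\wedge\di\bar\psi$ appearing just before $\eqref{asexp4psi}$, is exactly the cubic NLS equation $\eqref{1DimNLSeq}$. This is a direct computation, so the plan is simply to write down Hamilton's equation in the complex variable and evaluate the two relevant functional derivatives; the only genuine care-points are the symplectic convention and the Wirtinger-type derivatives of the complex field $\psi$.

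First I would record that, for the symplectic form $-\ii\,\di\psi\wedge\di\bar\psi$, Hamilton's equation reads $-\ii\,\psi_\tau=\frac{\delta\Ham K}{\delta\bar\psi}$ for a Hamiltonian $\Ham K$. I would fix and sanity-check the sign of this convention against the already-computed linear flow $\eqref{h0flowNLS}$: since $\Ham{h}_4=\int_I\psi\bar\psi\,\di y$ has $\frac{\delta\Ham{h}_4}{\delta\bar\psi}=\psi$, the rule yields $-\ii\,\psi_\tau=\psi$, i.e. $\psi(\tau,y)=e^{\ii\tau}\psi_0(y)$, which is precisely $\eqref{h0flowNLS}$. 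Next I would apply the same rule to $\Ham K=\Ham{h}_4+\mu^2\la\Ham F_4\ra$, with $\la\Ham F_4\ra$ as in $\eqref{F1av1DNLS}$, treating $\psi$ and $\bar\psi$ as independent. The linear part contributes $\psi$. For the averaged perturbation there are two terms: the quadratic dispersive term $\int_I\frac{\bar\psi\,(-\d_{y_1}^2\psi)}{2}\,\di y$, whose derivative in $\bar\psi$ is a multiple of $-\d_{y_1}^2\psi$ (here one uses that $-\d_{y_1}^2$ is self-adjoint under the periodic boundary conditions, so no boundary terms arise), and the quartic term $\frac{3\beta}{8}\int_I|\psi|^4\,\di y=\frac{3\beta}{8}\int_I\psi^2\bar\psi^2\,\di y$, whose derivative in $\bar\psi$ is $\frac{3\beta}{4}|\psi|^2\psi$. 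Collecting the three contributions and multiplying the perturbative ones by $\mu^2$ produces $\eqref{1DimNLSeq}$.

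Since $\la\Ham F_4\ra$ contains no $y_2$-derivatives, the variable $y_2$ enters only as a parameter, so the resulting equation is genuinely a one-parameter family of one-dimensional NLS equations, one for each fixed $y_2$, exactly as stated. The computation is routine; the only thing to watch is the bookkeeping of the numerical factors in the complex functional derivatives (in particular the factor $2$ coming from $|\psi|^4=\psi^2\bar\psi^2$). As an independent cross-check one could instead complexify the NLKG equation $\eqref{1DNLKGeq}$ directly via $\psi=\frac{1}{\sqrt{2}}(q-\ii p)$ and average the resulting evolution over the fast $e^{\ii\tau}$ oscillation, retaining only the resonant harmonic; this must reproduce the same equation and provides a check on the Hamiltonian-level averaging carried out in Proposition \ref{F1av1DNLSprop}.
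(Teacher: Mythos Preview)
Your proposal is correct and matches the paper's (implicit) approach: the paper states the corollary without proof, treating \eqref{1DimNLSeq} as the immediate result of computing Hamilton's equation $-\ii\psi_\tau=\delta\Ham K/\delta\bar\psi$ from the averaged Hamiltonian of Proposition~\ref{F1av1DNLSprop}, exactly as you do. Your sanity check against the linear flow \eqref{h0flowNLS} to pin down the symplectic convention, and your caution about the Wirtinger bookkeeping, are both apt.
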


\begin{remark} \label{2DNLSrem}
{If we apply the above argument to the Hamiltonian \eqref{Ham2KGs} in the regime}
\begin{itemize}
\item[(2-D NLS)] the scalar model \eqref{Ham2KGs} with $m=1$, $p=1$ and $Q(t,x) = \mu \, q(\mu^2 t, \mu x)$, where $\mu \ll 1$ and $\sigma =1$,
\end{itemize}
{we can obtain as a normal form equation the cubic nonlinear Schr\"odinger (NLS) equation}
\begin{align} \label{NLSeq}
-\ii \psi_{\tau} &= \psi - \mu^2 \; \Delta\psi + \mu^2 \frac{3 \beta}{4} \, |\psi|^2\psi.
\end{align}
\end{remark}

\section{Dynamics of the normal form equation} \label{BNFdyn}

\subsection{The KP equation} \label{KPdynsubsec}

In this Section we recall some known facts on the dynamics of the KP equation
on the two-dimensional torus 
\begin{align} \label{KPeq}
\xi_{\tau} & = - \frac{1}{24} \d_{y_1}^3\xi - \frac{1}{2} \d_{y_1}^{-1}\d_{y_2}^2\xi - \frac{\alpha}{2\sqrt{2}} \, \d_{y_1}(\xi^2), \; \alpha = \pm 1, \; y \in \T^2:=\R^2/(2\pi\Z)^2.
\end{align}

The KP equation has been introduced in order to describe 
weakly-transverse solutions of the water waves equations; 
it has been considered as a two-dimensional analogue of the KdV equation, 
since also the KP equation admits an infinite number of constants of motions 
\cite{lin1982constraints} \cite{chen1983new} \cite{chen1987infinite}. 
It is customary to refer to \eqref{KPeq}  as KP-I equation when $\alpha=-1$, and as KP-II equation when $\alpha=1$.

The global-well posedness for the KP-II equation on the two-dimensional torus has been discussed by Bourgain in \cite{bourgain1993cauchy}. The main point of the result by Bourgain consists in extending the local well-posedness result to a global one, even though the $L^2$-norm is the only constant of motion for the KP-II equation that allows an a-priori bound for the solution (see Theorem 8.10 and Theorem 8.12 in \cite{bourgain1993cauchy}).

\begin{theorem} \label{BouThm}
Consider \eqref{KPeq} with $\alpha=1$. 

Let $\rho \geq 0$ and $s \geq 0$, and assume that the initial datum $\xi(0,\cdot,\cdot)=\xi_0 \in \ell^2_{\rho,s}$. Then \eqref{KPeq} is globally well-posed in $\ell^2_{\rho,s}$. Moreover, the $\ell^2$-norm of the solution is conserved,
\begin{align} \label{L2normKP}
\|\xi(t)\|_{\ell^2} &= \|\xi_0\|_{\ell^2},
\end{align}
while
\begin{align} \label{HsnormKP}
\|\xi(t)\|_{ \ell^2_{0,s} } &\leq e^{C |t|} \|\xi_0\|_{ \ell^2_{0,s} },
\end{align}
where $C$ depends on $s$.
\end{theorem}

\begin{remark}
As pointed out by Bourgain in Sec. 10.2 of \cite{bourgain1993cauchy}, a global well-posedness result for sufficiently smooth solution of the KP-I equation (namely, \eqref{KPeq} with $\alpha=-1$) on the two-dimensional torus can be obtained by generalizing the argument in \cite{schwarz1987periodic} for small data and by using the a-priori bounds given by the constants of motion for the KP-I equation.
\end{remark}

{Last, we mention that f}or the KP equation the construction of action-angle/Birkhoff coordinates is still an open problem.

\subsection{The KdV equation} \label{KdVdynsubsec}

In this Section we recall some known facts on the dynamics of the KdV equation
with periodic boundary conditions. The interested reader can find more detailed explanations and proofs in \cite{kappeler2003kdv}.

Consider the KdV equation 
\begin{align} \label{KdVeq}
\xi_\tau &= - \frac{1}{24} \d_{y_1}^3 \xi - \frac{\alpha}{2\sqrt{2}} \d_{y_1}(\xi^2), \qquad y_1 \in [0,2].
\end{align}

Through the Lax pair formulation of the evolution problem \eqref{KdVeq} one get that the periodic eigenvalues $(\lambda_n)_{n \in \mathbb{N}}$ of the Sturm-Liouville operator
\begin{align}
L_\xi &:= -\d_{y_1}^2 + 6 \sqrt{2} \xi(\tau,y_1)
\end{align}
are conserved quantities under the evolution of the KdV equation \eqref{KdVeq}. 
Moreover, if we define the gaps of the spectrum $\gamma_m := \lambda_{2m}-\lambda_{2m-1}$ ($m \geq 1$), it is well known that the squared spectral gaps $(\gamma_m^2)_{m \geq 1}$ form a complete set of constants of motion for \eqref{KdVeq}. 

The following relation between the sequence of the spectral gaps and the regularity of the corresponding solution to the KdV equation holds (see Theorem 9, Theorem 10 and Theorem 11 in \cite{kappeler2008well}; see also \cite{poschel2011hill})

\begin{theorem} \label{KapPosThm1}
Assume that $\xi \in L^2$, then $\xi \in \ell^2_{0,s}$ if and only if its spectral gaps satisfy
\begin{align*}
\sum_{m \geq 1} m^{2s} |\gamma_m|^2 &< +\infty.
\end{align*}
Moreover if $\xi \in \ell^2_{\rho,s}$, then
\begin{align} \label{SpecGapEst}
\sum_{m \geq 1} m^{2s} e^{2 \rho m} |\gamma_m|^2 &< +\infty;
\end{align}
conversely, if \eqref{SpecGapEst} holds, then $\xi \in \ell^2_{\rho',0}$ for some $\rho'>0$.
\end{theorem}

Kappeler and P\"oschel constructed the following global Birkhoff coordinates (see Theorem 1.1 in \cite{kappeler2003kdv})

\begin{theorem} \label{KapPosThm2}
There exists a diffeomorphism $\Omega:L^2 \to \ell^2_{0,1/2} \times \ell^2_{0,1/2}$ such that:
\begin{itemize}
\item $\Omega$ is bijective, bianalytic and canonical;
\item for each $s \geq 0$, the restriction of $\Omega$ to $\ell^2_{0,s}$, namely the map
\begin{align*}
\Omega:\ell^2_{0,s} &\to \ell^2_{0,s+1/2} \times \ell^2_{0,s+1/2}
\end{align*}
is bijective, bianalytic and canonical;
\item the coordinates $(x, y) \in \ell^2_{0,3/2} \times \ell^2_{0,3/2}$ are Birkhoff coordinates for the KdV equation, namely they form a set of canonically conjugated coordinates in which the Hamiltonian of the KdV equation \eqref{KdVeq} depends only on the action $I_m := \frac{x_m^2 + y_m^2}{2}$ ($m \geq 1$).
\end{itemize}
\end{theorem}

The dynamics of the KdV equation \eqref{KdVeq} in terms of the variables $(x,y)$ is trivial: it can be immediately seen that any solution is periodic, quasiperiodic or almost periodic, depending on the number of spectral gaps (equivalently, depending on the number of actions) initially different from zero.

\subsection{The one-dimensional cubic NLS equation} \label{1DNLSdynsubsec}

In this Section we recall some known facts on the dynamics of the one-dimensional cubic defocusing NLS equation with periodic boundary conditions. The interested reader can find more detailed explanations and proofs in \cite{grebert2014defocusing} \cite{molnar2014new}.

Consider the cubic defocusing NLS equation 
\begin{align} \label{1DNLSeq}
\ii\psi_\tau &= - \d_{y_1}^2 \psi + 2 |\psi|^2\psi, \qquad y_1 \in \T:=\R/(2\pi\Z).
\end{align}

Eq. \eqref{1DNLSeq} is a PDE admitting a Hamiltonian structure: indeed, we can set $\cH^{\rho,s} = \ell^2_{\rho,s} \times \ell^2_{\rho,s}$ as the phase space with elements denoted by $\phi=(\phi_1,\phi_2)$, while the associated Poisson bracket and the Hamiltonian are given by
\begin{align}
\{\Ham F, \Ham G\} &:= -\ii \int_\T \left( \d_{\phi_1} \Ham F \, \d_{\phi_2} \Ham G - \d_{\phi_1} \Ham G \, \d_{\phi_2}\Ham F \right) \di y_1, \label{1DNLSPoisson} \\
\Ham H_{\mathrm{NLS}}(\phi_1,\phi_2) &:= \int_\T \left( \d_{y_1}\phi_1 \, \d_{y_1}\phi_2 + \phi_1^2 \phi_2^2 \right) \; \di y_1. \label{1DNLSHam}
\end{align}
The defocusing NLS equation \eqref{1DNLSeq} is obtained by restricting \eqref{1DNLSHam} to the invariant subspace of states of real type,
\begin{align} \label{RealStates}
\cH^{\rho,s} _r &:= \{ \phi \in \cH^{\rho,s} : \phi_2=\bar{\phi}_1 \}.
\end{align}
The above Hamiltonian \eqref{1DNLSHam} is well-defined on $\cH^{\rho,s}$ with $s \geq 1$ and $\rho \geq 0$, while the initial value problem for the NLS equation \eqref{1DNLSeq} is well-posed on $\cH^{0,0}=\ell^2 \times \ell^2$.

It is well known from the work by Zakharov and Shabat that the NLS equation \eqref{1DNLSeq} has a Lax pair, and that it admits infinitely many constants of motion in involution. More precisely, for any $\phi \in \cH^{0,0}$ consider the Zakharov-Shabat operator
\begin{align} \label{ZSOp}
L(\phi) &= 
\begin{pmatrix}
\ii & 0 \\
0 & -\ii
\end{pmatrix}
\d_{y_1} + 
\begin{pmatrix}
0 & \phi_1 \\
\phi_2 & 0
\end{pmatrix}
,
\end{align}
where we call $\phi$ the potential of the operator $L(\phi)$. 
The spectrum of $L(\phi)$ on the interval $[0,2]$ with periodic boundary conditions is pure point, and it consists of the following sequence of periodic eigenvalues
\begin{align} \label{perNLSspec}
\cdots < \lambda_{-1}^- \leq \lambda_{-1}^+ < \lambda_0^- &\leq \lambda_0^+ < \lambda_1^- \leq \lambda_1^+ < \cdots,
\end{align}
where the quantities $\gamma_m := \lambda_{m}^+ - \lambda_{m}^-$ ($m \in \Z$) are called gap lengths. It has been proved that the squared spectral lengths $(\gamma_m^2)_{m \in \Z}$ form a complete set of analytic constants of motion for \eqref{1DNLSeq}. 

Gr\'ebert, Kappeler and Mityagin proved the following relation between the sequence of the squared spectral gaps and the regularity of the corresponding potential (see Theorem in \cite{grebert1998gap}).

\begin{theorem} \label{GrebKapThm1}
Let $\rho \geq 0$ and $s >0$, then for any bounded subset $\cB \subset \ell^2_{\rho,s} \times \ell^2_{\rho,s}$ there exists $n_0 \geq 1$ and $M \geq 1$ such that for any $|k| \geq n_0$ and any $(\phi_1,\phi_2) \in \cB$, the following estimate holds
\begin{align} \label{SpecGapEstNLS}
\sum_{|k| \geq n_0} (1+|k|)^{2s} e^{2 \rho |k|} |\gamma_m|^2 &\leq M.
\end{align}
\end{theorem}

Moreover, Gr\'ebert and Kappeler constructed the following global Birkhoff coordinates (see Theorem 20.1 - Theorem 20.3 in \cite{grebert2014defocusing})

\begin{theorem} \label{GrebKapThm2}
There exists a diffeomorphism $\Omega:L^2_r \to \cH^{0,0}_r$ such that:
\begin{itemize}
\item $\Omega$ is bianalytic and canonical;
\item for each $s \geq 0$, the restriction of $\Omega$ to $\cH^{0,s}_r$, namely the map
\begin{align*}
\Omega:\cH^{0,s}_r &\to \cH^{0,s}_r
\end{align*}
is again bianalytic and canonical;
\item the coordinates $(x, y) \in \cH^{0,1}_r$ are Birkhoff coordinates for the NLS equation, namely they form a set of canonically conjugated coordinates in which the Hamiltonian of the NLS equation \eqref{1DNLSeq} depends only on the action $I_m := \frac{x_m^2 + y_m^2}{2}$ ($m \in \Z$).
\end{itemize}
\end{theorem}

The dynamics of the NLS equation \eqref{1DNLSeq} in terms of the variables $(x,y)$ is trivial: it can be immediately seen that any solution is periodic, quasiperiodic or almost periodic, depending on the number of spectral gaps (equivalently, depending on the number of actions) initially different from zero.

\section{Approximation results} \label{ApprSec}

In this Section we show how to use the normal form equations in order to construct approximate solutions of \eqref{2DETLeq} and \eqref{2DKGseq}, and we estimate the difference with respect to the true solutions with corresponding initial data.

{The approach is the same for all regimes \eqref{KPr1}, \eqref{KdVr1} and \eqref{1DNLSr}. First, we point out a relation between the specific energy of normal mode $\cE_\kappa=2 E_k/N$ (where the energy of normal mode $E_k$ is defined in \eqref{EnNormMode} for \eqref{2DETLeq}, and in \eqref{EnNormModeKG} for \eqref{2DKGseq}), $k \in \Z^2_{N_1,N_2}$, and the Fourier coefficients of the solutions of the normal form equations. This procedure has to be done carefully, since all wavevectors $k+(N_1 L_1, N_2L_2)$ contributes to the specific energy $\cE_\kappa$ of the discrete system. Then we have to prove that the approximate solutions approximate the specific energy of the true normal mode $\cE_\kappa$ up to the time-scale for which the continuous approximation is valid, and finally we can deduce the result about the dynamics of the lattice. For simplicity we present in this Section only the main part of the argument, and we defer the proof of technical results to Appendices \ref{ApprEstSec11}-\ref{ApprEstSec22}.}

\subsection{The KP regime} \label{apprKPsubsec}

Let $I=[-1,1]^2$ be as in \eqref{I} {and let $k=(k_1,k_2) \in \mathbb{Z}^2$}. We define the Fourier coefficients of the function $q:I \to \R$ by

\begin{align} \label{Fourierqcont}
\hat{q}_{\color{black}{k}} &:= \frac{1}{2}  \int_{I} q(y_1,y_2) \, e^{-\ii \pi ({k}_1 y_1 + {k}_2 y_2) } \di y_1 \, \di y_2,
\end{align}
and similarly for the Fourier coefficients of the function $p$. {Note that in this definition we omitted an eventual time dependence because we are interested in relations among quantities for points in the phase space, i.e. pairs of functions $(q,p):I\to \mathbb{R}$. These relations extend automatically to trajectories and, in fact, this will be the subject of the last result of each subsection.}

\begin{lemma} \label{EnSpecKPLemma}
Consider the lattice \eqref{HamQ} in the regime (KP) and with interpolating function \eqref{KPr1}. Then for a state corresponding to $(q,p)$ one has
\begin{align} \label{SpecEnNormModeKP}
\cE_\kappa &= \frac{\mu^4}{2}  \sum_{ L=(L_1,L_2) \in \Z^2:\mu L_1, \mu^2 L_2 \in 2\Z } \left( |\hat{q}_{K+L}|^2 + \omega_k^2 \left| \frac{\hat{p}_{K+L}}{\mu} \right|^2\right) , \; \; \forall k : \kappa(k) = (\mu K_1,\mu^2 K_2)
\end{align}
(where the $\omega_k$ are defined as in \eqref{FreqNormMode}), 
and $\cE_\kappa=0$ otherwise.

\end{lemma}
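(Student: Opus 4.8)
The plan is to compute the discrete Fourier coefficients $\hat{Q}_k$, $\hat{P}_k$ of the sampled lattice variables directly from the continuous interpolating data and then substitute into the definitions \eqref{EnNormMode} and \eqref{enkappa}. First I would fix the scalings of the two fields. From the ansatz \eqref{KPr1} one has $Q_j(t)=Q(t,j)=\mu^2\,q(\tau,\mu j_1,\mu^2 j_2)$, so $\dot Q=\mu^3 q_\tau$; since $\dot Q=-\Delta_1 P$ and $\Delta_1$ acting on a function of $(\mu x_1,\mu^2 x_2)$ is $\Delta_{\mu,y_1}=O(\mu^2)$, balancing the two sides (equivalently, matching the kinetic terms of \eqref{HamFPUc} and \eqref{HamFPUcKP}) forces the conjugate momentum to rescale as $P_j(t)=\mu\,p(\tau,\mu j_1,\mu^2 j_2)$.

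Next I expand $q,p$ in their Fourier series on the torus $I=[-1,1]^2$ as in \eqref{Fourierqcont}, so that $q(y)=\tfrac12\sum_{K}\hat q_K\,e^{\ii\pi K\cdot y}$ (and likewise for $p$). The key observation is that, evaluated at a lattice site $j\in\Z^2_{N_1,N_2}$, the continuous character collapses onto a discrete one,
\[
e^{\ii\pi\left(K_1\mu j_1+K_2\mu^2 j_2\right)}
\;=\;
\exp\!\left(2\pi\ii\left(\frac{K_1 j_1}{2N_1+1}+\frac{K_2 j_2}{2N_2+1}\right)\right),
\]
where I use $\mu=\tfrac{2}{2N_1+1}$ together with the identity $\mu^2=\tfrac{2}{2N_2+1}$, which holds precisely in the regime $\sigma=2$. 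This discrete character is invariant under $K\mapsto K+L$ exactly for the aliasing shifts $L$ with $\mu L_1,\mu^2 L_2\in 2\Z$, i.e. $L_1\in(2N_1+1)\Z$ and $L_2\in(2N_2+1)\Z$. Inserting the Fourier series into \eqref{fourierQ} (read componentwise) and using the lattice orthogonality $\sum_{j}e^{2\pi\ii(K-k)\cdot j/\cdots}=N$ when $K\equiv k$ modulo the aliasing lattice and $0$ otherwise, I read off
\[
\hat{Q}_k \;=\; \frac{\sqrt{N}\,\mu^2}{2}\sum_{L}\hat{q}_{K+L},
\qquad
\hat{P}_k \;=\; \frac{\sqrt{N}\,\mu}{2}\sum_{L}\hat{p}_{K+L},
\]
with $K=k$ (since $\kappa(k)=(\mu k_1,\mu^2 k_2)$ forces $K=k$) and $L$ ranging over the aliasing lattice above.

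Finally I substitute into $E_k=\tfrac12\bigl(\omega_k^2|\hat P_k|^2+|\hat Q_k|^2\bigr)$ and into $\cE_\kappa=E_k/\bigl[(N_1+\tfrac12)(N_2+\tfrac12)\bigr]$, using $(N_1+\tfrac12)(N_2+\tfrac12)=N/4$. The factors of $N$ cancel and the powers of $\mu$ collect: the position term contributes $\mu^4$, the momentum term contributes $\omega_k^2\mu^2=\mu^4\,\omega_k^2/\mu^2$, giving an overall $\tfrac{\mu^4}{2}$ with the momentum carrying $|\hat p_{K+L}/\mu|^2$, which is the asserted formula. For $\kappa$ not of the form $\kappa(k)$ with $k$ an admissible lattice index there is no associated normal mode, hence $\cE_\kappa=0$.

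I would flag the treatment of aliasing as the main obstacle: because the sampled data only resolve $K$ modulo the lattice $(2N_1+1,2N_2+1)$, a single discrete mode $k$ collects all continuous harmonics $K+L$, and one must verify that each is weighted by the correct frequency — this is consistent because $\omega_{k+L}=\omega_k$ by periodicity of the sine in \eqref{FreqNormMode}. The second delicate point is that the clean form $\sum_L\bigl(|\hat q_{K+L}|^2+\omega_k^2|\hat p_{K+L}/\mu|^2\bigr)$, rather than the square of the summed amplitudes $\bigl|\sum_L\hat q_{K+L}\bigr|^2$ produced by the computation above, relies on using the \emph{minimal} interpolating function singled out after (A1)--(A4), namely the trigonometric polynomial band-limited to $|K_r|\le N_r$; for such data exactly one shift $L=0$ contributes to each admissible $k$, so the square of the sum equals the sum of the squares and the two expressions coincide.
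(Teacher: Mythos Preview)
Your argument is correct and follows essentially the same route as the paper: compute the discrete Fourier coefficients $\hat Q_k,\hat P_k$ from the continuous Fourier data of $(q,p)$ via aliasing, then substitute into \eqref{EnNormMode}--\eqref{enkappa}. The paper inserts an intermediate continuous Fourier transform $\hat Q(k)$ (see \eqref{FourierQcont}--\eqref{FourierRelPp}) and obtains $\hat Q(k)=\mu^{1/2}\hat q_k$, $\hat P(k)=\mu^{-1/2}\hat p_k$, which matches your coefficients $\tfrac{\sqrt N\,\mu^2}{2}=\mu^{1/2}$ and $\tfrac{\sqrt N\,\mu}{2}=\mu^{-1/2}$; your computation simply bypasses this intermediate object.

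Your closing remark is well taken: the raw computation produces $\bigl|\sum_L\hat q_{K+L}\bigr|^2$ rather than $\sum_L|\hat q_{K+L}|^2$, and the paper's proof makes the same passage (writing $\sum_L|\hat Q(K+L)|^2$ in the step labelled \eqref{enkappa}) without comment. Your resolution --- that for the band-limited interpolant singled out after (A1)--(A4) only the shift $L=0$ survives, so the two expressions coincide --- is exactly the implicit assumption at play, and it is good that you made it explicit.
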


\begin{proof}
Take a smooth $(2N_1+1,2N_2+1)$-periodic interpolating function $Q$ for $Q_j$, and similarly for $P_j$. We denote by
\begin{align} \label{FourierQcont}
\hat{Q}({{k}}) &:= \frac{1}{ \sqrt{N}  } \int_{\left[-\left( N_1+\frac{1}{2} \right),\left( N_1+\frac{1}{2} \right) \right] \times \left[-\left( N_2+\frac{1}{2} \right),\left( N_2+\frac{1}{2} \right) \right] } Q(x) e^{-2\pi \ii \frac{ {k} \cdot x }{ N  } } \, \di x,
\end{align}
so that by the interpolation property we obtain
\begin{align*}
Q_j  &=  Q(j) \, = \, \frac{1}{ \sqrt{N} } \sum_{k \in \Z^2} \hat{Q}({k}) e^{ 2 \pi \ii  \frac{j \cdot k}{ N  } }  \nonumber \\
&=  \frac{1}{ \sqrt{N}  } \sum_{k=(k_1,k_2) \in \Z^2_{2N_1+1,2N_2+1}} \left[ \sum_{h=(h_1,h_2) \in \Z^2} \hat{Q}( k_1+(2N_1+1)h_1, k_2+(2N_2+1)h_2 ) \right] \, e^{2 \pi \ii\frac{ j \cdot k}{ N } },
\end{align*}
hence
\begin{align} \label{FourierRel}
\hat{Q}_{{K}} &= \sum_{h \in \Z^2} \hat{Q}( K_1+(2N_1+1)h_1, K_2+(2N_2+1)h_2 ).
\end{align}
The relation between $\hat{Q}(k)$ and $\hat{q}_k$ can be deduced from \eqref{KPr1} {and from the rescalings $\tau=\mu t$, $y_1=\mu x_1$, $y_2=\mu^2 x_2$},
\begin{align}
Q(j) &= \mu^2 q(\mu j_1,\mu^2 j_2); \nonumber \\
\hat{Q}{\color{black}(k)} &=  \frac{1}{2} \mu^{3/2} \, \int_{ \left[ -\frac{1}{\mu},\frac{1}{\mu} \right] \times \left[ -\frac{1}{\mu^2},\frac{1}{\mu^2}  \right] } Q(x_1,x_2) e^{-\ii\pi (k_1 x_1 \mu + k_2 x_2 \mu^2 )} \di x_1 \, \di x_2 \nonumber \\
&= \frac{1}{2} \mu^{3/2} \, \int_{ \left[ -\frac{1}{\mu},\frac{1}{\mu} \right] \times \left[ -\frac{1}{\mu^2},\frac{1}{\mu^2}  \right] }  \mu^2 \, q \left( \mu x_1, \mu^2 x_2 \right) e^{-\ii \pi (k_1 x_1 \mu + k_2 x_2 \mu^2 )} \di x_1 \, \di x_2 \nonumber \\
&\stackrel{\eqref{KdVr1}}{=} \frac{1}{2} \mu^{1/2} \int_{I} q(y) e^{-\ii \pi (k_1 y_1 + k_2 y_2) } \di y \; = \; \mu^{1/2} \hat{q}_k, \label{FourierRelQq}
\end{align}
and similarly
\begin{align} \label{FourierRelPp}
\hat{P}{\color{black}(k)} &= \mu^{-1/2} \hat{p}_k.
\end{align}
By using \eqref{EnNormMode}, \eqref{enkappa} and \eqref{FourierRel}-\eqref{FourierRelPp} we have
\begin{align*}
\cE_\kappa &\stackrel{\eqref{enkappa}}{=} \mu^{3} \, \frac{1}{2}  \sum_{ L=(L_1,L_2) \in \Z^2:\mu L_1, \mu^{2} L_2 \in 2\Z }  \left(|\hat{Q}{\color{black}(K+L)}|^2 + \omega_k^2 |\hat{P}{\color{black}(K+L)} |^2\right) \\
&\stackrel{\eqref{FourierRelQq}  }{=} \mu^4 \frac{1}{2}  \sum_{ L=(L_1,L_2) \in \Z^2:\mu L_1, \mu^2 L_2 \in 2\Z }  \left( |\hat{q}_{K+L}|^2 + \omega_k^2 \left| \frac{\hat{p}_{K+L}}{\mu} \right|^2 \right)
\end{align*}
for all $k$ such that $\kappa(k) = (\mu K_1,\mu^2 K_2)$, and this leads to \eqref{SpecEnNormModeKP}.
\end{proof}

\begin{proposition} \label{KPxietaProp}
Fix $\rho>0$ and $0 < \delta \ll 1$. Consider the normal form system \eqref{KPsys}, and define the Fourier coefficients of $(\xi,\eta)$ through the following formula
\begin{align}
\xi(y) &= \frac{1}{ 2 } \sum_{h \in \Z^2} \hat{\xi}_h \, e^{\ii \pi h \cdot y}, \label{FourierXiKP} \\
\eta(y) &= \frac{1}{ 2 } \sum_{h \in \Z^2} \hat{\eta}_h \, e^{\ii \pi h \cdot y}, \label{FourierEtaKP}.
\end{align}
{Suppose that} $(\xi,\eta) \in \cH^{\rho,0}$, and denote by $\cE_\kappa$ the specific energy of the normal mode with index $\kappa$ as defined in \eqref{kappa}-\eqref{enkappa}. Then for any positive $\mu$ sufficiently small
\begin{align} \label{KPcoeffxieta}
\left| \cE_\kappa - \mu^4 \frac{|\hat{\xi}_K|^2+|\hat{\eta}_K|^2}{2} \right| &\leq C \mu^{ 4+3/2 } \|(\xi,\eta)\|_{\cH^{\rho,0}}^2
\end{align}
for all $k$ such that $\kappa(k) = (\mu K_1,\mu^2 K_2)$ and $|K_1|+|K_2| \leq \frac{(2+\delta) |\log \mu|}{\rho}$. Moreover,
\begin{align} \label{KPSpecEnEst}
|\cE_\kappa| &\leq C \, \mu^{ 8 } \|(\xi,\eta)\|_{\cH^{\rho,0}}^2
\end{align}
for all $k$ such that $\kappa(k) = (\mu K_1,\mu^2 K_2)$ and $|K_1^2+K_2^2|^{1/2} > \frac{(2+\delta) |\log \mu|}{\rho}$, and $\cE_\kappa=0$ otherwise.
\end{proposition}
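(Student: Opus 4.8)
The plan is to start from the exact expression for $\cE_\kappa$ furnished by Lemma \ref{EnSpecKPLemma} and to rewrite the position and momentum Fourier coefficients in terms of $(\hat\xi,\hat\eta)$ by inverting the change of variables \eqref{xieta}. In Fourier variables this reads $\hat q_k=\frac{1}{\sqrt2}(\hat\xi_k+\hat\eta_k)$ and $\ii\pi k_1\,\hat p_k=\frac{1}{\sqrt2}(\hat\xi_k-\hat\eta_k)$, so that $\hat p_k=\frac{1}{\sqrt2\,\ii\pi k_1}(\hat\xi_k-\hat\eta_k)$ whenever $k_1\neq0$. For $k_1=0$ the zero-$y_1$-average constraint \eqref{ZeroAvy1KdV} forces $\hat\xi_{(0,k_2)}=\hat\eta_{(0,k_2)}$, so that mode of $\xi-\eta$ vanishes and the momentum contribution drops out (using the gauge $\hat p_{(0,k_2)}=0$ inherited from the Casimir reduction in Remark \ref{CasimirRem}); this degenerate case is then harmless and is treated separately.

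The core of the computation is the $L=0$ term. Using \eqref{FreqNormMode} with $\mu=\frac{2}{2N_1+1}$ and $\sigma=2$ gives $\omega_k^2=4\sin^2(\pi\mu K_1/2)+4\sin^2(\pi\mu^2 K_2/2)$, whence
\[
\frac{\omega_k^2}{\mu^2}\,|\hat p_K|^2=\frac{\sin^2(\pi\mu K_1/2)}{(\pi\mu K_1/2)^2}\,\frac{|\hat\xi_K-\hat\eta_K|^2}{2}+\frac{4\sin^2(\pi\mu^2 K_2/2)}{\mu^2}\,|\hat p_K|^2.
\]
Since $\frac{\sin^2 a}{a^2}=1+O(a^2)$, the first summand equals $\frac12|\hat\xi_K-\hat\eta_K|^2$ up to an error $O(\mu^2K_1^2)|\hat\xi_K-\hat\eta_K|^2$; adding $|\hat q_K|^2=\frac12|\hat\xi_K+\hat\eta_K|^2$ and invoking the parallelogram identity produces the leading term $|\hat\xi_K|^2+|\hat\eta_K|^2$, which after the prefactor $\mu^4/2$ matches the target of \eqref{KPcoeffxieta}. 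The two residual error sources are thus the Taylor remainder of $\sin^2$, of size $O(\mu^2K_1^2)|\hat\xi_K-\hat\eta_K|^2$, and the genuinely transverse term, of size $O(\mu^2K_2^2/K_1^2)|\hat\xi_K-\hat\eta_K|^2$.

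Next I would close the estimate in the low-frequency regime. Multiplying by $\mu^4/2$ and bounding $|\hat\xi_K-\hat\eta_K|^2\le 2(|\hat\xi_K|^2+|\hat\eta_K|^2)\le 2e^{-2\rho|K|}\|(\xi,\eta)\|_{\cH^{\rho,0}}^2$, both errors are $O(\mu^6)(K_1^2+K_2^2)e^{-2\rho|K|}\|(\xi,\eta)\|_{\cH^{\rho,0}}^2$; since $x^2e^{-2\rho x}$ is bounded (and $|K|\lesssim|\log\mu|$ on the admissible range), this is $\le C\mu^{4+3/2}\|(\xi,\eta)\|_{\cH^{\rho,0}}^2$, yielding \eqref{KPcoeffxieta}. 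The aliasing terms $L\neq0$ obey $|K+L|\gtrsim 1/\mu$ (because $\mu L_1\in2\Z$, $\mu^2L_2\in2\Z$ force $|L_1|\ge2/\mu$ or $|L_2|\ge2/\mu^2$), so each is $\lesssim e^{-2\rho/\mu}$, super-exponentially small and absorbed into the same error. For the high-frequency regime $|K|>\frac{(2+\delta)|\log\mu|}{\rho}$ I would use $e^{-2\rho|K|}<\mu^{2(2+\delta)}=\mu^{4+2\delta}$ together with the uniform bound $\frac{\omega_k^2}{\mu^2}|\hat p_K|^2\le C(1+\mu^2|K|^2)(|\hat\xi_K|^2+|\hat\eta_K|^2)$ to get $\cE_\kappa\le C\mu^4(1+\mu^2|K|^2)e^{-2\rho|K|}\|(\xi,\eta)\|_{\cH^{\rho,0}}^2\le C\mu^8\|(\xi,\eta)\|_{\cH^{\rho,0}}^2$, which is \eqref{KPSpecEnEst}; outside the admissible lattice $\cE_\kappa=0$ by Lemma \ref{EnSpecKPLemma}.

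The main obstacle is the careful control of the transverse contribution $\frac{\mu^2K_2^2}{K_1^2}$: the factor $1/K_1^2$ comes from the nonlocal operator $\d_{y_1}^{-1}$ hidden inside $\hat p_K$, and although $K_2$ may be as large as $\sim 1/\mu^2$, one must exploit the exponential weight of $\cH^{\rho,0}$ to defeat this polynomial growth. Equally delicate is the simultaneous handling of the $k_1=0$ modes (where the inversion of \eqref{xieta} degenerates and one must fall back on the Casimir constraint) and the verification that the aliasing sum is negligible; together these force precisely the two-regime split recorded in \eqref{KPcoeffxieta} and \eqref{KPSpecEnEst}.
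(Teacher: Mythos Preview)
Your proposal is correct and follows essentially the same approach as the paper: start from Lemma~\ref{EnSpecKPLemma}, convert $(\hat q_K,\hat p_K)$ to $(\hat\xi_K,\hat\eta_K)$ via \eqref{xieta}, isolate the $L=0$ main term (whose discrepancy from $\tfrac12(|\hat\xi_K|^2+|\hat\eta_K|^2)$ comes from the Taylor error of $\omega_k^2$ and the transverse contribution), and control the $L\neq0$ aliasing terms by the exponential weight. The paper in fact proves this as the special case $\sigma=2$ of Proposition~\ref{KdVxietaProp}, where the same decomposition is carried out with a somewhat more explicit case analysis of the aliasing sum; your bound on the low-frequency error is even slightly sharper (you get $O(\mu^6)$ via $x^2e^{-2\rho x}\le C$, whereas the paper uses the cutoff $|K|\lesssim|\log\mu|$ and loses a factor $|\log\mu|^2\le\mu^{-1/2}$).
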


The proof of the above Proposition is deferred to Appendix \ref{ApprEstSec11}. \\

Now, consider the following system of uncoupled KP equations
\begin{align}
\xi_{\tau} & = - \frac{1}{24} \d_{y_1}^3\xi - \frac{1}{2} \d_{y_1}^{-1}\d_{y_2}^2\xi - \frac{\alpha }{2\sqrt{2}} \, \d_{y_1}(\xi^2), \label{KPsys1} \\
\eta_{\tau} & = \frac{1}{2} \d_{y_1}^{-1}\d_{y_2}^2\eta + \frac{1}{24} \d_{y_1}^3\eta + \frac{\alpha}{2\sqrt{2}} \, \d_{y_1}(\eta^2). \label{KPsys2}
\end{align}
and consider a solution $(\tau,y) \mapsto (\widetilde{\xi_a}(\tau,y),\widetilde{\eta_a}(\tau,y))$ such that it belongs to $\cH^{\rho,n}$, for some $n \geq 1$.

We consider the approximate solutions $(Q_a,P_a)$ of the {ETL} model \eqref{FPUeqc}
\begin{align}
  Q_a(t,y) &:= \frac{\mu^2}{\sqrt{2}} \left[ \widetilde{\xi_a}(\mu^2 \tau,y_1- \tau,y_2) + \widetilde{\eta_a}(\mu^2 \tau,y_1 + \tau,y_2)  \right] \label{Qappr2} \\
\d_{y_1}P_a(t,y) &:= \frac{\mu}{\sqrt{2}} \left[ \widetilde{\xi_a}(\mu^2 \tau,y_1- \tau,y_2) - \widetilde{\eta_a}(\mu^2 \tau,y_1 + \tau,y_2)  \right], \label{Pappr2} 
\end{align}
{where we made a little abuse of notation since the left-hand side depends on $t$ and right-hand side on $\tau$ that are related as $\tau=\mu t$.}

We need to compare the difference between the approximate solution \eqref{Qappr2}-\eqref{Pappr2} and the true solution of \eqref{2DETLeq}. Let us consider \eqref{2DETLeq}, and take an initial datum $(Q_0,P_0)$ with corresponding Fourier coefficients $(\hat{Q}_{0,k},\hat{P}_{0,k})$ given by \eqref{fourierQ}. {Observe that}
\begin{align} \label{InDatumHyp21}
\hat Q_{0,k} \neq 0 \; \; &\text{only if} \; \;  \kappa(k) = (\mu K_1, \mu^2 K_2). 
\end{align}
{ and that, as a consequence of the analiticity of $Q_0$ and $P_0$,} there exist $C$, $\rho >0 $ such that
\begin{align} \label{InDatumHyp22}
\frac{ |\hat{Q}_{0,k}|^2 + \omega_k^2 |\hat{P}_{0,k}|^2 }{N} &\leq C e^{-2\rho | (\kappa_1(k)/\mu , \kappa_2(k)/\mu^2 ) |}.
\end{align}
Moreover, we define an interpolating function for the initial datum $(Q_0,P_0)$ by
\begin{align*}
Q_0(y) &= \frac{1}{ \sqrt{N} } \sum_{K: \left( \mu^2 |K_1|^2 + \mu^{4} |K_2|^2 \right)^{1/2} = |\kappa(k)| \leq 1 } \hat{Q}_{0,k} e^{\ii \pi (\mu K_1 y_1 + \mu^2 K_2 y_2)},
\end{align*}
and similarly for $y \mapsto P_0(y)$. \\

{Next we show that we can exploit the analiticity of the solution of the approximating integrable PDEs to prove the vicinity between the approximate solution and the true solution.}

\begin{proposition} \label{ApprPropKP}
Consider \eqref{2DETLeq} with $\sigma=2$, and fix {$0 < \gamma < \frac{1}{2}$ }. Let us assume that the initial datum for \eqref{2DETLeq} {satisfies} \eqref{InDatumHyp21}-\eqref{InDatumHyp22}, and denote by $(Q_{\color{black}j}(t),P_{\color{black}j}(t))_{\color{black} j \in \mathbb{Z}^2_{N_1,N_2}}$ the corresponding solution. Consider the approximate solution $(\widetilde{\xi_a},\widetilde{\eta_a})$ with the corresponding initial datum. Assume that $(\widetilde{\xi_a},\widetilde{\eta_a}) \in \cH^{\rho,n}$ for some $\rho>0$ and for some $n \geq 1$ for all times, and fix $T_0>0$ and $0<\delta \ll 1$.

Then there exists $\mu_0=\mu_0( T_0, \|( \widetilde{\xi_a}(0),\widetilde{\eta_a}(0) )\|_{\cH^{\rho,n}} )$ such that, if $\mu < \mu_0$, we have that there exists $C>0$ such that
\begin{align} \label{KPapprDiscrCont}
\sup_j |Q_j(t) - Q_a(t,j)| + |P_j(t) - P_a(t,j)| &\leq C \mu^\gamma, \qquad {{\forall \, }} |t| \leq \frac{T_0}{ \mu^{3} },
\end{align}
where $(Q_a,P_a)$ are given by \eqref{Qappr2}-\eqref{Pappr2}. Moreover,
\begin{align} \label{LowModesApprKP}
\left| \cE_\kappa{\color{black}(t)} - \mu^{4} \frac{|\hat{\xi}_K{\color{black}(\tau)}|^2+|\hat{\eta}_K{\color{black}(\tau)}|^2}{2} \right| &\leq C \mu^{ 4+\gamma } \, \qquad {\forall |t| \leq \frac{T_0}{\mu^3}}
\end{align}
for all $k$ such that $\kappa(k) = (\mu K_1,\mu^2 K_2)$ and $|K_1|+|K_2| \leq \frac{(2+\delta) |\log \mu|}{\rho}$. Moreover,
\begin{align} \label{HighModesApprKP}
|\cE_\kappa{\color{black}(\tau)}| \leq \mu^{ 4+\gamma } \, \qquad {\forall |t| \leq \frac{T_0}{\mu^3}}
\end{align}
for all $k$ such that $\kappa(k) = (\mu K_1,\mu^2 K_2)$ and $|K_1|+|K_2| > \frac{(2+\delta) |\log \mu|}{\rho}$, and $\cE_\kappa=0$ otherwise.
\end{proposition}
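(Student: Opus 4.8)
The plan is to transport the dynamical information from the integrable KP system \eqref{KPsys1}-\eqref{KPsys2} to the true lattice solution in three stages: reduce the discrete evolution to the rescaled continuous PDE, run a Gronwall comparison in an analytic norm between the true trajectory and the approximate one \eqref{Qappr2}-\eqref{Pappr2}, and convert the resulting closeness into the bounds \eqref{KPapprDiscrCont}-\eqref{HighModesApprKP}. First I would use that, by (A1)-(A4), the interpolation of the lattice solution solves \eqref{FPUeqc} exactly, since the operator \eqref{Delta1c} reproduces \eqref{Delta1} on $\Z^2_{N_1,N_2}$; hence after the substitution $Q=\mu^2 q(\mu t,\mu x_1,\mu^2 x_2)$ — which, because $\sigma=2$ gives $1/\mu^2=N_2+\tfrac12$, maps the spatial domain of the interpolation exactly onto $I=[-1,1]^2$ — the rescaled true solution obeys \eqref{FPUeqKPr1} with no discretization error. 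In the variables \eqref{xieta} and with the expansion \eqref{asexp3xieta}, the true trajectory $\zeta=(\xi,\eta)$ solves $X_{\Ham h_0+\mu^2\Ham F_1+\mu^4\cR}$, whereas the approximate trajectory $\zeta_a$ built from $(\widetilde{\xi_a},\widetilde{\eta_a})$ is a solution of the normal-form system generated by $\Ham h_0+\mu^2\langle\Ham F_1\rangle$, i.e. the KP system of Corollary \ref{KPcor}.

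The heart of the matter is the error estimate. I would apply the averaging transformation $\cT_\delta$ of Theorem \ref{gavthm} (with $\delta=\mu^2$, $\nu=1$) to the true trajectory; then $\zeta':=\cT_\delta^{-1}\zeta$ solves $X_{\Ham h_0+\mu^2\langle\Ham F_1\rangle+\mu^4\cR^{(1)}}$ and, by \eqref{CTthm}, $\|\zeta'(0)-\zeta_a(0)\|_{\cH^{\rho,0}}\lesssim\mu^2$. Passing through $\cT_\delta$ is essential: it removes the non-averaged quadratic term $\mu^2(\Ham F_1-\langle\Ham F_1\rangle)$, which would otherwise drive an $\cO(1)$ secular growth over $|\tau|\le T_0/\mu^2$, leaving only the genuinely $\cO(\mu^4)$ forcing $\mu^4\cR^{(1)}$. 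I would then bound $w:=\zeta'-\zeta_a$ by an energy argument in a shrinking analytic norm $\cH^{\rho(\tau),0}$: the transport part $X_{\Ham h_0}$ is an isometry, and the dispersive symbols $\partial_{y_1}^3$, $\partial_{y_1}^{-1}\partial_{y_2}^2$ are purely imaginary, hence skew even in the weighted inner product, so neither feeds the norm; the quadratic nonlinearity is controlled after one integration by parts, the single lost derivative being absorbed by letting $\rho(\tau)$ decrease. Bourgain's a priori bounds (Theorem \ref{BouThm}) together with the hypothesis $(\widetilde{\xi_a},\widetilde{\eta_a})\in\cH^{\rho,n}$ keep both trajectories inside the ball $\cB_{\rho,s}(R)$ on which $\cT_\delta$ and $\cR^{(1)}$ are defined (a bootstrap/continuity argument), and the nonlinear part of the normal-form field has Lipschitz constant $\cO(\mu^2)$; Gronwall over $|\tau|\le T_0/\mu^2$ then gives $\|w(\tau)\|_{\cH^{\rho',0}}\lesssim\mu^2$ for some $\rho'>0$.

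It then remains to read off the statement. Since $Q_a$ and $P_a$ are $\cO(\mu^2)$ and $\cO(\mu)$ rescalings of $(\widetilde{\xi_a},\widetilde{\eta_a})$, the embedding $\ell^2_{\rho',0}\hookrightarrow\ell^\infty$ converts the bound on $\|w\|$ into \eqref{KPapprDiscrCont}. For the modal energies I would apply the static identity of Proposition \ref{KPxietaProp} to $\zeta(\tau)$ and then replace the true Fourier coefficients by the approximate ones $\hat\xi_K(\tau),\hat\eta_K(\tau)$ through $\big||\hat\xi_K|^2-|\hat\xi_{a,K}|^2\big|\lesssim\|w\|\,(\|\zeta\|+\|\zeta_a\|)$; the aliasing sum in \eqref{SpecEnNormModeKP} is handled by analyticity, its $L\neq0$ terms having $|K+L|\gtrsim 1/\mu$ and being exponentially negligible. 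Splitting the modes at $|K_1|+|K_2|=\frac{(2+\delta)|\log\mu|}{\rho}$ yields \eqref{LowModesApprKP} below the threshold and \eqref{HighModesApprKP} above it, where the single surviving coefficient is already $\lesssim\mu^{4+2\delta}$.

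The delicate part, and the origin of the constraint $0<\gamma<\tfrac12$, is the simultaneous control of three competing effects over the time scale $\mu^{-3}$: the singular nature of $X_{\cR^{(1)}}$, which loses derivatives and must be weighed against the shrinking analyticity radius; the transverse dispersion $\mu^2\partial_{y_2}^2$, which at $\sigma=2$ sits exactly at the order of the leading correction — this is the edge case $\gamma_0(2)=\tfrac32\big(2-\tfrac53\big)=\tfrac12$ of the KdV family of Theorem \ref{KdVrThm} — and the logarithmically growing width $\sim|\log\mu|/\rho$ of the excited packet that enters the mode-by-mode conversion. Propagating the exponent $\gamma$ cleanly through all of these, rather than the balance itself, is what makes the estimate technical, and it is carried out in detail in Appendix \ref{ApprEstSec12}.
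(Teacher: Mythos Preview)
Your architecture is reasonable, but it diverges from the paper's in two structural places, and the second hides a genuine difficulty you have not fully resolved.

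The paper's proof of this proposition is literally one sentence: Proposition~\ref{ApprPropKP} follows from Proposition~\ref{ApprPropKdV} by setting $\sigma=2$; the restriction $\gamma<\tfrac12$ is exactly the specialisation of $\sigma+2\gamma<\min(4\sigma-5,7)$ at the endpoint $\sigma=2$. The substance therefore lives in the proof of Proposition~\ref{ApprPropKdV} (Appendix~\ref{ApprEstSec12}), and there the paper proceeds differently from your sketch in the following two ways.

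First, the direction of the normal form transformation is reversed. Rather than pulling the true solution $\zeta$ back through $\cT_{\mu^2}^{-1}$ and comparing with the KP solution $\widetilde{\zeta_a}$, the paper pushes the KP solution \emph{forward}: it sets $\zeta_a:=\cT_{\mu^2}(\widetilde{\zeta_a})$, builds $(q_a,p_a)$ from $\zeta_a$ via \eqref{qappr}--\eqref{pappr}, and checks that this pair satisfies the original rescaled system up to remainders $\mu^{\min(2\sigma,6)}\cR_q$, $\mu^{\min(2\sigma-1,5)}\cR_p$. The point is that these remainders are evaluated on $\zeta_a$, which inherits the full analytic regularity of the KP solution, so they are uniformly bounded with no derivative loss. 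Second, the comparison with the true lattice is carried out at the \emph{discrete} level, in the finite-dimensional norm $\ell^2_{N_1,N_1^\sigma}$. Writing $Q_j=\mu^2 q_{a,j}+\mu^{2+\gamma}E_j$, $P_j=\mu p_{a,j}+\mu^{2+\gamma}F_j$, the paper introduces the modified energy
\[
\cG(E,F)=\sum_j\Bigl(\tfrac12 E_j^2+\tfrac12 F_j(-\Delta_1 F)_j+\mu^2\alpha\, q_{a,j}E_j^2\Bigr),
\]
the correction term being chosen so that in $\dot\cG$ the $\cO(\mu^2)$ coupling $(-\Delta_1 F)_j\cdot 2\mu^2\alpha q_{a,j}E_j$ cancels exactly. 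What survives gives $|\dot\cG|\le C\mu^3\cG+C\mu^{(\min(4\sigma-5,7)-2\gamma-\sigma)/2}$, and Gronwall over $|t|\le T_0/\mu^3$ closes provided the second exponent is positive; at $\sigma=2$ this reads $\gamma<\tfrac12$.

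Your route---pull back, then compare in $\cH^{\rho(\tau),0}$---places the singular remainder $\mu^4 X_{\cR^{(1)}}$ on the \emph{true} trajectory $\zeta'$, for which you have no a priori control in $\cH^{\rho,s+s_0}$; Theorem~\ref{gavthm} only bounds $X_{\cR^{(1)}}$ as a map $\cB_{\rho,s+s_0}\to\cH^{\rho,s}$ with a fixed loss $s_0$. Absorbing this loss by letting $\rho(\tau)$ shrink over $|\tau|\le T_0/\mu^2$ is a genuine Nishida--Ovsyannikov argument, not a one-line remark, and your bootstrap that ``both trajectories stay in $\cB_{\rho,s}(R)$'' does not close without it, since a bound on $w$ in $\cH^{\rho,s}$ does not control $\zeta'$ in the higher norm the remainder needs. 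The paper's forward-push-plus-discrete-energy scheme sidesteps the issue entirely: the unbounded operator only ever touches the smooth approximate object, and the error propagation takes place in finite dimensions.
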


The proof of the above Proposition is deferred to Appendix \ref{ApprEstSec12}.

\begin{proof}[Proof of Theorem \ref{KPrThm}]
First we prove \eqref{EnModesKP}. 

We consider an initial datum as in \eqref{KPData}; when passing to the continuous approximation \eqref{FPUeqc}, this initial datum corresponds to an initial data $(\xi_0,\eta_0) \in \cH^{\rho_0,n}$ for some $\rho_0>0$ and $n \geq 1$. By Theorem \ref{BouThm} the corresponding solution $(\xi(\tau),\eta(\tau))$ is analytic in a complex strip of width $\rho(t)$. Taking the minimum of such quantities one gets the coefficient $\rho$ appearing in the statement of Theorem \eqref{KPrThm}. Applying Proposition \ref{ApprPropKP}, we can deduce the corresponding result for the discrete model \eqref{2DETLeq} and the specific quantities \eqref{enkappa}.
\end{proof}

\subsection{The KdV regime} \label{apprKdVsubsec}

Similarly to Lemma \ref{EnSpecKPLemma}, Proposition \ref{KPxietaProp} we can prove the following results

\begin{lemma} \label{EnSpecKdVLemma}
Consider the lattice \eqref{HamQ} in the regime (KdV) and with interpolating function \eqref{KdVr1}. Then for a state corresponding to $(q,p)$ one has
\begin{align} \label{SpecEnNormModeKdV}
\cE_\kappa &= \frac{\mu^{4}}{2}  \sum_{ L=(L_1,L_2) \in \Z^2:\mu L_1, \mu^{\sigma} L_2 \in 2\Z } \left(|\hat{q}_{K+L}|^2 + \omega_k^2 \left| \frac{\hat{p}_{K+L}}{\mu} \right|^2\right), \qquad \forall k : \kappa(k) = (\mu K_1,\mu^{\sigma} K_2)
\end{align}
(where the $\omega_k$ are defined as in \eqref{FreqNormMode} and the $\cE_\kappa$ in \eqref{enkappa}), 
and $\cE_\kappa=0$ otherwise.
\end{lemma}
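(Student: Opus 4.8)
The plan is to mirror the proof of Lemma \ref{EnSpecKPLemma} essentially verbatim, the only change being that the second rescaling exponent $2$ is replaced by $\sigma$ throughout. I would start from a smooth $(2N_1+1,2N_2+1)$-periodic interpolating function $Q$ for $Q_j$ (and likewise $P$ for $P_j$), introduce its continuous Fourier transform $\hat{Q}(k)$ exactly as in \eqref{FourierQcont}, and recover the aliasing identity
\begin{align*}
\hat{Q}_K &= \sum_{h \in \Z^2} \hat{Q}\left( K_1+(2N_1+1)h_1, K_2+(2N_2+1)h_2 \right)
\end{align*}
by the interpolation property and Poisson summation, exactly as in \eqref{FourierRel}. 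The crucial bookkeeping point is that, since $\mu=\frac{2}{2N_1+1}$ and $\sigma=\log_{N_1+1/2}(N_2+1/2)$ give $2N_1+1=2/\mu$ and $2N_2+1=2\mu^{-\sigma}$, the shift vectors $L=((2N_1+1)h_1,(2N_2+1)h_2)$ range precisely over the lattice $\{L : \mu L_1\in 2\Z,\ \mu^\sigma L_2\in 2\Z\}$ appearing in \eqref{SpecEnNormModeKdV}.

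Next I would record the scaling law between the continuous transform and the rescaled one. Plugging the ansatz \eqref{KdVr1}, i.e. $Q(x)=\mu^2 q(\mu x_1,\mu^\sigma x_2)$, into the definition of $\hat{Q}(k)$ and performing the change of variables $y_1=\mu x_1$, $y_2=\mu^\sigma x_2$ (whose Jacobian $\mu^{-(1+\sigma)}$ matches the normalisation $N=(2N_1+1)(2N_2+1)=4\mu^{-(1+\sigma)}$), one obtains, just as in \eqref{FourierRelQq}--\eqref{FourierRelPp},
\begin{align*}
\hat{Q}(k) &= \mu^{(3-\sigma)/2}\,\hat{q}_k, \qquad \hat{P}(k) = \mu^{(1-\sigma)/2}\,\hat{p}_k ,
\end{align*}
the second relation coming from applying the same computation to the momentum, cf. \eqref{FourierRelPp}.

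Finally I would substitute these two ingredients into $\cE_\kappa = \mu^{1+\sigma} E_k$, using \eqref{EnNormMode} for $E_k$ and $\mu^{1+\sigma}=\left[(N_1+\tfrac12)(N_2+\tfrac12)\right]^{-1}$ by \eqref{enkappa}, and collect the powers of $\mu$. The $Q$-contribution produces $\mu^{1+\sigma}\cdot\mu^{3-\sigma}\,|\hat{q}_{K+L}|^2=\mu^4\,|\hat{q}_{K+L}|^2$, while the $P$-contribution produces $\mu^{1+\sigma}\cdot\mu^{1-\sigma}\,\omega_k^2\,|\hat{p}_{K+L}|^2=\mu^2\,\omega_k^2\,|\hat{p}_{K+L}|^2=\mu^4\,\omega_k^2\,|\hat{p}_{K+L}/\mu|^2$; summing over the aliasing lattice yields exactly \eqref{SpecEnNormModeKdV}, and $\cE_\kappa=0$ whenever $\kappa$ is not of the form $(\mu K_1,\mu^\sigma K_2)$. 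The feature worth emphasising is that the $\sigma$-dependence cancels and the universal prefactor $\mu^4/2$ survives for every admissible $\sigma$.

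I do not expect a genuine obstacle, since the argument is parallel to the KP case; the only point requiring care is the exponent bookkeeping, namely checking that the normalisation $N^{-1/2}$, the amplitude $\mu^2$ of the ansatz \eqref{KdVr1}, and the Jacobian $\mu^{-(1+\sigma)}$ of the rescaling combine to the $\sigma$-independent factor $\mu^4$, and that the momentum rescaling (consistent with \eqref{FourierRelPp}) produces the $\mu^{-1}$ inside the second modulus. As in the KP case, one also notes that the surviving aliases are organised by the same lattice condition, so no structurally new terms appear relative to \eqref{SpecEnNormModeKP}.
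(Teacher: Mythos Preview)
Your proposal is correct and follows the paper's own proof essentially line by line: the paper likewise invokes the interpolating function, the aliasing identity \eqref{FourierRel}, computes $\hat{Q}(k)=\mu^{(3-\sigma)/2}\hat{q}_k$ and $\hat{P}(k)=\mu^{(1-\sigma)/2}\hat{p}_k$ via the change of variables $y_1=\mu x_1$, $y_2=\mu^\sigma x_2$, and then combines these with $\cE_\kappa=\mu^{\sigma+1}E_k$ to obtain the $\sigma$-independent prefactor $\mu^4$. Your additional remarks on the lattice parametrisation of the aliases and on the cancellation of $\sigma$ are correct and make explicit what the paper leaves implicit.
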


\begin{proof}
As in Lemma \ref{EnSpecKdVLemma} we introduce a $(2N_1+1,2N_2+1)$-periodic interpolating function for $Q_j$ and $P_j$. We denote $\hat{Q}(t,k)$ and $\hat{Q}_K(t)$ as in \eqref{FourierQcont} and \eqref{FourierRel}.

The relation between $\hat{Q}(k)$ and $\hat{q}_k$ can be deduced from \eqref{KdVr1} {and from the rescalings  $y_1=\mu x_1$, $y_2=\mu^\sigma x_2$},
\begin{align}
Q(j) &= \mu^2 q(\mu j_1,\mu^\sigma j_2); \nonumber \\
\hat{Q}({k}) &=  \frac{1}{2} \mu^{(\sigma+1)/2} \, \int_{ \left[ -\frac{1}{\mu},\frac{1}{\mu} \right] \times \left[ -\frac{1}{\mu^\sigma},\frac{1}{\mu^\sigma}  \right] } Q( x_1,x_2) e^{-\ii\pi (k_1 x_1 \mu + k_2 x_2 \mu^\sigma )} \di x_1 \, \di x_2 \nonumber \\
&= \frac{1}{2} \mu^{(\sigma+1)/2} \, \int_{ \left[ -\frac{1}{\mu},\frac{1}{\mu} \right] \times \left[ -\frac{1}{\mu^\sigma},\frac{1}{\mu^\sigma}  \right] }  \mu^2 \, q \left( \mu x_1, \mu^\sigma x_2 \right) e^{-\ii\pi (k_1 x_1 \mu + k_2 x_2 \mu^\sigma )} \di x_1 \, \di x_2 \nonumber \\
&\stackrel{\eqref{KdVr1}}{=} \frac{1}{2} \mu^{(3-\sigma)/2} \int_{I} q(	y) e^{-\ii \pi (k_1 y_1 + k_2 y_2) } \di y \; =  \; \mu^{(3-\sigma)/2} \hat{q}_k, \label{FourierRelQqKdV}
\end{align}
and similarly
\begin{align} \label{FourierRelPpKdV}
\hat{P}{\color{black}(k)} &= \mu^{(1-\sigma)/2} \hat{p}_k.
\end{align}
By using \eqref{EnNormMode}, \eqref{enkappa} and \eqref{FourierRelQqKdV}-\eqref{FourierRelPpKdV} we have
\begin{align*}
\cE_\kappa &\stackrel{\eqref{enkappa}}{=} \mu^{\sigma+1} \, \frac{1}{2}  \sum_{ L=(L_1,L_2) \in \Z^2:\mu L_1, \mu^{\sigma} L_2 \in 2\Z } \left( |\hat{Q}{(K+L)}|^2 + \omega_k^2 |\hat{P}{(K+L)} |^2 \right) \\
&\stackrel{ \eqref{FourierRelQqKdV},\eqref{FourierRelQqKdV}  }{=} \mu^{\sigma+1} \, \mu^{3-\sigma} \frac{1}{2}  \sum_{ L=(L_1,L_2) \in \Z^2:\mu L_1, \mu^{\sigma} L_2 \in 2\Z } \left( |\hat{q}_{K+L}|^2 + \omega_k^2 \left| \frac{\hat{p}_{K+L}}{\mu} \right|^2 \right)
\end{align*}
for all $k$ such that $\kappa(k) = (\mu K_1,\mu^{\sigma} K_2)$, and this leads to \eqref{SpecEnNormModeKdV}.
\end{proof}

\begin{proposition} \label{KdVxietaProp}
Fix $\rho>0$ and $0 <\delta \ll 1$. Consider the normal form system \eqref{KdVsys}, and define the Fourier coefficients of $(\xi,\eta)$ through the following formula
\begin{align}
\xi( y) &= \frac{1}{ 2 } \sum_{h \in \Z^2} \hat{\xi}_h e^{\ii \pi h \cdot y }, \label{FourierXiKdV} \\
\eta( y) &= \frac{1}{ 2 } \sum_{h \in \Z^2} \hat{\eta}_h e^{\ii\pi h \cdot y }, \label{FourierEtaKdV}.
\end{align}
{Suppose that} $(\xi,\eta) \in \cH^{\rho,0}$, and denote by $\cE_\kappa$ the specific energy of the normal mode with index $\kappa$ as defined in \eqref{kappa}-\eqref{enkappa}. Then for any positive $\mu$ sufficiently small 
\begin{align} \label{KdVcoeffxieta}
\left| \cE_\kappa - \mu^{4} \frac{|\hat{\xi}_K|^2+|\hat{\eta}_K|^2}{2} \right| &\leq C \mu^{ 4+3/2 } \|(\xi,\eta)\|_{\cH^{\rho,0}}^2
\end{align}
for all $k$ such that $\kappa(k) = (\mu K_1,\mu^\sigma K_2)$ and $|K_1|+|K_2| \leq \frac{(2+\delta) |\log \mu|}{\rho}$. Moreover,
\begin{align} \label{KdVSpecEnEst}
|\cE_\kappa| &\leq C \, \mu^{ 8 } \|(\xi,\eta)\|_{\cH^{\rho,0}}^2
\end{align}
for all $k$ such that $\kappa(k) = (\mu K_1,\mu^\sigma K_2)$ and $|K_1|+|K_2| > \frac{(2+\delta) |\log \mu|}{\rho}$, and $\cE_\kappa=0$ otherwise.
\end{proposition}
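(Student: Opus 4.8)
The plan is to mirror the proof of Proposition \ref{KPxietaProp}, the only change being that the transverse rescaling is now $y_2 = \mu^\sigma x_2$ with $\sigma > 2$ instead of $\sigma = 2$. I would start from the exact identity of Lemma \ref{EnSpecKdVLemma},
\begin{align*}
\cE_\kappa &= \frac{\mu^4}{2} \sum_{L : \mu L_1, \mu^\sigma L_2 \in 2\Z} \left( |\hat{q}_{K+L}|^2 + \omega_k^2 \left| \frac{\hat{p}_{K+L}}{\mu} \right|^2 \right),
\end{align*}
and rewrite it in the coordinates $(\xi,\eta)$ of \eqref{xieta}. In Fourier variables this reads $\hat{q}_h = \frac{1}{\sqrt{2}}(\hat{\xi}_h + \hat{\eta}_h)$ and $\ii \pi h_1 \hat{p}_h = \frac{1}{\sqrt{2}}(\hat{\xi}_h - \hat{\eta}_h)$, while the constraint \eqref{ZeroAvy1} forces $\hat{\xi}_h = \hat{\eta}_h$ whenever $h_1 = 0$; thus the formally singular factor $1/h_1$ in $\hat{p}_h$ never multiplies a non-zero numerator, which disposes of the $K_1 = 0$ sector. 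I would then split the sum into the diagonal term $L = 0$ and the aliasing tail $L \neq 0$.

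For the diagonal term I would Taylor-expand the dispersion relation \eqref{FreqNormMode}. Since $\kappa(k) = (\mu K_1, \mu^\sigma K_2)$ one has $\frac{\omega_k^2}{\mu^2} = \pi^2 K_1^2 (1 + O(\mu^2 K_1^2)) + \pi^2 \mu^{2\sigma - 2} K_2^2 (1 + O(\mu^{2\sigma} K_2^2))$, where the transverse contribution is of higher order precisely because $\sigma > 2$. Inserting this into $\omega_k^2 |\hat{p}_K/\mu|^2 = \frac{\omega_k^2}{2\pi^2 \mu^2 K_1^2} |\hat{\xi}_K - \hat{\eta}_K|^2$ makes the singular $1/K_1^2$ cancel against the longitudinal part of $\omega_k^2$ to leading order, and combining with $|\hat{q}_K|^2 = \frac{1}{2}|\hat{\xi}_K + \hat{\eta}_K|^2$ the parallelogram identity produces exactly $\mu^4 \frac{|\hat{\xi}_K|^2 + |\hat{\eta}_K|^2}{2}$. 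The leftover is $\frac{\mu^4}{2}|\hat{\xi}_K - \hat{\eta}_K|^2 \cdot O(\mu^2 K_1^2 + \mu^{2\sigma - 2} K_2^2/K_1^2)$; on the low-frequency range $|K_1| + |K_2| \leq \frac{(2+\delta)|\log\mu|}{\rho}$ the bracket is $O(\mu^2 (\log\mu)^2)$, so this is bounded by $C \mu^6 (\log\mu)^2 \|(\xi,\eta)\|_{\cH^{\rho,0}}^2$, which is smaller than the claimed $C\mu^{4+3/2} \|(\xi,\eta)\|_{\cH^{\rho,0}}^2$ for $\mu$ small, giving \eqref{KdVcoeffxieta}.

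For the aliasing tail and the high-frequency estimate \eqref{KdVSpecEnEst} I would use analyticity. The constraints $\mu L_1 \in 2\Z$, $\mu^\sigma L_2 \in 2\Z$ force every non-zero $L$ to satisfy $|L_1| \geq 2N_1 + 1 = 2/\mu$ or $|L_2| \geq 2N_2 + 1 = 2/\mu^\sigma$, hence $|K+L| \geq 1/\mu$ for $\mu$ small; the weighted bound $|\hat{\xi}_{K+L}|^2 \leq e^{-2\rho|K+L|} \|(\xi,\eta)\|_{\cH^{\rho,0}}^2$ then makes each such term super-exponentially small in $\mu$, the momentum factor being harmless since $\omega_k^2 \leq C \mu^2 K_1^2$ on the relevant range. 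For \eqref{KdVSpecEnEst} one keeps the whole sum and uses that on the high-frequency range $e^{-2\rho|K|} < \mu^{2(2+\delta)}$, so the prefactor $\mu^4$ together with $\omega_k^2|\hat{p}_K/\mu|^2 \leq C \|(\xi,\eta)\|_{\cH^{\rho,0}}^2 e^{-2\rho|K|}$ yields $|\cE_\kappa| \leq C \mu^{8 + 2\delta} \|(\xi,\eta)\|_{\cH^{\rho,0}}^2$.

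The step demanding the most care is the diagonal term: one has to verify that the singular factor $1/K_1^2$ carried by $\hat{p}_K$ is exactly compensated by the longitudinal part of $\omega_k^2$ so that no inverse power of $K_1$ survives, that the transverse part of $\omega_k^2$ is genuinely of higher order (here even more so than in the KP regime, since $\sigma > 2$), and that the zero-average constraint \eqref{ZeroAvy1} correctly handles the degenerate $K_1 = 0$ modes. The remaining estimates are a direct transcription of those for Proposition \ref{KPxietaProp}.
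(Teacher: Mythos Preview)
Your proposal is correct and follows essentially the same approach as the paper: start from Lemma~\ref{EnSpecKdVLemma}, pass to the $(\xi,\eta)$ variables, isolate the $L=0$ diagonal term (whose error comes from the Taylor expansion of $\omega_k^2$, controlled by $\mu^2(\log\mu)^2\leq \mu^{3/2}$ via $|\log\mu|\leq\mu^{-1/4}$), and show the aliasing tail $L\neq 0$ is exponentially small in $1/\mu$.

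Two remarks. First, you frame the argument as ``mirroring Proposition~\ref{KPxietaProp}'', but in the paper the logical dependence runs the other way: Proposition~\ref{KdVxietaProp} is proved directly for general $\sigma>2$, and Proposition~\ref{KPxietaProp} is then obtained as the corollary $\sigma=2$. Second, your treatment of the aliasing tail is more compressed than the paper's: the paper performs a careful case split according to $|K_2+L_2|\lessgtr|K_1+L_1|$ and then further into $L_1=0$, $L_2=0$, $L_1L_2\neq 0$, precisely because the factor $\mu^{2(\sigma-1)}(K_2+L_2)^2/(K_1+L_1)^2$ arising from $\omega_k^2|\hat p_{K+L}/\mu|^2$ can be large when $K_1+L_1$ is small --- your claim that ``$\omega_k^2\leq C\mu^2 K_1^2$'' on the relevant range is not sufficient by itself (the transverse part $\mu^{2\sigma}K_2^2$ is not always subleading, since $|K_2|$ may be as large as $N_2\sim\mu^{-\sigma}$). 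The paper's case analysis is what absorbs this; the idea you sketch is right but this is where the extra bookkeeping lives.
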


We defer the proof of the above Proposition to Appendix \ref{ApprEstSec11}. \\

Now, consider the following system of uncoupled KdV equations
\begin{align}
\xi_\tau &= - \frac{1}{24} \d_{y_1}^3 \xi - \frac{\alpha}{2\sqrt{2}} \d_{y_1}(\xi^2), \; \; \label{KdVsys1} \\
\eta_\tau &=  \frac{1}{24} \d_{y_1}^3 \eta + \frac{\alpha}{2\sqrt{2}} \d_{y_1}(\eta^2), \; \; \label{KdVsys2}
\end{align}
and consider a solution $(\tau,y) \mapsto (\widetilde{\xi_a}(\tau,y),\widetilde{\eta_a}(\tau,y))$ such that it belongs to $\cH^{\rho,n}$, for some $n \geq 1$.

We consider the approximate solutions $(Q_a,P_a)$ of the FPU model \eqref{FPUeqc} defined by formulas \eqref{Qappr2}-\eqref{Pappr2}.

We need to compare the difference between the approximate solution \eqref{Qappr2}-\eqref{Pappr2} and the true solution of \eqref{2DETLeq}. { Let us consider \eqref{2DETLeq}, and take an initial datum $(Q_0,P_0)$ with corresponding Fourier coefficients $(\hat{Q}_{0,k},\hat{P}_{0,k})$ given by \eqref{fourierQ}; observe that }
\begin{align} \label{InDatumHyp1}
Q_{0,k} \neq 0 \; \; &\text{only if} \; \;  \kappa(k) = (\mu K_1, \mu^\sigma K_2). 
\end{align}
{Since $Q_0$ and $P_0$ are analytic functions, }
 there exist $C$, $\rho >0 $ such that
\begin{align} \label{InDatumHyp2}
\frac{ |\hat{Q}_{0,k}|^2 + \omega_k^2 |\hat{P}_{0,k}|^2 }{(2N_1+1)(2N_2+1)} &\leq C e^{-2\rho | (\kappa_1(k)/\mu , \kappa_2(k)/\mu^\sigma ) |}.
\end{align}
Moreover, we define an interpolating function for the initial datum $(Q_0,P_0)$ by
\begin{align*}
Q_0(y,t) &= \frac{1}{{ \sqrt{N}} } \sum_{K: \left( \mu^2 |K_1|^2 + \mu^{2\sigma} |K_2|^2 \right)^{1/2} = |\kappa(k)| \leq 1 } \hat{Q}_{0,k}(t) e^{\ii\pi (\mu K_1 y_1 + \mu^\sigma K_2 y_2)},
\end{align*}
and similarly for $y \mapsto P_0(y)$.

\begin{proposition} \label{ApprPropKdV}
Consider \eqref{2DETLeq} with $\sigma > 2$ and $\gamma \geq 1$ such that $\sigma+2\gamma < { \min(4\sigma-5,7) }$. Let us assume that the initial datum satisfies \eqref{InDatumHyp1}-\eqref{InDatumHyp2}, and denote by $(Q(t),P(t))$ the corresponding solution. Consider the approximate solution $(\widetilde{\xi_a}(t,x), \widetilde{\eta_a}(t,x))$ with the corresponding initial datum. Assume that $(\widetilde{\xi_a},\widetilde{\eta_a}) \in \cH^{\rho,n}$ for some $\rho>0$ and for some $n \geq 1$ for all times, and fix $T_0>0$ and $0<\delta \ll 1$.

Then there exists $\mu_0=\mu_0( T_0, {\sigma}, \|( \widetilde{\xi_a}(0),\widetilde{\eta_a}(0) )\|_{\cH^{\rho,n}} )$ such that, if $\mu < \mu_0$, we have that there exists $C>0$ such that
\begin{align} \label{apprDiscrCont}
\sup_j |Q_j(t) - Q_a(t,j)| + |P_j(t) - P_a(t,j)| &\leq C \mu^\gamma, \qquad {\color{black}\forall} \, |t| \leq \frac{T_0}{ \mu^{3} },
\end{align}
where $(Q_a,P_a)$ are given by \eqref{Qappr2}-\eqref{Pappr2}. Moreover,
\begin{align} \label{LowModesAppr}
\left| \cE_\kappa{(t)} - \mu^{4} \frac{|\hat{\xi}_K{(\tau)}|^2+|\hat{\eta}_K{(\tau)}|^2}{2} \right| &\leq C \, \mu^{ 4+\gamma }  {, \, \qquad \forall |t| \leq \frac{T_0}{\mu^3} }
\end{align}
for all $k$ such that $\kappa(k) = (\mu K_1,\mu^\sigma K_2)$ and $|K_1|+|K_2| \leq \frac{(2+\delta) |\log \mu|}{\rho}$. Moreover,
\begin{align} \label{HighModesAppr}
|\cE_\kappa{(t)}| &\leq \mu^{ 4+\gamma }{, \, \qquad \forall |t| \leq \frac{T_0}{\mu^3}}
\end{align}
for all $k$ such that $\kappa(k) = (\mu K_1,\mu^\sigma K_2)$ and $|K_1|+|K_2| > \frac{(2+\delta)|\log \mu|}{\rho}$, and $\cE_\kappa=0$ otherwise.
\end{proposition}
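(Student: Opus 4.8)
The plan is to follow the scheme of \cite{schneider2000counter} and \cite{bambusi2006metastability}. Let $z=(\xi,\eta)$ be the true continuous solution written in the decoupled variables \eqref{xieta}, and let $z_a=(\widetilde{\xi_a},\widetilde{\eta_a})$ be the exact solution of the uncoupled KdV system \eqref{KdVsys1}-\eqref{KdVsys2} from which $(Q_a,P_a)$ is built via \eqref{Qappr2}-\eqref{Pappr2}. I work throughout in the $(\xi,\eta)$ coordinates, in which the unperturbed flow \eqref{h0flow} is a pure translation and hence an isometry of every $\ell^2_{\rho,s}$; this is what prevents a naive exponential loss over the long interval. Writing $\Phi:=\cT_\delta$ for the normalizing map of Theorem \ref{gavthm} with $\delta=\mu^2$ and $\nu=\min(\sigma-2,1)$, the normalized solution $\Phi^{-1}(z)$ obeys the truncated normal form \eqref{KdVsys} up to the remainder $\mu^{2(1+\nu)}=\mu^{\min(2\sigma-2,4)}$ of \eqref{Remthm}, whereas $z_a$ obeys it exactly. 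For $|t|\le T_0/\mu^3$ the KdV time $\mu^2\tau=\mu^3 t$ stays in $[0,T_0]$, so by Theorem \ref{KapPosThm2} the approximate solution remains in $\cH^{\rho,n}$, with a possibly smaller $\rho$, on the whole interval.

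I then estimate $w:=\Phi^{-1}(z)-z_a$ by an energy argument in $\cH^{\rho,n}$. Differentiating $\tfrac12\|w\|_{\rho,n}^2$ along the first-order system, the transport $-\d_{y_1}$ and the dispersion $\d_{y_1}^3$ drop out, being skew-symmetric on $\ell^2_{\rho,n}$, so that only the $\cO(\mu^2)$ nonlinear coupling (recall the KdV nonlinearity carries a factor $\mu^2$) and the source of size $\mu^{\min(2\sigma-2,4)}$ remain; a tame product estimate handles the quadratic and cubic terms in $w$ under a bootstrap keeping $\|w\|$ small. This gives $\tfrac{d}{d\tau}\|w\|_{\rho,n}\lesssim \mu^2\|w\|_{\rho,n}+\mu^{\min(2\sigma-2,4)}$, and over $\tau\in[0,T_0/\mu^2]$ the homogeneous factor is $e^{C\mu^2\cdot T_0/\mu^2}=e^{CT_0}$, bounded uniformly in $\mu$, so $\|w\|_{\rho,n}\lesssim \mu^{\min(2\sigma-4,2)}$. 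The same bound transfers to $z-z_a$, since $z-\Phi(z_a)=\Phi(\Phi^{-1}z)-\Phi(z_a)$ is Lipschitz-controlled by $\|w\|$ and $\Phi(z_a)-z_a=\cO(\mu^2)$ by \eqref{CTthm}; hence $\sup_j|Q_j-Q_a(j)|\le\mu^2\|z-z_a\|_{L^\infty(I)}\lesssim\mu^{\min(2\sigma-2,4)}$, which in the admissible range of $\gamma$ is comfortably below $\mu^\gamma$, so \eqref{apprDiscrCont} is not the binding constraint.

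To reach the spectral statements I combine this with Lemma \ref{EnSpecKdVLemma} and Proposition \ref{KdVxietaProp}. Applied to $z_a$, the latter gives the approximate specific energy as $\mu^4\tfrac{|\hat\xi_K|^2+|\hat\eta_K|^2}{2}+\cO(\mu^{4+3/2})$ on the low modes and $\cO(\mu^8)$ on the high ones, while the elementary inequality $\bigl||a|^2-|b|^2\bigr|\le|a-b|(|a|+|b|)$, summed over the aliasing index $L$ of \eqref{SpecEnNormModeKdV} with Cauchy--Schwarz, bounds $|\cE_\kappa-\cE_\kappa^{a}|$ by $\mu^4\|z-z_a\|_{\ell^2}$ up to an $\cO(1)$ factor. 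Here lies the main obstacle: the crude bound $\|z-z_a\|_{\ell^2}\lesssim\mu^{\min(2\sigma-4,2)}$ produces a conversion error $\cO(\mu^{4+\min(2\sigma-4,2)})$ which, for the very anisotropic range $2<\sigma<3$, fails to stay below the target $\mu^{4+\gamma}$ throughout the claimed interval of $\gamma$, and the $\cO(\mu^{4+3/2})$ of Proposition \ref{KdVxietaProp} is likewise borderline for the larger values of $\gamma$ allowed when $\sigma>3$. The remedy is to estimate the defect mode by mode rather than in a single norm: the transverse term $\mu^{2(\sigma-1)}\d_{y_2}^2 q_a$ feeds only the $y_2$-harmonics and the next longitudinal term $\mu^4\d_{y_1}^6 q_a$ only the high $y_1$-harmonics, and each must be resummed against the exponential decay furnished by the analyticity of $z_a$. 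Carrying out these mode-localized estimates sharply, uniformly over the $\cO(\mu^{-3})$ time scale and without sacrificing the gain from the transport isometry, is the delicate point; requiring every such contribution to fall below $\mu^{4+\gamma}$ is precisely what intersects the various admissibility thresholds into the window $\sigma+2\gamma<\min(4\sigma-5,7)$, and \eqref{LowModesAppr} and \eqref{HighModesAppr} then follow at once.
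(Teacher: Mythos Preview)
Your approach is genuinely different from the paper's, and it has a real gap. The paper never pulls the normal form map back through the true solution; instead it pushes $\cT_{\mu^2}$ \emph{forward} onto the smooth KdV solution to define $(\xi_a,\eta_a)=\cT_{\mu^2}(\widetilde{\xi_a},\widetilde{\eta_a})$, builds $(q_a,p_a)$ from that, restricts to lattice sites, and then compares directly at the level of the \emph{discrete} equations. The error $(E,F)$ defined by $Q_j=\mu^2 q_{a,j}+\mu^{2+\gamma}E_j$, $P_j=\mu p_{a,j}+\mu^{2+\gamma}F_j$ satisfies a finite-dimensional ODE, and the whole proof is a Gronwall estimate on the modified discrete energy
\[
\cG(E,F)=\sum_j\Bigl(\tfrac12 E_j^2+\tfrac12 F_j(-\Delta_1 F)_j+\mu^2\alpha\, q_{a,j}E_j^2\Bigr).
\]
The correction term $\mu^2\alpha q_{a,j}E_j^2$ is the point of the construction: it cancels the otherwise dangerous coupling $-2\alpha\mu^2 q_{a,j}E_j$ in the $\dot F$ equation, and what survives in $\dot{\cG}$ are only source terms of size $\mu^{(\min(4\sigma-5,7)-2\gamma-\sigma)/2}$ and a $\mu^3\cG$ drift, which over $|t|\le T_0/\mu^3$ stays bounded. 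This is precisely where the window $\sigma+2\gamma<\min(4\sigma-5,7)$ enters.

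Your route, by contrast, estimates $w=\Phi^{-1}(z)-z_a$ in $\cH^{\rho,n}$. The difficulty you do not address is that the normal form remainder $X_{\cR^{(1)}}$ is only bounded from $\cH^{\rho,n+s_0}$ to $\cH^{\rho,n}$ (see \eqref{Remthm}); to bound $\langle w,X_{\cR^{(1)}}(\Phi^{-1}z)\rangle_{\rho,n}$ you need $\Phi^{-1}(z)\in\cH^{\rho,n+s_0}$, but your energy only controls $w$ in $\cH^{\rho,n}$, so the bootstrap does not close. The paper sidesteps this entirely because the discrete system is finite dimensional. Moreover, your final paragraph concedes that the crude $\ell^2$ bound on $z-z_a$ does not reach the target $\mu^{4+\gamma}$ and defers to unexecuted ``mode-localized estimates''; this is exactly the work the paper's $\cG$-functional does for free, since it is tailored so that the residual terms already carry the right powers of $\mu$ mode by mode. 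As written, your argument is a plausible outline but not a proof: the derivative loss must be handled (e.g.\ by a shrinking-strip analytic argument or by moving to the discrete setting as the paper does), and the conversion to \eqref{LowModesAppr}--\eqref{HighModesAppr} needs to be carried out rather than asserted.
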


We defer the proof to Appendix \ref{ApprEstSec12}.

\begin{remark} \label{assKdVrem}
The conditions {$\sigma+2\gamma < \min(4\sigma-5,7)$}, which, together with $\gamma>0$, implies the upper bound {$\sigma < 7$} found in the statement of Theorem \eqref{KdVrThm}, is the consequence of a technical condition which allows to estimate the error in the proof of Proposition \ref{ApprPropKdV} (see Claim 2, together with \eqref{EstTimeDer1}-\eqref{EstTimeDer2}).
\end{remark}

\begin{proof}[Proof of Theorem \ref{KdVrThm}]
First we prove \eqref{EnModesKdV}. 

We consider an initial datum as in \eqref{KdVData}; when passing to the continuous approximation \eqref{FPUeqc}, this initial datum corresponds to an initial data $(\xi_0,\eta_0) \in \cH^{\rho_0,n}$ for some $\rho_0>0$ and $n \geq 1$. By Theorem \ref{KapPosThm1} the corresponding sequence of gaps belongs to $\cH^{\rho_0,n}$, and that the solution $(\xi(\tau),\eta(\tau))$ is analytic in a complex strip of width $\rho(t)$. Taking the minimum of such quantities one gets the coefficient $\rho$ appearing in the statement of Theorem \eqref{KdVrThm}. Applying Proposition \ref{ApprPropKdV}, we can deduce the corresponding result for the discrete model \eqref{2DETLeq} and the specific quantities \eqref{enkappa}.
\\ 

Next, we prove \eqref{ApprEnModesKdV}. In order to do so, we exploit the Birkhoff coordinates $(x,y)$ introduced in Theorem \ref{KapPosThm2}; indeed, by rewriting the normal form system \eqref{KdVsys} in Birkhoff coordinates we get that every solution is almost-periodic in time. Now, let us introduce the quantities
\begin{align*}
E_K^{(1)} &:= \left| \hat{\xi}_K \right|^2, \\
E_K^{(2)} &:= \left| \hat{\eta}_K \right|^2,
\end{align*}
then $\tau \mapsto E_K^{(1)}(x(\tau),y(\tau))$ and $\tau \mapsto E_K^{(2)}(x(\tau),y(\tau))$ are almost-periodic. If we set $E_K := \frac{1}{2} \left( E_K^{(1)} + E_K^{(2)} \right)$, we can exploit \eqref{LowModesAppr} of Proposition \ref{ApprPropKdV} to translate the results in terms of the specific quantities $\cE_\kappa$, and we get the thesis.
\end{proof}

\subsection{The one-dimensional NLS regime} \label{appr1DNLSsubsec}

Let $\beta >0$ and let $I$ be as in \eqref{I}, we define the Fourier coefficients of the function $q:I \to \R$ by

\begin{align} \label{FourierqcontKG}
\hat{q}_{\color{black} k} \;&:=\; \frac{1}{2} \int_I q(y_1,y_2) \, e^{-\ii \pi ({k}_1 y_1 + {k}_2 y_2 ) } \di y_1 \, \di y_2,
\end{align}
and similarly for the Fourier coefficients of the function $p$. 

\begin{lemma} \label{EnSpec1DNLSLemma}
Consider the lattice \eqref{Ham2KGs} in the regime (1D NLS) and with interpolating function \eqref{1DNLSr}. Then for a state corresponding to $(q,p)$ one has
\begin{align} \label{SpecEnNormMode1DNLS}
\cE_\kappa \;&=\; \frac{\mu^{2}}{2}  \sum_{ L=(L_1,L_2) \in \Z^2:\mu L_1, \mu^\sigma L_2 \in 2\Z } \left( |\hat{p}_{K+L}|^2 + \omega_k^2 |\hat{q}_{K+L}|^2\right) \, , \qquad \forall k : \kappa(k) = (\mu K_1,\mu^\sigma K_2)
\end{align}
(where the $\omega_k$ are defined as in \eqref{FreqNormModeKG}), 
and $\cE_\kappa=0$ otherwise.
\end{lemma}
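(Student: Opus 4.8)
The plan is to follow verbatim the argument already used for Lemma \ref{EnSpecKdVLemma}, adapting it to the Klein--Gordon energy \eqref{EnNormModeKG} and to the amplitude scaling of the ansatz \eqref{1DNLSr}. First I would introduce a smooth $(2N_1+1,2N_2+1)$-periodic interpolating function for $Q_j$, and an analogous one for $P_j$, and record the continuous Fourier transform $\hat{Q}(k)$ as in \eqref{FourierQcont}. Evaluating the interpolant at the lattice sites and splitting the integer index modulo the periods produces the aliasing (folding) identity \eqref{FourierRel}, $\hat{Q}_K = \sum_{h \in \Z^2} \hat{Q}(K_1+(2N_1+1)h_1, K_2+(2N_2+1)h_2)$, and likewise for $\hat{P}_K$. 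This is precisely what makes all wavevectors of the form $K+L$, with $\mu L_1, \mu^\sigma L_2 \in 2\Z$, contribute to the single specific mode $\kappa = (\mu K_1, \mu^\sigma K_2)$.

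The second step is to pin down the scaling relations between the continuous transforms and the rescaled coefficients $\hat{q}_k,\hat{p}_k$ of \eqref{FourierqcontKG}. Inserting the ansatz \eqref{1DNLSr}, performing the change of variables $y_1=\mu x_1$, $y_2=\mu^\sigma x_2$ (whose Jacobian contributes $\mu^{-(1+\sigma)}$) and using $2N_1+1 = 2/\mu$, $2N_2+1 = 2\mu^{-\sigma}$ in the prefactor $N^{-1/2} = \tfrac12 \mu^{(1+\sigma)/2}$, exactly as in \eqref{FourierRelQqKdV}, I obtain $\hat{Q}(k) = \mu^{(1-\sigma)/2}\hat{q}_k$. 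Running the identical computation for the conjugate momentum, whose interpolant obeys the analogous ansatz $P=\mu\,p(\mu^2 t,\mu x_1,\mu^\sigma x_2)$ carrying the same amplitude power as $Q$ (reflecting the symmetric roles of $\tfrac{p^2}{2}$ and $\tfrac{q^2}{2}$ in \eqref{Ham2KGc1DNLS}, in contrast with the ETL case where $Q$ and $P$ scale with different powers), yields $\hat{P}(k) = \mu^{(1-\sigma)/2}\hat{p}_k$.

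Finally I would assemble the two ingredients. By \eqref{enkappa} and $(N_1+\tfrac12)(N_2+\tfrac12) = \mu^{-(1+\sigma)}$ one has $\cE_\kappa = \mu^{1+\sigma} E_k$, and inserting the Klein--Gordon energy \eqref{EnNormModeKG} together with the folded identity gives $\cE_\kappa = \mu^{1+\sigma}\tfrac12 \sum_{L:\,\mu L_1,\mu^\sigma L_2 \in 2\Z}\bigl(|\hat{P}(K+L)|^2 + \omega_k^2 |\hat{Q}(K+L)|^2\bigr)$, valid only when $\kappa(k) = (\mu K_1,\mu^\sigma K_2)$ and zero otherwise. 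Substituting the two scaling relations, the common factor $\mu^{1-\sigma}$ factors out, the powers combine as $\mu^{1+\sigma}\cdot\mu^{1-\sigma}=\mu^2$, and, crucially, since $\hat{Q}$ and $\hat{P}$ scale with the same power, no residual factor $\mu^{-1}$ is left on the momentum term (unlike \eqref{SpecEnNormModeKdV}), producing exactly \eqref{SpecEnNormMode1DNLS}.

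The computation is routine modulo bookkeeping; the only genuinely delicate point is the aliasing step, i.e. correctly identifying which continuous modes fold onto a given discrete mode $\kappa$ and verifying that only the shifts $L$ with $\mu L_1, \mu^\sigma L_2 \in 2\Z$ survive. A secondary point to state with care is the momentum scaling $\hat{P}(k)=\mu^{(1-\sigma)/2}\hat{p}_k$, which must be read off from the canonical pairing defining \eqref{Ham2KGc1DNLS} rather than from the naive relation $P=\dot Q$; once these are in place the remainder is an exponent count identical to the one already carried out in the proof of Lemma \ref{EnSpecKdVLemma}.
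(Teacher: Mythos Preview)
Your proposal is correct and follows essentially the same approach as the paper: introduce the interpolating functions and the aliasing identity \eqref{FourierRel}, derive the scaling relations $\hat{Q}(k)=\mu^{(1-\sigma)/2}\hat q_k$ and $\hat{P}(k)=\mu^{(1-\sigma)/2}\hat p_k$ from the ansatz \eqref{1DNLSr}, and then combine with \eqref{EnNormModeKG} and \eqref{enkappa} to obtain the factor $\mu^{\sigma+1}\cdot\mu^{1-\sigma}=\mu^2$. Your additional commentary on why no residual $\mu^{-1}$ appears on the momentum term (in contrast with \eqref{SpecEnNormModeKdV}) is a helpful observation that the paper leaves implicit.
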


\begin{proof}
We introduce a $(2N_1+1,2N_2+1)$-periodic interpolating function for $Q_j$ and $P_j$. We denote $\hat{Q}(t,k)$ and $\hat{Q}_k(t)$ as in \eqref{FourierQcont} and \eqref{FourierRel}. By the interpolation property we obtain \eqref{FourierRel}.

The relation between $\hat{Q}(t,k)$ and $\hat{q}_k(t)$ can be deduced from \eqref{1DNLSr} {and from the rescalings $\tau=\mu t$, $y_1=\mu x_1$, $y_2=\mu^\sigma x_2$},
\begin{align}
Q(j) &= \mu q(\mu j_1,\mu^\sigma j_2); \nonumber \\
\hat{Q}{\color{black}(k)} &=  \frac{1}{2} \mu^{(\sigma+1)/2} \, \int_{ \left[ -\frac{1}{\mu},\frac{1}{\mu} \right] \times \left[ -\frac{1}{\mu^\sigma},\frac{1}{\mu^\sigma}  \right] } Q(x_1,x_2)\, e^{-\ii\pi (k_1 x_1 \mu + k_2 x_2 \mu^\sigma )} \di x_1 \, \di x_2 \nonumber \\
&\stackrel{\eqref{1DNLSr}}{=} \frac{1}{2} \mu^{(\sigma+1)/2} \, \int_{ \left[ -\frac{1}{\mu},\frac{1}{\mu} \right] \times \left[ -\frac{1}{\mu^\sigma},\frac{1}{\mu^\sigma}  \right] }  \mu \, q \left( \mu x_1, \mu^\sigma x_2 \right) e^{-\ii\pi (k_1 x_1 \mu + k_2 x_2 \mu^\sigma )} \di x_1 \, \di x_2 \nonumber \\
&= \frac{1}{2} \mu^{(1-\sigma)/2} \int_{I} q(y)\, e^{-\ii \pi (k_1 y_1 + k_2 y_2) } \di y \; = \; \mu^{(1-\sigma)/2} \hat{q}_k, \label{FourierRelQq2}
\end{align}
and similarly
\begin{align} \label{FourierRelPp2}
\hat{P}{\color{black}(k)} &= \mu^{(1-\sigma)/2} \hat{p}_k.
\end{align}
By using \eqref{EnNormModeKG}, \eqref{enkappa}, \eqref{FourierRel} and \eqref{FourierRelPp2} we have
\begin{align*}
\cE_\kappa &\stackrel{\eqref{enkappa}}{=} \mu^{\sigma+1} \, \frac{1}{2}  \sum_{ L=(L_1,L_2) \in \Z^2:\mu L_1, \mu^{\sigma} L_2 \in 2\Z } \left(|\hat{P}{\color{black}({K+L})}|^2 + \omega_k^2 |\hat{Q}{\color{black}({K+L})} |^2 \right)\\
&\stackrel{ \eqref{FourierRelQq2}  }{=} \mu^{\sigma+1} \, \mu^{1-\sigma} \frac{1}{2}  \sum_{ L=(L_1,L_2) \in \Z^2:\mu L_1, \mu^{\sigma} L_2 \in 2\Z } \left( |\hat{p}_{K+L}|^2 + \omega_k^2 |\hat{q}_{K+L}|^2 \right)
\end{align*}
for all $k$ such that $\kappa(k) = (\mu K_1,\mu^{\sigma} K_2)$, and this leads to \eqref{SpecEnNormMode1DNLS}.
\end{proof}

\begin{proposition} \label{1DNLSpsiProp}
Fix $\rho>0$ and $0 < \delta \ll 1$. Consider the normal form equation \eqref{1DimNLSeq}, and define the Fourier coefficients of $(\psi,\bar\psi)$ through the following formula
\begin{align}
\psi(y) &= \frac{1}{ 2 } \sum_{h \in \Z^2} \hat{\psi}_h  e^{\ii\pi h \cdot y }, \label{FourierPsi1D} 
\end{align}
{Suppose that} $(\psi,\bar\psi) \in \cH^{\rho,0}$, and denote by $\cE_\kappa$ the specific energy of the normal mode with index $\kappa$ as defined in \eqref{kappa}-\eqref{enkappa}. Then for any positive $\mu$ sufficiently small 
\begin{align} \label{1DNLScoeffpsi}
\left| \cE_\kappa - \mu^{2} \frac{|\hat{\psi}_K|^2}{2} \right| &\leq C \mu^{ 2+ {3/2} } \|(\psi,\bar\psi)\|_{\cH^{\rho,0}}^2
\end{align}
for all $k$ such that $\kappa(k) = (\mu K_1,\mu^\sigma K_2)$ and $|K_1|+|K_2| \leq \frac{(2+\delta) |\log \mu|}{\rho}$. Moreover,
\begin{align} \label{1DNLSSpecEnEst}
|\cE_\kappa| &\leq C \, \mu^{ {6} } \|(\psi,\bar\psi)\|_{\cH^{\rho,1}}^2
\end{align}
for all $k$ such that $\kappa(k) = (\mu K_1,\mu^\sigma K_2)$ and $|K_1|+|K_2| > \frac{(2+\delta) |\log \mu|}{\rho}$, and $\cE_\kappa=0$ otherwise.
\end{proposition}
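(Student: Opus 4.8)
The plan is to follow verbatim the scheme of Propositions \ref{KPxietaProp} and \ref{KdVxietaProp}, whose proofs are collected in Appendix \ref{ApprEstSec11}: reduce the statement to the exact identity of Lemma \ref{EnSpec1DNLSLemma}, and then estimate the error made by (i) replacing the true frequency $\omega_k$ by its low-frequency limit and (ii) discarding the aliased modes. Concretely, I would start from
\[
\cE_\kappa = \frac{\mu^2}{2} \sum_{L:\,\mu L_1,\,\mu^\sigma L_2 \in 2\Z} \left( |\hat p_{K+L}|^2 + \omega_k^2\,|\hat q_{K+L}|^2 \right),
\]
and insert the change of variables \eqref{qp}, which in Fourier reads $\hat q_h = \tfrac{1}{\sqrt2}(\hat\psi_h + \overline{\hat\psi_{-h}})$, $\hat p_h = \tfrac{\ii}{\sqrt2}(\hat\psi_h - \overline{\hat\psi_{-h}})$, together with the reality relations $\hat q_{-h}=\overline{\hat q_h}$, $\hat p_{-h}=\overline{\hat p_h}$. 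This expresses each summand through $\hat\psi$; keeping $L=0$ and letting $\omega_k^2\to1$ then isolates the principal term of \eqref{1DNLScoeffpsi}, while everything else is relegated to the remainder.

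The feature that makes the Klein--Gordon case land on the unweighted variable \eqref{psi} is that, by \eqref{FreqNormModeKG} with $m=1$,
\[
\omega_k^2 = 1 + 4\sin^2\!\Big(\tfrac{\mu K_1\pi}{2}\Big) + 4\sin^2\!\Big(\tfrac{\mu^\sigma K_2\pi}{2}\Big),
\]
so $\omega_k^2\to1$ on the low modes, which is exactly why the leading contribution is $\cO(\mu^2)$ and carries no extra power of the frequency. On the range $|K_1|+|K_2|\le (2+\delta)|\log\mu|/\rho$ this gives $\omega_k^2-1 = \cO(\mu^2|\log\mu|^2)$, so replacing $\omega_k^2$ by $1$ in the $L=0$ term costs only $\cO(\mu^4|\log\mu|^2)\,\|(\psi,\bar\psi)\|_{\cH^{\rho,0}}^2$, comfortably inside the claimed remainder $C\mu^{2+3/2}\|(\psi,\bar\psi)\|_{\cH^{\rho,0}}^2$.

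The aliased terms $L\neq0$ I would control by analyticity. The condition $\mu L_1,\mu^\sigma L_2\in2\Z$ forces $L=((2N_1+1)\ell_1,(2N_2+1)\ell_2)$ with $(\ell_1,\ell_2)\neq0$; since $2N_1+1=2/\mu$ and $2N_2+1=2\mu^{-\sigma}$ with $\sigma>1$, every nonzero alias obeys $|K+L|\gtrsim 1/\mu$, and $(\psi,\bar\psi)\in\cH^{\rho,0}$ yields $|\hat\psi_{K+L}|^2\le e^{-2\rho|K+L|}\|(\psi,\bar\psi)\|_{\cH^{\rho,0}}^2$. As $\omega_k^2$ is uniformly bounded, the full $L\neq0$ contribution is $\cO(\mu^2 e^{-c/\mu})\|(\psi,\bar\psi)\|_{\cH^{\rho,0}}^2$, super-exponentially small; collecting this with the frequency error yields \eqref{1DNLScoeffpsi}. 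For the high modes $|K_1|+|K_2|>(2+\delta)|\log\mu|/\rho$ I would keep the whole sum but bound each summand by the same analytic estimate: already the $L=0$ term satisfies $|\hat\psi_K|^2\le e^{-2\rho|K|}\|(\psi,\bar\psi)\|_{\cH^{\rho,0}}^2<\mu^{4+2\delta}\|(\psi,\bar\psi)\|_{\cH^{\rho,0}}^2$, and the weight $s=1$ appearing in \eqref{1DNLSSpecEnEst} is what I would use to sum the aliased tail, producing $|\cE_\kappa|\le C\mu^6\|(\psi,\bar\psi)\|_{\cH^{\rho,1}}^2$.

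I expect the main difficulty to be the uniform bookkeeping in the low-mode regime: one must follow the dependence on $|K|$ through the logarithmic cutoff and verify that the frequency correction, the conjugate-mode contribution arising from \eqref{qp}, and the aliasing error all fit under the single power $\mu^{2+3/2}$, uniformly in $\kappa$. Once this is arranged the rest is identical to the ETL computation of Appendix \ref{ApprEstSec11}, the sole structural change being that here $\omega_k^2\to1$ (rather than $\omega_k^2/\mu^2\to\pi^2 K_1^2$), which accounts for the different power of $\mu$ in the normalization.
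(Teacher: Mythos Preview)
Your proposal is correct and follows essentially the same route as the paper's proof in Appendix \ref{ApprEstSec21}: start from Lemma \ref{EnSpec1DNLSLemma}, bound the frequency defect $|\omega_k^2-1|=\cO(\mu^2|\log\mu|^2)$ on the low-mode window, and control the aliased sum $L\neq0$ via the exponential decay coming from $(\psi,\bar\psi)\in\cH^{\rho,0}$. One minor remark: in the paper the high-mode estimate is carried out entirely with the $\cH^{\rho,0}$ norm (the factors $(K_i+L_i)^2$ are absorbed by direct summation of $\sum e^{-2\rho|K+L|}(K_i+L_i)^2$, as in \eqref{Est1HighModesTerm2}--\eqref{DecompHighModesTerm33}), rather than by invoking the extra Sobolev weight; your idea of using $s=1$ to sum the tail is a harmless variant.
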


We defer the proof of the above Proposition to Appendix \ref{ApprEstSec21}. \\

Now, consider the normal form equation, namely the following cubic defocusing one-dimensional NLS
\begin{align} \label{1DimNLSeq1}
-\ii \psi_{\tau} &= - \; \d_{y_1}^2\psi + \frac{3 \beta}{4} \, |\psi|^2\psi.
\end{align}
and consider a solution $(\widetilde{\psi_a},\bar{\widetilde{\psi_a}})$ such that it belongs to $\cH^{\rho,n}$, for some $n > 0$.

We consider the approximate solutions $(Q_a,P_a)$ of the KG lattice \eqref{Ham2KGs} (in the following $\tau=\mu^2 t$)
\begin{align}
Q_a({\color{black}t},y) &:= \frac{\mu}{\sqrt{2}} \left[ e^{\ii\tau} \widetilde{\psi_a}(\tau,y_1,y_2) + e^{-\ii\tau} \bar{\widetilde{\psi_a}}(\tau,y_1,y_2)  \right] \, , \label{Qappr3} \\
P_a({\color{black}t},y) &:= \frac{\mu}{\sqrt{2} \ii} \left[ e^{\ii\tau} \widetilde{\psi_a}(\tau,y_1,y_2) + e^{-\ii\tau} \bar{\widetilde{\psi_a}}(\tau,y_1,y_2)  \right] \, . \label{Pappr3}
\end{align}

We need to compare the difference between the approximate solution \eqref{Qappr3}-\eqref{Pappr3} and the true solution of \eqref{Ham2KGs}. Let us consider \eqref{Ham2KGs}, and take an initial datum $(Q_0,P_0)$ with corresponding Fourier coefficients $(\hat{Q}_{0,k},\hat{P}_{0,k})$ given by \eqref{fourierQ}; {observe that}
\begin{align} \label{InDatumHyp31}
\hat Q_{0,k} \neq 0 \; \; &\text{only if} \; \;  \kappa(k) = (\mu K_1, \mu^\sigma K_2). 
\end{align}
{Since $P_0$ and $Q_0$ are analytic functions,}  there exist $C$, $\rho >0 $ such that
\begin{align} \label{InDatumHyp32}
\frac{ |\hat{P}_{0,k}|^2 + \omega_k^2 |\hat{Q}_{0,k}|^2 }{N} &\leq C e^{-2\rho | (\kappa_1(k)/\mu , \kappa_2(k)/\mu^\sigma ) |}.
\end{align}
Moreover, we define an interpolating function for the initial datum $(Q_0,P_0)$ by 
\begin{align*}
Q_0({\color{black}t},y) &= \frac{1}{ \color{black}\sqrt{N} } \sum_{K: \left( \mu^2 |K_1|^2 + \mu^{2\sigma} |K_2|^2 \right)^{1/2} = |\kappa(k)| \leq 1 } \hat{Q}_{0,{K}}({\color{black}t})\, e^{\ii\pi (\mu K_1 y_1 + \mu^\sigma K_2 y_2)},
\end{align*}
and similarly for $y \mapsto P_0(y)$.

\begin{proposition} \label{ApprProp1DNLS}
Consider \eqref{Ham2KGs} with $\sigma > 1$ and $\gamma > 0$ such that $\sigma+2\gamma < {\min(4\sigma-1,7)}$. Let us assume that the initial datum satisfies \eqref{InDatumHyp31}-\eqref{InDatumHyp32}, and denote by $(Q_{\color{black}j}(t),P_{\color{black}j}(t))_{\color{black}j \in \mathbb{Z}_{N_1,N_2}}$ the corresponding solution. Consider the approximate solution $(\widetilde{\psi_a}(t,x), \bar{\widetilde{\psi_a}}(t,x))$ with the corresponding initial datum. Assume that $(\widetilde{\psi_a},\bar{\widetilde{\psi_a}}) \in \cH^{\rho,n}$ for some $\rho>0$ and for some $n \geq 0$ for all times, and fix $T_0>0$ and $0<\delta \ll 1$.

Then there exists $\mu_0=\mu_0( T_0, {\sigma,} \|( \widetilde{\psi_a}(0),\bar{\widetilde{\psi_a}}(0) )\|_{\cH^{\rho,n}} )$ such that, if $\mu < \mu_0$, we have that there exists $C>0$ such that
\begin{align} \label{apprDiscrCont3}
\sup_j |Q_j(t) - Q_a(t,j)| + |P_j(t) - P_a(t,j)| &\leq C \mu^\gamma, \qquad {\color{black} \forall } \, |t| \leq \frac{T_0}{ \mu^{2} },
\end{align}
where $(Q_a,P_a)$ are given by \eqref{Qappr3}-\eqref{Pappr3}. Moreover,
\begin{align} \label{LowModesApprNLS}
\left| \cE_\kappa{(t)} - \mu^{2} \frac{|\hat{\psi}_K{(\tau)}|^2}{2} \right| &\leq C \, \mu^{ 2+\gamma } \, , \qquad {\color{black}|t| \leq \frac{T_0}{ \mu^{2} }}
\end{align}
for all $k$ such that $\kappa(k) = (\mu K_1,\mu^\sigma K_2)$ and $|K_1|+|K_2| \leq \frac{(2+\delta) |\log \mu|}{\rho}$. Moreover,
\begin{align} \label{HighModesApprNLS}
|\cE_\kappa{(t)}| &\leq \mu^{ 2+\gamma } \, , \qquad {\color{black}|t| \leq \frac{T_0}{ \mu^{2} }}
\end{align}
for all $k$ such that $\kappa(k) = (\mu K_1,\mu^\sigma K_2)$ and $|K_1|+|K_2| > \frac{(2+\delta)|\log \mu|}{\rho}$, and $\cE_\kappa=0$ otherwise.
\end{proposition}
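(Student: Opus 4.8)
The plan is to follow the scheme of \cite{bambusi2006metastability} and \cite{schneider2000counter}, exactly parallel to the KdV result of Proposition \ref{ApprPropKdV} (proved in Appendix \ref{ApprEstSec12}) but adapted to the hyperbolic NLKG dynamics, and to defer the bookkeeping to Appendix \ref{ApprEstSec22}. The argument proceeds in three stages: a \emph{consistency} estimate showing that the approximate field \eqref{Qappr3}--\eqref{Pappr3} solves the lattice equation \eqref{2DKGseq} up to a small residual; a \emph{long-time energy} estimate controlling the difference between the true and approximate solutions on $|t|\le T_0/\mu^2$, which gives \eqref{apprDiscrCont3}; and finally the translation of this difference into the bounds \eqref{LowModesApprNLS}--\eqref{HighModesApprNLS} on the specific energies $\cE_\kappa$ via Lemma \ref{EnSpec1DNLSLemma} and Proposition \ref{1DNLSpsiProp}.

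First I would substitute the rescaled approximate field associated to \eqref{Qappr3} into the rescaled equation \eqref{1DKGsc2}. Since $\widetilde{\psi_a}$ solves the normal-form equation \eqref{1DimNLSeq1}, the non-oscillatory leading terms cancel by construction of the averaged Hamiltonian \eqref{asexp4psi}, and the residual collects two contributions: the dispersive defect produced by truncating the symbol expansion \eqref{ex2Delta1sigma}, which is of order $\mu^{\min(2\sigma,4)}$ relative to the leading term $\partial_{y_1}^2$, and the fast, $e^{\pm 2\ii\tau},e^{\pm4\ii\tau}$-oscillating quartic terms discarded in passing from $\Ham F_4$ to $\langle \Ham F_4\rangle$. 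The latter are removed to higher order either by an auxiliary near-identity correction of $Q_a$ or by integrating by parts in time in the Duhamel formula, so that the effective residual is $\cO(\mu^{2+\nu})$ with $\nu=\min(\sigma-1,1)$; because $\sigma>1$ forces $2+\nu\le\min(2\sigma,4)$, this dominates the dispersive defect. All of this is estimated in a norm $\cH^{\rho',n'}$ with a slightly smaller analyticity width $\rho'<\rho$ and a fixed loss of derivatives, which is affordable since $(\widetilde{\psi_a},\bar{\widetilde{\psi_a}})\in\cH^{\rho,n}$ for all times.

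Second I would set $R:=Q-Q_a$ and derive its equation: a linear NLKG equation with the positive operator $1-\Delta_1$ (bounded below by $1$), forced by the residual above and by the remaining nonlinear terms, which are at least quadratic in $R$ and linear in $R$ with coefficient $3\beta Q_a^2=\cO(\mu^2)$. Using the energy associated to $1-\Delta_1$ and a Gronwall argument, the key point is that this $\cO(\mu^2)$ nonlinear prefactor multiplied by the window length $T_0/\mu^2$ keeps the Gronwall exponential bounded, so $R$ is controlled by $T_0/\mu^2$ times the residual; a standard bootstrap closes the genuinely nonlinear-in-$R$ contributions once $R$ is known to be small. Undoing the rescaling and using the Sobolev embedding $\cH^{\rho',n'}\hookrightarrow\ell^\infty$ then yields \eqref{apprDiscrCont3} with the stated $C\mu^\gamma$, after tracking the powers of $\mu$. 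The admissible range $0<\gamma\le\gamma_1(\sigma)$ and the upper bound $\sigma<7$ emerge, just as in the KdV case (cf. Remark \ref{assKdVrem}), from balancing the residual order and the loss of analyticity against the number of $y_1$- and $\tau$-derivatives of $\widetilde{\psi_a}$ that must be bounded when passing from the continuous interpolation to the discrete sup-norm, which is precisely the constraint $\sigma+2\gamma<\min(4\sigma-1,7)$.

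Finally, for \eqref{LowModesApprNLS}--\eqref{HighModesApprNLS} I would pass from the sup-norm bound to the specific energies. Writing $\cE_\kappa$ through the Fourier coefficients of the interpolating functions (Lemma \ref{EnSpec1DNLSLemma}) and linearising the quadratic energy around the approximate field via $|a|^2-|b|^2=(a-b)(\bar a+\bar b)$, the difference $|\cE_\kappa(t)-\mu^2|\hat{\psi}_K(\tau)|^2/2|$ splits into the term already controlled by Proposition \ref{1DNLSpsiProp} applied to $\widetilde{\psi_a}$ and a cross term bounded by the product of the energy of $Q_a$ and the $\cH$-difference $\|R\|$; both are $\cO(\mu^{2+\gamma})$ in the low-mode range $|K_1|+|K_2|\le(2+\delta)|\log\mu|/\rho$, giving \eqref{LowModesApprNLS}, while the high-mode bound \eqref{HighModesApprNLS} follows from the exponential decay of $\hat{\psi}_K$ together with the tail estimate \eqref{1DNLSSpecEnEst}. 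I expect the main obstacle to be the second stage: closing the long-time energy estimate while simultaneously controlling the discrete-versus-continuous discrepancy and the shrinkage of the analyticity width, since it is exactly the derivative count needed there that produces the upper bound on $\sigma$.
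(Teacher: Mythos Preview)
Your three-stage outline is sound and would lead to a proof, but it diverges from the paper's actual argument in two substantive ways, and your identification of where the constraint $\sigma+2\gamma<\min(4\sigma-1,7)$ comes from is not correct.

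First, the paper does \emph{not} handle the $e^{\pm 2\ii\tau},e^{\pm 4\ii\tau}$-oscillating residual by Duhamel integration by parts or by an ad hoc correction. Instead it uses the canonical transformation $\cT_{\mu^2}$ already constructed in Theorem \ref{gavthm}: one sets $(\psi_a,\bar\psi_a)=\cT_{\mu^2}(\widetilde{\psi_a},\bar{\widetilde{\psi_a}})=\widetilde\zeta_a+\phi_a(\widetilde\zeta_a)$ with $\|\phi_a\|\le C\mu^2$, and builds $q_a,p_a$ from $\psi_a$ rather than from $\widetilde{\psi_a}$. Because $\cT_{\mu^2}$ conjugates the full flow to the normal form flow up to the remainder, the pair $(q_a,p_a)$ satisfies the first-order system $\mu(q_a)_t=\mu p_a+\mu^{\min(2\sigma+1,5)}\cR_q$, $\mu(p_a)_t=-\mu q_a+\mu\Delta_1 q_a-\mu^3\beta\,\overline{\pi_0}q_a^3+\mu^{\min(2\sigma+1,5)}\cR_p$, with $\cR_q,\cR_p$ bounded in $\ell^2_{\rho,0}$, $\ell^2_{\rho,1}$. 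The price is that $E_0,F_0$ are now nonzero, of size $\mu^{2-\gamma}$ pointwise.

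Second, the paper does \emph{not} run the energy estimate in an analytic norm with shrinking width; it restricts everything to the lattice $\Z^2_{N_1,N_1^\sigma}$ and uses the finite-dimensional $\ell^2_{N_1,N_1^\sigma}$ norm. Writing $Q=\mu q_a+\mu^{1+\gamma}E$, $P=\mu p_a+\mu^{1+\gamma}F$, it introduces a \emph{modified} energy
\[
\cG(E,F)=\sum_j\Bigl(\tfrac{F_j^2+E_j^2+E_j(-\Delta_1E)_j}{2}+\tfrac{3\mu^2\beta q_a^2E_j^2+3\mu^{2+\gamma}\beta q_aE_j^3}{2}\Bigr),
\]
whose cubic corrections are chosen precisely so that the $3\mu^2\beta q_a^2E$ and $3\mu^{2+\gamma}\beta q_aE^2$ terms in $\dot F$ cancel against the contributions from the corrections in $\dot\cG$; only then does $\dot\cG$ reduce to terms of order $\mu^2\cG$ and forcing terms involving $\cR_q,\cR_p$, after which Gronwall over $|t|\le T_0/\mu^2$ closes.

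The constraint on $\sigma$ has nothing to do with analyticity shrinkage. It comes from the fact that the remainders $\cR_q,\cR_p$ are bounded \emph{pointwise} (via the analytic norm of $\widetilde{\psi_a}$), so their discrete $\ell^2_{N_1,N_1^\sigma}$ norms pick up a factor $\sqrt{\#\text{sites}}\sim\mu^{-(1+\sigma)/2}$. Tracking this factor through $\dot\cG$ produces the forcing term $\mu^{(\min(4\sigma-1,7)-2\gamma-\sigma)/2}$, and requiring it to be bounded yields $\sigma+2\gamma<\min(4\sigma-1,7)$ (see \eqref{EstTimeDer31}--\eqref{EstTimeDer33}). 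Your continuous-norm route would not naturally produce the same exponent; if you pursued it you would get a different (likely weaker) constraint.
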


We defer the proof to Appendix \ref{ApprEstSec22}.

\begin{remark} \label{assNLSrem}
The condition $\sigma+2\gamma < {\min(4\sigma-1,7)}$, which, together with $\gamma>0$, implies the upper bound $\sigma < 7$ found in the statement of Theorem \eqref{1DNLSrThm}, is the consequence of a technical condition which allows to estimate the error in the proof of Proposition \ref{ApprProp1DNLS} (see Claim 2, together with \eqref{EstTimeDer31}-\eqref{EstTimeDer33}).
\end{remark}

\begin{proof}[Proof of Theorem \ref{1DNLSrThm}]
First we prove \eqref{EnModes1DNLS}. 

We consider an initial datum as in \eqref{1DNLSData}; when passing to the continuous approximation \eqref{HamKGsc}, this initial datum corresponds to an initial data $(\xi_0,\eta_0) \in \cH^{\rho_0,n}$. By Theorem \ref{GrebKapThm1} the corresponding sequence of gaps belongs to $\cH^{\rho_0,n}$, and that the solution $(\xi(\tau),\eta(\tau))$ is analytic in a complex strip of width $\rho(t)$. Taking the minimum of such quantities one gets the coefficient $\rho$ appearing in the statement of Theorem \eqref{1DNLSrThm}. Applying Proposition \ref{ApprProp1DNLS}, we can deduce the corresponding result for the discrete model \eqref{2DKGseq} and the specific quantities \eqref{enkappa}.
\\ 

Next, we prove \eqref{ApprEnModes1DNLS}. In order to do so, we exploit the Birkhoff coordinates $(x,y)$ introduced in Theorem \ref{GrebKapThm2}; indeed, by rewriting the normal form system \eqref{1DimNLSeq} in Birkhoff coordinates we get that every solution is almost-periodic in time. Now, let us introduce the quantity
\begin{align*}
E_K &:= \frac{1}{2} \, \left| \hat{\psi}_K \right|^2, 
\end{align*}
then $\tau \mapsto E_K(x(\tau),y(\tau))$ is almost-periodic. Hence we can exploit \eqref{LowModesApprNLS} of Proposition \ref{ApprProp1DNLS} to translate the results in terms of the specific quantities $\cE_\kappa$, and we get the thesis.
\end{proof}

\begin{appendix}

\section{Proof of Lemma \ref{NFest}} \label{BNFest}

This appendix is devoted to the proof of the Lemma \ref{NFest}, which is a key step to normalize the system \eqref{truncsys}. Its proof is an adaptation of Theorem 4.4 in \cite{bambusi1999nekhoroshev} and it is based on the method of Lie transform, briefly recalled in the following. Throughout this Section, we consider  $s \geq s_1$ and $\rho \geq 0$ to be fixed quantities.

Given an auxiliary function $\chi$ analytic on $\cH^{\rho,s}$, 
we consider the auxiliary differential equation
\begin{align} \label{auxDE}
\dot \zeta &= X_\chi(\zeta)
\end{align}
and denote by $\Phi^t_\chi$ its flow at time $t$. 

\begin{lemma} \label{cauchylemma}
Let $\chi$ and its vector field be analytic in $\cB_{\rho,s}(R)$. 
Fix ${d}<R$, and assume that 
\begin{align*}
\sup_{\zeta \in \cB_{\rho,s}(R)} \|X_\chi(\zeta)\|_{\cH^{\rho,s}} &\leq {d}.
\end{align*}
Then, if we consider the time-$t$ flow $\Phi^t_\chi$ of $X_\chi$ we have that 
for $|t| \leq 1$ 
\begin{align*}
\sup_{\zeta \in \cB_{\rho,s}( R-{d} )} \|\Phi^t_\chi(\zeta)-\zeta\|_{\cH^{\rho,s}} &\leq \sup_{\zeta \in  \cB_{\rho,s}(R)} \|X_\chi(\zeta)\|_{\cH^{\rho,s}}.
\end{align*}
\end{lemma}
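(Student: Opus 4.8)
The plan is to realise the flow $\Phi^t_\chi$ as the solution of the integral equation associated with \eqref{auxDE} and to control it by an a priori estimate combined with a continuation argument. First I would recall that, since $\chi$ and hence $X_\chi$ are analytic on $\cB_{\rho,s}(R)$, the vector field is in particular locally Lipschitz there; the Cauchy--Lipschitz theorem in the Banach space $\cH^{\rho,s}$ then yields a unique local solution $t \mapsto \Phi^t_\chi(\zeta)$ of \eqref{auxDE} through any $\zeta \in \cB_{\rho,s}(R)$, and, integrating \eqref{auxDE} in time,
\[ \Phi^t_\chi(\zeta) = \zeta + \int_0^t X_\chi(\Phi^s_\chi(\zeta)) \, \di s \]
for as long as the trajectory remains in $\cB_{\rho,s}(R)$, which is where $X_\chi$ is defined.

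Next I would fix $\zeta \in \cB_{\rho,s}(R-{d})$ and introduce the exit time $T := \sup\{ t \in [0,1] : \Phi^s_\chi(\zeta) \in \cB_{\rho,s}(R) \text{ for all } 0 \leq s \leq t \}$. For every $t < T$ the whole trajectory up to time $t$ lies in $\cB_{\rho,s}(R)$, so the integrand above is bounded by the hypothesis and
\[ \|\Phi^t_\chi(\zeta) - \zeta\|_{\cH^{\rho,s}} \leq \int_0^t \|X_\chi(\Phi^s_\chi(\zeta))\|_{\cH^{\rho,s}} \, \di s \leq t \sup_{\zeta \in \cB_{\rho,s}(R)} \|X_\chi(\zeta)\|_{\cH^{\rho,s}} \leq {d}. \]

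The step I expect to be the main (if mild) obstacle is upgrading this into the statement that $T=1$, i.e. ruling out early escape from the ball. Since $\|\zeta\|_{\cH^{\rho,s}} < R-{d}$ strictly, the previous display gives $\|\Phi^t_\chi(\zeta)\|_{\cH^{\rho,s}} < (R-{d}) + {d} = R$ for all $t < T$; the a priori bound also prevents blow-up, so the solution extends continuously to $t=T$, where the same strict inequality $\|\Phi^T_\chi(\zeta)\|_{\cH^{\rho,s}} < R$ persists by continuity. If $T<1$, local existence would then let me extend the solution past $T$ while staying in the open ball $\cB_{\rho,s}(R)$, contradicting the maximality of $T$. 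Hence $T=1$, the bound holds on the whole interval $0 \leq t \leq 1$, and using $t \leq 1$ in the penultimate display yields
\[ \sup_{\zeta \in \cB_{\rho,s}(R-{d})} \|\Phi^t_\chi(\zeta) - \zeta\|_{\cH^{\rho,s}} \leq \sup_{\zeta \in \cB_{\rho,s}(R)} \|X_\chi(\zeta)\|_{\cH^{\rho,s}}. \]
Finally I would observe that the range $-1 \leq t \leq 0$ is handled identically after reversing time (equivalently, replacing $X_\chi$ by $-X_\chi$), which completes the proof.
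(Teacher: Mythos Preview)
Your argument is correct and is exactly the standard a priori bound plus continuation argument one uses for such statements. Note that the paper itself states this lemma without proof, treating it as an elementary fact; your proposal fills in precisely the routine details that the authors omitted.
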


\begin{definition}
The map $\Phi_\chi := \Phi^1_\chi$ is called the \emph{Lie transform} 
generated by $\chi$.
\end{definition}

Given $\Ham G$ analytic on $\cH^{\rho,s}$, let us consider the differential equation
\begin{align} \label{orDE}
\dot \zeta &= X_{\Ham G}(\zeta),
\end{align}
where by $X_{\Ham G}$ we denote the vector field of $\Ham G$. Now define
\begin{align*}
\Phi_\chi^\ast \Ham G(\tilde\zeta) &:= \Ham G \circ \Phi_\chi(\tilde\zeta).
\end{align*}
By exploiting the fact that $\Phi_\chi$ is a canonical transformation, we have that in the new variable $\tilde\zeta$ defined by $\zeta=\Phi_\chi(\tilde\zeta)$ equation \eqref{orDE} is equivalent to
\begin{align} \label{pullbDE}
\dot{\tilde\zeta} &= X_{ \Phi_\chi^\ast \Ham G }(\tilde\zeta).
\end{align}

Using the relation
\begin{align}\label{eq:CompositionRelation}
\frac{\di}{\di t} \Phi_\chi^\ast \Ham G &= \Phi_\chi^\ast \{\chi, \Ham G\}, 
\end{align}
and the Poisson bracket formalism $\{\Ham G_1,\Ham G_2\}(\zeta):= \di \Ham G_1(\zeta) [ X_{\Ham G_2}(\zeta)]$ we formally get
\begin{equation} \label{lieseries}
\begin{split}
\Phi^\ast_\chi \Ham G &= \sum_{\ell=0}^\infty \Ham G_\ell, \\
\Ham G_0 &:= \Ham G, \\
\Ham G_{\ell} &:= \frac{1}{\ell} \{\chi,\Ham G_{\ell-1}\}, \; \; \ell \geq 1.
\end{split}
\end{equation}

In order to estimate the vector field of the terms appearing in \eqref{lieseries}, we exploit the following results

\begin{lemma}\label{lem:liebrest}
Let $R>0$, and assume that $\chi$, $\Ham G$ are analytic on $\cB_{\rho,s}(R)$ as well as their vector fields. Then, for any $d \in (0,R)$ we have that 
$\{\chi,\Ham G\}$ is analytic on $\cB_{\rho,s}(R-d)$, and
\begin{align} \label{liebrest}
\sup_{\zeta \in \cB_{\rho,s}(R-d)} \|X_{ \{\chi,\Ham G\} }(\zeta)\|_{\cH^{\rho,s}} &\leq \frac{2}{d} \; \left( \sup_{\zeta \in \cB_{\rho,s}(R)} \|X_{\chi} (\zeta)\|_{\cH^{\rho,s}} \right) \; \left( \sup_{\zeta \in \cB_{\rho,s}(R)} \|X_{ \Ham G }(\zeta)\|_{\cH^{\rho,s}} \right).
\end{align}
\end{lemma}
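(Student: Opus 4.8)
The plan is to reduce \eqref{liebrest} to two standard ingredients: the identity expressing the Hamiltonian vector field of a Poisson bracket as the commutator of the two individual Hamiltonian vector fields, and a Cauchy estimate for the differential of an analytic map passing from $\cB_{\rho,s}(R)$ to the smaller ball $\cB_{\rho,s}(R-d)$. The combination of these is what produces the factor $2/d$.

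First I would record the algebraic identity
\[ X_{\{\chi,\Ham G\}} = \di X_\chi[X_{\Ham G}] - \di X_{\Ham G}[X_\chi], \]
the commutator $[X_\chi,X_{\Ham G}]$ (an overall sign, depending on conventions, is immaterial for the estimate). This holds because both symplectic forms in \eqref{eq:SymplecticForms} have constant coefficients, so the associated Poisson tensor $\Omega_\Gamma^{-1}$ is independent of $\zeta$; then $\nabla\{\chi,\Ham G\}=(\nabla^2\chi)\,\Omega_\Gamma^{-1}\nabla\Ham G-(\nabla^2\Ham G)\,\Omega_\Gamma^{-1}\nabla\chi$ and, applying $\Omega_\Gamma^{-1}$ and using $\di X_{\Ham H}=\Omega_\Gamma^{-1}\nabla^2\Ham H$, one obtains the displayed formula. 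Crucially, the right-hand side involves only the first differentials of the vector fields $X_\chi$ and $X_{\Ham G}$, which are analytic from $\cB_{\rho,s}(R)$ into $\cH^{\rho,s}$ by hypothesis; hence the right-hand side is analytic on $\cB_{\rho,s}(R-d)$, which establishes the claimed analyticity of $\{\chi,\Ham G\}$.

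Next I would establish the Cauchy estimate: if $Y\colon\cB_{\rho,s}(R)\to\cH^{\rho,s}$ is analytic and $\zeta\in\cB_{\rho,s}(R-d)$, then for any unit vector $v\in\cH^{\rho,s}$ the map $\lambda\mapsto Y(\zeta+\lambda v)$ is holomorphic on the disc $\{|\lambda|<d\}$, and the Cauchy integral representation of its derivative at $\lambda=0$ gives
\[ \| \di Y(\zeta)[v] \|_{\cH^{\rho,s}} \leq \frac{1}{d}\,\sup_{\cB_{\rho,s}(R)} \|Y\|_{\cH^{\rho,s}}, \]
so that $\|\di Y(\zeta)\|_{\mathrm{op}}\leq \tfrac1d\sup_{\cB_{\rho,s}(R)}\|Y\|_{\cH^{\rho,s}}$. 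Applying this with $Y=X_\chi$ and with $Y=X_{\Ham G}$, and feeding the two bounds into the commutator identity, I get for $\zeta\in\cB_{\rho,s}(R-d)$
\[ \|X_{\{\chi,\Ham G\}}(\zeta)\|_{\cH^{\rho,s}} \leq \|\di X_\chi(\zeta)\|_{\mathrm{op}}\,\|X_{\Ham G}(\zeta)\|_{\cH^{\rho,s}} + \|\di X_{\Ham G}(\zeta)\|_{\mathrm{op}}\,\|X_\chi(\zeta)\|_{\cH^{\rho,s}}; \]
bounding each remaining factor by its supremum over $\cB_{\rho,s}(R)$ then yields exactly \eqref{liebrest}.

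The only genuine subtlety, and the step I would treat most carefully, is the first one: justifying the commutator reformulation at the level of vector fields so as to avoid the second gradients $\nabla^2\chi$, $\nabla^2\Ham G$, which a priori need not map into $\cH^{\rho,s}$. Rewriting $X_{\{\chi,\Ham G\}}$ as $[X_\chi,X_{\Ham G}]$ circumvents this, since it only requires the first differentials of the already-analytic vector fields $X_\chi,X_{\Ham G}$; the constant-coefficient nature of $\Omega_\Gamma$ (equivalently, the validity of the Jacobi identity) is precisely what makes this reformulation legitimate, after which the Cauchy estimate does the rest.
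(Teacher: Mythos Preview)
Your proposal is correct and follows essentially the same approach as the paper: the commutator identity $X_{\{\chi,\Ham G\}}=[X_\chi,X_{\Ham G}]$ followed by the Cauchy estimate $\|\di Y(\zeta)\|_{\mathrm{op}}\le\tfrac1d\sup_{\cB_{\rho,s}(R)}\|Y\|$ on the smaller ball, then the triangle inequality. Your write-up is in fact more careful than the paper's in justifying why the commutator reformulation is legitimate (constant-coefficient Poisson tensor) and in deriving the Cauchy bound, but the argument is the same.
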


\begin{proof}
Observe that 
\begin{align*}
\|X_{ \{\chi, \Ham G\} }(\zeta)\|_{\cH^{\rho,s}} &= \| \di X_\chi(\zeta) \; X_{\Ham G}(\zeta) - \di X_{\Ham G}(\zeta) \; X_\chi(\zeta) \|_{\cH^{\rho,s}} \\
&\leq \| \di X_\chi(\zeta) \; X_{\Ham G}(\zeta) \|_{\cH^{\rho,s}} + \| \di X_{\Ham G}(\zeta) \; X_\chi(\zeta) \|_{\cH^{\rho,s}},
\end{align*}
and since for any $d \in (0,R)$ Cauchy inequality gives
\begin{align*}
\sup_{\zeta \in \cB_{\rho,s}(R-d)} \| \di X_\chi(\zeta) \|_{\cH^{\rho,s}\to\cH^{\rho,s}} &\leq \frac{1}{d} \; \sup_{\zeta \in \cB_{\rho,s}(R)} \| X_\chi(\zeta) \|_{\cH^{\rho,s}},
\end{align*}
we finally get
\begin{align*}
\sup_{\zeta \in \cB_{\rho,s}(R-d)} \| \di X_\chi(\zeta) \; X_{\Ham G}(\zeta) \|_{\cH^{\rho,s}} &\leq \frac{1}{d} \; \left( \sup_{\zeta \in \cB_{\rho,s}(R)} \|X_{\chi} (\zeta)\|_{\cH^{\rho,s}} \right) \; \left( \sup_{\zeta \in \cB_{\rho,s}(R)} \|X_{ \Ham G }(\zeta)\|_{\cH^{\rho,s}} \right)\, .
\end{align*}
With a similar estimate for the other term we obtain the thesis.
\end{proof}

\begin{lemma}\label{lem:Liesrest}
Let $R>0$, and assume that $\chi$, $\Ham G$ are analytic on $\cB_{\rho,s}(R)$ 
as well as their vector fields. 
Let $\ell \geq 1$, and consider $\Ham G_\ell$ as defined in \eqref{lieseries};
for any $d \in (0,R)$,  $\Ham G_\ell$ is analytic on $\cB_{\rho,s}(R-d)$ 
as well as it vector field, and
\begin{align} \label{lieserest}
\sup_{\zeta \in \cB_{\rho,s}(R-d)} \|X_{ \Ham G_\ell }(\zeta)\|_{\cH^{\rho,s}} &\leq \left( \frac{2e}{d} \sup_{\zeta \in \cB_{\rho,s}(R)} \|X_\chi(\zeta)\|_{\cH^{\rho,s}} \right)^\ell  \; \sup_{\zeta \in \cB_{\rho,s}(R)} \|X_{\Ham G}(\zeta)\|_{\cH^{\rho,s}}.
\end{align}
\end{lemma}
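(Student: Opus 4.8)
The plan is to argue by iterating the single-commutator estimate of Lemma \ref{lem:liebrest} across a chain of nested balls whose radii decrease in equal steps, and then to absorb the resulting combinatorial factor using an elementary inequality. First I would fix the abbreviations
\begin{align*}
\Theta_\chi := \sup_{\cB_{\rho,s}(R)} \|X_\chi\|_{\cH^{\rho,s}}, \qquad \Theta_G := \sup_{\cB_{\rho,s}(R)} \|X_{\Ham G}\|_{\cH^{\rho,s}},
\end{align*}
and introduce the radii $R_m := R - m\,d/\ell$ for $m=0,1,\dots,\ell$, so that $R_0=R$, $R_\ell=R-d$, and each successive shrinkage equals $d/\ell$.

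Then, for each $m$ from $1$ to $\ell$, I would apply Lemma \ref{lem:liebrest} to the bracket $\{\chi,\Ham G_{m-1}\}$ with $R$ replaced by $R_{m-1}$ and $d$ replaced by $d/\ell$. This propagates analyticity of $\Ham G_m=\frac1m\{\chi,\Ham G_{m-1}\}$ onto $\cB_{\rho,s}(R_m)$ and, since $X_{\Ham G_m}=\frac1m X_{\{\chi,\Ham G_{m-1}\}}$, yields
\begin{align*}
\sup_{\cB_{\rho,s}(R_m)} \|X_{\Ham G_m}\|_{\cH^{\rho,s}} &\leq \frac{1}{m}\cdot\frac{2\ell}{d}\Big(\sup_{\cB_{\rho,s}(R_{m-1})}\|X_\chi\|_{\cH^{\rho,s}}\Big)\Big(\sup_{\cB_{\rho,s}(R_{m-1})}\|X_{\Ham G_{m-1}}\|_{\cH^{\rho,s}}\Big).
\end{align*}
Because $R_{m-1}\le R$, the balls are nested, so the $\chi$-factor is bounded by $\Theta_\chi$ uniformly in $m$, while the $\Ham G_{m-1}$-factor is controlled by the estimate obtained at the previous radius.

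Multiplying these $\ell$ inequalities and telescoping the radii (the intermediate suprema cancel, leaving only $R_0=R$ at the start and $R_\ell=R-d$ at the end) gives
\begin{align*}
\sup_{\cB_{\rho,s}(R-d)}\|X_{\Ham G_\ell}\|_{\cH^{\rho,s}} &\leq \Big(\prod_{m=1}^\ell \frac1m\Big)\Big(\frac{2\ell}{d}\Theta_\chi\Big)^\ell \Theta_G = \frac{\ell^\ell}{\ell!}\Big(\frac{2}{d}\Theta_\chi\Big)^\ell\Theta_G.
\end{align*}
Finally I would invoke the elementary bound $\ell^\ell/\ell!\le e^\ell$, which is immediate from comparing $\ell^\ell/\ell!$ with a single term of the convergent series $e^\ell=\sum_{k\ge0}\ell^k/k!$. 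This absorbs the factor $\ell^\ell$ into $e^\ell$ and produces exactly the claimed estimate $\big(\tfrac{2e}{d}\Theta_\chi\big)^\ell\Theta_G$.

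I do not expect a genuine obstacle here, since analyticity is carried along automatically through each commutator by Lemma \ref{lem:liebrest}. The only point requiring care — and the sole source of the constant $e$ — is the bookkeeping of the shrinkage: choosing equal steps $d/\ell$ makes each application of Lemma \ref{lem:liebrest} cost a factor $2\ell/d$, and it is precisely the product of these $\ell$ factors against the $1/\ell!$ coming from the recursion $\Ham G_\ell=\frac1\ell\{\chi,\Ham G_{\ell-1}\}$ that the inequality $\ell^\ell/\ell!\le e^\ell$ is designed to control.
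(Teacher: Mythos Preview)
Your proposal is correct and follows essentially the same argument as the paper: both fix $\ell$, shrink the ball in equal steps $d/\ell$, iterate Lemma~\ref{lem:liebrest} across the nested balls to obtain $\frac{1}{\ell!}\big(\tfrac{2\ell}{d}\Theta_\chi\big)^\ell\Theta_G$, and then invoke $\ell^\ell\le \ell!\,e^\ell$ to conclude. Your justification of that last inequality via the exponential series is a nice touch not spelled out in the paper.
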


\begin{proof}
Fix {$\ell \geq 1$}. We look for a sequence $C^{(\ell)}_m$ such that
\begin{align*}
\sup_{\zeta \in \cB_{\rho,s}(R-m \, d/\ell)} \|X_{\Ham G_m}(\zeta)\|_{\cH^{\rho,s}} &\leq C^{(\ell)}_m, \; \; 
\forall m \leq \ell.
\end{align*}
Lemma \ref{lem:liebrest} ensures that the following sequence satisfies this property.
\begin{align*}
C^{(\ell)}_0 &:= \sup_{\zeta \in \cB_{\rho,s}(R)} \|X_{\Ham G}(\zeta)\|_{\cH^{\rho,s}}, \\
C^{(\ell)}_m &= \frac{2 \ell}{d m} C^{(\ell)}_{m-1} \, \sup_{\zeta \in \cB_{\rho,s}(R)} \|X_\chi(\zeta)\|_{\cH^{\rho,s}}.
\end{align*}
One has
\begin{align*}
C^{(\ell)}_\ell &= \frac{1}{\ell!} \left( \frac{2\ell}{d} \sup_{\zeta \in \cB_{\rho,s}(R)} \|X_\chi(\zeta)\|_{\cH^{\rho,s}} \right)^\ell  \; \sup_{\zeta \in  \cB_{\rho,s}(R)} \|X_{\Ham G}(\zeta)\|_{\cH^{\rho,s}},
\end{align*}
and by using the inequality $\ell^\ell < \ell! e^\ell$ one obtains the estimate \eqref{lieserest}.
\end{proof}

Before stating the next Lemma, we point out that the Poisson tensor $\Omega_2^{-1}$, obtained by inversion from the associated symplectic form $\Omega_2$ in \eqref{eq:SymplecticForms}, is not a bounded operator on $\mathcal{H}^{\rho,s}$. We thus have to weaken the hypothesis of Theorem 4.4 in \cite{bambusi1999nekhoroshev}; indeed, we just assume that
\begin{align*}
\Vert \Omega^{-1} f \Vert_{\mathcal{H}^{\rho,s}} &\leq \Vert f \Vert_{ \mathcal{H}^{\rho,s+1}} \, . 
\end{align*}
This property is satisfied by both $\Omega_1^{-1}$ and $\Omega_2^{-1}$.

\begin{lemma}\label{lem:VectorFieldCanonicalTransformation}
Let $\chi$ and $\Ham F$ be analytic on $\cB_{\rho,s}(R)$ as well as their vector fields. 
Fix $d \in (0,R)$, and assume also that
\begin{align*}
 \sup_{\zeta \in \cB_{\rho,s}(R)} \|X_\chi(\zeta)\|_{\cH^{\rho,s}} \leq d/3 \, .
\end{align*}
Then for $|t| \leq 1$
\begin{align}
\sup_{\zeta \in \cB_{\rho,s}(R-d)} \|X_{ (\Phi^t_\chi)^\ast \Ham F -  \Ham F }(\zeta)\|_{\cH^{\rho,s}} 
{\leq} 
\frac{9}{d} \, \sup_{\zeta \in \cB_{\rho,s}(R)} \|X_\chi(\zeta)\|_{\cH^{\rho,s}} \,
\sup_{\zeta \in \cB_{\rho,s}(R)} \|X_{\Ham F}(\zeta)\|_{\cH^{\rho,s}}. \label{vfest}
\end{align}
\end{lemma}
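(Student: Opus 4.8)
The plan is to avoid summing the Lie series \eqref{lieseries} directly: under the hypothesis $\sup_{\cB_{\rho,s}(R)}\|X_\chi\|_{\cH^{\rho,s}}\le d/3$ the geometric ratio $2e\Theta/d$ produced by Lemma \ref{lem:Liesrest} (with $\Theta:=\sup_{\cB_{\rho,s}(R)}\|X_\chi\|_{\cH^{\rho,s}}$) exceeds $1$, so term-by-term summation does not converge. Instead I would integrate the infinitesimal relation \eqref{eq:CompositionRelation} along the flow. Setting $\Ham G:=\{\chi,\Ham F\}$, relation \eqref{eq:CompositionRelation} gives $(\Phi^t_\chi)^\ast\Ham F-\Ham F=\int_0^t(\Phi^s_\chi)^\ast\Ham G\,\di s$, and since the vector field depends linearly on the Hamiltonian, $X_{(\Phi^t_\chi)^\ast\Ham F-\Ham F}=\int_0^t X_{(\Phi^s_\chi)^\ast\Ham G}\,\di s$. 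Thus it suffices to bound the integrand uniformly for $|s|\le1$ on $\cB_{\rho,s}(R-d)$.

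The key point is the change-of-variables identity for canonical maps: since $\Phi^s_\chi$ is the time-$s$ flow of the Hamiltonian vector field $X_\chi$ it is symplectic, and hence $X_{\Ham G\circ\Phi^s_\chi}(\zeta)=(D\Phi^s_\chi(\zeta))^{-1}\,X_{\Ham G}(\Phi^s_\chi(\zeta))$, which follows from $\nabla(\Ham G\circ\Phi)=(D\Phi)^T\,(\nabla\Ham G\circ\Phi)$ together with the symplectic relation $\Omega^{-1}(D\Phi)^T=(D\Phi)^{-1}\Omega^{-1}$. I would then estimate the two factors separately on $\cB_{\rho,s}(R-d)$, working throughout inside the fixed space $\cH^{\rho,s}$ (so that the derivative-losing tensor $\Omega_2^{-1}$ of \eqref{eq:SymplecticForms} plays no role in this lemma, exactly as in Lemmas \ref{lem:liebrest}--\ref{lem:Liesrest}). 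First, Lemma \ref{cauchylemma} with $\Theta\le d/3\le d$ gives $\sup_{\cB_{\rho,s}(R-d)}\|\Phi^s_\chi-\mathrm{id}\|_{\cH^{\rho,s}}\le\Theta\le d/3$, so $\Phi^s_\chi$ maps $\cB_{\rho,s}(R-d)$ into $\cB_{\rho,s}(R-2d/3)$; on that ball Lemma \ref{lem:liebrest} (with reduction $2d/3$) bounds $\|X_{\Ham G}\|=\|X_{\{\chi,\Ham F\}}\|$ by $\frac{3}{d}\,\Theta\,\sup_{\cB_{\rho,s}(R)}\|X_{\Ham F}\|$.

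For the second factor I would control $(D\Phi^s_\chi)^{-1}$ through a Neumann series. Applying Lemma \ref{cauchylemma} with reduction $d/3$ gives $\sup_{\cB_{\rho,s}(R-d/3)}\|\Phi^s_\chi-\mathrm{id}\|_{\cH^{\rho,s}}\le\Theta$, and a Cauchy estimate with reduction $2d/3$ then yields $\sup_{\cB_{\rho,s}(R-d)}\|D(\Phi^s_\chi-\mathrm{id})\|_{\cH^{\rho,s}\to\cH^{\rho,s}}\le\frac{3\Theta}{2d}\le\frac12$; hence $\|(D\Phi^s_\chi)^{-1}\|_{\cH^{\rho,s}\to\cH^{\rho,s}}\le2$ on $\cB_{\rho,s}(R-d)$. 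Multiplying the two bounds gives $\|X_{(\Phi^s_\chi)^\ast\Ham G}\|_{\cH^{\rho,s}}\le\frac{6}{d}\,\Theta\,\sup_{\cB_{\rho,s}(R)}\|X_{\Ham F}\|$, and integrating over $|s|\le|t|\le1$ produces \eqref{vfest} with constant $6\le9$. The main obstacle is precisely the bookkeeping of radii needed to make the Neumann series for $(D\Phi^s_\chi)^{-1}$ converge: the reductions ($d$ for transporting the ball, $2d/3$ for the Poisson-bracket Cauchy estimate, and the pair $d/3$, $2d/3$ for the differential) must be chosen compatibly, and it is exactly the hypothesis $\Theta\le d/3$ that forces $\|D(\Phi^s_\chi-\mathrm{id})\|\le\frac12$ and thereby closes the argument.
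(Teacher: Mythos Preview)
Your argument is correct and in fact yields the constant $6/d$ rather than the paper's $9/d$. The approaches share the same core ingredient, namely the symplectic change-of-variables identity $X_{\Ham G\circ\Phi}(\zeta)=(D\Phi(\zeta))^{-1}X_{\Ham G}(\Phi(\zeta))$, but organise the estimate differently. The paper works directly with the difference $X_{\Ham F\circ\Phi^t_\chi}-X_{\Ham F}$: it splits this into $(D\Phi^{-t}_\chi(\Phi^t_\chi(\zeta))-\mathrm{id})\,X_{\Ham F}(\Phi^t_\chi(\zeta))$ and $X_{\Ham F}(\Phi^t_\chi(\zeta))-X_{\Ham F}(\zeta)$, bounding the first via a Cauchy estimate on $D\Phi^{-t}_\chi-\mathrm{id}$ (contributing $3/d$) and the second by writing it as $\int_0^t[X_\chi,X_{\Ham F}]\circ\Phi^s_\chi\,\di s$ and again applying Cauchy (contributing $6/d$). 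You instead integrate \eqref{eq:CompositionRelation} first, reducing matters to a single term $X_{(\Phi^s_\chi)^\ast\{\chi,\Ham F\}}$ with the smallness already built in via Lemma~\ref{lem:liebrest}; combined with the Neumann bound $\|(D\Phi^s_\chi)^{-1}\|\le2$ this avoids the subtraction altogether. Your route is slightly more streamlined and gives a sharper constant; the paper's route is more hands-on but perhaps makes the two sources of error (transport of $X_{\Ham F}$ and distortion by $D\Phi^{-t}_\chi$) more visible. Both are valid.
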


\begin{proof}
Since the bound on the norm of $X_\chi$ implies that $\Phi^t_\chi(\zeta) \in \cB_{\rho,s}(R)$ when $\zeta \in \cB_{\rho,s}(R-d/3)$, using Cauchy inequality and Lemma \ref{cauchylemma}
\[
	\begin{split}
		\sup_{\zeta \in \cB_{\rho,s}(R-d)} \Vert \di \Phi_\chi^{-t}(\Phi^t_\chi(\zeta))- \mathrm{id} \Vert_{\cH^{\rho,s} \to \cH^{\rho,s}} & \leq \sup_{\zeta \in \cB_{\rho,s}(R-2d/3)} \Vert \di \Phi_\chi^{-t}(\zeta) - \mathrm{id} \Vert_{\cH^{\rho,s} \to \cH^{\rho,s}} \\
		& \leq \frac{3}{d}\sup_{\zeta \in \cB_{\rho,s}(R-d/3)} \Vert \Phi_\chi^{-t}(\zeta)-\zeta \Vert_{\cH^{\rho,s}} \\
		& \leq \frac{3}{d} \sup_{\zeta \in \cB_{\rho,s}(R)} \Vert X_\chi(\zeta) \Vert_{\cH^{\rho,s}}
	\end{split}
\]

Since $\Phi_\chi^t$ is a canonical transformation, a direct computation shows
\begin{align*}
\Omega^{-1} \di ( \Ham F \circ \Phi_\chi^t)(\zeta) &= ( \di \Phi_{\chi}^{-t} ( \Phi_\chi^t(\zeta)) - \mathrm{id}) \Omega^{-1} \di \Ham F (\Phi^t_\chi)+ \Omega^{-1} \di \Ham F (\Phi_\chi^t(\zeta))
\end{align*}
whence 
\[
	\begin{split}
		\sup_{\zeta \in \cB_{\rho,s}(R-d)} \Vert X_{(\Phi_\chi^t)^* \Ham F- \Ham F}(\zeta) \Vert_{\cH^{\rho,s}} &= \sup_{\zeta \in \cB_{\rho,s}(R-d)} \Vert\Omega^{-1} \di (\Ham F(\Phi_\chi^t(\zeta))-\Ham F(\zeta)) \Vert_{\cH^{\rho,s}} \\
		& \leq \sup_{\zeta \in \cB_{\rho,s}(R-d)} \Vert ( \di \Phi_{\chi}^{-t} ( \Phi_\chi^t(\zeta)) - \mathrm{id}) \Omega^{-1} \di \Ham F (\Phi^t_\chi)+ \Omega^{-1} \di ( \Ham F (\Phi_\chi^t(\zeta))-\Ham F(\zeta) ) \Vert_{\cH^{\rho,s}} \\
		&\leq \sup_{\zeta \in \cB_{\rho,s}(R-d)} \Vert \di \Phi_{\chi}^{-t}(\Phi_\chi^t(\zeta))- \mathrm{id} \Vert_{ \cH^{\rho,s} \to \cH^{\rho,s} } \sup_{\zeta \in \cB_{\rho,s}(R-d)} \Vert X_{\Ham F}(\Phi^t_\chi(\zeta))\Vert_{\cH^{\rho,s}}\\
		& + \sup_{\zeta \in \cB_{\rho,s}(R-d)} \Vert X_{\Ham F}(\Phi_\chi^t(\zeta))-X_{\Ham F}(\zeta) \Vert_{\cH^{\rho,s}} \\
		&\leq \frac{3}{d} \sup_{\zeta \in \cB_{\rho,s}(R)} \Vert X_\chi(\zeta) \Vert_{\cH^{\rho,s}} \sup_{\zeta \in \cB_{\rho,s}(R)} \Vert X_{\Ham F}(\zeta) \Vert_{\cH^{\rho,s}}\\ &+ \sup_{\zeta \in \cB_{\rho,s}(R-d)} \left\Vert \int_0^t [X_\chi,X_{\Ham F}](\Phi_\chi^s(\zeta)) \di s \right\Vert_{\cH^{\rho,s}} \, .
	\end{split}
\]
To estimate the last term we use Cauchy inequality
\[
	\begin{split}
		\sup_{\zeta \in \cB_{\rho,s}(R-d)} \left\Vert \int_0^t [X_\chi,X_{\Ham F}](\Phi_\chi^s(\zeta)) \, \di s \right\Vert_{\cH^{\rho,s}} & \leq 2 \sup_{\zeta \in \cB_{\rho,s}(R-2d/3)} \Vert [X_\chi,X_{\Ham F}](\zeta) \Vert_{\cH^{\rho,s}} \\
		&\leq \frac{6}{d} \sup_{\zeta \in \cB_{\rho,s}(R)} \Vert X_\chi(\zeta) \Vert_{\cH^{\rho,s}} \sup_{\zeta \in \cB_{\rho,s}(R)} \Vert X_{\Ham F}(\zeta) \Vert_{\cH^{\rho,s}} \, .
	\end{split}
\]
Then the thesis follows.
\end{proof}

\begin{lemma} \label{homeqlemma}
Assume that $\Ham G$ is analytic on $\cB_{\rho,s}(R)$ as well as its vector field, 
and that $\Ham{h}_0$ satisfies (PER). 
Then there exists $\chi$ analytic on $\cB_{\rho,s}(R)$ and $\Ham Z$ analytic 
on $\cB_{\rho,s}(R)$ with $\Ham Z$ in normal form, namely $\{\Ham{h}_0,\Ham Z\}=0$, such that
\begin{align} \label{homeq}
\{ \chi,\Ham{h}_0 \} \; + \; \Ham G \; &= \; \Ham Z.
\end{align}
Such $\Ham Z$ and $\chi$ are given explicitly by
\begin{align}\label{eq:ExplicitZ}
\Ham Z(\zeta)\; = \; \frac{1}{T} \int_0^T \Ham G(\Phi^t_{\Ham{h}_0}(\zeta)) \, \di t \, ,
\end{align}
\begin{align}\label{eq:ExplicitChi}
\chi(\zeta) &= 
\frac{1}{T} \int_0^T t \, \left[ \Ham Z(\Phi^t_{\Ham{h}_0}(\zeta))-\Ham G(\Phi^t_{\Ham{h}_0}(\zeta)) \right] \, \di t \, .
\end{align}
Furthermore, we have that the vector fields of $\chi$ and $\Ham Z$ are analytic on $\cB_{\rho,s}(R)$, and satisfy 
\begin{align} 
\sup_{\zeta \in \cB_{\rho,s}(R)} \|X_{\Ham Z}(\zeta)\|_{\cH^{\rho,s}} &\leq  \sup_{\zeta \in \cB_{\rho,s}(R)} \|X_{\Ham G}(\zeta)\|_{\cH^{\rho,s}}, \nonumber \\
\sup_{\zeta \in \cB_{\rho,s}(R)} \|X_\chi(\zeta)\|_{\cH^{\rho,s}} &\leq  2T  \sup_{\zeta \in \cB_{\rho,s}(R)} \|X_{\Ham G}(\zeta)\|_{\cH^{\rho,s}} \label{vfhomeq}.
\end{align}
\end{lemma}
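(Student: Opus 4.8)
The plan is to take $\Ham Z$ and $\chi$ defined by the explicit formulas \eqref{eq:ExplicitZ}--\eqref{eq:ExplicitChi} and verify directly that they have all the asserted properties, using only the fundamental relation between the flow of $\Ham h_0$ and the Poisson bracket together with the periodicity assumption (PER). First I would record the basic identity $\frac{\di}{\di t}\left(\Ham G\circ\Phi^t_{\Ham h_0}\right)=\{\Ham G,\Ham h_0\}\circ\Phi^t_{\Ham h_0}$, which follows from the definition $\{\Ham G_1,\Ham G_2\}=\di\Ham G_1[X_{\Ham G_2}]$ and $\dot\zeta=X_{\Ham h_0}$. To see that $\Ham Z$ is in normal form I would observe that, by the change of variable $t\mapsto t+s$ and the $T$-periodicity of $\Phi^t_{\Ham h_0}$, the average $\Ham Z\circ\Phi^s_{\Ham h_0}$ is independent of $s$; differentiating at $s=0$ then gives $\{\Ham Z,\Ham h_0\}=0$, which is \eqref{NFthm}. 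In particular $\Ham Z\circ\Phi^t_{\Ham h_0}=\Ham Z$ for all $t$, a fact I would use repeatedly below.

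Next, for the homological equation I would compute $\{\chi,\Ham h_0\}(\zeta)=\frac{\di}{\di s}\big|_{s=0}\chi(\Phi^s_{\Ham h_0}(\zeta))$. Writing $\chi(\Phi^s_{\Ham h_0}(\zeta))=\frac1T\int_0^T t\,[\Ham Z(\zeta)-\Ham G(\Phi^{t+s}_{\Ham h_0}(\zeta))]\,\di t$ (using the invariance of $\Ham Z$), the $s$-derivative of the integrand coincides with its $t$-derivative since it depends only on $t+s$; an integration by parts in $t$ then produces the boundary term $t\,\Ham G(\Phi^{t+s}_{\Ham h_0}(\zeta))\big|_0^T$, which by periodicity equals $T\,\Ham G(\Phi^s_{\Ham h_0}(\zeta))$, minus the full-period integral of $\Ham G\circ\Phi^{t+s}_{\Ham h_0}$, which equals $T\,\Ham Z(\zeta)$. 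Collecting the $-\frac1T$ prefactor and evaluating at $s=0$ yields exactly $\{\chi,\Ham h_0\}=\Ham Z-\Ham G$, i.e. \eqref{homeq}. This integration-by-parts step, together with keeping the sign convention of the bracket consistent, is the only genuinely computational point and is where I expect the main care to be needed.

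Finally, for the analytic and quantitative part I would argue that $\Ham Z$ and $\chi$ are analytic on $\cB_{\rho,s}(R)$ because they are integrals over the compact interval $[0,T]$ of functions analytic in $\zeta$ (by analyticity of $\Phi^t_{\Ham h_0}$ from (PER) and of $\Ham G$), and that one may differentiate under the integral sign to obtain their vector fields. Since $\Phi^t_{\Ham h_0}$ is a canonical (hence symplectic) map, the vector field of a pullback is the pullback of the vector field, $X_{\Ham G\circ\Phi^t_{\Ham h_0}}=(\di\Phi^t_{\Ham h_0})^{-1}\,X_{\Ham G}\circ\Phi^t_{\Ham h_0}$; because by (PER) the flow is a linear isometry of $\cH^{\rho,s}$, it maps $\cB_{\rho,s}(R)$ into itself and $\|(\di\Phi^t_{\Ham h_0})^{-1}\|_{\cH^{\rho,s}\to\cH^{\rho,s}}=1$, so averaging gives $\sup\|X_{\Ham Z}\|_{\cH^{\rho,s}}\le\sup\|X_{\Ham G}\|_{\cH^{\rho,s}}$. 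Applying the same bound termwise to $\chi$, together with $\frac1T\int_0^T t\,\di t=T/2$ and the triangle inequality $\|X_{\Ham Z}\|+\|X_{\Ham G}\circ\Phi^t_{\Ham h_0}\|\le 2\sup\|X_{\Ham G}\|$, yields $\sup\|X_\chi\|_{\cH^{\rho,s}}\le T\sup\|X_{\Ham G}\|_{\cH^{\rho,s}}\le 2T\sup\|X_{\Ham G}\|_{\cH^{\rho,s}}$, which is \eqref{vfhomeq}. The main obstacle is purely bookkeeping: ensuring the flow-bracket identity is applied with the correct sign throughout, and invoking the isometry property of $\Phi^t_{\Ham h_0}$ both to keep all arguments inside $\cB_{\rho,s}(R)$ and to discard the derivative factor in the estimates.
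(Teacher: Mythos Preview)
Your proposal is correct and follows essentially the same route as the paper: verify \eqref{homeq} by computing $\{\chi,\Ham h_0\}=\frac{\di}{\di s}\big|_{s=0}\chi\circ\Phi^s_{\Ham h_0}$ via integration by parts in $t$, and deduce the vector-field bounds from the canonical pullback identity together with the isometry property in (PER). You are slightly more explicit than the paper in two harmless places---you separately verify $\{\Ham h_0,\Ham Z\}=0$ (the paper only uses this implicitly in the last step of its integration-by-parts computation), and your use of $\frac1T\int_0^T t\,\di t=T/2$ actually gives the sharper constant $T$ before you relax it to the stated $2T$---but neither of these is a genuine departure from the paper's argument.
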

\begin{proof}
We check directly that the solution of \eqref{homeq} is \eqref{eq:ExplicitChi}. Indeed,
\begin{align*}
\{ \chi,\Ham{h}_0 \}(\zeta) &= \frac{\di}{\di s} \Big|_{s=0} \chi(\Phi^s_{\Ham{h}_0}(\zeta)) \\
&= \frac{1}{T} \int_0^{T} t \frac{\di}{\di s} \Big|_{s=0} \left[ \Ham Z(\Phi^{t+s}_{\Ham{h}_0}(\zeta))-\Ham G(\Phi^{t+s}_{\Ham{h}_0}(\zeta)) \right] \di t \\
&= \frac{1}{T} \int_0^{T} t \frac{\di}{\di t} \left[ \Ham Z(\Phi^{t}_{\Ham{h}_0}(\zeta))-\Ham G(\Phi^{t}_{\Ham{h}_0}(\zeta)) \right] \di t \\
&= \frac{1}{T} \left[ t \Ham Z(\Phi^{t}_{\Ham{h}_0}(\zeta))- t \Ham G(\Phi^{t}_{\Ham{h}_0}(\zeta)) \right]_{t=0}^{T} - \frac{1}{T} \int_0^{T} \left[ \Ham Z(\Phi^{t}_{\Ham{h}_0}(\zeta))-\Ham G(\Phi^{t}_{\Ham{h}_0}(\zeta)) \right] \di t \\
&= \Ham Z(\zeta)-\Ham G(\zeta).
\end{align*}
In the last step we used the explicit expression of $\Ham Z$ provided in \eqref{eq:ExplicitZ}. Finally, the first estimate in \eqref{vfhomeq} follows from the explicit expression of $\Ham Z$ in \eqref{eq:ExplicitZ} while for the second estimate we write explicitly the vector field $X_\chi$: 
\begin{align*}
X_\chi(\zeta) &= 
\frac{1}{T} \int_0^T t \, D\Phi^{-t}_{\Ham{h}_0}(\Phi_{\Ham{h}_0}^t(\zeta)) \circ X_{\Ham Z-\Ham G}(\Phi^t_{\Ham{h}_0}(\zeta)) \, \di t \, .
\end{align*}
Hypothesis (PER) guarantees that $\Phi_{\Ham{h}_0}^t$ as well as its derivatives and the inverses are uniformly bounded as operators from $\cH^{\rho,s}$ into itself. Moreover, for any $t \in \mathbb{R}$, the map $\zeta \mapsto \Phi_{\Ham{h}_0}^t(\zeta)$ is a diffeomorphism of $\cB_{\rho,s}(R)$ into itself. {Using the fact that $\Phi_{\Ham h_0}^t$ is an isometry, we have}
\[
	\begin{split}
		\sup_{\zeta \in \cB_{\rho,s}(R)} \Vert X_\chi(\zeta) \Vert_{\cH^{\rho,s}} &\leq T  \sup_{\zeta \in \cB_{\rho,s}(R)} \left(\Vert X_{\Ham Z}(\zeta) \Vert_{\cH^{\rho,s}} + \Vert X_{\Ham G}(\zeta) \Vert_{\cH^{\rho,s}} \right) \leq 2T \sup_{\zeta \in \cB_{\rho,s}(R)} \Vert X_{\Ham G}(\zeta) \Vert_{\cH^{\rho,s}}
	\end{split}
\]
where in the last step we used the first inequality in \eqref{vfhomeq}. 
\end{proof}

\begin{lemma}
Assume that $\Ham G$ and its vector fields are analytic on $\cB_{\rho,s}(R)$, and that $\Ham{h}_0$ satisfies (PER). 
Let $\chi$ and its vector field be analytic on $\cB_{\rho,s}(R)$, and assume that $\chi$ solves \eqref{homeq}. For any $\ell\geq 1$ denote by $h_{0,\ell}$ 
the functions defined recursively as in \eqref{lieseries} from $\Ham{h}_0$.
Then for any $d \in (0,R)$ one has that $h_{0,\ell}$ and its vector field 
are analytic on $\cB_{\rho,s}(R-d)$, and
\begin{align} \label{lieseriesh0}
\sup_{\zeta \in \cB_{\rho,s}(R-d)} \|X_{ \Ham h_{0,\ell} }(\zeta)\|_{\cH^{\rho,s}} &\leq 
2 \sup_{\zeta \in \cB_{\rho,s}(R)} \|X_{\Ham G}(\zeta)\|_{\cH^{\rho,s}} 
\left( \frac{9}{d} \, \sup_{\zeta \in  \cB_{\rho,s}(R)} \|X_\chi(\zeta)\|_{\cH^{\rho,s}} \right)^\ell.
\end{align}
\end{lemma}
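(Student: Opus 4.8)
The plan is to deduce \eqref{lieseriesh0} from the Lie-series estimates of Lemma \ref{lem:liebrest} and Lemma \ref{lem:Liesrest}, but these cannot be applied to $\Ham h_0$ directly: its vector field $X_{\Ham h_0}$ is \emph{unbounded} on $\cH^{\rho,s}$ (the Poisson tensor loses one derivative, as already noted before Lemma \ref{lem:VectorFieldCanonicalTransformation}), so $\sup_{\cB_{\rho,s}(R)}\|X_{\Ham h_0}\|_{\cH^{\rho,s}}=+\infty$ and the seed quantity of those lemmas is meaningless. \textbf{The key device} is therefore to differentiate $\Ham h_0$ \emph{only once}, exploiting the homological equation \eqref{homeq} to replace the single dangerous bracket by a bounded object before any iteration begins.

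First I would record the base case $\ell=1$. From the recursion \eqref{lieseries} one has $h_{0,1}=\{\chi,\Ham h_0\}$, and \eqref{homeq} gives the identity $\{\chi,\Ham h_0\}=\Ham Z-\Ham G$. Thus, although $X_\chi$ composed with the unbounded $X_{\Ham h_0}$ would naively be out of control, this identity shows that $h_{0,1}$ is analytic on all of $\cB_{\rho,s}(R)$ and that its vector field satisfies
\[ \sup_{\cB_{\rho,s}(R)}\|X_{h_{0,1}}\|_{\cH^{\rho,s}}\le \sup_{\cB_{\rho,s}(R)}\|X_{\Ham Z}\|_{\cH^{\rho,s}}+\sup_{\cB_{\rho,s}(R)}\|X_{\Ham G}\|_{\cH^{\rho,s}}\le 2\sup_{\cB_{\rho,s}(R)}\|X_{\Ham G}\|_{\cH^{\rho,s}}, \]
the last step being the first estimate in \eqref{vfhomeq} of Lemma \ref{homeqlemma}.

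Next I would reduce the general term to the Lie series of the bounded seed $h_{0,1}$. Unrolling \eqref{lieseries} gives $h_{0,\ell}=\tfrac1{\ell!}\{\chi,\{\chi,\dots,\{\chi,\Ham h_0\}\dots\}\}$, and pulling the innermost bracket out and substituting $\{\chi,\Ham h_0\}=\Ham Z-\Ham G$ yields $h_{0,\ell}=\tfrac1{\ell}\,\widehat G_{\ell-1}$, where $(\widehat G_m)_{m\ge0}$ denotes the Lie series \eqref{lieseries} generated from $\widehat G_0:=h_{0,1}=\Ham Z-\Ham G$. Since $h_{0,1}$ and its vector field are analytic on $\cB_{\rho,s}(R)$, Lemma \ref{lem:Liesrest} now applies \emph{verbatim} to $\widehat G_{\ell-1}$: for every $d\in(0,R)$ it is analytic on $\cB_{\rho,s}(R-d)$ with
\[ \sup_{\cB_{\rho,s}(R-d)}\|X_{\widehat G_{\ell-1}}\|_{\cH^{\rho,s}}\le\Big(\tfrac{2e}{d}\sup_{\cB_{\rho,s}(R)}\|X_\chi\|_{\cH^{\rho,s}}\Big)^{\ell-1}\sup_{\cB_{\rho,s}(R)}\|X_{h_{0,1}}\|_{\cH^{\rho,s}}. \]
Combining this with $h_{0,\ell}=\tfrac1\ell\widehat G_{\ell-1}$, the base-case bound, and a crude majorization of the numerical constants (in particular $2e<9$ and $\tfrac1\ell\le1$) gives the geometric estimate \eqref{lieseriesh0}; the analyticity of $h_{0,\ell}$ on $\cB_{\rho,s}(R-d)$ is inherited from that of $\widehat G_{\ell-1}$.

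\textbf{The main obstacle}, and really the only subtle point, is precisely the unboundedness of $X_{\Ham h_0}$: everything after the base case is a mechanical iteration of the bounded-vector-field Lie-bracket estimates. This is why it is essential to invoke \eqref{homeq} \emph{at the level of the first bracket} --- once $\{\chi,\Ham h_0\}$ has been identified with the bounded combination $\Ham Z-\Ham G$, the unperturbed Hamiltonian is never differentiated again and Lemma \ref{lem:Liesrest} carries out the remaining $\ell-1$ brackets.
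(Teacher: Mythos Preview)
Your approach is exactly the paper's: use the homological equation \eqref{homeq} to identify $h_{0,1}=\Ham Z-\Ham G$, which is analytic on all of $\cB_{\rho,s}(R)$ with $\|X_{h_{0,1}}\|\le 2\|X_{\Ham G}\|$, and then iterate the Lie bracket with $\chi$ on the bounded seed $h_{0,1}$. The paper's two-line proof says precisely this.

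There is, however, a slip in your final ``crude majorization''. Lemma~\ref{lem:Liesrest} applied to $\widehat G_0=h_{0,1}$ gives
\[
\sup_{\cB_{\rho,s}(R-d)}\|X_{h_{0,\ell}}\|_{\cH^{\rho,s}}\le \frac{2}{\ell}\,\sup_{\cB_{\rho,s}(R)}\|X_{\Ham G}\|_{\cH^{\rho,s}}\,\Big(\frac{2e}{d}\sup_{\cB_{\rho,s}(R)}\|X_\chi\|_{\cH^{\rho,s}}\Big)^{\ell-1},
\]
with exponent $\ell-1$, not $\ell$. The passage to \eqref{lieseriesh0} is \emph{not} a comparison of numerical constants alone: it would require one extra factor of $(9/d)\sup\|X_\chi\|$, and nothing in the hypotheses bounds this quantity from below by $1$. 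Indeed, for $\ell=1$ the right-hand side of \eqref{lieseriesh0} equals $2\|X_{\Ham G}\|\cdot(9/d)\|X_\chi\|$, which can be made smaller than $\|X_{\Ham G}\|=\|X_{h_{0,1}}\|$ (take $\Ham G$ with zero average, so $\Ham Z=0$, and $\|X_\chi\|<d/18$). Your bound with exponent $\ell-1$ is in fact the correct and sharper one; the exponent $\ell$ in \eqref{lieseriesh0} appears to be a minor imprecision in the statement. Since this lemma is not invoked anywhere else in the paper, nothing downstream is affected.
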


\begin{proof}
By using \eqref{homeq} one gets that $\Ham h_{0,1} =\Ham Z- \Ham G$ is analytic on $\cB_{\rho,s}(R)$. 
Then by exploiting \eqref{vfest} one gets the result.
\end{proof}

\begin{lemma} \label{Poissonlemma}
Assume that $G$ and its vector field are analytic on $\cB_{\rho,s}(R)$, 
and that $\Ham{h}_0$ satisfies PER. 
Let $\chi$ be the solution of \eqref{homeq}, denote by $\Phi^t_\chi$ 
the flow of the Hamiltonian vector field associated to $\chi$ 
and by $\Phi_\chi$ the corresponding time-one map. Moreover, denote by 
\begin{align*}
\cF(\zeta) &:= \Ham{h}_0(\Phi_\chi(\zeta)) - \Ham{h}_0(\zeta) - \{\chi,\Ham{h}_0\}(\zeta).
\end{align*}
Let $d < R$, and assume that
\begin{align*}
\sup_{\zeta \in \cB_{\rho,s}(R)} \|X_\chi(\zeta)\|_{\cH^{\rho,s}} \leq d/3 \, .
\end{align*}
Then we have that $\cF$ and its vector field are analytic on $\cB_{\rho,s}(R-d)$, and 
\begin{align} 
\sup_{\zeta \in \cB_{\rho,s}(R-d)} \|X_\cF(\zeta)\|_{\cH^{\rho,s}} &\leq  \frac{18}{d} \, \sup_{\zeta \in \cB_{\rho,s}(R)} \|X_\chi(\zeta)\|_{\cH^{\rho,s}} \; \sup_{\zeta \in \cB_{\rho,s}(R)} \|X_{\Ham G}(\zeta)\|_{\cH^{\rho,s}} . \label{vfPois}
\end{align}
\end{lemma}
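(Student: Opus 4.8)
The plan is to recognize $\cF$ as the second--order remainder of the Lie series for $\Ham{h}_0\circ\Phi_\chi$ and to reduce its estimate to Lemma~\ref{lem:VectorFieldCanonicalTransformation} via an integral representation along the flow. First I would use the fundamental theorem of calculus along $t\mapsto\Phi^t_\chi$ together with relation \eqref{eq:CompositionRelation} applied to $\Ham{h}_0$, which gives $\frac{\di}{\di t}(\Ham{h}_0\circ\Phi^t_\chi)=\{\chi,\Ham{h}_0\}\circ\Phi^t_\chi$, to write
\begin{align*}
\cF(\zeta) &= \int_0^1 \{\chi,\Ham{h}_0\}(\Phi^t_\chi(\zeta))\,\di t - \{\chi,\Ham{h}_0\}(\zeta).
\end{align*}
Setting $\Ham F:=\{\chi,\Ham{h}_0\}$, this is exactly
\begin{align*}
\cF = \int_0^1 \left[ (\Phi^t_\chi)^\ast \Ham F - \Ham F \right]\,\di t,
\end{align*}
so that the integrand is precisely the object estimated in Lemma~\ref{lem:VectorFieldCanonicalTransformation}.

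The key point requiring care is that $\{\chi,\Ham{h}_0\}$ a priori only lives on a slightly smaller ball (Lemma~\ref{lem:liebrest}), whereas Lemma~\ref{lem:VectorFieldCanonicalTransformation} needs $\Ham F$ analytic on the full $\cB_{\rho,s}(R)$. Here I would invoke the homological equation \eqref{homeq} of Lemma~\ref{homeqlemma}, namely $\Ham F=\{\chi,\Ham{h}_0\}=\Ham Z-\Ham G$: since $\Ham Z$ is the time average \eqref{eq:ExplicitZ} and $\Ham G$ is assumed analytic on $\cB_{\rho,s}(R)$ with its vector field, both are analytic on the full ball, hence so is $\Ham F$. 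Moreover the first estimate in \eqref{vfhomeq} yields
\begin{align*}
\sup_{\cB_{\rho,s}(R)}\|X_{\Ham F}\|_{\cH^{\rho,s}} \leq \sup_{\cB_{\rho,s}(R)}\|X_{\Ham Z}\|_{\cH^{\rho,s}} + \sup_{\cB_{\rho,s}(R)}\|X_{\Ham G}\|_{\cH^{\rho,s}} \leq 2\,\sup_{\cB_{\rho,s}(R)}\|X_{\Ham G}\|_{\cH^{\rho,s}},
\end{align*}
which is where the extra factor $2$ (and ultimately the constant $18=9\times2$) comes from.

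With this in hand I would apply Lemma~\ref{lem:VectorFieldCanonicalTransformation} to the present $\chi$ and $\Ham F$; its smallness hypothesis is exactly the assumed $\sup_{\cB_{\rho,s}(R)}\|X_\chi\|_{\cH^{\rho,s}}\leq d/3$. This bounds $\sup_{\cB_{\rho,s}(R-d)}\|X_{(\Phi^t_\chi)^\ast\Ham F-\Ham F}\|_{\cH^{\rho,s}}$ by $\tfrac{9}{d}\sup\|X_\chi\|\,\sup\|X_{\Ham F}\|$ uniformly for $|t|\leq1$. Differentiating the integral representation under the integral sign and integrating this estimate over $t\in[0,1]$ then gives precisely \eqref{vfPois}. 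Analyticity of $\cF$ and $X_\cF$ on $\cB_{\rho,s}(R-d)$ follows because $\Phi^t_\chi$ is the analytic flow of an analytic vector field, with $\Phi^t_\chi(\cB_{\rho,s}(R-d))\subset\cB_{\rho,s}(R)$ for $|t|\leq1$ by the smallness hypothesis and Lemma~\ref{cauchylemma}, so the integrand is analytic in $\zeta$ with uniform bounds. I do not anticipate a genuine obstacle: the only delicate step is the substitution $\{\chi,\Ham{h}_0\}=\Ham Z-\Ham G$, which simultaneously restores analyticity on the whole ball and supplies the factor $2$; everything else is a direct application of the preceding lemmas.
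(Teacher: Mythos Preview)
Your proposal is correct and follows essentially the same route as the paper: write $\cF=\int_0^1\bigl[(\Phi^t_\chi)^\ast\Ham F-\Ham F\bigr]\,\di t$ with $\Ham F:=\{\chi,\Ham{h}_0\}=\Ham Z-\Ham G$, use the homological equation to regain analyticity on the full ball together with $\sup\|X_{\Ham F}\|\leq 2\sup\|X_{\Ham G}\|$, and then bound the integrand by $\tfrac{9}{d}\sup\|X_\chi\|\sup\|X_{\Ham F}\|$ uniformly in $|t|\leq1$. The only cosmetic difference is that the paper reproduces the decomposition and Cauchy-inequality estimates of Lemma~\ref{lem:VectorFieldCanonicalTransformation} inline rather than citing that lemma as a black box; your direct appeal to it is slightly cleaner and yields the same constant $18=9\times2$.
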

\begin{proof}
Since
\begin{align*}
\Ham{h}_0(\Phi_\chi(\zeta)) - \Ham{h}_0(\zeta) &= \int_0^1 \; \{\chi,\Ham{h}_0 \} \circ \Phi^{t}_{\chi}(\zeta) \; \di t \stackrel{\eqref{homeq}}{=} \int_0^1 \;  \Ham Z( \Phi^{t}_{\chi}(\zeta) ) - \Ham G( \Phi^{t}_{\chi}(\zeta) ) \; \di t,
\end{align*}
if we define $\Ham F(\zeta):=\Ham Z(\zeta)-\Ham G(\zeta)$, we get
\begin{align*}
\cF(\zeta) &= \int_0^1 \left(\Ham F( \Phi^t_\chi(\zeta) ) -\Ham F(\zeta)\right) \, \di t.
\end{align*}
Now, we have
\begin{align*}
&\sup_{\zeta \in \cB_{\rho,s}(R-d)} \Vert X_{\cF}(\zeta) \Vert_{\cH^{\rho,s}} \\
&= \sup_{\zeta \in \cB_{\rho,s}(R-d)} \left\Vert \Omega^{-1} \di \left[ \; \int_0^1 \;\left( \Ham F(\Phi_\chi^t(\zeta))- \Ham F(\zeta)\right) \, \di t \; \right] \right\Vert_{\cH^{\rho,s}} \\
& \leq \sup_{\zeta \in \cB_{\rho,s}(R-d)} \left\Vert \int_0^1 \; \left[( \di \Phi_{\chi}^{-t} ( \Phi_\chi^t(\zeta)) - \mathrm{id}) \Omega^{-1} \di \Ham F (\Phi^t_\chi)+ \Omega^{-1} \di ( \Ham F (\Phi_\chi^t(\zeta))-\Ham F(\zeta) )\right] \; \di t \right\Vert_{\cH^{\rho,s}} \\
&\leq  \sup_{\zeta \in \cB_{\rho,s}(R-d)} \left\Vert \int_0^1 \; ( \di \Phi_{\chi}^{-t} ( \Phi_\chi^t(\zeta)) - \mathrm{id}) \Omega^{-1} \di \Ham F (\Phi^t_\chi) \; \di t \right\Vert_{\cH^{\rho,s}} \\
&+ \sup_{\zeta \in \cB_{\rho,s}(R-d)} \left\Vert \int_0^1 \;  \left(X_{\Ham F}(\Phi_\chi^t(\zeta))-X_{\Ham F}(\zeta)\right) \; \di t \right\Vert_{\cH^{\rho,s}} 
\end{align*}
and by dominated convergence we can bound the last quantity by 
\begin{align*}
&\sup_{\zeta \in \cB_{\rho,s}(R-d)} \sup_{t \in[0,1]} \Vert \di \Phi_{\chi}^{-t}(\Phi_\chi^t(\zeta))- \mathrm{id} \Vert_{ \cH^{\rho,s} \to \cH^{\rho,s} } \sup_{\zeta \in \cB_{\rho,s}(R-d)} \Vert X_{\Ham F}(\Phi^t_\chi(\zeta))\Vert_{\cH^{\rho,s}}\\
& + \sup_{\zeta \in \cB_{\rho,s}(R-d)} \sup_{t \in [0,1]} \Vert  X_{\Ham F}(\Phi_\chi^t(\zeta))-X_{\Ham F}(\zeta) \Vert_{\cH^{\rho,s}} \\
&\leq \sup_{t \in [0,1]} \sup_{\zeta \in \cB_{\rho,s}(R-d)} \Vert \di \Phi_{\chi}^{-t}(\Phi_\chi^t(\zeta))- \mathrm{id} \Vert_{ \cH^{\rho,s} \to \cH^{\rho,s} } \sup_{\zeta \in\cB_{\rho,s}(R-d)} \Vert X_{\Ham F}(\Phi^t_\chi(\zeta))\Vert_{\cH^{\rho,s}}\\
& + \sup_{t \in [0,1]} \sup_{\zeta \in \cB_{\rho,s}(R-d)}  \Vert  X_{\Ham F}(\Phi_\chi^t(\zeta))-X_{\Ham F}(\zeta) \Vert_{\cH^{\rho,s}} \\
&\leq \frac{3}{d} \sup_{\zeta \in \cB_{\rho,s}(R)} \Vert X_\chi(\zeta) \Vert_{\cH^{\rho,s}} \sup_{\zeta \in \cB_{\rho,s}(R)} \Vert X_{\Ham F}(\zeta) \Vert_{\cH^{\rho,s}} + \sup_{t \in [0,1]} \sup_{\zeta \in \cB_{\rho,s}(R-d)} \left\Vert \int_0^t [X_\chi,X_{\Ham F}](\Phi_\chi^s(\zeta)) \di s \right\Vert_{\cH^{\rho,s}},
\end{align*}
where we can estimate the last term by Cauchy inequality
\begin{align*}
\sup_{\zeta \in \cB_{\rho,s}(R-d)} \left\Vert \int_0^t [X_\chi,X_{\Ham F}](\Phi_\chi^s(\zeta)) \di s \right\Vert_{\cH^{\rho,s}} & \leq 2 \sup_{\zeta \in \cB_{\rho,s}(R-2d/3)} \Vert [X_\chi,X_{\Ham F}](\zeta) \Vert_{\cH^{\rho,s}} \\
&\leq \frac{6}{d} \sup_{\zeta \in \cB_{\rho,s}(R)} \Vert X_\chi(\zeta) \Vert_{\cH^{\rho,s}} \sup_{\zeta \in \cB_{\rho,s}(R)} \Vert X_{\Ham F}(\zeta) \Vert_{\cH^{\rho,s}} \, .
\end{align*}
By the above computations  and \eqref{vfhomeq} we obtain
\begin{align*}
\sup_{\zeta \in\cB_{\rho,s}(R-d)} \Vert X_\cF(\zeta) \Vert_{\cH^{\rho,s}} &\leq \frac{9}{d} \sup_{\cB_{\rho,s}(R)} \Vert X_{ \chi }(\zeta) \Vert_{\cH^{\rho,s}} \; \sup_{\zeta \in\cB_{\rho,s}(R)} \Vert X_{ \Ham F }(\zeta) \Vert_{\cH^{\rho,s}} \\
&\stackrel{\eqref{vfhomeq}}{\leq} \frac{18}{d} \sup_{\zeta \in \cB_{\rho,s}(R)} \Vert X_{ \chi }(\zeta) \Vert_{\cH^{\rho,s}} \; \sup_{\zeta \in \cB_{\rho,s}(R)} \Vert X_{ G }(\zeta) \Vert_{\cH^{\rho,s}}.
\end{align*}
\end{proof}

\begin{lemma} \label{itlemma}
Let $s \geq s_1 \gg 1$, $R>0$, $m\geq 0$, {$0 < \nu \leq 1$}, and consider the Hamiltonian 
\begin{align} \label{Hm}
H^{(m)}(\zeta) &= \Ham{h}_0(\zeta) + \delta \Ham Z^{(m)}(\zeta) + \delta^{ {\nu}m+1 } \Ham F^{(m)}(\zeta).
\end{align}
Assume that $\Ham{h}_0$ satisfies (PER) and (INV), and that
\begin{align*}
\sup_{\zeta \in \cB_{\rho,s}(R)} \|X_{\Ham F^{(0)}}(\zeta)\|_{\cH^{\rho,s}} &\leq F.
\end{align*}
Fix $d < \frac{R}{m+1}$, and set $R_m := R-md$ ($m \geq 1$). \\
Assume also that $\Ham Z^{(m)}$ is analytic on $\cB_{\rho,s}(R_m)$, and that
\begin{align}
\sup_{\zeta \in \cB_{\rho,s}(R_m)} \|X_{\Ham Z^{(0)}}(\zeta)\|_{\cH^{\rho,s}} &=0, \nonumber \\
\sup_{\zeta \in \cB_{\rho,s}(R_m)} \|X_{\Ham Z^{(m)}}(\zeta)\|_{\cH^{\rho,s}} &\leq F\sum_{i=0}^{m-1}\delta^{ {\nu} i} K_0^i, \; \; m \geq 1, \label{eq:estimateZm} \\
\sup_{\zeta \in \cB_{\rho,s}(R_m)} \|X_{\Ham F^{(m)}}(\zeta)\|_{\cH^{\rho,s}} &\leq F \, K_0^m, \; \; m \geq 1, \label{stepm}
\end{align}

with $K_0 \geq 15$ and $d > 3T \delta F$. \\
Then, if $\delta^{{\nu}} K_0 < 1/2$ there exists a canonical transformation $\cT^{(m)}_\delta$ analytic on $\cB_{\rho,s}(R_{m+1})$ such that 
\begin{align} \label{CTm}
\sup_{\zeta \in \cB_{\rho,s}(R_{m+1})} \|\cT^{(m)}_\delta(\zeta)-\zeta\|_{\cH^{\rho,s}} &\leq 2T \delta^{ {\nu} m+1 }  K_0^m F,
\end{align}
$\Ham H^{(m+1)} := \Ham H^{(m)} \circ \cT^{(m)}$ has the form \eqref{Hm} and 
satisfies \eqref{stepm} with $m$ replaced by $m+1$.
\end{lemma}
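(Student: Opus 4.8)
The plan is to realise $\cT^{(m)}_\delta$ as the time-one Lie transform of a generator chosen to cancel the non-normal part of the remainder $\delta^{\nu m+1}\Ham F^{(m)}$; the whole point of the induction is then to check that the terms which survive are all of order at least $\delta^{\nu(m+1)+1}$, and this is exactly where the assumption $\nu\le 1$ enters. First I would solve the homological equation: applying Lemma~\ref{homeqlemma} with $\Ham G=\Ham F^{(m)}$ produces an analytic $\wtchi$ and a function $\la\Ham F^{(m)}\ra$ in normal form with $\{\wtchi,\Ham h_0\}+\Ham F^{(m)}=\la\Ham F^{(m)}\ra$, together with the bounds $\sup\|X_{\la\Ham F^{(m)}\ra}\|_{\cH^{\rho,s}}\le\sup\|X_{\Ham F^{(m)}}\|_{\cH^{\rho,s}}$ and $\sup\|X_{\wtchi}\|_{\cH^{\rho,s}}\le 2T\sup\|X_{\Ham F^{(m)}}\|_{\cH^{\rho,s}}$ from \eqref{vfhomeq}. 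I then set $\chi:=\delta^{\nu m+1}\wtchi$ and define $\cT^{(m)}_\delta:=\Phi^1_\chi$.

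Next, using \eqref{stepm} and the bound on $X_{\wtchi}$, one gets
\[ \sup_{\cB_{\rho,s}(R_m)}\|X_\chi\|_{\cH^{\rho,s}}\le 2T\delta^{\nu m+1}K_0^m F. \]
The hypotheses $d>3T\delta F$ and $\delta^\nu K_0<1/2$ guarantee that this quantity stays below the thresholds required by Lemma~\ref{cauchylemma} and Lemma~\ref{lem:VectorFieldCanonicalTransformation}, so $\cT^{(m)}_\delta$ is a well-defined analytic canonical map from $\cB_{\rho,s}(R_{m+1})$ into $\cB_{\rho,s}(R_m)$, and Lemma~\ref{cauchylemma} directly gives \eqref{CTm}.

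I would then expand $\Ham H^{(m+1)}=\Ham H^{(m)}\circ\cT^{(m)}_\delta=\Phi^\ast_\chi\Ham h_0+\delta\,\Phi^\ast_\chi\Ham Z^{(m)}+\delta^{\nu m+1}\Phi^\ast_\chi\Ham F^{(m)}$ through the Lie series \eqref{lieseries}. The first-order term $\{\chi,\Ham h_0\}=\delta^{\nu m+1}(\la\Ham F^{(m)}\ra-\Ham F^{(m)})$ cancels the non-normal part of $\delta^{\nu m+1}\Ham F^{(m)}$ and leaves the normal-form contribution $\delta^{\nu m+1}\la\Ham F^{(m)}\ra$. Setting $\Ham Z^{(m+1)}:=\Ham Z^{(m)}+\delta^{\nu m}\la\Ham F^{(m)}\ra$, one has $\delta\Ham Z^{(m+1)}=\delta\Ham Z^{(m)}+\delta^{\nu m+1}\la\Ham F^{(m)}\ra$; this is still in normal form (both summands commute with $\Ham h_0$, the first by induction and the second because it is an $\Ham h_0$-average), and the bound $\sup\|X_{\la\Ham F^{(m)}\ra}\|\le K_0^m F$ appends exactly the term $\delta^{\nu m}K_0^m F$ to the geometric sum, giving \eqref{eq:estimateZm} with $m$ replaced by $m+1$. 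Everything else I collect into
\[ \delta^{\nu(m+1)+1}\Ham F^{(m+1)}:=\sum_{\ell\ge 2}\Ham h_{0,\ell}+\delta\sum_{\ell\ge 1}(\Ham Z^{(m)})_\ell+\delta^{\nu m+1}\sum_{\ell\ge 1}(\Ham F^{(m)})_\ell, \]
where $(\cdot)_\ell$ denotes the $\ell$-th Lie-series term generated by $\chi$ as in \eqref{lieseries}.

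The hard part will be verifying that this $\Ham F^{(m+1)}$ obeys \eqref{stepm} with $m+1$, and it is here that $\nu\le 1$ is decisive. I would estimate the three families separately: the $\Ham h_{0,\ell}$ via \eqref{lieseriesh0}, and the Lie terms $(\Ham Z^{(m)})_\ell$ and $(\Ham F^{(m)})_\ell$ via Lemma~\ref{lem:Liesrest}, using the inductive bound $\sup\|X_{\Ham Z^{(m)}}\|\le F\sum_{i\ge 0}\delta^{\nu i}K_0^i\le 2F$ (finite because $\delta^\nu K_0<1/2$). Each family is a geometric series whose ratio is a multiple of $d^{-1}\sup\|X_\chi\|\lesssim\delta^{\nu m+1}$, hence summable. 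The governing contribution is the single bracket $\delta\{\chi,\Ham Z^{(m)}\}$, whose vector field, by Lemma~\ref{lem:liebrest}, is of order $\delta^{\nu m+2}$, while the families $\Ham h_{0,\ell}$ ($\ell\ge 2$) and $\delta^{\nu m+1}(\Ham F^{(m)})_\ell$ ($\ell\ge 1$) only enter at order $\delta^{2(\nu m+1)}$. Since $\nu\le 1$ forces both $\nu m+2\ge\nu(m+1)+1$ and $2(\nu m+1)\ge\nu(m+1)+1$, every surviving term is of order at least $\delta^{\nu(m+1)+1}$, so after dividing by $\delta^{\nu(m+1)+1}$ the estimate for $\Ham F^{(m+1)}$ is uniform in $\delta$. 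Choosing $K_0\ge 15$ to absorb the accumulated numerical constants, exactly as the single-step bound \eqref{vecfrem} does with the constant $15$, then yields $\sup_{\cB_{\rho,s}(R_{m+1})}\|X_{\Ham F^{(m+1)}}\|_{\cH^{\rho,s}}\le K_0^{m+1}F$, which closes the induction.
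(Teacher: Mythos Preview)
Your construction of $\cT^{(m)}_\delta$ via the homological equation, the bound \eqref{CTm}, and the definition of $\Ham Z^{(m+1)}$ match the paper exactly. The difference is entirely in how you control the new remainder $\Ham F^{(m+1)}$, and there your argument has a gap.

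You propose to expand each of $\Phi^\ast_\chi\Ham h_0$, $\Phi^\ast_\chi\Ham Z^{(m)}$, $\Phi^\ast_\chi\Ham F^{(m)}$ into its full Lie series and sum the tails using Lemma~\ref{lem:Liesrest} and \eqref{lieseriesh0}. But those estimates give geometric ratios $\frac{2e}{d}\sup\|X_\chi\|$ and $\frac{9}{d}\sup\|X_\chi\|$, and the hypotheses of the lemma do \emph{not} force these below $1$. Concretely, for $m=0$ one has $\sup\|X_\chi\|\le 2T\delta F$, while the only lower bound available on $d$ is $d>3T\delta F$; this yields $\frac{2e}{d}\sup\|X_\chi\|<\frac{4e}{3}\approx 3.6$, so your claim ``hence summable'' is unjustified precisely in the case $m=0$ that matters for Lemma~\ref{NFest}. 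Writing the ratio as ``$\lesssim\delta^{\nu m+1}$'' hides a constant that can exceed $\delta^{-(\nu m+1)}$ under the stated hypotheses.

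The paper sidesteps this by not summing any series at all: it groups the remainder as
\[
\bigl(\Ham h_0\circ\cT^{(m)}_\delta-\Ham h_0-\delta^{\nu m+1}\{\chi_m,\Ham h_0\}\bigr)
+\delta\bigl(\Ham Z^{(m)}\circ\cT^{(m)}_\delta-\Ham Z^{(m)}\bigr)
+\delta^{\nu m+1}\bigl(\Ham F^{(m)}\circ\cT^{(m)}_\delta-\Ham F^{(m)}\bigr)
\]
and estimates these three pieces directly with Lemma~\ref{Poissonlemma} and Lemma~\ref{lem:VectorFieldCanonicalTransformation}. Those lemmas only need the mild condition $\sup\|X_\chi\|\le d/3$ (which does follow from $d>3T\delta F$ and $\delta^\nu K_0<1/2$) and give closed-form bounds $\frac{18}{d}\|X_\chi\|\,\|X_{\Ham F^{(m)}}\|$ and $\frac{9}{d}\|X_\chi\|\,\|X_{\Ham Z^{(m)}}\|$, etc. Combining them with $\|X_{\delta^{\nu m+1}\chi_m}\|\le\frac{(m+1)d}{3}$ produces the constant $15$ cleanly, and the inequality $2(\nu m+1)\ge\nu(m+1)+1$ (your observation that $\nu\le 1$ is the key) then closes the induction. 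Your Lie-series route would go through if you strengthened the hypothesis to, say, $d>4eT\delta F$, but as the lemma is stated you should switch to the integral-remainder lemmas.
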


\begin{proof}
The key point of the proof is to look for $\cT^{(m)}_\delta$ as the time-one map of the Hamiltonian vector field of an analytic function $\delta^{ {\nu} m+1 }\chi_m$. Hence, consider the differential equation
\begin{align} \label{chim}
\dot\zeta &= X_{\delta^{ {\nu}m+1 } \chi_m}(\zeta).
\end{align}
By standard theory we have that, if $\|X_{\delta^{ {\nu} m+1 }\chi_m}\|_{\cB_{\rho,s}(R_m)}$ is small enough (e.g. $\|X_{\delta^{ {\nu} m+1 } \chi_m}\|_{\cB_{\rho,s}(R_m)} \leq \frac{ md}{m+1}$) and $\zeta_0 \in \cB_{\rho,s}(R_{m+1})$, then the solution of \eqref{chim} exists for $|t| \leq 1$. 

Therefore we can define $\cT^t_{m,\delta}:\cB_{\rho,s}(R_{m+1}) \to \cB_{\rho,s}(R_m)$, and in particular the corresponding time-one map $\cT^{(m)}_\delta:=\cT^1_{m,\delta}$, which is an analytic canonical transformation, $\delta^{ {\nu} m+1 }$-close to the identity. We have 
\begin{align}
(\cT^{(m)}_\delta)^\ast \; (\Ham{h}_0 + \delta\Ham Z^{(m)} + \delta^{m+1} &\Ham F^{(m)}) = \Ham{h}_0 + \delta \Ham Z^{(m)} + \delta^{ {\nu} m+1 } \left[ \{ \chi_m,\Ham{h}_0 \} + \Ham F^{(m)} \right] + \nonumber \\
&\; \; \; + \left( \Ham{h}_0 \circ \cT^{(m)}_\delta - \Ham{h}_0  - \delta^{ {\nu} m+1} \{ \chi_m,\Ham{h}_0 \} \right) + \delta \left( \Ham Z^{(m)} \circ \cT^{(m)}_\delta - \Ham Z^{(m)} \right) \label{nonnorm1} \\
&\; \; \; + \delta^{ {\nu} m+1 } \left( \Ham F^{(m)} \circ \cT^{(m)}_\delta - \Ham F^{(m)} \right). \label{nonnorm2}
\end{align}
It is easy to see that the first two terms on the right-hand side are already normalized, that the third term is the non-normalized part of order $m+1$ that can be normalized through the choice of a suitable $\chi_m$, and that \eqref{nonnorm1}-\eqref{nonnorm2} contain all the terms of order higher than $m+1$. 
 
In order to normalize the third term on the right-hand side we solve the homological equation
\begin{align*}
\{ \chi_m,\Ham{h}_0 \} + \Ham F^{(m)} \; &= \; \Ham Z_{m+1},
\end{align*}
with $\Ham Z_{m+1}$ in normal form. Lemma \ref{homeqlemma} ensures the existence of $\chi_m$ and $\Ham Z_{m+1}$ as well as their explicit expressions:
\[
	\begin{split}
		\Ham Z_{m+1}(\zeta) & = \frac{1}{T} \int_0^T \Ham F^{(m)}(\Phi_{\Ham{h}_0}^t(\zeta)) \, \di t \, ,\\
		\chi_m(\zeta) &= \frac{1}{T} \int_0^T t \, [\Ham F^{(m)}(\Phi_{\Ham{h}_0}^t(\zeta))- \Ham Z_{m+1}(\Phi_{\Ham{h}_0}^t(\zeta))] \, \di t \, .\\
	\end{split}
\]
The explicit expression of $X_{\chi_m}$ can be computed following the argument of Lemma \ref{homeqlemma}. Using this explicit expression, the analyticity of the flow $\Phi_{\Ham{h}_0}^t$ ensured by (PER) and \eqref{vfhomeq} one has
\[\tag{*} \label{eq:EstimateVectorField}
	\begin{split}
		\sup_{\zeta \in\cB_{\rho,s}(R_m)} \Vert X_{\chi_m} (\zeta)\Vert_{\cH^{\rho,\sigma}} \leq 2T  \sup_{\zeta \in \cB_{\rho,s}(R_m)} \Vert X_{\Ham F^{(m)}}\Vert_{\cH^{\rho,\sigma}} \leq 2T  K_0^m F \, .
	\end{split}
\]
Straightforwardly, from the explicit expression of $Z_{m+1}(\zeta)$ and \eqref{stepm} one has
\[
	\sup_{\zeta \in \cB_{\rho,s}(R_m)} \Vert X_{Z_{m+1}} \Vert_{\cH^{\rho,s}} \leq   K_0^m F \, .
\]
Now define $\Ham Z^{(m+1)}:=\Ham Z^{(m)}+ \delta^{ {\nu} m } \Ham Z_{m+1}$ and notice that as a consequence of the latter estimate and \eqref{eq:estimateZm} we have
\[
	\begin{split}
		\sup_{\zeta \in \cB_{\rho,s}(R_{m+1})} \Vert X_{\Ham Z^{(m+1)}}(\zeta) \Vert &\leq  \sup_{\zeta \in\cB_{\rho,s}(R_{m+1})}\Vert X_{\Ham Z^{(m)}}(\zeta) \Vert_{\cH^{\rho,s}} + \sup_{\zeta \in\cB_{\rho,s}(R_{m+1})} \Vert X_{\delta^{ {\nu} m } \Ham  Z_{m+1}} (\zeta) \Vert_{\cH^{\rho,s}} \\
		&\leq  F\left(\sum_{j=0}^{m-1} \delta^{{\nu} j} K_0^j + \delta^{ {\nu} m} K_0^m\right) \, .
	\end{split}
\]
Defining now $\cT^{(m)}_\delta(\zeta) := \Phi^1_{\delta^{ {\nu} m+1 } \chi_m}(\zeta)$ we can apply Lemma \ref{cauchylemma} and \eqref{eq:EstimateVectorField} to obtain
\[
	\begin{split}
		\sup_{\zeta \in \cB_{\rho,s}(R_{m+1})} \Vert \cT^{(m)}_\delta(\zeta)-\zeta 	\Vert_{\cH^{\rho,s}} &= \sup_{\zeta \in \cB_{\rho,s}(R_{m+1})}  \Vert \Phi^1_{\delta^{ {\nu} m+1} \chi_m}(\zeta)-\zeta \Vert_{\cH^{\rho,s}} \\
		&\leq \sup_{\zeta \in \cB_{\rho,s}(R_m)} \Vert X_{\delta^{ {\nu} m+1} \chi_m} \Vert_{\cH^{\rho,s}} \leq 2T \delta^{ {\nu} m+1}  K_0^m F \, .
	\end{split}
\]

Let us set now $\delta^{ { \nu(m+1) +1 } } \Ham F^{(m+1)} :=  \eqref{nonnorm1} + \eqref{nonnorm2}$. Using Lemma \ref{lem:VectorFieldCanonicalTransformation} one can estimate separately the three pieces. We notice that $\sup_{\cB_{\rho,s}(R_m)}\Vert X_{\delta^{ {\nu} m+1} \chi_m} \Vert_{\cH^{\rho,s}} \leq 2T \delta^{ {\nu} m+1} K_0^m F$ and since $\delta^{{\nu}} K_0 < \frac{1}{2}$ we have $\sup_{\cB_{\rho,s}(R_m)}\Vert X_{\delta^{ {\nu} m+1} \chi_m} \Vert_{\cH^{\rho,s}} < T \delta F < \frac{d}{3} \leq \frac{(m+1) d}{3}$. We can thus apply Lemma \ref{lem:VectorFieldCanonicalTransformation} and Lemma \ref{Poissonlemma} to get
\begin{align*}
\sup_{\zeta \in \cB(R_{m+1})} \Vert X_{\Ham Z^{(m)} \circ \cT^{(m)}_\delta-\Ham Z^{(m)}} (\zeta) \Vert_{\cH^{\rho,s}} &\leq \frac{27 \; \delta^{ {\nu} m+1 }}{(m+1)d} \sup_{\zeta \in \cB_{\rho,s}(R_m)} \Vert X_{\chi_m}(\zeta) \Vert_{\cH^{\rho,s}} \; \sup_{\zeta \in \cB_{\rho,s}(R_m)} \Vert X_{\Ham Z^{(m)}} \Vert_{\cH^{\rho,s}} , \\
\sup_{\zeta \in \cB(R_{m+1})} \Vert X_{\Ham F^{(m)} \circ \cT^{(m)}_\delta-\Ham F^{(m)}} (\zeta) \Vert_{\cH^{\rho,s}} &\leq \frac{27 \; \delta^{ {\nu} m+1 }}{(m+1)d} \sup_{\zeta \in \cB_{\rho,s}(R_m)} \Vert X_{\chi_m}(\zeta) \Vert_{\cH^{\rho,s}} \; \sup_{\zeta \in \cB_{\rho,s}(R_m)} \Vert X_{\Ham F^{(m)}} \Vert_{\cH^{\rho,s}} , \\
\sup_{\zeta \in \cB(R_{m+1})} \Vert X_{\Ham{h}_0 \circ \cT^{(m)}_\delta-\Ham{h}_0-\delta^{ {\nu} m+1 } \{ \chi_m, \Ham{h}_0\}} \Vert_{\cH^{\rho,s}} &\leq \frac{18 \; \delta^{2m {\nu}+2} }{(m+1)d} \; \sup_{\zeta \in \cB_{\rho,s}(R_m)} \|X_{\chi_m}(\zeta)\|_{\cH^{\rho,s}} \; \sup_{\zeta \in \cB_{\rho,s}(R_m)} \|X_{\Ham F^{(m)}}(\zeta)\|_{\cH^{\rho,s}}.
\end{align*}
By means of these inequalities and by exploiting $\Vert X_{\delta^{{\nu} m+1}\chi_m} \Vert_{\cH^{\rho,s}} \leq \frac{(m+1)d}{3}$ together with assumptions \eqref{eq:estimateZm} and \eqref{stepm}, we can estimate
\[
	\begin{split}
		\sup_{\zeta \in \cB_{\rho,s}(R_{m+1})} \|X_{ \delta^{ {\nu (m+1)+1 } } \Ham F^{(m+1)} }(\zeta)\|_{\cH^{\rho,s}} & \leq 9 \delta^{ {\nu(m+1)+1} } \sup_{\zeta \in \cB_{\rho,s}(R_m)} \Vert X_{\Ham Z^{(m)}}(\zeta) \Vert_{\cH^{\rho,s}} \\
                & + 9 \; \delta^{ 2m {\nu} + 2 } \; \sup_{\zeta \in  \cB_{\rho,s}(R_m)} \Vert X_{\Ham F^{(m)}}(\zeta) \Vert_{\cH^{\rho,s}}\\
		& + 6 \; \delta^{ 2m {\nu} +2 } \sup_{\zeta \in \cB_{\rho,s}(R_m)} \Vert X_{\Ham F^{(m)}}(\zeta) \Vert_{\cH^{\rho,s}} \\
		& \leq 9 \; \delta^{ {\nu (m+1)+1 } }  F\sum_{i=0}^{m-1}\delta^i K_0^i +9 \; \delta^{2m {\nu} +2} F \, K_0^m +6 \; \delta^{2m {\nu}+2} F \, K_0^m \\
                &=\delta^{ {\nu(m+1)+1} } \left(9 F \sum_{i=0}^{m-1}\delta^i K_0^i+ 9 \delta^{ {\nu(m-1)+1} } F \, K_0^m + 6 \; \delta^{ {\nu(m-1)+1} } F \, K_0^m  \right).
	\end{split}
\]
If $m=0$ then we have
\begin{align*}
\sup_{\zeta \in \cB_{\rho,s}(R_1)} \Vert X_{\delta^{ {1+\nu} } \Ham F^{(1)}} \Vert_{\cH^{\rho,s}} &\leq  \delta^{{1+\nu} } (9 F  + 6 \, F ).
\end{align*}
If $m \geq 1$ we exploit the smallness condition $\delta^{{\nu}} K_0 < \frac{1}{2}$ to get $\sum_{i=0}^{m-1} \delta^{ {\nu} i } K_0^i < 2$ and
\begin{align}
\sup_{\zeta \in \cB_{\rho,s}(R_{m+1})} \Vert X_{\delta^{ {\nu(m+1)+1} } \Ham F^{(m+1)}} \Vert_{\cH^{\rho,s}} &\leq \delta^{ {\nu(m+1)+1} } \left( 6F + 9 \, { \delta K_0 \frac{F}{2^{m-1}} }  + 6 \, { \delta K_0 \frac{F}{2^{m-1}} } \right), \label{itbound}
\end{align}
{
and since $0< \nu \leq 1$ the right-hand side of \eqref{itbound} can be bounded by
\begin{align*}
\delta^{ \nu(m+1)+1 } \left( 6F + 9 \, \delta^{1-\nu} \frac{F}{2^{m}} + 6 \,  \delta^{1-\nu} \frac{F}{2^{m}} \right) &\leq  15 \, \delta^{\nu(m+1)+1} F.
\end{align*}
}
\end{proof}

\begin{proof}[Proof of Lemma \ref{NFest}]
The Hamiltonian \eqref{truncsys} satisfies the assumptions 
of Lemma \ref{itlemma} with $m=0$, $\Ham F_{1,M}$ in place of $\Ham F^{(0)}$, 
$F = K^{(F)}_{1,s} \, M^{2+{{\Gamma}}}$.
So we apply Lemma \ref{itlemma} with $d=R/4$, provided that 
\[\delta < \frac{R}{12 \, T \, F} = \frac{R}{12 \, T \, K_{1,s}^{(F)} M^{2+{{\Gamma}}}} \]
which is true due to \eqref{smallcond}. 
Hence there exists an analytic canonical transformation 
$\cT^{(0)}_{\delta,M}: \cB_{\rho,s}(3R/4) \to \cB_{\rho,s}(R)$ with
\[ \sup_{\zeta \in \cB_{\rho,s}(3R/4)} \|\cT^{(0)}_{\delta,M}(\zeta)-\zeta\|_{\cH^{\rho,s}} \leq 2T \,   F \, \delta, \]
such that 
\begin{align}
& \Ham H_{1,M} \circ \cT^{(0)}_{\delta,M} = \Ham{h}_0 + \delta \Ham Z^{(1)}_M + \delta^{{1+\nu}} \cR^{(1)}_M, \label{step1} \\
& \Ham Z^{(1)}_M := \la \Ham F_{1,M} \ra, \\
& \delta^{{1+\nu}} \cR^{(1)}_M := \delta^{{1+\nu}} \Ham F^{(1)} \nonumber \\
&=\left( \Ham{h}_0 \circ \cT^{(0)}_{\delta,M} - \Ham{h}_0  - \delta \{ \chi_1,\Ham{h}_0 \} \right) 
+ \delta \left( \Ham Z^{(1)}_M \circ \cT^{(0)}_{\delta,M} - \Ham Z^{(1)}_M \right) + \delta^{{1+\nu}} \left( \Ham F_{1,M} \circ \cT^{(0)}_{\delta,M} - \Ham F_{1,M} \right), \\
& \sup_{\zeta \in \cB_{\rho,s}(3R/4)} \|X_{ \Ham Z^{(1)}_M }(\zeta)\|_{\cH^{\rho,s}} \leq F, \\
& \sup_{\zeta \in \cB_{\rho,s}(3R/4)} \|X_{ \cR^{(1)}_N }(\zeta)\|_{\cH^{\rho,s}} \leq 
15 \; F.
\end{align}
and $K_0=15$, whence $\delta < \frac{1}{30^{ {1/\nu} } }$.
\end{proof}

\section{Proof of Proposition \ref{KdVxietaProp} and Proposition \ref{KPxietaProp}} \label{ApprEstSec11}

\begin{proof}[Proof of Proposition \ref{KdVxietaProp}] In order to prove Proposition \ref{KdVxietaProp} we first discuss the specific energies associated to the high modes, and then the ones associated to the low modes.

First we remark that for all $k$ such that $\kappa(k) = (\mu K_1,\mu^\sigma K_2)$ we have
\begin{align}
\left| \frac{\omega^2_k}{\mu^2} \right| \;&\stackrel{\eqref{FreqNormMode}}{=}\; \frac{4}{\mu^2} \left[ \sin^2\left(\frac{k_1 \pi}{2N_1+1}\right) + \sin^2\left(\frac{k_2 \pi}{2N_2+1}\right) \right] \nonumber \\
&=\; \frac{4}{\mu^2} \left[ \sin^2\left(\frac{\mu K_1 \pi}{2}\right) + \sin^2\left(\frac{\mu^\sigma K_2 \pi}{2}\right) \right] \; \leq \; \pi^2 (K_1^2+\mu^{2(\sigma-1)}K_2^2); \label{EstFreqKdVr}
\end{align}
moreover, for $K_1 \neq 0$
\begin{align}
\frac{|\hat{q}_K|^2+\pi^2(K_1^2+\mu^{2(\sigma-1)}K_2^2)|\hat{p}_K|^2}{2} &\leq \pi^2 \, e^{-2\rho|K|} \; \frac{|\hat{q}_K|^2+(K_1^2+\mu^{2(\sigma-1)}K_2^2)|\hat{p}_K|^2}{2} e^{2\rho|K|} \nonumber \\
&\leq \pi^2 \, e^{-2\rho|K|} \; \left(1+\mu^{2(\sigma-1)} \frac{K_2^2}{K_1^2}\right) \; \|(\xi,\eta)\|_{\cH^{\rho,0}}^2, \label{NormModeKdVEst1} 
\end{align}
while for $|K_2| \leq |K_1|$
\begin{align}
\frac{|\hat{q}_K|^2+\pi^2(K_1^2+\mu^{2(\sigma-1)}K_2^2)|\hat{p}_K|^2}{2} &\stackrel{|K_2| \leq |K_1| }{\leq} \; \; 2 \pi^2 \, e^{-2\rho|K|}  \; \|(\xi,\eta)\|_{\cH^{\rho,0}}^2. \label{NormModeKdVEst2} 
\end{align}
{\color{black}In order to estimate $\mathcal{E}_\kappa$ for large $K_1$ and $K_2$, it is convenient to divide the frequency-space in different regions and bound the terms supported in each region separately. Unlike the one-dimensional case, only few some terms will turn out to be exponentially small with respect to $\mu$, so the introduction of different regions in the frequency space will help us estimating most of the terms in an efficient way. Let us define
\begin{equation}\label{eq:ScrL}
	\mathscr{L}_{\mu,\delta,\rho}\;:=\; \left\{L=(L_1,L_2) \in \mathbb{Z}^2 \, : \, \mu L_1, \mu^\sigma L_2 \in 2 \mathbb{Z} \, , \, |K_1|+ |K_2| > \frac{(2+\delta) |\log \mu|}{\rho} \right\}
\end{equation}}
Using the explicit expression \eqref{SpecEnNormModeKdV} we obtain
\begin{align}
&\frac{ \cE_\kappa }{\mu^{4}} = \sum_{ \substack{ L \in \mathscr{L}_{\mu,\delta,\rho} \\ |K_2+L_2| \leq |K_1+L_1| } } \left( |\hat{q}_{K+L}|^2 + \omega_k^2 \left| \frac{\hat{p}_{K+L}}{\mu} \right|^2 \right) + \sum_{ \substack{L \in \mathscr{L}_{\mu,\delta,\rho} \\ |K_2+L_2| > |K_1+L_1| } } \left( |\hat{q}_{K+L}|^2 + \omega_k^2 \left| \frac{\hat{p}_{K+L}}{\mu} \right|^2 \right) \nonumber \\
&\stackrel{ \eqref{EstFreqKdVr},\eqref{NormModeKdVEst2},\eqref{ZeroAvy1KdV} }{\leq} \pi^2 \; \|(\xi,\eta)\|_{\cH^{\rho,0}}^2 \; 2 \sum_{ \substack{  L \in \mathscr{L}_{\mu,\delta,\rho} \\ |K_2+L_2| \leq |K_1+L_1| } } e^{-2\rho|K+L|} \nonumber \\
&\; \;   \qquad + \pi^2 \; \|(\xi,\eta)\|_{\cH^{\rho,0}}^2 \; \sum_{ \substack{ L \in \mathscr{L}_{\mu,\delta,\rho} \\ |K_2+L_2| > |K_1+L_1| \\ K_1+L_1 \neq 0 } } e^{-2\rho|K+L|} \; \left(1+\mu^{2(\sigma-1)} \frac{(K_2+L_2)^2}{(K_1+L_1)^2}\right) \nonumber \\
&\; \;   \qquad + \pi^2 \; \|(\xi,\eta)\|_{\cH^{\rho,0}}^2 \; \sum_{ \substack{ L \in \mathscr{L}_{\mu,\delta,\rho} \\ |K_2+L_2| > |K_1+L_1| \\ K_1+L_1 = 0 } } e^{-2\rho|K_2+L_2|}.  \nonumber
\end{align}

Now,
\begin{align}
&\sum_{ \substack{ L \in \mathscr{L}_{\mu,\delta,\rho}  } } e^{-2\rho|K+L|}\;\leq\; e^{-2\rho|K|} + \sum_{ \substack{ L \in \mathscr{L}_{\mu,\delta,\rho} \\ L_1=0,L_2 \neq 0  } } e^{-2\rho|K+L|} + \sum_{ \substack{  L \in \mathscr{L}_{\mu,\delta,\rho} \\ L_1 \neq 0, L_2=0 } } e^{-2\rho|K+L|} + \sum_{ \substack{  L \in \mathscr{L}_{\mu,\delta,\rho} \\ L_1,L_2 \neq 0 } } e^{-2\rho|K+L|}. \label{DecompHighFreqTerm1}
\end{align}
We now estimate the last sum in \eqref{DecompHighFreqTerm1}; we point out that for $L_1,L_2 \neq 0$ we have $|L| \geq \frac{2}{\mu} + \frac{2}{\mu^\sigma}$, hence $2|K| \leq |L|$.

Therefore, for any $k$ such that $\kappa(k) = (\mu K_1,\mu^{\sigma} K_2)$ and $|K_1|+|K_2| \geq \frac{(2+\delta)|\log\mu|}{\rho}$
\begin{align}
\sum_{ \substack{  L \in \mathscr{L}_{\mu,\delta,\rho} \\ L_1,L_2 \neq 0 } } e^{-2\rho|K+L|} &\leq \sum_{ \substack{ L \in \mathscr{L}_{\mu,\delta,\rho} \\ L_1,L_2 \neq 0 } } e^{-2\rho\, | \, |K|-|L| \, |} \leq \sum_{ \substack{  L \in \mathscr{L}_{\mu,\delta,\rho} \\ L_1,L_2 \neq 0 } } e^{2\rho|K|} e^{-2\rho|L|} \nonumber \\
&\leq e^{2\rho|K|} \, 2\pi \, \int_{2|K|}^{+\infty} R e^{-2\rho R} \di R = 2 \pi \, e^{2\rho|K|} \, \left( -\frac{1}{2} \right) \frac{\di}{\di\rho} \left[ \int_{2|K|}^{+\infty} e^{-2\rho R} \di R \right] \nonumber \\
&= -\pi \, e^{2\rho|K|} \, \frac{\di}{\di\rho} \left( \frac{e^{-4\rho |K|}}{2\rho} \right) 
\; = \; \frac{\pi}{2 \rho} \left(\frac{1}{\rho}+4 \right) e^{-2 \rho |K|} \, . \label{Est1HighFreqTerm1}
\end{align}
Next we estimate the second sum in \eqref{DecompHighFreqTerm1}; we have
\begin{align}
\sum_{ \substack{  L \in \mathscr{L}_{\mu,\delta,\rho} \\ L_1 \neq 0, L_2 = 0 } } e^{-2\rho|K+L|} &\leq e^{-2\rho\, (|K_1|+|K_2|)} \sum_{ \substack{  \ell \in \Z \setminus \{0\}   } } e^{-4 \rho |\ell|/\mu}, \label{Est2HighFreqTerm1}
\end{align}
which is exponentially small with respect to $\mu$. Similarly,
\begin{align}
\sum_{ \substack{  L \in \mathscr{L}_{\mu,\delta,\rho} \\ L_1 = 0, L_2 \neq 0 } } e^{-2\rho|K+L|} &\;\leq\; e^{-2\rho\, (|K_1|+|K_2|)} \sum_{ \substack{  \ell \in \Z \setminus \{0\}   } } e^{-4 \rho |\ell|/\mu^\sigma}. \label{Est3HighFreqTerm1}
\end{align}
Then,
\begin{align}
\sum_{ \substack{L \in \mathscr{L}_{\mu,\delta,\rho} \\ |K_2+L_2| > |K_1+L_1| \\ K_1+L_1 \neq 0 } } &e^{-2\rho|K+L|} \; \frac{(K_2+L_2)^2}{(K_1+L_1)^2} \;\leq \; e^{-2\rho|K|} \left( \frac{K_2}{K_1} \right)^2  + \sum_{ \substack{ L \in \mathscr{L}_{\mu,\delta,\rho} \\ |K_2+L_2| > |K_1+L_1| \\ K_1+L_1 \neq 0 \\ L_1 \neq 0, L_2=0} } e^{-2\rho|K+L|} \; \frac{(K_2+L_2)^2}{(K_1+L_1)^2} \nonumber \\
& +\sum_{ \substack{ L \in \mathscr{L}_{\mu,\delta,\rho} \\ |K_2+L_2| > |K_1+L_1| \\ K_1+L_1 \neq 0 \\ L_1=0,L_2 \neq 0} } e^{-2\rho|K+L|} \; \frac{(K_2+L_2)^2}{(K_1+L_1)^2} + \sum_{ \substack{ L \in \mathscr{L}_{\mu,\delta,\rho} \\ |K_2+L_2| > |K_1+L_1| \\ K_1+L_1 \neq 0 \\ L_1,L_2 \neq 0} } e^{-2\rho|K+L|}\; \frac{(K_2+L_2)^2}{(K_1+L_1)^2}. \label{DecompHighFreqTerm2}
\end{align}
First we estimate the last term in \eqref{DecompHighFreqTerm2}: 
we have that $|L+K| \geq |K|$, hence
\begin{align}
\sum_{ \substack{ L \in \mathscr{L}_{\mu,\delta,\rho} \\ |K_2+L_2| > |K_1+L_1| \\ K_1+L_1 \neq 0 \\ L_1,L_2 \neq 0} } e^{-2\rho|K+L|} \; \frac{(K_2+L_2)^2}{(K_1+L_1)^2} &= \int_{|K|}^{+\infty} \int_0^{\pi/4} e^{-2\rho \, \xi} \, \xi \tan^2\phi \, \di\phi \, \di \xi \nonumber \\
&= \left( 1-\frac{\pi}{4} \right) \, e^{-2\rho |K|} \, \frac{1+2\rho |K|}{4\rho^2} \nonumber \\
&\leq \left( 1-\frac{\pi}{4} \right) \, \mu^4 \, e^{ -2\rho \left[ |K| - \frac{2|\log\mu|}{\rho} - \frac{1}{2\rho} \log(2\rho|K|) \right] } \nonumber \\
&\stackrel{\delta < 1 - 1/e}{\leq} \left( 1-\frac{\pi}{4} \right) \, \mu^4 \, e^{ -2\rho \, \left[ \delta |K| - \frac{2 |\log\mu|}{\rho} \right] } \nonumber \\
&=\left(1-\frac{\pi}{4} \right) \mu^8 e^{-2 \rho \delta |K|}  . \label{Est1HighFreqTerm21}
\end{align}

Now we bound the other two nontrivial terms in \eqref{DecompHighFreqTerm2}; on the one hand, we notice that
{
\begin{align}
\sum_{ \substack{ L \in \mathscr{L}_{\mu,\delta,\rho} \\ |K_2+L_2| > |K_1+L_1| \\ K_1+L_1 \neq 0 \\ L_1 \neq 0,L_2=0} } e^{-2\rho|K+L|} \frac{(K_2+L_2)^2}{(K_1+L_1)^2} &= K_2^2 \; \sum_{ \substack{ L \in \mathscr{L}_{\mu,\delta,\rho} \\ |K_2+L_2| > |K_1+L_1| \\ K_1+L_1 \neq 0 \\ L_1 \neq 0,L_2=0} } e^{-2\rho|K+L|} \frac{1}{(K_1+L_1)^2} \nonumber \\
&\leq K_2^2 \, e^{-2\rho|K|} \sum_{ \substack{  \ell \in \Z \setminus \{0\}   } } e^{-4 \rho |\ell|/\mu} \nonumber \\
&\leq \; 2 K_2^2 \, e^{-2\rho|K|} \int_1^{+\infty} e^{-4 \rho |\ell|/\mu} \, \di \ell , \label{DecompHighFreqTerm22}
\end{align}
where the last integral is exponentially small with respect to $\mu$, while on the other hand 
\begin{align}
\sum_{ \substack{ L \in \mathscr{L}_{\mu,\delta,\rho} \\ |K_2+L_2| > |K_1+L_1| \\ K_1+L_1 \neq 0 \\ L_1 \neq 0,L_2=0} } e^{-2\rho|K+L|} \frac{(K_2+L_2)^2}{(K_1+L_1)^2} &\leq \frac{1}{K_1^2} \; \sum_{ \substack{ L \in \mathscr{L}_{\mu,\delta,\rho} \\ |K_2+L_2| > |K_1+L_1| \\ K_1+L_1 \neq 0 \\ L_1 = 0,L_2 \neq 0} } e^{-2\rho|K+L|}  (K_2+L_2)^2 \nonumber \\
&\leq \frac{1}{K_1^2} \; e^{-2\rho|K|} \sum_{ \substack{  \ell \in \Z \setminus \{0\}   } } e^{-4 \rho |\ell|/\mu^\sigma} \, \left( K_1^2 + 2K_1 \frac{\ell}{\mu^{\sigma}} + \frac{\ell^2}{\mu^{2\sigma}} \right) \nonumber \\
&\leq \frac{ 2 e^{-2\rho|K|} }{K_1^2} \;  \int_1^{+\infty} e^{-4 \rho |\ell|/\mu^\sigma} \, \left( K_1^2 + 2K_1 \frac{\ell}{\mu^{\sigma}} + \frac{\ell^2}{\mu^{2\sigma}} \right) \, \di \ell, \label{DecompHighFreqTerm23}
\end{align}
}
and the last integral is again exponentially small with respect to $\mu$. \\

On the other hand, for any $k$ such that $\kappa(k) = (\mu K_1,\mu^{\sigma} K_2)$ and $|K_1|+|K_2| \leq \frac{(2+\delta) |\log\mu|}{\rho}$
\begin{align}
&\left| \frac{\cE_\kappa}{\mu^{4}} - \frac{|\hat{\xi}_K|^2+|\hat{\eta}_K|^2}{2} \right| \nonumber \\
&\leq \left|\frac{\omega_k^2-\mu^2 \, \pi^2 K_1^2}{2\mu^2}\right| |\hat{p}_K|^2 + \frac{1}{2} \sum_{ \substack{ L=(L_1,L_2) \in \Z^2 \setminus \{0\} \\ \mu L_1, \mu^{\sigma} L_2 \in 2\Z} } \left( |\hat{q}_{K+L}|^2 + \omega_k^2 \left| \frac{\hat{p}_{K+L}}{\mu} \right|^2\right) \, , \nonumber \\
&\stackrel{\eqref{EstFreqKdVr}}{\leq} (\mu^2 \pi^4 K_1^4 + \pi^2 \mu^{2(\sigma-1)} K_2^2) |\hat{p}_K|^2 \nonumber \\
&\; \; \; \; + \frac{1}{2} \sum_{ \substack{ L=(L_1,L_2) \in \Z^2 \setminus \{0\} \\ \mu L_1, \mu^{\sigma} L_2 \in 2\Z} } \left(|\hat{q}_{K+L}|^2 + \pi^2[(K_1+L_1)^2+\mu^{2(\sigma-1)}(K_2+L_2)^2] |\hat{p}_{K+L}|^2\right), \nonumber \\
&\leq \left( \pi^4 \, \mu^2 K_1^4 + \pi^2 \mu^{2(\sigma-1)} \, \frac{9 |\log\mu|^2}{\rho^2} \right) |\hat{p}_K|^2 \nonumber \\
&\; \; \; \; + \frac{1}{2} \sum_{ \substack{ L=(L_1,L_2) \in \Z^2 \setminus \{0\} \\ \mu L_1, \mu^{\sigma} L_2 \in 2\Z} } \left( |\hat{q}_{K+L}|^2 + \pi^2[(K_1+L_1)^2+\mu^{2(\sigma-1)}(K_2+L_2)^2] |\hat{p}_{K+L}|^2\right), \nonumber \\
&\leq \left( \pi^4 \, \mu^2  + \pi^2 \mu^{2(\sigma-1)} \, \right) \frac{9 |\log\mu|^2}{\rho^2}  \, 2 \|(\xi,\eta)\|_{\cH^{\rho,0}}^2 \label{EstLowFreqTerm1} \\
&\; \; \; \; + \frac{\pi^2}{2} \sum_{ \substack{ L=(L_1,L_2) \in \Z^2 \setminus \{0\} \\ \mu L_1, \mu^{\sigma} L_2 \in 2\Z} } e^{2\rho |K+L|} \, (|\hat\xi_{K+L}|^2 + |\hat\eta_{K+L}|^2) \, \left( 1 + 2 \mu^{2(\sigma-1)} \frac{K_2^2+L_2^2}{(K_1+L_1)^2} \right)\, e^{-2\rho |K+L|}, \label{EstLowFreqTerm2}
\end{align}
and we can conclude by estimating \eqref{EstLowFreqTerm1} by exploiting the fact that {$|\log\mu| \leq \mu^{-1/4}$}, while we can estimate \eqref{EstLowFreqTerm2} by
\begin{align}
&\frac{\pi^2}{2} \, \|(\xi,\eta)\|_{\cH^{\rho,0}}^2 \, \sum_{ \substack{ L=(L_1,L_2) \in \Z^2 \setminus \{0\} \\ \mu L_1, \mu^{\sigma} L_2 \in 2\Z} } \, \left( 1 + 2 \mu^{2(\sigma-1)} \frac{K_2^2+L_2^2}{(K_1+L_1)^2} \right)\, e^{-2\rho |K+L|} \nonumber \\
&\leq \frac{\pi^2}{2} \, \|(\xi,\eta)\|_{\cH^{\rho,0}}^2 \, \sum_{ \substack{ L=(L_1,L_2) \in \Z^2 \setminus \{0\} \\ \mu L_1, \mu^{\sigma} L_2 \in 2\Z} } \, \left( 1 + 2 \, \mu^{2(\sigma-1)} \, K_2^2 + 2 \, \mu^{2(\sigma-1)} \, L_2^2 \right) \, e^{-2\rho |K+L|} \nonumber \\
&\leq \frac{\pi^2}{2} \, \|(\xi,\eta)\|_{\cH^{\rho,0}}^2 \, \left[ (1+2 \, \mu^{2(\sigma-1)} \, K_2^2) 2\pi \, \int_{2/\mu}^{+\infty} e^{-2\rho \ell} \, \ell \di \ell + 4\pi \, \int_{2/\mu}^{+\infty} e^{-2\rho \ell} \, \ell^3 \di \ell \right] \nonumber \\
&=  \frac{\pi^2}{2} \, \|(\xi,\eta)\|_{\cH^{\rho,0}}^2 \times \nonumber \\
&\; \; \; \left[ 2\pi \,\left(1+ 2 \, \mu^{2(\sigma-1)} \,  \frac{9|\log\mu|^2}{\rho^2}\right) \,  e^{-4\rho/\mu} \, \frac{\mu+4\rho}{4\mu\rho^2} + 4\pi \, e^{-4\rho/\mu} \, \frac{3\mu^3+12\rho\mu^2+24\rho^2\mu+32\rho^3}{8\mu^3\rho^4} 
\right]. \label{Est1LowFreqTerm2}
\end{align}
\end{proof}

\begin{proof}[Proof of Proposition \ref{KPxietaProp}]
Proposition \ref{KPxietaProp} is obtained as a Corollary of Proposition \ref{KdVxietaProp} by setting $\sigma = 2$.
\end{proof}

\section{Proof of Proposition \ref{ApprPropKdV} and Proposition \ref{ApprPropKP} } \label{ApprEstSec12}

\begin{proof}[Proof of Proposition \ref{ApprPropKdV}]
The argument follows along the lines of Appendix C in \cite{bambusi2006metastability}.

Exploiting the canonical transformation found in Theorem \ref{gavthm}, we also define
\begin{align}
\zeta_a \, := \, (\xi_a,\eta_a) &=  \cT_{\mu^2}( \widetilde{\xi_a},\widetilde{\eta_a} ) \, = \, \tilde\zeta_a + \psi_a(\tilde\zeta_a),
\end{align}
where $\psi_a(\tilde\zeta_a):=( \psi_\xi(\tilde\zeta_a),\psi_\eta(\tilde\zeta_a) )$; by \eqref{CTthm} we have
\begin{align} \label{estRemThm}
\sup_{\zeta \in \cB_{\rho,n}(R)} \|\psi_a(\zeta)\|_{\cH^{\rho, {n} }} &\leq C'_n \mu^2 \, R.
\end{align}

For convenience we define
\begin{align}
q_a(\tau,y) &:= \frac{1}{\sqrt{2}} \left[ \xi_a(\mu^2 \tau,y_1- \tau,y_2) + \eta_a(\mu^2 \tau,y_1 + \tau,y_2)  \right] \label{qappr} \\
\d_{y_1}p_a(\tau,y) &:= \frac{1}{\sqrt{2}} \left[ \xi_a(\mu^2 \tau,y_1- \tau,y_2) - \eta_a(\mu^2 \tau,y_1 + \tau,y_2)  \right], \label{pappr} 
\end{align}

We observe that the pair $(q_a,p_a)$ satisfies
\begin{align*}
\mu^2 (q_a)_t = -\Delta_1 \, \mu p_a + \mu^{ { \min(2\sigma,6) } } \cR_q, \; &\; \; \mu (p_a)_t = - \mu^2 q_a - \mu^4 \, \alpha \, \overline{\pi_{0}}q_a^2 + \mu^{ { \min(2\sigma-1,5) } } \cR_p,
\end{align*}
where the operator $\Delta_1$ acts on the variable $x$, $\overline{\pi_{0}}$ is the projector on the space of the functions with zero average (defined in Section \ref{galavsec}), and the remainders are functions of the rescaled variables $\tau$ and $y$ which satisfy
\begin{align*}
\sup_{\cB_{\rho,n}(R)} \|\cR_q\|_{\ell^2_{\rho,0}} \leq C, \; &\; \; \sup_{\cB_{\rho,n}(R)} \|\cR_p\|_{\ell^2_{\rho,1}} \leq C.
\end{align*}
We now restrict the space variables to integer values; keeping in mind that $q_a$ and $p_a$ are periodic, we assume that $j \in \Z^2_{N_1,N_1^\sigma}$. 

For a finite sequence $Q=(Q_j)_{j \in  \Z^2_{N_1,N_1^\sigma} }$ we define the norm
\begin{align} \label{NormSeq}
\|Q\|_{\ell^2_{N_1,N_1^\sigma}}^2 &:= \sum_{j \in \Z^2_{N_1,N_1^\sigma}} |Q_j|^2.
\end{align}

Now we consider the discrete model \eqref{2DETLeq}: we rewrite in the following form,
\begin{align}
\dot{Q}_j &= - (\Delta_1 P)_j \label{DiscrEq1} \\
\dot{P}_j &= - Q_j - \, \alpha \, \overline{\pi_{0}}Q_j^2 \label{DiscrEq2}
\end{align}
and we want to show that there exist two sequences $E=(E_j)_{j \in \Z^2_{N_1,N_1^\sigma} }$ and $F=(F_j)_{j \in \Z^2_{N_1,N_1^\sigma} }$ such that
\begin{align*}
Q \, = \, \mu^2 \, q_a + \mu^{2+\gamma} E, &\; \; P \, = \, \mu p_a + \mu^{2+\gamma} F
\end{align*}
fulfills \eqref{DiscrEq1}-\eqref{DiscrEq2}, where $\gamma>0$ is a parameter we will fix later in the proof. Therefore, we have that
\begin{align}
\dot{E} &= - \Delta_1 \, F - \mu^{ {\min(2\sigma,6)} -2-\gamma } \cR_q  \label{EqSeq1} \\
\dot{F} &= - E - \alpha \pi_0\, ( \mu^2 \, 2q_a E + \mu^{2+\gamma} E^2) - \mu^{ {\min(2\sigma-1,5)} -2-\gamma }\cR_p, \label{EqSeq2}
\end{align}
where we impose initial conditions on $(E,F)$ such that $(\tilde{q},\tilde{p})$ has initial conditions corresponding to the ones of the true initial datum,
\begin{align}
\mu^2 q_a(0,\mu j_1,\mu^\sigma j_2) + \mu^{2+\gamma} E_{0,j} &= Q_{0,j}, \nonumber \\
\mu p_a(0,\mu j_1,\mu^\sigma j_2) + \mu^{2+\gamma} F_{0,j} &= P_{0,j} . \nonumber
\end{align}

We now define the operator $\d_{i}$, $i=1,2$,  by $(\d_if)_j := f_j - f_{j-e_i}$ for each $f \in \ell^2_{N_1,N_1^\sigma}$. 

\begin{itemize}
\item Claim 1: Let $\sigma > 2$ and $\gamma >0$, we have
\begin{align*}
\|E_0\|_{\ell^2_{N_1,N_1^\sigma}} \leq C' \mu^{(3-2\gamma-\sigma)/2}, \; \|\d_1F_0\|_{\ell^2_{N_1,N_1^\sigma}} &\leq C' \mu^{(3-2\gamma-\sigma)/2}, \; \|\d_2F_0\|_{\ell^2_{N_1,N_1^\sigma}} \leq C' \mu^{(1-2\gamma+\sigma)/2}.
\end{align*}
\end{itemize}
To prove Claim 1 we observe that
\begin{align*}
E_0 &= \mu^2 \frac{ \xi_a+\eta_a-(\widetilde{\xi}_a+\widetilde{\eta}_a) }{\sqrt{2} \mu^{2+\gamma} }\, = \, \mu^{-\gamma} \, \frac{ \psi_\xi+\psi_\eta }{\sqrt{2}}, \\
F_0 &= \mu \frac{\d_{y_1}^{-1} [ \xi_a-\eta_a-(\widetilde{\xi}_a-\widetilde{\eta}_a) ] }{\sqrt{2} \mu^{2+\gamma} }\, = \, \mu^{-1-\gamma} \frac{ \d_{y_1}^{-1} (\psi_\xi-\psi_\eta) }{\sqrt{2}}, \\
\end{align*}
from which we can deduce
\begin{align*}
\|E_0\|_{\ell^2_{N_1,N_1^\sigma}}^2 &\leq \sum_{j \in \Z^2_{N_1,N_1^\sigma} } |E_{0,j}|^2 \,\leq \, C \, 4 N_1^{\sigma+1} \, (\mu^{2-\gamma})^2 \,  = \, C \, \mu^{3-2\gamma-\sigma}\, , \\
\|\d_1F_0\|_{\ell^2_{N_1,N_1^\sigma}}^2 &\leq \sum_{j \in \Z^2_{N_1,N_1^\sigma} } |\d_1F_{0,j}|^2 \, \leq \, \, C \, 4 N_1^{\sigma+1} \, (\mu^{2-\gamma})^2 \, \leq \, C \, \mu^{3-2\gamma-\sigma} \, , \\
\|\d_2F_0\|_{\ell^2_{N_1,N_1^\sigma}}^2 &\leq \sum_{j \in \Z^2_{N_1,N_1^\sigma} } |\d_2F_{0,j}|^2 \, \leq \,\, C \, 4 N_1^{\sigma+1} \, (\mu^{1+\sigma-\gamma})^2 \, = \, C \, \mu^{1-2\gamma+\sigma}
\end{align*}
and this leads to the thesis.

\begin{itemize}
\item Claim 2: Fix $n \geq 0$, $T_0 > 0$  and $K_\ast >0$, then for any $\mu < \mu_s$ and for any $\sigma > 2$ and {$\gamma >0 $} such that {$\sigma+2\gamma < \min(4\sigma-5,7)$} we have
\begin{align}
\|E\|_{\ell^2_{N_1,N_1^\sigma}}^2  +  \|\d_1F\|_{\ell^2_{N_1,N_1^\sigma}}^2 + \|\d_2F\|_{\ell^2_{N_1,N_1^\sigma}}^2  &\leq K_\ast, \; \; {\color{black} \forall} \, |t| < \frac{T_0}{ \mu^{3} }.
\end{align}
\end{itemize}

To prove the claim, we define
\begin{align} \label{auxFuncClaim2}
\cG(E,F) &:= \sum_{j \in \Z^2_{N_1,N_1^\sigma} } \left(\frac{E_j^2 + F_j (-\Delta_1F)_j}{2} + \frac{ 2\mu^2\alpha q_{a,j} E_j^2}{2}\right) \, ,
\end{align}
and we remark that, using the boundedness of $q_{a,j}$,
\begin{align*}
\frac{1}{2} \cG(E,F) \, \leq \, \|E\|_{\ell^2_{N_1,N_1^\sigma}}^2  &+  \|\d_1F_0\|_{\ell^2_{N_1,N_1^\sigma}}^2 + \|\d_2F_0\|_{\ell^2_{N_1,N_1^\sigma}}^2 \, \leq \, 4 \cG(E,F).
\end{align*}

Now we compute the time derivative of $\cG$. Exploiting \eqref{EqSeq1}-\eqref{EqSeq2}
\begin{align}
\dot{\cG} &= \sum_{j} E_j \left[ -(\Delta_1F)_j - \mu^{ {\min(2\sigma-2,4)} -\gamma } (\cR_q)_j \right] \label{TimeDerAuxFunc1} \\
&\; \; + \sum_{j} (-\Delta_1F)_j \left[ - E_j -\alpha (\mu^2 2q_{a,j} E_j + \mu^{2+\gamma} E_j^2) - \mu^{ {\min(2\sigma-3,3)} -\gamma } (\cR_p)_j \right] \label{TimeDerAuxFunc2} \\
&\; \; + \sum_{j} 2\mu^2 \, \alpha \, q_{a,j} E_j \left[ - (\Delta_1F)_j - \mu^{ {\min(2\sigma-2,4)} -\gamma } (\cR_q)_j \right] \label{TimeDerAuxFunc3} \\
&\; \; + \sum_{j} \mu^2\alpha E_j^2 \, \mu \, \frac{\d q_{a,j}}{\d\tau} \label{TimeDerAuxFunc4} \\
&= \sum_{j} - E_j \, \mu^{ {\min(2\sigma-2,4)} -\gamma } (\cR_q)_j + \sum_{j} (-\Delta_1F)_j \left[  -\alpha \,  \mu^{2+\gamma} E_j^2 - \mu^{ {\min(2\sigma-3,3)} -\gamma } (\cR_p)_j \right] \label{TimeDerAuxFunc21} \\
&\; \; - \sum_{j} 2\mu^2 \, \alpha \, q_{a,j} E_j  \, \mu^{ {\min(2\sigma-2,4)} -\gamma } (\cR_q)_j + \sum_{j} \mu^2\alpha E_j^2 \, \mu \, \frac{\d q_{a,j}}{\d\tau} \label{TimeDerAuxFunc22}
\end{align}
In order to estimate \eqref{TimeDerAuxFunc21}-\eqref{TimeDerAuxFunc22}, we notice that
\begin{align*}
\sup_j |(\Delta_1 F)_j| &\leq 2 \sup_j |(\d_1F)_j| + |(\d_2F)_j| \leq 4 \sqrt{\cG}, \\
\|\cR_q\|^2_{\ell^2_{N_1,N_1^\sigma}} \, \leq \, \sum_j |(\cR_q)_j|^2 & \leq 4N_1^{\sigma+1} \sup_y |\cR_q(y)|^2 \, \leq \, C \mu^{-1-\sigma},
\end{align*}
and that $ |(\d_i\cR_p)_j| \leq \mu \sup_y \left| \frac{\d \cR_p}{\d y} (y) \right|$, which implies 
\begin{align*}
\|\d_i\cR_p\|^2_{\ell^2_{N_1,N_1^\sigma}} & \leq  C \mu^{1-\sigma}.
\end{align*}

Now, the first sum in \eqref{TimeDerAuxFunc21} is estimated by $C \cG^{1/2} \mu^{( {2\min(2\sigma-2,4)-1} -2\gamma-\sigma)/2}$ ; the second sum in \eqref{TimeDerAuxFunc21} can be bounded by
\begin{equation*}
C (\mu^{2+\gamma} \cG^{3/2} + \mu^{( {2\min(2\sigma-3,3)+1} -2\gamma-\sigma)/2} \cG^{1/2}).
\end{equation*}
Recalling that $q_{a,j}$ is bounded, the first sum in \eqref{TimeDerAuxFunc22} can be bounded by $C \cG^{1/2} \mu^{( {2\min(2\sigma,6)-1 }  -2\gamma-\sigma)/2}$, while the second one is estimated by $C \mu^{3} \cG$. 
Hence, as long as $\cG < 2 K_\ast$ we have
\begin{align}
\left| \dot{\cG} \right| &\leq C \left| ( \mu^{( {\min(4\sigma-5,7)} -2\gamma-\sigma)/2} + \mu^{( {\min(4\sigma-5,7)} -2\gamma-\sigma)/2} + \mu^{( {\min(4\sigma-1,11) } -2\gamma-\sigma)/2} ) \cG^{1/2} + \mu^{2+\gamma} \cG^{3/2} + \mu^{3} \cG \right| \nonumber \\
&\leq C (\mu^{2+\gamma} \, \sqrt{2} \, K_{\ast}^{1/2}+\mu^3) \cG + C { ( 2 \mu^{( \min(4\sigma-5,7) -2\gamma-\sigma)/2}  + \mu^{( \min(4\sigma-1,11) -2\gamma-\sigma)/2} ) } \sqrt{2} \, K_{\ast}^{1/2}, \label{EstTimeDer1} \\
&\stackrel{\gamma \geq 1}{\leq} C \, \mu^{3} \, 2 \sqrt{2} \, K_{\ast}^{1/2} \cG +  C \, 3 \mu^{( {\min(4\sigma-5,7)} -2\gamma-\sigma)/2} \, \sqrt{2} \, K_{\ast}^{1/2}, \label{EstTimeDer2}
\end{align}
and by applying Gronwall's lemma we get
\begin{align} \label{Gronwall}
\cG(t) &\leq \cG(0) e^{C \, 2 \sqrt{2} \, K_{\ast}^{1/2} \, \mu^3 t} + e^{C \, 2 \sqrt{2} \, K_{\ast}^{1/2} \, \mu^3 t} \; C \, 2 \sqrt{2} \, K_{\ast}^{1/2} \, \mu^3 t \; C \, 3 \mu^{( {\min(4\sigma-5,7)} -2\gamma-\sigma)/2} \, \sqrt{2} \, K_{\ast}^{1/2},
\end{align}
from which we can deduce the thesis.
\end{proof}

\begin{proof}[Proof of Proposition \ref{ApprPropKP}]
Proposition \ref{ApprPropKP} can be obtained from Proposition \ref{ApprPropKdV} by setting $\sigma = 2$.
\end{proof}

\section{Proof of Proposition \ref{1DNLSpsiProp}} \label{ApprEstSec21}

We argue as in the proof of Proposition \eqref{KdVxietaProp}.

First we remark that for all $k$ such that $\kappa(k) = (\mu K_1,\mu^\sigma K_2)$ we have
\begin{align}
| \omega^2_k | &\stackrel{\eqref{FreqNormModeKG}}{=} 1 + 4 \left[ \sin^2\left(\frac{k_1 \pi}{2N_1+1}\right) + \sin^2\left(\frac{k_2 \pi}{2N_2+1}\right) \right] \nonumber \\
&= 1 + 4 \left[ \sin^2\left(\frac{\mu K_1 \pi}{2}\right) + \sin^2\left(\frac{\mu^\sigma K_2 \pi}{2}\right) \right] \nonumber \\
&\leq 1 + \pi^2 (\mu^2 \, K_1^2 + \mu^{2\sigma} \, K_2^2) \; \leq \; \pi^2 (1 + \mu^2 \, K_1^2 + \mu^{2\sigma} \, K_2^2), \label{EstFreq1DNLSr}
\end{align}
hence 
\begin{align}
\frac{|\hat{p}_K|^2+\pi^2(1 + \mu^2 K_1^2 + \mu^{2\sigma} K_2^2)|\hat{q}_K|^2}{2} &\leq \pi^2 \, e^{-2\rho|K|} \; \frac{|\hat{p}_K|^2+(1 + \mu^2 K_1^2 + \mu^{2\sigma} K_2^2)|\hat{q}_K|^2}{2} e^{2\rho|K|} \nonumber \\
&\leq \pi^2 \, e^{-2\rho|K|} \; \left(1 + \mu^2 \, K_1^2 + \mu^{2\sigma} \, K_2^2\right) \; \|(\psi,\bar\psi)\|_{\cH^{\rho,0}}^2. \label{NormMode1DNLSEst} 
\end{align}
{Hence, analogously to Appendix \ref{ApprEstSec11}, it is convenient to define $\mathscr{L}_{\mu,\delta,\rho}$ as in \eqref{eq:ScrL} and} by \eqref{SpecEnNormMode1DNLS} we obtain that 
for all $k$ such that $\kappa(k) = (\mu K_1,\mu^{\sigma} K_2)$  and $|K_1|+ |K_2| > \frac{(2+\delta) |\log \mu|}{\rho}$
\begin{align}
&\frac{ \cE_\kappa }{\mu^2} \nonumber \leq \sum_{ \substack{ L \in \mathscr{L}_{\mu,\delta,\rho} } } \left( |\hat{p}_{K+L}|^2 + \omega_k^2 \left| \hat{q}_{K+L} \right|^2 \right)  \nonumber \\
&\stackrel{ \eqref{EstFreq1DNLSr},\eqref{NormMode1DNLSEst} }{\leq} \pi^2 \; \|(\psi,\bar\psi)\|_{\cH^{\rho,0}}^2 \; 2 \sum_{ L\in \mathscr{L}_{\mu,\delta,\rho}  } e^{-2\rho|K+L|} \left[1 + \mu^2 \, (K_1+L_1)^2 + \mu^{2\sigma} \, (K_2+L_2)^2\right], \\
&\quad =\sum_{ \substack{  L\in \mathscr{L}_{\mu,\delta,\rho} } } e^{-2\rho|K+L|} + \mu^2 \sum_{ \substack{  L \in \mathscr{L}_{\mu,\delta,\rho}} } e^{-2\rho|K+L|} (K_1+L_1)^2 + \mu^{2\sigma} \sum_{ L \in \mathscr{L}_{\mu,\delta,\rho} } e^{-2\rho|K+L|} (K_2+L_2)^2.
\end{align}
Now,
\begin{align}
&\sum_{ L \in \mathscr{L}_{\mu,\delta,\rho} } e^{-2\rho|K+L|} \leq e^{-2\rho|K|} + \sum_{ \substack{  L \in \mathscr{L}_{\mu,\delta,\rho} \\ L_1=0,L_2 \neq 0  } } e^{-2\rho|K+L|} + \sum_{ \substack{  L \in \mathscr{L}_{\mu,\delta,\rho} \\ L_1 \neq 0, L_2=0 } } e^{-2\rho|K+L|}  + \sum_{ \substack{ L \in \mathscr{L}_{\mu,\delta,\rho} \\ L_1,L_2 \neq 0 } } e^{-2\rho|K+L|}, \label{DecompHighModesTerm1}
\end{align}
and we can estimate the above terms as for \eqref{DecompHighFreqTerm1} in Proposition \ref{KdVxietaProp}; indeed, by \eqref{Est1HighFreqTerm1}, \eqref{Est2HighFreqTerm1} and \eqref{Est3HighFreqTerm1} we have that the last sum in \eqref{DecompHighModesTerm1} is bounded by
\begin{align}
&e^{-2\rho|K|} + \pi \left( \frac{1}{2\rho^2} + 2|K| \right) e^{-2\rho |K|} + e^{-2\rho\, (|K_1|+|K_2|)} \sum_{ \substack{  \ell \in \Z \setminus \{0\}   } } e^{-4 \rho |\ell|/\mu} + e^{-2\rho\, (|K_1|+|K_2|)} \sum_{ \substack{  \ell \in \Z \setminus \{0\}   } } e^{-4 \rho |\ell|/\mu^\sigma}. \label{EstHighModesTerm1}
\end{align}

Next, we have
\begin{align}
\sum_{ \substack{ L \in \mathscr{L}_{\mu,\delta,\rho}  } } e^{-2\rho|K+L|} \; (K_1+L_1)^2 &\leq e^{-2\rho|K|} \, K_1^2 \nonumber  + \sum_{ \substack{ L \in \mathscr{L}_{\mu,\delta,\rho} \\ L_1 \neq 0, L_2=0} } e^{-2\rho|K+L|} \; (K_1+L_1)^2 + \sum_{ \substack{L \in \mathscr{L}_{\mu,\delta,\rho}  \\ L_1=0,L_2 \neq 0} } e^{-2\rho|K+L|} \; K_1^2 \nonumber \\
& + \sum_{ \substack{ L \in \mathscr{L}_{\mu,\delta,\rho} \\ L_1,L_2 \neq 0} } e^{-2\rho|K+L|}\; (K_1+L_1)^2. \label{DecompHighModesTerm2}
\end{align}
First we estimate the last term in \eqref{DecompHighModesTerm2}: 
we have that $|L+K| \geq |K|$, hence
\begin{align}
\sum_{ \substack{L \in \mathscr{L}_{\mu,\delta,\rho} \\ L_1,L_2 \neq 0} } e^{-2\rho|K+L|} \; (K_1+L_1)^2 &= \int_{|K|}^{+\infty} \int_0^{2\pi} e^{-2\rho \, \xi} \, \xi \cos^2\phi \, \di\phi \, \di \xi \; = \; \pi \, e^{-2\rho |K|} \, \frac{1+2\rho |K|}{4\rho^2} \nonumber \\
&\leq \pi \, \mu^4 \, e^{ -2\rho \left[ |K| - \frac{2|\log\mu|}{\rho} - \frac{1}{2\rho} \log(2\rho|K|) \right] } 
\stackrel{\delta < 1 - 1/e}{\leq} \pi \, \mu^4 \, e^{ -2\rho \, \left[ \delta |K| - \frac{2 |\log\mu|}{\rho} \right] } \, .
\label{Est1HighModesTerm2}
\end{align}

Now we bound the other two nontrivial terms in \eqref{DecompHighModesTerm2}; on the one hand, we notice that
\begin{align}
&\sum_{ \substack{ L \in \mathscr{L}_{\mu,\delta,\rho} \\  L_1=0,L_2 \neq 0} } e^{-2\rho|K+L|} (K_1+L_1)^2 \leq 2 \sum_{ \substack{ L \in \mathscr{L}_{\mu,\delta,\rho} \\  L_1=0,L_2 \neq 0} } e^{-2\rho|K+L|} \, K_1^2 +2 \sum_{ \substack{ L \in \mathscr{L}_{\mu,\delta,\rho} \\  L_1=0,L_2 \neq 0} } e^{-2\rho|K+L|} \, L_1^2, \label{DecompHighModesTerm22}
\end{align}
where the first sum can be bounded as the second term in \eqref{DecompHighModesTerm1}, while 
\begin{align}
\sum_{ \substack{ L \in \mathscr{L}_{\mu,\delta,\rho} \\  L_1 = 0,L_2 \neq 0} } e^{-2\rho|K+L|} L_1^2 &\leq e^{-2\rho|K|} \sum_{ \substack{  \ell \in \Z \setminus \{0\}   } } e^{-4 \rho |\ell|/\mu} \, \frac{\ell^2}{\mu^{2}} \leq 2 e^{-2\rho|K|} \int_1^{+\infty} e^{-4 \rho |\ell|/\mu} \, \frac{\ell^2}{\mu^{2}} \, \di \ell, \label{DecompHighModesTerm23}
\end{align}
where the last integral is exponentially small with respect to $\mu$. Similarly,
\begin{align}
\sum_{ \substack{ L \in \mathscr{L}_{\mu,\delta,\rho} \\  L_1 = 0,L_2 \neq 0} } e^{-2\rho|K+L|} L_2^2 &\leq e^{-2\rho|K|} \sum_{ \substack{  \ell \in \Z \setminus \{0\}   } } e^{-4 \rho |\ell|/\mu^\sigma} \, \frac{\ell^2}{\mu^{2\sigma}} \leq 2 e^{-2\rho|K|} \int_1^{+\infty} e^{-4 \rho |\ell|/\mu^\sigma} \, \frac{\ell^2}{\mu^{2\sigma}} \, \di \ell, \label{DecompHighModesTerm33}
\end{align}
where the last integral is exponentially small with respect to $\mu$. \\

On the other hand, for any $k$ such that $\kappa(k) = (\mu K_1,\mu^{\sigma} K_2)$ and $|K_1|+|K_2| \leq \frac{(2+\delta) |\log\mu|}{\rho}$
\begin{align}
&\left| \frac{\cE_\kappa}{\mu^{2}} - \frac{|\hat{\psi}_K|^2}{2} \right| \leq \left| \omega_k^2 - 1 \right| |\hat{q}_K|^2 + \frac{1}{2} \sum_{ \substack{ L=(L_1,L_2) \in \Z^2 \setminus \{0\} \\ \mu L_1, \mu^{\sigma} L_2 \in 2\Z} } \left(|\hat{p}_{K+L}|^2 + \omega_k^2 \left| \hat{q}_{K+L} \right|^2\right) \nonumber \\
&\stackrel{\eqref{EstFreq1DNLSr}}{\leq} (\mu^2 \pi^2 K_1^2 + \pi^2 \mu^{2\sigma} K_2^2) |\hat{p}_K|^2 \nonumber \\
&\; \; \; \; + \frac{1}{2} \sum_{ \substack{ L=(L_1,L_2) \in \Z^2 \setminus \{0\} \\ \mu L_1, \mu^{\sigma} L_2 \in 2\Z} } \left(|\hat{p}_{K+L}|^2 + |\hat{q}_{K+L}|^2 + \pi^2[\mu^2 (K_1+L_1)^2+\mu^{2\sigma}(K_2+L_2)^2] |\hat{q}_{K+L}|^2\right), \nonumber \\
&\leq \left( \pi^2 \, \mu^2 K_1^2 + \pi^2 \mu^{2\sigma} \, K_2^2 \right) |\hat{p}_K|^2 \nonumber \\
&\; \; \; \; + \|(\psi,\bar\psi)\|_{\cH^{\rho,0}}^2 \sum_{ \substack{ L=(L_1,L_2) \in \Z^2 \setminus \{0\} \\ \mu L_1, \mu^{\sigma} L_2 \in 2\Z} } e^{-2\rho |K+L|} [ 1+ \pi^2 \mu^2 (K_1+L_1)^2 + \pi^2 \mu^{2\sigma} (K_2+L_2)^2 ] \nonumber \\
&\leq \pi^2 \, \mu^2 \left( 1 + \mu^{2(\sigma-1)} \, \right) \frac{9 |\log\mu|^2}{\rho^2} \|(\psi,\bar\psi)\|_{\cH^{\rho,0}}^2  \label{EstLowModesTerm1} \\
&\; \; \; \; +\|(\psi,\bar\psi)\|_{\cH^{\rho,0}}^2 \sum_{ \substack{ L=(L_1,L_2) \in \Z^2 \setminus \{0\} \\ \mu L_1, \mu^{\sigma} L_2 \in 2\Z} } e^{-2\rho |K+L|} \label{EstLowModesTerm2} \\
&\; \; \; \; + \pi^2 \mu^2 \|(\psi,\bar\psi)\|_{\cH^{\rho,0}}^2 \sum_{ \substack{ L=(L_1,L_2) \in \Z^2 \setminus \{0\} \\ \mu L_1, \mu^{\sigma} L_2 \in 2\Z} } e^{-2\rho |K+L|}(K_1+L_1)^2 \label{EstLowModesTerm3} \\
&\; \; \; \; + \pi^2 \mu^{2\sigma} \|(\psi,\bar\psi)\|_{\cH^{\rho,0}}^2 \sum_{ \substack{ L=(L_1,L_2) \in \Z^2 \setminus \{0\} \\ \mu L_1, \mu^{\sigma} L_2 \in 2\Z} } e^{-2\rho |K+L|}(K_2+L_2)^2 \label{EstLowModesTerm4} 
\end{align}

and we can conclude by estimating \eqref{EstLowModesTerm1} by exploiting the fact that {$|\log\mu| \leq \mu^{-1/4}$}, while we can bound \eqref{EstLowModesTerm2}-\eqref{EstLowModesTerm3} by
\begin{align}
&\frac{\pi^2}{2} \, \|(\psi,\bar\psi)\|_{\cH^{\rho,0}}^2  \, \sum_{ \substack{ L=(L_1,L_2) \in \Z^2 \setminus \{0\} \\ \mu L_1, \mu^{\sigma} L_2 \in 2\Z} } \, [1+\mu^2(K_1+L_1)^2]\, e^{-2\rho |K+L|} \nonumber \\
&\leq \frac{\pi^2}{2} \, \|(\psi,\bar\psi)\|_{\cH^{\rho,0}}^2   \, \sum_{ \substack{ L=(L_1,L_2) \in \Z^2 \setminus \{0\} \\ \mu L_1, \mu^{\sigma} L_2 \in 2\Z} } \, (1+2 \, \mu^2 \, K_1^2 + 2 \, \mu^2 \, L_1^2) \, e^{-2\rho |K+L|} \nonumber \\
&\leq \frac{\pi^2}{2} \, \|(\psi,\bar\psi)\|_{\cH^{\rho,0}}^2 \,  \left[ (1+2 \, \mu^2 \, K_1^2) \, 2\pi \, \int_{2/\mu}^{+\infty} e^{-2\rho \ell} \, \ell \,  \di \ell + 4\pi \, \mu^2 \int_{2/\mu}^{+\infty} e^{-2\rho \ell} \, \ell^3 \,\di \ell \right] \nonumber \\
&=  \frac{\pi^2}{2} \, \|(\psi,\bar\psi)\|_{\cH^{\rho,0}}^2 \, \times \nonumber \\
&\; \; \; \left[ 2\pi \, \left(1 + 2\mu^2 \frac{9|\log\mu|^2}{\rho^2} \right)\,  e^{-4\rho/\mu} \, \frac{\mu+4\rho}{4\mu\rho^2} + 4\pi \, \mu^2 e^{-4\rho/\mu} \, \frac{3\mu^3+12\rho\mu^2+24\rho^2\mu+32\rho^3}{8\mu^3\rho^4} 
\right], \label{Est1LowModesTerm3}
\end{align}
and we can estimate \eqref{EstLowModesTerm4} by
\begin{align}
&\frac{\pi^2}{2} \, \|(\psi,\bar\psi)\|_{\cH^{\rho,0}}^2 \, \mu^{2(\sigma-1)} \, \sum_{ \substack{ L=(L_1,L_2) \in \Z^2 \setminus \{0\} \\ \mu L_1, \mu^{\sigma} L_2 \in 2\Z} } \, (K_2+L_2)^2\, e^{-2\rho |K+L|} \nonumber \\
&\leq \frac{\pi^2}{2} \, \|(\psi,\bar\psi)\|_{\cH^{\rho,0}}^2 \, \mu^{2(\sigma-1)}  \, \sum_{ \substack{ L=(L_1,L_2) \in \Z^2 \setminus \{0\} \\ \mu L_1, \mu^{\sigma} L_2 \in 2\Z} } \, (2 K_2^2 + 2 L_2^2) \, e^{-2\rho |K+L|} \nonumber \\
&\leq \frac{\pi^2}{2} \, \|(\psi,\bar\psi)\|_{\cH^{\rho,0}}^2 \, \mu^{2(\sigma-1)} \,  \left[ 2K_1^2 \, 2\pi \, \int_{2/\mu^\sigma}^{+\infty} e^{-2\rho \ell} \, \ell \di \ell + 4\pi \, \int_{2/\mu^\sigma}^{+\infty} e^{-2\rho \ell} \, \ell^3 \di \ell \right] \nonumber \\
&=  \frac{\pi^2}{2} \, \|(\psi,\bar\psi)\|_{\cH^{\rho,0}}^2 \, \mu^{2(\sigma-1)} \times \nonumber \\
&\; \; \; \left[ 4\pi \,  \frac{9|\log\mu|^2}{\rho^2} \,  e^{-4\rho/\mu^\sigma} \, \frac{\mu^\sigma+4\rho}{4\mu^\sigma\rho^2} + 4\pi \, e^{-4\rho/\mu^\sigma} \, \frac{3\mu^{3\sigma}+12\rho\mu^{2\sigma}+24\rho^2\mu^\sigma+32\rho^3}{8\mu^{3\sigma}\rho^4} 
\right]. \label{Est1LowModesTerm4}
\end{align}

\section{Proof of Proposition \ref{ApprProp1DNLS}} \label{ApprEstSec22}

The argument follows along the lines of Appendix C in \cite{bambusi2006metastability}.

Exploiting the canonical transformation found in Theorem \ref{gavthm}, we also define
\begin{align}
\zeta_a \, := \, (\psi_a,\bar\psi_a) &=  \cT_{\mu^2}( \widetilde{\psi_a},\bar{\widetilde{\psi_a}} ) \, = \, \tilde\zeta_a + \phi_a(\tilde\zeta_a),
\end{align}
where $\phi_a(\tilde\zeta_a):=( \phi_\xi(\tilde\zeta_a),\phi_\eta(\tilde\zeta_a) )$; by \eqref{CTthm} we have
\begin{align} \label{estRemThm3}
\sup_{\zeta \in \cB_{\rho,n}(R)} \|\phi_a(\zeta)\|_{\cH^{\rho,n}} &\leq C'_n \mu^2 \, R.
\end{align}

For convenience we define
\begin{align}
q_a(\tau,y) &:= \frac{1}{\sqrt{2}} \left[ e^{\ii\tau} \widetilde{\psi_a}(\tau,y_1,y_2) + e^{-\ii\tau} \bar{\widetilde{\psi_a}}(\tau,y_1,y_2)  \right] \label{qappr3} \\
p_a(\tau,y) &:= \frac{1}{\sqrt{2}\ii} \left[ e^{i\tau} \widetilde{\psi_a}(\tau,y_1,y_2) - e^{-\ii\tau} \bar{\widetilde{\psi_a}}(\tau,y_1,y_2)  \right], \label{pappr3} 
\end{align}

We observe that the pair $(q_a,p_a)$ satisfies

\begin{align*}
\mu (q_a)_t = \mu p_a + \mu^{ {\min(2\sigma+1,5)} } \cR_q, \; &\; \; \mu (p_a)_t = - \mu q_a + \mu \Delta_1 q_a- \mu^3 \, \beta \, \overline{\pi_{0}}q_a^3 + \mu^{ {\min(2\sigma+1,5)} } \cR_p,
\end{align*}
where the operator $\Delta_1$ acts on the variable $x$, $\overline{\pi_{0}}$ is the projector on the space of the functions with zero average, and the remainders are functions of the rescaled variables $\tau$ and $y$ which satisfy
\begin{align*}
\sup_{\cB_{\rho,n}(R)} \|\cR_q\|_{\ell^2_{\rho,0}} \leq C, \; &\; \; \sup_{\cB_{\rho,n}(R)} \|\cR_p\|_{\ell^2_{\rho,1}} \leq C.
\end{align*}
We now restrict the space variables to integer values; keeping in mind that $q_a$ and $p_a$ are periodic, we assume that $j \in \Z^2_{N_1,N_1^\sigma}$. 

For a finite sequence $Q=(Q_j)_{j \in  \Z^2_{N_1,N_1^\sigma} }$ we use the norm $\|Q\|_{\ell^2_{N_1,N_1^\sigma}}^2$ defined in \eqref{NormSeq}.

Now we consider the discrete model \eqref{2DETLeq}: we rewrite in the following form,
\begin{align}
\dot{Q}_j &= P_j \label{DiscrEq31} \\
\dot{P}_j &= - Q_j + (\Delta_1Q)_j - \, \beta \, \overline{\pi_{0}}Q_j^3 \label{DiscrEq32}
\end{align}
and we want to show that there exist two sequences $E=(E_j)_{j \in \Z^2_{N_1,N_1^\sigma} }$ and $F=(F_j)_{j \in \Z^2_{N_1,N_1^\sigma} }$ such that
\begin{align*}
Q \, = \, \mu \, q_a + \mu^{1+\gamma} E, &\; \; P \, = \, \mu p_a + \mu^{1+\gamma} F
\end{align*}
fulfills \eqref{DiscrEq31}-\eqref{DiscrEq32}, where $\gamma>0$ is a parameter we will fix later in the proof. Therefore, we have that
\begin{align}
\dot{E} &= F - \mu^{ {\min(2\sigma+1,5)} -1-\gamma} \cR_q  \label{EqSeq31} \\
\dot{F} &= - E + \Delta_1E  - \beta \overline{\pi_0}\, ( 3 \mu^{2} \, q_a^2 E + 3 \mu^{2+\gamma} \, q_a E^2 + \mu^{2+2\gamma} E^3) - \mu^{ {\min(2\sigma+1,5)} -1-\gamma}\cR_p, \label{EqSeq32}
\end{align}
where we impose initial conditions on $(E,F)$ such that $(\tilde{q},\tilde{p})$ has initial conditions corresponding to the ones of the true initial datum,
\begin{align}
\mu q_a(0,\mu j_1,\mu^\sigma j_2) + \mu^{1+\gamma} E_{0,j} &= Q_{0,j}, \nonumber \\
\mu p_a(0,\mu j_1,\mu^\sigma j_2) + \mu^{1+\gamma} F_{0,j} &= P_{0,j} . \nonumber
\end{align}

We now define the operator $\d_{i}$, $i=1,2$,  by $(\d_if)_j := f_j - f_{j-e_i}$ for each $f \in \ell^2_{N_1,N_1^\sigma}$. 

\begin{itemize}
\item Claim 1: Let $\sigma > 1$ and $\gamma >0$, we have
\begin{align*}
\|E_0\|_{\ell^2_{N_1,N_1^\sigma}} \leq C' \mu^{(3-2\gamma-\sigma)/2}, \; \|F_0\|_{\ell^2_{N_1,N_1^\sigma}} &\leq C' \mu^{(3-2\gamma-\sigma)/2}, \; \|\d_1E_0\|_{\ell^2_{N_1,N_1^\sigma}} \leq C' \mu^{(5-2\gamma-\sigma)/2}, \\
\|\d_2E_0\|_{\ell^2_{N_1,N_1^\sigma}} \leq C' \mu^{(3-2\gamma+\sigma)/2}, \; \|\d_1F_0\|_{\ell^2_{N_1,N_1^\sigma}} &\leq C' \mu^{(5-2\gamma-\sigma)/2}, \; \|\d_2F_0\|_{\ell^2_{N_1,N_1^\sigma}} \leq C' \mu^{(3-2\gamma+\sigma)/2}.
\end{align*}
\end{itemize}
To prove Claim 1 we observe that
\begin{align*}
E_0 &= \mu \frac{ \psi_a+\bar\psi_a-(\widetilde{\psi}_a+\bar{\widetilde{\psi}_a}) }{\sqrt{2} \mu^{1+\gamma} }\, = \, \mu^{-\gamma} \, \frac{ \phi_\xi+\phi_\eta }{\sqrt{2}}, \\
F_0 &= \mu \frac{ \psi_a-\bar\psi_a-(\widetilde{\psi}_a-\bar{\widetilde{\psi}_a}) ] }{\sqrt{2}\ii \, \mu^{1+\gamma} }\, = \, \mu^{-\gamma} \frac{ \phi_\xi-\phi_\eta }{\sqrt{2}\ii }, \\
\end{align*}
from which we can deduce
\begin{align*}
\|E_0\|_{\ell^2_{N_1,N_1^\sigma}}^2 &\leq \sum_{j \in \Z^2_{N_1,N_1^\sigma} } |E_{0,j}|^2 \,\leq \, C \, 4 N_1^{\sigma+1} \, (\mu^{2-\gamma})^2 \,  = \, C \, \mu^{3-2\gamma-\sigma}, \\
\|F_0\|_{\ell^2_{N_1,N_1^\sigma}}^2 &\leq \sum_{j \in \Z^2_{N_1,N_1^\sigma} } |F_{0,j}|^2 \,\leq \, C \, 4 N_1^{\sigma+1} \, (\mu^{2-\gamma})^2 \,  = \, C \, \mu^{3-2\gamma-\sigma}, \\
\|\d_1E_0\|_{\ell^2_{N_1,N_1^\sigma}}^2 &\leq \sum_{j \in \Z^2_{N_1,N_1^\sigma} } |\d_1E_{0,j}|^2 \, \leq \, \, C \, 4 N_1^{\sigma+1} \, (\mu^{2+1-\gamma})^2 \, \leq \, C \, \mu^{5-2\gamma-\sigma}, \\
\|\d_2E_0\|_{\ell^2_{N_1,N_1^\sigma}}^2 &\leq \sum_{j \in \Z^2_{N_1,N_1^\sigma} } |\d_2E_{0,j}|^2 \, \leq \,\, C \, 4 N_1^{\sigma+1} \, (\mu^{2+\sigma-\gamma})^2 \, = \, C \, \mu^{3-2\gamma+\sigma}, \\
\|\d_1F_0\|_{\ell^2_{N_1,N_1^\sigma}}^2 &\leq \sum_{j \in \Z^2_{N_1,N_1^\sigma} } |\d_1F_{0,j}|^2 \, \leq \, \, C \, 4 N_1^{\sigma+1} \, (\mu^{2+1-\gamma})^2 \, \leq \, C \, \mu^{5-2\gamma-\sigma}, \\
\|\d_2F_0\|_{\ell^2_{N_1,N_1^\sigma}}^2 &\leq \sum_{j \in \Z^2_{N_1,N_1^\sigma} } |\d_2F_{0,j}|^2 \, \leq \,\, C \, 4 N_1^{\sigma+1} \, (\mu^{2+\sigma-\gamma})^2 \, = \, C \, \mu^{3-2\gamma+\sigma},
\end{align*}
and this leads to the thesis.

\begin{itemize}
\item Claim 2: Fix $n \geq 0$, $T_0 > 0$  and $K_\ast >0$, then for any $\mu < \mu_s$ and for any $\sigma > 1$ and $\gamma > 0$ such that $\sigma+2\gamma < {\min(4\sigma-1,7)}$ we have
\begin{align}
\|E\|_{\ell^2_{N_1,N_1^\sigma}}^2 + \|F\|_{\ell^2_{N_1,N_1^\sigma}}^2 +  \|\d_1E_0\|_{\ell^2_{N_1,N_1^\sigma}}^2 + \|\d_2E_0\|_{\ell^2_{N_1,N_1^\sigma}}^2  &\leq K_\ast, \; \;{\color{black} \forall \, } |t| < \frac{T_0}{ \mu^{2} }.
\end{align}
\end{itemize}

To prove the claim, we define
\begin{align} \label{auxFuncClaim32}
\cG(E,F) &:= \sum_{j \in \Z^2_{N_1,N_1^\sigma} } \left(\frac{F_j^2 + E_j^2 + E_j (-\Delta_1E)_j}{2} + \frac{ 3\mu^2\beta q_a^2 E_j^2 + 3 \mu^{2+\gamma} \beta q_a E_j^3 }{2}\right) ,
\end{align}
and we remark that
\begin{align*}
\frac{1}{2} \cG(E,F) \, \leq \, \|E\|_{\ell^2_{N_1,N_1^\sigma}}^2  &+  \|\d_1F_0\|_{\ell^2_{N_1,N_1^\sigma}}^2 + \|\d_2F_0\|_{\ell^2_{N_1,N_1^\sigma}}^2 \, \leq \, 2 \cG(E,F).
\end{align*}

Now we compute the time derivative of $\cG$. Exploiting \eqref{EqSeq1}-\eqref{EqSeq2} {\color{black}and using a procedure analogous to Appendix \ref{ApprEstSec12}, we finally obtain}
as long as $\cG < 2 K_\ast$ 
\begin{align}
\left| \dot{\cG} \right| &\leq C \left[ \mu^{2+\gamma} \, K_\ast^{1/2} + \mu^{2+2\gamma} \, K_\ast + \mu^{3} + \mu^{2+\gamma} \, K_\ast^{1/2} + \mu^{ {\min(2\sigma+2,6)} -(1+\sigma)/2} + \mu^{2+\gamma} \, K_\ast^{1/2} \right] \cG \label{EstTimeDer31} \\
&\; \; \; + C \,\left[ 2 \mu^{ {\min(2\sigma,4)} -\gamma-(1+\sigma)/2} + \mu^{ {\min(2\sigma,4)} -\gamma+(1-\sigma)/2} + \mu^{ {\min(2\sigma+2,6)} -\gamma-(1+\sigma)/2} \right] K_\ast^{1/2} \label{EstTimeDer32} \\
&\stackrel{\sigma+2\gamma < {\min(4\sigma-1,7)} }{\leq} C \, \mu^2 \, (1+K_\ast^{1/2}) \, \cG + C \, \mu^{( {\min(4\sigma-1,7)} -2\gamma-\sigma)/2} \, K_\ast^{1/2} \label{EstTimeDer33}
\end{align}
and by applying Gronwall's lemma we get
\begin{align} \label{Gronwall3}
\cG(t) &\leq \cG(0) e^{C \,(1+K_\ast^{1/2}) \, \mu^2 t} + e^{C \, (1+K_\ast^{1/2}) \, \mu^2 t} \; C \, (1+K_\ast^{1/2}) \, \mu^2 t \; C \, \mu^{( {\min(4\sigma-1,7)} -2\gamma-\sigma)/2} \, K_{\ast}^{1/2},
\end{align}
from which we can deduce the thesis. \\

\paragraph{\emph{Acknowledgements}} The authors would like to thank Dario Bambusi, Alberto Maspero, Tiziano Penati and Antonio Ponno for useful comments and suggestions. We also thank the anonymous referees for suggesting improvements to the paper. S.P. acknowledges financial support from the Spanish "Ministerio de Ciencia, Innovaci\'on y Universidades", through the María de Maeztu Programme for Units of Excellence (2015-2019) and the Barcelona Graduate School of Mathematics, and partial support by the Spanish MINECO-FEDERGrant MTM2015-65715-P. M.G. acknowledges financial support from the MIUR-PRIN 2017 project MaQuMA cod.~2017ASFLJR, INdAM (GNFM) and also the Department of Mathematics at  "Universitat Polit\'ecnica de Catalunya" where part of this work was carried on. This project has received funding from the European Research Council (ERC) under the European Union's Horizon 2020 research and innovation programme (grant agreement No 757802). 

\end{appendix}

\bibliography{GP_ETL_2019}
\bibliographystyle{alpha}

\end{document}